\documentclass{article}
\usepackage[utf8]{inputenc}
 \usepackage[T1]{fontenc} 
\usepackage{amsmath, amssymb, amscd, amsthm, comment, amsxtra,mathrsfs, enumerate}
\usepackage{graphicx}
\usepackage{subfigure}
\usepackage[pdftex, linktocpage=true]{hyperref}
\usepackage{enumitem}
\usepackage[small]{titlesec}

\usepackage{euscript}
\usepackage[format=plain,labelfont=bf,up,width=.99\textwidth]{caption}
\usepackage[toc,page]{appendix}
\usepackage[all]{xy}%
\usepackage[textwidth=16.5cm,textheight=21cm]{geometry}
\usepackage{tikz-cd}
\usepackage{xfrac}

\usepackage{arcs}

\newtheorem{thm}{Theorem}[section]
\newtheorem{lem}[thm]{Lemma}
\newtheorem{prop}[thm]{Proposition}
\newtheorem{propdef}[thm]{Proposition-Definition}
\newtheorem{cor}[thm]{Corollary}

\newtheorem*{claim*}{Claim}
\newtheorem*{thm*}{Theorem}
\newtheorem{defn}[thm]{Definition}

\newtheorem{rem}[thm]{Remark}

\newtheorem{assum}[thm]{Assumption}

\newtheorem*{conj*}{Conjecture}
\newtheorem*{rem*}{Remark}
\newtheorem*{lem*}{Lemma}

\newtheorem*{def*}{Definition}
\newtheorem*{fact*}{Fact}
\newtheorem*{assumption*}{Assumption on $s$}
\newtheorem*{assum*}{Assumption on $\tau$}
\newtheorem*{remnota*}{Remark on the notation}

\newtheorem{thmx}{Theorem}


\setenumerate{fullwidth,itemindent=\parindent,listparindent=\parindent,itemsep=0ex,partopsep=0pt,parsep=0ex}
\newlist{steps}{enumerate}{1}
\setlist[steps, 1]{label = \textbf{Step \arabic*}:}

\def\C{{\mathbb C}}
\def\D{{\mathbb D}}

 \def\epsilon{{\varepsilon}}

\renewcommand{\arg}{\operatorname{arg}}

\renewcommand{\Im}{\operatorname{Im}}

\newcommand{{\q}}{q_{\theta}}

\newcommand{{\Zak}}{\mathcal{Z}_{\theta}}

\title{Parabolic Implosion in the Parameter Space of Cubic Polynomials}
\author{Runze Zhang }
\date{\small}

\begin{document}
\maketitle
\begin{abstract}
    Parabolic implosion describes the enrichment of Julia sets when a parabolic fixed point is perturbed. It is also natural to study parabolic implosion in parameter spaces. In particular, when one perturbs properly the family of cubic polynomials having a stable parabolic fixed point into nearby families, the enrichment of the bifurcation loci occurs. We investigate the topology of such enrichment in the parameter space of cubic polynomials and relate it to the corresponding enrichment of Julia sets of quadratic polynomials, the latter of which has been studied systematically by P. Lavaurs in the 80s.
\end{abstract}
\tableofcontents

\section{Introduction}
Parabolic implosion is a remarkable phenomenon in complex dynamics. It describes the enrichment of the Julia set when a parabolic periodic point is perturbed. The study of parabolic implosion was begun by Douady and Hubbard \cite{DoHu1} who introduced Ecalle cylinder into the study of the bifurcation for parabolic points. Later, P. Lavaurs studied systematically this topic in his Phd thesis \cite{La,Do}. He studied the perturbation of the quadratic polynomial $\mathfrak{p}_\lambda(z) = z^2+\lambda z$ at $\lambda =1$ and showed that as $\lambda_n$ approaches $\lambda=1$ from certain directions, the limiting filled-in Julia set $\limsup_n K_{\lambda_n}$ is strictly smaller than the filled-in Julia set $K_1$ of the limiting map $\mathfrak{p}_1$ (equivalently, the limiting Julia set presents richer structures than the original one). A natural question is to ask what the enrichment consists of:  
$$\text{\textbf{Question 1.} Describe the difference } {K}_1\setminus\limsup_n {K}_{\lambda_n}.$$ 

To be more precise, Lavaurs showed the following:

\begin{thm}[Lavaurs]\label{thm.Lavaurs1}
    Let $\lambda_n = e^{2\pi i\alpha_n}$ with $\mathfrak{Re}\,\alpha_n>0$, $N_n\to+\infty$ a sequence of positive integers and $\tau\in\mathbb{C}$ such that
  $$1/\alpha_n = N_n -\tau +o(1),\quad n\to+\infty.$$
  Then $\mathfrak{p}^{N_n}_{\lambda_n}$ converges uniformly on compact subsets of the parabolic basin of $\mathfrak{p}_1$ to a transcendental map $L_{\mathfrak{p}_1,\tau}$\footnote{Nowadays known as the {\it Lavaurs map}.}. 
\end{thm}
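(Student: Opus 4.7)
The plan is to apply the classical theory of Fatou coordinates and their stability in the parabolic direction, following Douady--Hubbard--Lavaurs. I would first set up the local geometry at the parabolic fixed point $z=0$ of $\mathfrak{p}_1(z)=z+z^2$: since the multiplier is $1$ and the nonlinear leading term is $z^2$, there is a single attracting petal $\mathcal{P}^+$ opening toward negative real parts and a single repelling petal $\mathcal{P}^-$ opening toward positive real parts. On these petals one constructs attracting and repelling Fatou coordinates $\phi^\pm_{\mathfrak{p}_1}$ satisfying $\phi^+_{\mathfrak{p}_1}\circ\mathfrak{p}_1=\phi^+_{\mathfrak{p}_1}+1$ on $\mathcal{P}^+$ and the analogous equation for $\phi^-_{\mathfrak{p}_1}$. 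Via these abelian equations, $\phi^+_{\mathfrak{p}_1}$ extends holomorphically to the entire parabolic basin $B$ of $0$, while $(\phi^-_{\mathfrak{p}_1})^{-1}$ extends to an entire map $\mathbb{C}\to\widehat{\mathbb{C}}$. The Lavaurs map is then defined as $L_{\mathfrak{p}_1,\tau}:=(\phi^-_{\mathfrak{p}_1})^{-1}\circ T_\tau\circ\phi^+_{\mathfrak{p}_1}$ on $B$, where $T_\tau(w)=w+\tau$.

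Next, for the perturbed maps $\mathfrak{p}_{\lambda_n}$, the assumption $\mathfrak{Re}\,\alpha_n>0$ controls the direction of approach of $\lambda_n$ to $1$, so that the two fixed points $0$ and $z_n=1-\lambda_n$ sit in a near-parabolic configuration admitting a uniform \emph{gate}. I would invoke the classical construction of \emph{perturbed Fatou coordinates}: on domains $\mathcal{P}^\pm_n$ deforming $\mathcal{P}^\pm$ and crossing the gate between $0$ and $z_n$ there exist holomorphic maps $\phi^\pm_n$ with $\phi^\pm_n\circ\mathfrak{p}_{\lambda_n}=\phi^\pm_n+1$ which, after a suitable common normalization, satisfy two key properties: (i) $\phi^\pm_n\to\phi^\pm_{\mathfrak{p}_1}$ uniformly on compact subsets of $\mathcal{P}^\pm$; and (ii) the \emph{gate-crossing formula} $\phi^-_n(z)-\phi^+_n(z)=-1/\alpha_n+o(1)$ holds uniformly on the overlap of the two chart domains.

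With these ingredients the theorem reduces to a short computation. For $z$ in a compact subset of $\mathcal{P}^+$, the abelian equation for $\phi^-_n$, combined with (ii) and the hypothesis $1/\alpha_n=N_n-\tau+o(1)$, gives
$$\phi^-_n\bigl(\mathfrak{p}_{\lambda_n}^{N_n}(z)\bigr)=\phi^-_n(z)+N_n=\phi^+_n(z)+N_n-\frac{1}{\alpha_n}+o(1)=\phi^+_n(z)+\tau+o(1).$$
Using (i), inverting, and passing to the limit yields $\mathfrak{p}_{\lambda_n}^{N_n}(z)\to(\phi^-_{\mathfrak{p}_1})^{-1}\bigl(\phi^+_{\mathfrak{p}_1}(z)+\tau\bigr)=L_{\mathfrak{p}_1,\tau}(z)$ uniformly on compacts in $\mathcal{P}^+$. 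To upgrade compact convergence from $\mathcal{P}^+$ to the whole basin $B$, for any $K\Subset B$ pick $k\in\mathbb{N}$ with $\mathfrak{p}_1^k(K)\Subset\mathcal{P}^+$, decompose $\mathfrak{p}_{\lambda_n}^{N_n}=\mathfrak{p}_{\lambda_n}^{N_n-k}\circ\mathfrak{p}_{\lambda_n}^{k}$, and combine continuous dependence of $\mathfrak{p}_{\lambda_n}^{k}$ on $\lambda_n$ with the already-proven convergence; shifting $N_n$ by $k$ shifts $\tau$ by $k$, which is absorbed by $L_{\mathfrak{p}_1,\tau+k}=L_{\mathfrak{p}_1,\tau}\circ\mathfrak{p}_1^{k}$ inside the basin.

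The principal obstacle, and the only genuinely analytic step, is the gate-crossing formula (ii). The standard route is to conjugate $\mathfrak{p}_{\lambda_n}$ in a neighborhood of the gate to its normal form $w\mapsto w+1+O(1/w)$ via an adapted chart of the shape $w=\frac{1}{2\pi i\alpha_n}\log\!\bigl(\frac{z-z_n}{z}\bigr)+c_n$, and then compare this approximate Fatou coordinate with the true $\phi^\pm_n$ by a Banach contraction argument in a space of holomorphic functions on a tubular neighborhood of the gate. The term $-1/\alpha_n$ is precisely the branch jump of the logarithm as one traverses the gate from the side of $0$ to the side of $z_n$, while the $o(1)$ requires summing residual errors of order $O(1/|w|^2)$ across the roughly $1/|\alpha_n|$ iterates in the gate. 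The condition $\mathfrak{Re}\,\alpha_n>0$ is used exactly here: it bounds the geometry of the gate and keeps every relevant iterate inside the region where the $O(1/w)$ error estimate is uniform, so that the accumulated error remains $o(1)$ as $n\to\infty$.
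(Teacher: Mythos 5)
The paper does not prove this theorem: it is quoted as a known result of Lavaurs (with references \cite{La,Do}), and the analogous statement used later for the cubic family, Theorem~\ref{thm.shishikura-lavaurs}, is likewise cited to Shishikura without proof. So there is no proof in the paper to compare your attempt against.

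That said, your sketch is the standard Douady--Lavaurs--Shishikura argument: set up attracting/repelling Fatou coordinates $\phi^{\pm}_{\mathfrak{p}_1}$, define $L_{\mathfrak{p}_1,\tau}=(\phi^-_{\mathfrak{p}_1})^{-1}\circ T_\tau\circ\phi^+_{\mathfrak{p}_1}$, construct perturbed Fatou coordinates $\phi^{\pm}_n$ converging to $\phi^{\pm}_{\mathfrak{p}_1}$, use the phase (gate-crossing) formula $\phi^-_n-\phi^+_n=-1/\alpha_n+o(1)$, and then the short chain of identities you wrote gives $\mathfrak{p}^{N_n}_{\lambda_n}\to L_{\mathfrak{p}_1,\tau}$ on compacts of an attracting petal, upgraded to the whole basin by conjugating forward $k$ steps and using $L_{\mathfrak{p}_1,\tau+k}=L_{\mathfrak{p}_1,\tau}\circ\mathfrak{p}_1^{k}$. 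This structure, the sign of the phase, and the way the hypothesis $1/\alpha_n=N_n-\tau+o(1)$ enters are all correct.

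Two caveats on completeness, which you already partly flag. First, the whole analytic content of the theorem is concentrated in items (i) and (ii) --- the construction and normalization of perturbed Fatou coordinates and the estimate on the phase --- and your proposal only describes how one would prove them (adapted logarithmic chart, contraction argument across roughly $1/|\alpha_n|$ iterates). Without carrying out that estimate, the argument is a plan rather than a proof. Second, the line $\phi^-_n\bigl(\mathfrak{p}^{N_n}_{\lambda_n}(z)\bigr)=\phi^-_n(z)+N_n$ implicitly requires that the entire orbit $z,\mathfrak{p}_{\lambda_n}(z),\dots,\mathfrak{p}^{N_n}_{\lambda_n}(z)$ lie in the region on which the abelian equation for $\phi^-_n$ propagates; in Shishikura's framework this is true because the perturbed Fatou domain spans the gate (width of order $1/|\alpha_n|$), but it deserves to be said explicitly rather than absorbed into the notation. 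Neither of these is an error; they are the places where a full write-up would have to do real work, and they are exactly the content of Lavaurs' and Shishikura's original treatments.
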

 The filled-in Julia set $K(L_{\mathfrak{p}_1,\tau})$ for $L_{\mathfrak{p}_1,\tau}$ is defined to be the complement in $K_1$ of the following escaping region
 \begin{equation}\label{eq.escaping-quadratic}
     E := \{z\in\mathrm{int}\, {K_1};\,\exists N\geq 1 \text{ s.t. }L_{\mathfrak{p}_1,\tau}^N(z)\in\mathbb{C}\setminus{K_1}\}.
 \end{equation}
  
  Lavaurs was able to partially solve Question 1:
\begin{thm}[Lavaurs]\label{thm.Lavaurs2}
Under the same hypothesis as Theorem \ref{thm.Lavaurs1}, we have
$$\limsup_n K_{\lambda_n}\subset K(L_{\mathfrak{p}_1,\tau})\subsetneq K_1.$$ 
Moreover, if we set $E^1 := \{z\in\mathrm{int}\, {K_1};\,L_{\mathfrak{p}_1,\tau}(z)\in\mathbb{C}\setminus{K_1}\}$ to be the escaping set of the first level and suppose that the critical point $-\frac{1}{2}$ of $\frak{p}_1$ is not in $E^1$, then 
\begin{itemize}
    \item each connected component $C\subset E^1$ is a croissant (see Definition \ref{defn.croissant});
    \item each connected component $C\subset E^1$ is attached to a unique point $z\in\partial K_1$ (that is, $\overline{C}\cap\partial K_1=\{z\}$) and $z$ is an iterated pre-image of the parabolic fixed point $0$;
    \item for each $z\in\partial K_1$ which is an iterated pre-image of $0$, there is a unique connected component $C\subset E^1$ attached to it.
\end{itemize}
\end{thm}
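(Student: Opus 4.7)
The plan is to study $E^1$ through the Lavaurs decomposition $L_{\mathfrak{p}_1,\tau} = \psi^{rep} \circ T_\tau \circ \phi^{att}$ on the immediate basin $B$ of the parabolic fixed point $0$, where $\phi^{att}\colon B \to \mathbb{C}$ is the extended attracting Fatou coordinate (semi-conjugating $\mathfrak{p}_1|_B$ to the translation $T_1$ by $1$) and $\psi^{rep}\colon \mathbb{C} \to \mathbb{C}$ is the entire extension of the inverse of the repelling Fatou coordinate, satisfying $\psi^{rep}\circ T_1 = \mathfrak{p}_1\circ \psi^{rep}$. Setting $V := (\psi^{rep})^{-1}(\mathbb{C}\setminus K_1)$, one has $E^1 \cap B = (\phi^{att})^{-1}(V - \tau)$, and the analysis extends from $B$ to all of $\mathrm{int}\, K_1$ via the commutation $L_{\mathfrak{p}_1,\tau}\circ \mathfrak{p}_1 = \mathfrak{p}_1\circ L_{\mathfrak{p}_1,\tau}$.

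The two statements of the first assertion are handled directly. For $\limsup_n K_{\lambda_n}\subset K(L_{\mathfrak{p}_1,\tau})$, extract a convergent subsequence $z_{n_k}\to z$ with $z_{n_k}\in K_{\lambda_{n_k}}$: if $z\in \partial K_1$ the conclusion is immediate, and otherwise the uniform convergence $\mathfrak{p}_{\lambda_n}^{N_n}\to L_{\mathfrak{p}_1,\tau}$ on compacts of the parabolic basin forces $L_{\mathfrak{p}_1,\tau}(z)\in\limsup_n K_{\lambda_n}\subset K_1$ by upper semicontinuity of $K_\lambda$, and iterating yields $z\notin E$. Strict containment reduces to exhibiting a single point of $E$: since $\psi^{rep}(w)\to 0$ from the repelling side as $\mathfrak{Re}\, w\to -\infty$ and a small positive real point escapes to $\infty$ under $\mathfrak{p}_1$, the semi-conjugacy $\psi^{rep}\circ T_1 = \mathfrak{p}_1\circ \psi^{rep}$ yields some $w_0$ with $\psi^{rep}(w_0)\notin K_1$, and surjectivity of $\phi^{att}\colon B\to\mathbb{C}$ produces the required $z_0\in E^1$.

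For the structural analysis, the first step is to establish unramifiedness. Differentiating the two functional equations shows that the critical values of $\psi^{rep}$ form the forward $\mathfrak{p}_1$-orbit of $-1/2$, all lying in $K_1$; hence $\psi^{rep}\colon V\to \mathbb{C}\setminus K_1$ is an unramified cover. The critical values of $\phi^{att}$ all lie in the $T_1$-orbit of $\phi^{att}(-1/2)$, and the $T_1$-invariance of $V$ combined with the hypothesis $-1/2\notin E^1$ (equivalently $\phi^{att}(-1/2)+\tau\notin V$) ensures that no critical value of $\phi^{att}$ meets $V - \tau$, so $\phi^{att}\colon (\phi^{att})^{-1}(V-\tau)\to V-\tau$ is also unramified. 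Composing, $L_{\mathfrak{p}_1,\tau}\colon E^1\cap B \to \mathbb{C}\setminus K_1$ is an unramified covering onto the once-punctured topological disk $\mathbb{C}\setminus K_1$, so each connected component $C$ of $E^1\cap B$ is either the universal cover of $\mathbb{C}\setminus K_1$ (a topological disk) or a finite cyclic cover (a topological annulus).

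The heart of the proof is to exclude annular components and pin down the attachment. A careful boundary analysis --- tracing the induced covering to the universal cover of $\mathbb{C}\setminus K_1$, exploiting the local normal form $\psi^{rep}(w)\sim -1/w$ at $\mathfrak{Re}\, w=-\infty$, and using the Jordan structure of the Fatou components of $\mathfrak{p}_1$ --- shows that the only boundary behavior of $C$ compatible with the covering structure is that $C$ is a topological disk whose closure meets $\partial K_1$ at a single point $z_C$, at which $\phi^{att}$ (composed with appropriate iterates of $\mathfrak{p}_1$ when $C$ lies in a non-immediate Fatou component) diverges to $+\infty$ in real part. Such $z_C$ are exactly the iterated preimages of $0$. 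Conversely, at each iterated preimage $z^*$ one constructs the unique attached component as the pullback of the canonical cusp region of $V$ near $\mathfrak{Re}\, w=-\infty$, with uniqueness following from the $T_1$-invariance of $V$ and univalence of $\phi^{att}$ in a fundamental neighborhood near $z^*$. The main obstacle throughout is this last boundary analysis --- excluding annular components and collapsing the cusp to a single point --- which depends sharply on Lavaurs's classical estimates for the Fatou coordinates near the parabolic cusp and on the precise asymptotic behavior of $\psi^{rep}$ at the parabolic end of the Ecalle cylinder.
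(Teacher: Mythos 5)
The paper does not prove this theorem: it is stated in the introduction as classical background, attributed to Lavaurs and cited from his thesis \cite{La}, so there is no ``paper's own proof'' to compare against. The closest parallel the paper actually proves is the classification of the $g_s$-invariant component of the cubic escaping region in Lemma~\ref{lem.classification.C1}, whose proof is a case-by-case pullback analysis of boundary arcs.

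Your framework is the standard one and is essentially sound: the Lavaurs factorization $L_{\mathfrak{p}_1,\tau}=\psi^{rep}\circ T_\tau\circ\phi^{att}$, the identity $E^1=(\phi^{att})^{-1}(V-\tau)$ with $V:=(\psi^{rep})^{-1}(\mathbb{C}\setminus K_1)$, the unramifiedness of both factors under the hypothesis $-\frac12\notin E^1$, and the first assertion (upper semicontinuity plus uniform convergence of $\mathfrak{p}_{\lambda_n}^{N_n}$, strictness via producing one escaping point). Two remarks, though. First, your worry about annular components is a red herring. By the argument of Lemma~\ref{lem.asymptotic.psi} applied to $\mathfrak{p}_1$ in place of $g_s$, $\psi^{rep}\colon V\to\mathbb{C}\setminus K_1$ is already a \emph{universal} covering: $0$ is the unique asymptotic value, all critical values lie in $K_1$, and $V$ is the $T_1$-invariant strip between the two curves of $(\psi^{rep})^{-1}(\partial K_1)$ (the quadratic analog of Definition~\ref{defn.invariant-curves}), with $T_1$ acting transitively on fibers. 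Hence $V-\tau$ is simply connected, and since $\phi^{att}$ restricted to $(\phi^{att})^{-1}(V-\tau)$ is an unramified covering onto the simply connected $V-\tau$, each connected component $C$ of $E^1$ is mapped \emph{biholomorphically} by $\phi^{att}$ onto $V-\tau\cong\mathbb{H}$. You get $C\cong\mathbb{H}$ for free; there are no cyclic covers to exclude.

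Second, the genuine gap is the boundary analysis, which you correctly flag as ``the main obstacle'' but then leave as a gesture. Showing that $\overline{C}\cap\partial K_1$ is a single point, that this point is an iterated preimage of $0$, and the converse uniqueness, is the actual content of Lavaurs's theorem and cannot be read off the covering structure alone. The intended argument (parallel to the paper's Lemma~\ref{lem.classification.C1}) is: for the unique $\mathfrak{p}_1$-invariant component $C_0$, the two boundary arcs of $C_0$ are $\mathfrak{p}_1$-invariant, being the $\phi^{att}|_{C_0}$-preimages of the two $T_1$-invariant curves bounding $V-\tau$; each invariant arc must land at a finite fixed point of $\mathfrak{p}_1$; since $\mathfrak{p}_1(z)=z+z^2$ has $0$ as its \emph{only} finite fixed point, both land at $0$, giving the croissant. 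The remaining components and the third bullet then come from pulling $C_0$ back through the preimage tree of $0$ and verifying there is exactly one attached component per preimage. Even this skeleton needs a landing argument for the invariant arcs (local connectivity of $J(\mathfrak{p}_1)$ together with the snail lemma, or Lavaurs's cusp estimates), and you have written none of it; the proposal should not be read as a proof of the three bulleted items.
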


It also comes natural to study parabolic implosion in parameter spaces in the following sense: when one perturbs a family of rational maps having a stable parabolic fixed point into families having a stable near parabolic fixed point, the enrichment of the bifurcation loci occurs. One typical result of this aspect is due to Buff-Ecalle-Epstein \cite{BEE}, who studied the enrichment of degenerate parabolic parameters for the family of quadratic rational maps. In \cite{CEP}, it has been asked whether one can describe the enrichment for the cubic polynomial family:
\begin{equation}\label{eq.family_lambda-a}
   f_{\lambda,a}(z) =  \lambda z+az^2 + z^3,\,\,(\lambda,a)\in\mathbb{C}^2.
\end{equation}

Following the notation introduced by Milnor \cite{Mi}, let us denote by $\mathrm{Per}_1(\lambda) := \{f_{\lambda,a};\,a\in\mathbb{C}\}$ the $\lambda$-slice of cubic polynomials. For each root of unity $\lambda$, the author defined in \cite{Z} a compact subset $\mathcal{K}_\lambda$ called the {\it central part} of the connectedness locus $\mathcal{C}_\lambda$ for $\mathrm{Per}_1(\lambda)$:
\begin{defn}[Parabolic component and central part]\label{defn.central-part}
     Let $\lambda$ be a root of unity. A parabolic component $\mathcal{U}\subset\mathrm{Per}_1(\lambda)$ is an open connected component of the following locus:
     $$\{a\in\C; \text{ both critical points of 
 }f_{\lambda,a} \text{ are attracted by }z=0\}.$$
     A parabolic component is called adjacent if both critical points are contained in the same Fatou component; it is called bitransitive if both critical points belong to two different periodic Fatou components in the same cycle.
     
     The \textbf{central part} $\mathcal{K}_\lambda$ is defined to be the closure of the union of all parabolic components $\mathcal{U}$ satisfying the following condition: there exists a simple curve $\gamma\subset\mathcal{C}_\lambda$ starting from $\mathcal{U}$ and ending at an adjacent or bitransitive parabolic component, such that $\#(\gamma\cap\partial\mathcal{C}_\lambda)<+\infty$ and $\gamma\setminus(\gamma\cap\partial\mathcal{C}_\lambda)$ is contained in a union of parabolic components.
\end{defn}

The central part $\mathcal{K}_\lambda$ can be characterized as the closure of the parameters $a$ such that $f_{\lambda,a}$ is {\it not} quadratic-like\footnote{Here “quadratic-like” is in the sense of Douady-Hubbard's polynomial-like renormalization \cite{DoHu2}.} renormalizable. It is shown in \cite{Z} that $\mathcal{K}_\lambda$ is dynamically modeled over the quadratic filled-in Julia set $K_\lambda$:
\begin{thm}[{\cite[Theorem A]{Z}}]
 Let $\lambda = e^{2\pi ip/q}$ be a root of unity. The central part $\mathcal{K}_\lambda$ is a locally connected full continuum. Moreover, there exists a dynamically defined double covering: 
\[\mathfrak{G}:\mathcal{K}_\lambda\setminus\mathcal{I}\longrightarrow K({\mathfrak{p}})\setminus\bigcup^{q-1}_{i=0} \overline{P_i}\] 
where $\mathcal{I}\subset\mathcal{K}_\lambda$ is a Jordan arc passing through $a=0$; $(P_i)_i$ are $q$ attracting parabolic petals contained respectively in the $q$ immediate basins of $\mathfrak{p}_\lambda$.
\end{thm}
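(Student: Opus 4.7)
The strategy combines the symmetry $a\mapsto -a$ — which conjugates $f_{\lambda,a}$ to $f_{\lambda,-a}$ via $z\mapsto -z$, swapping the two critical points — with a dynamical address valued in the parabolic basin of the model map $\mathfrak{p}_\lambda=z^2+\lambda z$. The involution supplies the generic two-to-one identification on parameter space that yields the double cover, while the address places the image in $K(\mathfrak{p}_\lambda)\setminus\bigcup_i\overline{P_i}$. Concretely, $\mathfrak{G}(a)$ is defined by tracking, in Fatou/Ecalle coordinates at the common parabolic fixed point $0$, the position at which one of the critical orbits of $f_{\lambda,a}$ settles in an iterated preimage of the immediate basin, and transporting this dynamically to a point of $K(\mathfrak{p}_\lambda)$. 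The ambiguity of which critical orbit to track is precisely the involution, and the arc $\mathcal{I}$ will be identified with its ``self-symmetric'' locus inside $\mathcal{K}_\lambda$, which contains $a=0$ since $f_{\lambda,0}=\lambda z+z^3$ is odd.

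First I would construct $\mathfrak{G}$ on a single parabolic component $\mathcal{U}\subset\mathcal{K}_\lambda$. On such $\mathcal{U}$, Fatou coordinates at $0$ vary holomorphically in $a$, and a distinguished critical orbit of $f_{\lambda,a}$ lands after a bounded number of iterates in an iterated preimage of an immediate petal whose combinatorial address is constant on $\mathcal{U}$. A Douady--Hubbard style straightening together with a holomorphic-motion argument then yields a holomorphic map from $\mathcal{U}$ onto the corresponding pre-periodic Fatou component of $\mathfrak{p}_\lambda$. Letting $\mathcal{U}$ range over all parabolic components of $\mathcal{K}_\lambda$ — whose combinatorial classification is controlled by Definition \ref{defn.central-part} — these partial maps should exhaust every pre-periodic Fatou component of $\mathfrak{p}_\lambda$, and should glue continuously along shared boundaries via quasiconformal surgery.

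To extend $\mathfrak{G}$ to all of $\mathcal{K}_\lambda\setminus\mathcal{I}$ one invokes parabolic implosion. At a parameter $a_\infty\in\partial\mathcal{U}$ reached along $a_n\to a_\infty$, the limiting critical-orbit address is captured by the appropriate Lavaurs map $L_{\mathfrak{p}_\lambda,\tau}$ (Theorems \ref{thm.Lavaurs1}--\ref{thm.Lavaurs2}), with $\tau$ recorded by the direction of approach. Well-definedness of $\mathfrak{G}$ reduces to showing that this limit depends only on $a_\infty$, after which continuity and surjectivity onto $K(\mathfrak{p}_\lambda)\setminus\bigcup_i\overline{P_i}$ follow from a compactness argument. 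Once $\mathfrak{G}$ is in place as a degree-two branched map with ramification locus $\mathcal{I}$, the local connectivity and full-continuum properties of $\mathcal{K}_\lambda$ transfer from the corresponding (classical) properties of $K(\mathfrak{p}_\lambda)$ minus the open immediate petals, through the finite-to-one continuous map $\mathfrak{G}$.

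The principal obstacle is the continuity and properness of $\mathfrak{G}$ across the bifurcation boundary of each parabolic component, where one is doing parabolic implosion in a genuinely two-dimensional parameter situation: the Ecalle-cylinder address of a cubic critical orbit must be controlled uniformly as $a$ varies in $\mathcal{K}_\lambda$ and as the effective Lavaurs parameter $\tau$ varies jointly with it. A secondary but nontrivial difficulty is proving that the self-symmetric locus $\mathcal{I}$ is a single Jordan arc rather than a more complicated set; this requires combining the involution $a\mapsto -a$ with a combinatorial analysis of the parabolic components meeting $\mathcal{I}$ to rule out branching along the fixed locus. The remaining assertions — the two-to-one property of $\mathfrak{G}$ off $\mathcal{I}$, local connectivity, and fullness — then follow formally from the continuous construction.
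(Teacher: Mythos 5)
This theorem is cited from~[Z] and is not re-proved in the present paper, so there is no in-text proof to compare your sketch against. Evaluating it on its own merits: you have correctly identified the central idea, which is visible in the piece of [Z] restated here as Theorem~3.11(III). The map $\mathfrak{G}$ records the Fatou/Ecalle-coordinate address of a marked critical point relative to the model quadratic $\mathfrak{p}$, the construction is done in the marked-critical-point family $g_s$ (not directly in the $a$-plane), and the $a\mapsto -a$ symmetry (equivalently $s\mapsto -s$, the conjugacy $z\mapsto -z$) is indeed the deck transformation, since $\Phi(-s)=\Phi(s)$ once one checks that the conjugacy maps the marked critical point of $g_s$ to that of $g_{-s}$. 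But the sketch has two concrete gaps.

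First, invoking the Lavaurs maps of Theorems~1.1--1.2 to define $\mathfrak{G}$ on $\partial\mathcal{U}$ is a mismatch. Those theorems describe perturbation of the \emph{multiplier} of the parabolic point; in [Z, Thm.\ A] the multiplier is a fixed root of unity and only $a$ varies inside the slice $\mathrm{Per}_1(\lambda)$, so moving $a$ to $\partial\mathcal{U}$ triggers no Lavaurs-type implosion and there is no phase $\tau$ to record. The degenerations one genuinely has to handle are (i) the free critical orbit landing on $\partial B^*$, controlled by the local connectivity of $\partial B^*$ (cf.\ Theorem~3.13), and (ii) the degenerate parabolic at $a=0$ (equivalently $s=\pm i$), for which the relevant machinery is Oudkerk's theory of \emph{degenerate} parabolic germs recalled in Section~2.1--2.3 of the present paper, not Lavaurs' theory. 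This second degeneration is precisely why $\mathcal{I}$ passes through $a=0$, and ruling out pathological behaviour there is a nontrivial part of the argument that a Lavaurs-map invocation does not supply.

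Second, ``local connectivity transfers through a finite-to-one continuous map'' is false as a general principle (a finite-to-one continuous surjection onto a locally connected space can have a non-locally-connected source). What \emph{does} transfer through an honest covering map is local connectivity of $\mathcal{K}_\lambda\setminus\mathcal{I}$; but one must still re-insert $\mathcal{I}$, pass to the closed set $\mathcal{K}_\lambda$, and verify both local connectivity and fullness there, and none of this ``follows formally.'' Likewise, identifying $\mathcal{I}$ with ``the self-symmetric locus'' of $a\mapsto -a$ is only a placeholder: the fixed-point set of $a\mapsto -a$ is the single point $\{0\}$, whereas $\mathcal{I}$ is the $\sigma$-image of an explicit curve joining $s=i$ to $s=-i$ inside $\mathcal{H}$ (Theorem~3.11(III)); showing it is a Jordan arc, and that $\mathcal{K}_\lambda\setminus\mathcal{I}$ consists of exactly two sheets interchanged by $a\mapsto -a$, is itself substantive and requires the degenerate-parabolic analysis you have bypassed.
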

Since we have just seen from the result of Lavaurs that $K_{\lambda_n}$ presents the parabolic implosion phenomenon, the above result together with numerical pictures provides strong evidence that parabolic implosion should occur in a corresponding way for $\mathcal{K}_{\lambda_n}$ as $\lambda_n$ tends to $1$.\\

In this article, we aim to answer partially the following question, which is the parameter analog to Question 1:
$$\text{\textbf{Question 1'.} Describe the difference }\mathcal{K}_1\setminus\limsup_n \mathcal{K}_{\lambda_n}.$$

Just like in the dynamical plane of $\mathfrak{p}_\lambda$ where one expects that $\limsup_n {K}_{\lambda_n} = K(L_{\mathfrak{p}_1,\tau})$, the correct candidate for $\limsup_n\mathcal{K}_{\lambda_n}$ should be the “central part” $\mathcal{K}_L$ for the cubic Lavaurs map family:
$$L_a = \lim\limits_{n\to+\infty}f_{\lambda_n,a}^{N_n}.$$ 

Although it seems coherent to keep in mind the idea that $\mathcal{K}_L$ is the closure of the collection of parameters $a$ such that $L_a$ is {\it not} “quadratic-like” renormalizable, it will be rather technical to define the notion “quadratic-like” for $L_a$ since $L_a$ may have a virtual attracting/parabolic basin or Siegel disk that touches the boundary of its maximal domain of definition. Instead, we consider directly the following {\it escaping locus} for the family $L_a$, which is the analog to the escaping region (\ref{eq.escaping-quadratic}):
$$\boldsymbol{\mathrm{Esc}} := \{a\in \mathrm{int}\,{\mathcal{K}_1};\,\exists N\geq 1  \text{ s.t. }L^N_a(c_1) \text{ or }L^N_a(c_2) \in \mathbb{C}\setminus K(f_{1,a})\},$$
where $c_1,c_2$ are the two critical points of $f_{1,a}$ and $K(f_{1,a})$ is the filled-in Julia set of $f_{1,a}$. Now we are ready to state our main results.

\begin{thmx}\label{thm.implosion-main1}
     Let $\lambda_n = e^{2\pi i\alpha_n}$ with $|\lambda_n|\leq 1$ and $\lim\limits_{n\to\infty}\lambda_n=1$. Suppose there exists $N_n\to+\infty$ such that 
     \begin{equation*}
         1/\alpha_n = N_n -\tau +o(1),\quad n\to+\infty,\quad\text{for some }\tau\in\C
     \end{equation*}
     (In fact, we have automatically $\mathfrak{Im}\,\tau\geq0$, see Lemma \ref{lem.assumtau}). Then $$\boldsymbol{\mathrm{Esc}}\subset \mathcal{K}_1\setminus\limsup_n \mathcal{K}_{\lambda_n}.$$ 
     Let $\mathcal{E}$ be a connected component of $\boldsymbol{\mathrm{Esc}}$ and $\mathbb{H} := \{z;\,\mathfrak{Re}\,z>0\}$ be the right half-plane. There exists a conformal dynamical parametrization $\Phi_\mathcal{E}:\mathcal{E}\longrightarrow \mathbb{H}$ that gives the position of the escaping critical point $c_*=c_1$ or $c_2$ under the Bõttcher coordinate $\phi^\infty_a$ of $f_{1,a}$ in the following sense: 
     $$\exp\,\circ\,\,\Phi_{\mathcal{E}}(a)= \phi^\infty_a\circ L_a(c_*).$$
\end{thmx}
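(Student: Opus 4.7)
The plan is to handle the two claims of Theorem \ref{thm.implosion-main1} separately. The inclusion $\boldsymbol{\mathrm{Esc}}\subset\mathcal{K}_1\setminus\limsup_n\mathcal{K}_{\lambda_n}$ follows from a parametric version of parabolic implosion applied to the critical orbit, while the second claim is a Böttcher-type conformal parametrization of each escape component $\mathcal{E}$ by $\mathbb{H}$, in the spirit of the Douady--Hubbard parametrization of escape loci in polynomial parameter spaces. Throughout I would use the parametric Lavaurs convergence: the functions $(z,a)\mapsto f_{\lambda_n, a}^{N_n}(z)$ converge, uniformly on compact subsets of the attracting petal of $f_{1,a}$ and locally uniformly in $a$, to $(z,a)\mapsto L_a(z)$.

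For the inclusion, pick $a_0\in\boldsymbol{\mathrm{Esc}}$ and let $N\geq 1$ be minimal with $L_{a_0}^N(c_*)\in\mathbb{C}\setminus K(f_{1,a_0})$. By holomorphic dependence of the Böttcher coordinate $\phi^\infty_a$ and continuity of the critical marking, the escape $L_a^N(c_*(a))\in\mathbb{C}\setminus K(f_{1,a})$ persists on a full neighborhood $U\ni a_0$ with Green's value bounded below by some $g_0>0$. Applying the parametric Lavaurs convergence $N$ times yields $f_{\lambda_n, a}^{N\cdot N_n}(\tilde{c}_{*,n}(a))\to L_a^N(c_*(a))$ on $U$, where $\tilde{c}_{*,n}(a)$ is a critical point of $f_{\lambda_n,a}$ marked so as to approach $c_*(a)$. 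Hence for $n$ large and all $a\in U$ some iterate of a critical point of $f_{\lambda_n,a}$ enters the basin of infinity, so $U\cap\mathcal{K}_{\lambda_n}=\emptyset$ for all large $n$, giving $a_0\notin\limsup_n\mathcal{K}_{\lambda_n}$.

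For the parametrization, on a fixed component $\mathcal{E}\subset\boldsymbol{\mathrm{Esc}}$ the first escape time $N_\mathcal{E}$ of $L_a^{\bullet}(c_*)$ is locally, hence globally, constant by openness of the basin of infinity. The function $a\mapsto\phi^\infty_a(L_a^{N_\mathcal{E}}(c_*))$ is holomorphic on $\mathcal{E}$ with image in $\mathbb{C}\setminus\overline{\mathbb{D}}$. Extending $\phi^\infty_a$ backwards along the $L_a$-orbit via the functional equation $\phi^\infty_a\circ f_{1,a}=(\phi^\infty_a)^3$, choosing branches consistently over paths in $\mathcal{E}$, and taking a logarithm, produces a single-valued holomorphic map $\Phi_\mathcal{E}:\mathcal{E}\to\mathbb{H}$ compatible with the asserted identity $\exp\circ\Phi_\mathcal{E}(a)=\phi^\infty_a\circ L_a(c_*)$ in the extended-Böttcher sense. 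To prove $\Phi_\mathcal{E}$ is a biholomorphism, I would proceed by the usual two-step scheme: injectivity via rigidity (two parameters giving the same Böttcher data for the escaping orbit yield a conformal conjugacy between $L_a$ and $L_{a'}$ near this orbit which extends globally and, coupled with the compatibility of the non-escaping critical data, forces $a=a'$); surjectivity and properness by quasiconformal surgery starting from a model in which the Böttcher position of the escaping critical value is prescribed at a chosen $w\in\mathbb{H}$.

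The main obstacle is the surgery step. One must interpolate between a model Böttcher dynamics near infinity and the genuine Lavaurs dynamics of $L_a$ on the non-escaping Fatou components of $f_{1,a}$, while preserving both the parabolic structure of $f_{1,a}$ at $z=0$ and the Ecalle phase $\tau$ built into the definition of $L_a$. This requires a precise description of the Lavaurs dynamics on $K(f_{1,a})$ and of how the non-escaping critical orbit organizes the remainder of $\mathcal{K}_1$; the croissant picture of Theorem \ref{thm.Lavaurs2} provides the dynamical template, with the role of Fatou components in the dynamical plane now played by holomorphic motions of the non-escaping critical orbit in the parameter $a$.
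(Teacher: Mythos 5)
Your proof of the inclusion $\boldsymbol{\mathrm{Esc}}\subset\mathcal{K}_1\setminus\limsup_n\mathcal{K}_{\lambda_n}$ is fine and is the standard consequence of the parametric Lavaurs convergence (Theorem \ref{thm.shishikura-lavaurs}) together with the openness of escape; the paper leaves this elementary step implicit and focuses on the parametrization.

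The parametrization is where the gap lies. You split the construction into injectivity-by-rigidity and surjectivity-by-quasiconformal-surgery, and you yourself flag the surgery step as ``the main obstacle'' without resolving it. The paper (Proposition \ref{Propositionparametristation.escape_locus}) does something more economical: it constructs the \emph{inverse} $\mathscr{A}:\mathbb{H}\to\mathcal{E}$ of $\Phi_\mathcal{E}$ directly via Branner-Hubbard motion, so no separate injectivity or properness argument is needed at all. Concretely, one fixes $s_0\in\mathcal{E}$, pulls back the trivial conformal structure on $\mathbb{C}\setminus\overline{\mathbb{D}}$ by $l_u\circ\phi^\infty_{s_0}$ (where $l_u(z)=z\,|z|^{u-1}$), and spreads the resulting Beltrami form into the parabolic basin by pulling back by \emph{both} $g_{s_0}$ and $L_{s_0}$. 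The key point you are missing — and the reason there is no ``interpolation'' problem — is that $g_{s_0}$ and $L_{s_0}$ commute, so this Beltrami form is well defined and invariant under both maps; the straightening $h_{s_0,u}$ therefore conjugates $g_{s_0}$ to some $g_{s(u)}$ and, automatically, $L_{s_0}$ to $L_{s(u)}$, and the Ecalle phase $\tau$ is preserved by uniqueness of Fatou coordinates (see Diagram (\ref{diag.commu.branner-hubbard})). The induced action on the Böttcher-Lavaurs coordinate is then the explicit linear automorphism $\tilde{l}_u$ of $\mathbb{H}$ with $\tilde{l}_u(1)=u$, which after a base-point change gives $\Phi_{\mathcal{E}}\circ\mathscr{A}=\mathrm{id}_{\mathbb{H}}$ and $\mathscr{A}\circ\Phi_{\mathcal{E}}=\mathrm{id}_{\mathcal{E}}$ at once. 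A smaller omission: obtaining a single-valued holomorphic $\Phi_\mathcal{E}$ requires the Böttcher-Lavaurs coordinate of Definition \ref{def.botthcer-lavaurs}, which rests on the structural Lemmas \ref{lem.classification.C1} and \ref{lem.properness.lavaurs} controlling the escape components in the dynamical plane; your ``choose branches consistently over paths in $\mathcal{E}$'' glosses over this input, and your injectivity-via-rigidity step, had you needed it, would in any case be delicate since a local conjugacy near the escaping orbit does not readily globalize.
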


We say that a parameter $a\in\mathrm{Per}_1(1)$ is {\it Misiurewicz parabolic} if one critical point of $f_{1,a}$ is an iterated pre-image of the parabolic fixed point $z=0$.
Let us set 
$$\boldsymbol{\mathrm{Esc}}^1 := \{a\in\boldsymbol{\mathrm{Esc}};\,L_a(c_1) \text{ or }L_a(c_2) \in \mathbb{C}\setminus K(f_{1,a})\}$$
to be the escaping locus of the first level. As an analog to Theorem \ref{thm.Lavaurs2}, we are able to give a more detailed topological description of $\boldsymbol{\mathrm{Esc}}^1$: 

\begin{thmx}\label{thm.implosion-main2}
      Under the same hypothesis as Theorem \ref{thm.implosion-main1}, let $\mathcal{E}$ be any connected component of $\boldsymbol{\mathrm{Esc}}^1$. Then 
      \begin{enumerate}[label=\upshape(\Roman*)]
          \item\label{thm.implosion-main2.first}${\partial\mathcal{E}}\cap\partial\mathcal{K}_1$ is a singleton and this point is either Misiurewicz parabolic or degenerate parabolic, i.e. $a=0$;
          \item\label{thm.implosion-main2.second} conversely, if $a_0\in\partial\mathcal{K}_1$ is Misiurewicz parabolic, then there is a unique component $\mathcal{E}\subset\mathbf{Esc}^1$ such that ${\partial\mathcal{E}}\cap\partial\mathcal{K}_1 = \{a_0\}$; if $a_0=0$, then there are exactly two components attached at $0$.

          \item\label{thm.implosion-main2.third} if $a_n\in\mathcal{E}$ is a sequence accumulating at $\partial\mathcal{E}$, then $a_n$ accumulates at $\partial\mathcal{K}_1$ if and only if $\Phi_{\mathcal{E}}(a_n)\to 0$ or $\infty$, where $\Phi_{\mathcal{E}}:\mathcal{E}\longrightarrow\mathbb{H}$ is the parametrization given by Theorem \ref{thm.implosion-main1}.
          
\vspace{0.2cm}

           If we assume furthermore that $\mathfrak{Im}\tau>0$ or $\tau\in\mathbb{Q}$, then:

\vspace{0.2cm}
           
           \item\label{thm.implosion-main2.fourth} if $\mathcal{E}'\subset\mathbf{Esc}^1$ is a component different from $\mathcal{E}$, then $\overline{\mathcal{E}}\cap\overline{\mathcal{E}'} = \emptyset$ unless both $\mathcal{E},\mathcal{E}'$ are attached to $a=0$;
          \item\label{thm.implosion-main2.fifth} $\mathcal{E}$ is a croissant (see Definition \ref{defn.croissant}).      
          \end{enumerate}
          
\end{thmx}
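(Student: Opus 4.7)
The plan is to transport Lavaurs' proof of Theorem \ref{thm.Lavaurs2} from the dynamical plane of $\mathfrak{p}_1$ to the parameter space of cubic polynomials, using the dynamical parametrization $\Phi_{\mathcal{E}}$ of Theorem \ref{thm.implosion-main1} as the parameter-space substitute for the Böttcher coordinate. The crucial feature is the identity $\exp\circ\Phi_{\mathcal{E}}(a) = \phi^\infty_a\circ L_a(c_*)$: the behavior of $\Phi_{\mathcal{E}}$ at $\partial\mathcal{E}$ is read off from the behavior of the critical value $L_a(c_*)$ through its Böttcher position in $f_{1,a}$. Since the prime-end boundary $\partial\mathbb{H}\cup\{\infty\}$ partitions into the two ends $\{0\},\{\infty\}$ and the open imaginary axis $i\mathbb{R}^*$, each prime end of $\mathcal{E}$ falls into one of three classes, which I claim correspond respectively to Misiurewicz parabolic parameters, the degenerate parameter $a=0$, and points of $\partial\boldsymbol{\mathrm{Esc}}^1$ lying in $\mathrm{int}\,\mathcal{K}_1$.

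I would first establish \ref{thm.implosion-main2.third}. If $a_n\in\mathcal{E}$ and $\Phi_{\mathcal{E}}(a_n)\to iy_0$ for some $y_0\neq 0$, then $\phi^\infty_{a_n}\circ L_{a_n}(c_*)\to e^{iy_0}\neq 1$; any subsequential limit $a_\infty$ of $a_n$ produces, by holomorphic dependence of $\phi^\infty_a$ on $a$ throughout the interior of $\mathcal{K}_1$, a critical value $L_{a_\infty}(c_*)$ landing on $\partial K(f_{1,a_\infty})$ away from the $0$-ray landing point, keeping $a_\infty\in\mathrm{int}\,\mathcal{K}_1$. Hence accumulation at $\partial\mathcal{K}_1$ forces $\Phi_{\mathcal{E}}(a_n)\to 0$ or $\infty$. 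The converse uses the structure of $\mathcal{K}_1$ as a locally connected continuum with a para-puzzle description from \cite{Z}: if $\Phi_{\mathcal{E}}(a_n)\to 0$ or $\infty$ but $a_n$ stayed away from $\partial\mathcal{K}_1$, compactness together with the holomorphic motion of $a\mapsto L_a(c_*)$ in the interior of $\mathcal{K}_1$ would contradict the divergence of the Böttcher position. Once \ref{thm.implosion-main2.third} is in hand, \ref{thm.implosion-main2.first} follows by case analysis: $\Phi_{\mathcal{E}}(a_n)\to 0$ forces $L_{a_n}(c_*)$ to approach the landing point of the external ray of angle $0$ on $\partial K(f_{1,a_\infty})$, which is an iterated preimage of the parabolic fixed point $0$, hence $a_\infty$ is Misiurewicz parabolic; while $\Phi_{\mathcal{E}}(a_n)\to\infty$ forces $L_{a_n}(c_*)\to\infty$, which via the expression $L_a=\lim f^{N_n}_{\lambda_n,a}$ and a properness argument pinpoints $a_\infty=0$. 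The singleton property in \ref{thm.implosion-main2.first} is then immediate since $0$ and $\infty$ are single prime ends of $\mathbb{H}$.

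For \ref{thm.implosion-main2.second}, a transversality argument via the implicit function theorem applied to the non-constant holomorphic map $a\mapsto L_a(c_*)-\zeta_a$ (where $\zeta_a$ is the Misiurewicz landing point depending holomorphically on $a$) near a Misiurewicz parabolic $a_0\in\partial\mathcal{K}_1$ exhibits a unique local attached component of $\boldsymbol{\mathrm{Esc}}^1$; at $a=0$ the two critical points $c_1,c_2$ play independent roles and yield exactly two attached components by the same local argument. Part \ref{thm.implosion-main2.fourth} uses the extra hypothesis $\mathfrak{Im}\,\tau>0$ or $\tau\in\mathbb{Q}$ to exclude pathological cusp-sharing away from $a=0$: under these conditions the relevant Misiurewicz preimage structure is non-degenerate, so the local uniqueness of the attached component propagates and prevents two components from sharing a tip at a non-zero parameter.

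Finally for \ref{thm.implosion-main2.fifth}, once parts \ref{thm.implosion-main2.first}--\ref{thm.implosion-main2.third} are proven the boundary $\partial\mathcal{E}$ is locally connected, so the Carathéodory extension of the conformal isomorphism $\Phi_{\mathcal{E}}^{-1}:\mathbb{H}\to\mathcal{E}$ produces a continuous surjection $\overline{\mathbb{H}}\cup\{\infty\}\to\overline{\mathcal{E}}$ identifying $\{0,\infty\}$ with the unique attachment point and homeomorphically mapping $i\mathbb{R}^*$ onto the rest of $\partial\mathcal{E}$; the tangential landing of the two boundary arcs at the cusp follows from the specific asymptotics of $\phi^\infty_a$ near the Misiurewicz parabolic point, giving exactly the croissant shape. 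The most delicate step will be the converse direction of \ref{thm.implosion-main2.third}: ruling out that $\Phi_{\mathcal{E}}$ degenerates while $a_n$ remains in $\mathrm{int}\,\mathcal{K}_1$ requires fine control on the para-puzzle structure of $\mathcal{K}_1$ and the precise way in which the Lavaurs map $L_a$ varies with $a$ in a neighbourhood of the accumulation set, and this is really the geometric core of the argument from which the remaining parts flow.
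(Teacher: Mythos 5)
Your high-level strategy — reading $\partial\mathcal{E}$ off the Böttcher position of the escaping critical value via $\Phi_{\mathcal{E}}$ — is the right one and matches the paper's framework. But the proposal has a genuine gap precisely where the paper concentrates its work: the behaviour of $\Phi_{\mathcal{E}}$ as the parameter accumulates on the \emph{degenerate} parabolic parameter $a=0$ (equivalently $s=\pm i$ in the $g_s$ coordinates the paper actually uses). At $a=0$ the parabolic fixed point is doubly degenerate, the gate structure of the Fatou coordinates can change, and continuous dependence of $\phi_{1,\pm,a}$ and $\psi_a$ fails in the classical sense; the paper invokes Oudkerk's theory of degenerate parabolic germs (\S\ref{sec.oudkerk}--\ref{sec.parabolic-degenerate}) and splits $\boldsymbol{\mathrm{Esc}}^1$ into \emph{adjacent} and \emph{disjoint} type components (Definition \ref{def.disjoint-adjacent}) exactly to control this. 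Your argument for \ref{thm.implosion-main2.third} invokes ``holomorphic dependence of $\phi^\infty_a$ on $a$ throughout the interior of $\mathcal{K}_1$'' to conclude $a_\infty\in\mathrm{int}\,\mathcal{K}_1$, which is circular (you need $a_\infty\in\mathrm{int}\,\mathcal{K}_1$ to invoke it), and more to the point does not touch the possibility $a_\infty=0$. The paper's Lemma \ref{lem.accumulation.inside} — the analogue of your step — explicitly assumes the accumulation set avoids $\{i,-i\}$, and the bulk of \S\ref{sec.main2} (Lemmas \ref{lem.i.notonboundary}, \ref{lem.nondouble.parabolic}, \ref{lem.adjacent.accumulation}, relying on Lemmas \ref{lem.perturb.Fatoucoordi}, \ref{lem.orbit.correspond}, Proposition \ref{Propositioncontrol.juliaset}) is devoted to removing that assumption.

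Your case analysis for \ref{thm.implosion-main2.first} is also factually wrong: you claim ``$\Phi_{\mathcal{E}}\to 0$ gives Misiurewicz parabolic, $\Phi_{\mathcal{E}}\to\infty$ gives $a_\infty=0$,'' but this is not the actual dichotomy. In the paper's proof (Theorem \ref{thm.implosion.family-gs} and Lemmas \ref{lem.disjoint.accumulation}, \ref{lem.adjacent.accumulation}) the split is by component type, not by which end of $\mathbb{H}$ is approached: for a \emph{disjoint} component, \emph{both} $\Phi\to 0$ and $\Phi\to\infty$ accumulate on the \emph{same} Misiurewicz parabolic parameter, whereas for an \emph{adjacent} component both accumulate on $a=0$. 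Correspondingly, in \ref{thm.implosion-main2.second} the ``exactly two components attached at $0$'' are two adjacent-type components rather than one disjoint plus one adjacent as your reasoning about ``$c_1,c_2$ playing independent roles'' would suggest. Finally, for \ref{thm.implosion-main2.fourth}--\ref{thm.implosion-main2.fifth} you gesture at transversality, which is fine for the disjoint case (the paper uses Shishikura's transversality there), but the adjacent case needs a separate argument tracking the curves $\mathfrak{L}^u_a,\mathfrak{L}^l_a$, and the exclusion of cusp-sharing in \ref{thm.implosion-main2.fourth} uses the extra hypothesis on $\tau$ together with the combinatorics of Lemma \ref{lem.classification.C1}, not just ``non-degenerate Misiurewicz preimage structure.'' The delicate step you identify at the end (ruling out $\Phi\to 0,\infty$ while $a_n$ stays interior) is actually the easy direction of Lemma \ref{lem.accumulation.inside}; the real crux is the degenerate parabolic limit, which your proposal never confronts.
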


\begin{figure}[ht] 
\centering 
\includegraphics[width=0.7\textwidth]{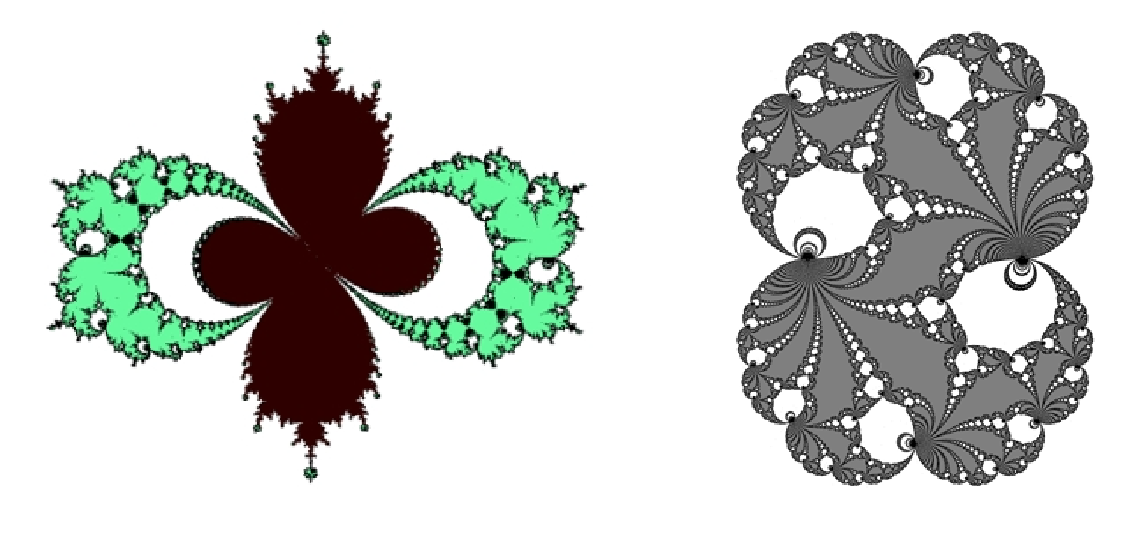}
\caption{Left: $\limsup_n\mathcal{K}_{\lambda_n}$. Right: $\limsup_n{K}_{\lambda_n}$ with $\lambda_n = e^{2\pi i/n}$.} 
\label{fig.implosion-paradym} 
\end{figure}

\paragraph{\textbf{Idea of the proof for Theorem \ref{thm.implosion-main1} and \ref{thm.implosion-main2}}.} We define the parametrization $\Phi_\mathcal{E}$ in Theorem \ref{thm.implosion-main1} by locating the escaping critical value in the Böttcher-Lavaurs coordinate (Definition \ref{def.botthcer-lavaurs}). Using quasi-conformal deformation, we can find a parameter $a_*\in\mathcal{E}$, such that $\Phi_{\mathcal{E}}(a_*) = 1$. Then it turns out that the Branner-Hubbard motion with based point $1$ gives the inverse mapping of $\Phi_{\mathcal{E}}$. This shows that $\Phi_\mathcal{E}$ is a conformal isomorphism. 

The key point of Theorem \ref{thm.implosion-main2} is \ref{thm.implosion-main2.third}. The main subtlety here is that the sequence $a_n$ may converge to the degenerate parabolic parameter $a=0$, where the classical holomorphic dependence on Fatou coordinates fails. In fact, if we assume furthermore that the sequence $a_n$ does not accumulate to $a=0$, then \ref{thm.implosion-main2.third} will be much less harder (see Lemma \ref{lem.accumulation.inside}). In order to overcome this subtlety, we distinguish two types of components in $\boldsymbol{\mathrm{Esc}}^1$: disjoint type and adjacent type (see Definition \ref{def.disjoint-adjacent}). 
\begin{itemize}
    \item For disjoint type component $\mathcal{E}_{\mathrm{dis}}$, we will show that $0\not\in\partial\mathcal{E}_{\mathrm{dis}}$ (Lemma \ref{lem.nondouble.parabolic}). To do this, we need to analyze carefully the dynamics of near degenerate parabolic mappings.
    \item For adjacent type component $\mathcal{E}_{\mathrm{adj}}$, we will show that $0\in\partial \mathcal{E}_{\mathrm{adj}}$ and $\mathcal{E}_{\mathrm{adj}}$ does not intersect any other points on $\partial\mathcal{K}_1$ (Lemma \ref{lem.adjacent.accumulation}).
\end{itemize}

\paragraph{\textbf{Outline of the paper.}} In \S \ref{sec.oudkerk}, we outline the parabolic bifurcation theory for degenerate parabolic germs developed by R. Oudkerk \cite{Ou}. In \S \ref{sec.parabolic-degenerate}, we apply this theory to the cubic polynomial family. In \S \ref{sec.parabolicslice} we recall some results in the parabolic slice $\mathrm{Per}_1(1)$ established in \cite{Z}. \S \ref{sec.dynamical} is devoted to the description of the escaping region in the dynamical plane of cubic Lavaurs map. In \S \ref{sec.main1} and \S \ref{sec.main2}, we prove Theorem \ref{thm.implosion-main1} and \ref{thm.implosion-main2} respectively.\\

\section{Preparation}
\subsection{Fatou coordinates for degenerate parabolic germs}\label{sec.oudkerk}
In this section we recall some important results in \cite{Ou}. Let's consider the parabolic germ $$G_0(z) = z+z^{\nu+1}+\mathcal{O}(z^{\nu+2}).$$
We say that the germ is {\it degenerate parabolic} if $\nu\geq 2$. Take $r_0$ small enough such that $K_0 := \overline{\mathbb{D}_{2r_0}}\subset\mathrm{Dom}(G_0)$ and $0$ is the only fixed point in $K_0$. Take a small neighborhood $\mathcal{N}_0$ of $G_0$ with respect to the compact-open topology such that for all $G\in\mathcal{N}_0$, $K_0\subset\mathrm{Dom}(G)$. Shrink $\mathcal{N}_0$ if necessary, we may assume that every $G\in\mathcal{N}_0$ has the form
$$G(z) = z+(z-\sigma_0)^{m_0}(z-\sigma_1)^{m_1}...(z-\sigma_r)^{m_r}u_G(z),$$
where $\sigma_0,...,\sigma_r\in \mathbb{D}_{r_0}$, $\sum m_i = \nu+1$, $u_G(z)$ is close to $1$ for $z\in K_0$. Denote by $\mathrm{Fix}(G) = \{\sigma_0,...,\sigma_r\}$. For $G\in\mathcal{N}_0$, $\sigma\in\mathrm{Fix}(G)$, the {\it holomorphic index at $\sigma$} is defined by 
$$\iota(G,\sigma) := \frac{1}{2\pi i}\oint_\sigma\frac{dz}{z-f(z)}.$$ 
Notice that if $G'(\sigma)\not= 1$, then $\iota(G,\sigma) = \frac{1}{1-G'(\sigma)}$. The following lemma is elementary.
\begin{lem}\label{lem.iota.attracting}
    If $G'(\sigma)\not=1$, then $\sigma$ is an attracting fixed point if and only if
 $\mathfrak{Re}\,\iota(G,\sigma)>\frac{1}{2}$.
\end{lem}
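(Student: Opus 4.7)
The plan is straightforward: since the hypothesis $G'(\sigma)\neq 1$ lets us invoke the closed form $\iota(G,\sigma) = 1/(1-G'(\sigma))$ already noted in the excerpt, and since $\sigma$ being attracting means exactly $|G'(\sigma)|<1$, setting $\mu := G'(\sigma)$ reduces the lemma to the algebraic equivalence
\[
|\mu|<1 \iff \mathfrak{Re}\,\frac{1}{1-\mu}>\frac{1}{2}.
\]
I would prove this by a one-line manipulation. Applying $\mathfrak{Re}(1/w)=\mathfrak{Re}(w)/|w|^2$ with $w=1-\mu$, the right-hand inequality is equivalent to $2\,\mathfrak{Re}(1-\mu) > |1-\mu|^2$. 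Expanding $|1-\mu|^2 = 1 - 2\,\mathfrak{Re}(\mu) + |\mu|^2$ and $2\,\mathfrak{Re}(1-\mu) = 2 - 2\,\mathfrak{Re}(\mu)$, the inequality collapses to $|\mu|^2 < 1$, as desired. Both implications follow from the same chain of equivalences, so nothing further is required.

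There is no real obstacle here: the statement is essentially the well-known fact that the Möbius transformation $\mu \mapsto 1/(1-\mu)$ carries the open unit disk $\{|\mu|<1\}$ conformally onto the half-plane $\{\mathfrak{Re}(w)>1/2\}$, because it sends the boundary points $\mu=1,-1$ to $\infty, 1/2$ and the interior point $\mu=0$ to $1$. I would include the short algebraic check above rather than merely quote this geometric picture, so that the lemma is self-contained and the constant $1/2$ appears transparently.
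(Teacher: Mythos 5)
Your proof is correct and is exactly the elementary computation the paper has in mind (the paper states the lemma without proof, remarking only that it is elementary, after noting the identity $\iota(G,\sigma)=1/(1-G'(\sigma))$). The algebraic reduction $|\mu|<1 \iff 2\,\mathfrak{Re}(1-\mu)>|1-\mu|^2 \iff |\mu|^2<1$ is clean and complete.
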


Let $m(\sigma)$ be the multiplicity of $\sigma$. The {\it $\jmath$-index} is defined by  
\begin{equation}\label{eq.j-index}
    \jmath(G,\sigma):=\left\{
\begin{aligned}
&\frac{-2\pi i}{\log(1-\iota^{-1}(G,\sigma))}, \text{ if } m(\sigma) = 1  \\
&2\pi i(\iota(G,\sigma)-\frac{m}{2}), \text{ if } m(\sigma) > 1 
\end{aligned}\right.
\end{equation}

For $k=1,...,\nu$, set $z_{k,-} := r_0e^{2\pi i(k-1)/\nu}$, $z_{k,+} := r_0e^{\pi i/\nu}z_{k,-}$. Let 
$\epsilon\in\{+,-\}$. Consider the differential equation in real time 
$$\dot{z} = i(G(z)-z),\,\,z(0) = z_{k,\epsilon}.$$
By continuous dependence of solutions, shrinking $\mathcal{N}_0$ if necessary, there exists the first backward/forward moment $t_-<0<t_+$ such that the maximal solution $\gamma_{k,\epsilon,G}:I_{k,\epsilon,G}\longrightarrow\mathbb{C}$ satisfies $\gamma_{k,\epsilon,G}(t_+),\gamma_{k,\epsilon,G}(t_-)\in\partial\mathbb{D}_{r_0/2}$ and $\gamma_{k,\epsilon,G}$ intersects transversally $\partial\mathbb{D}_{r_0/2}$ at $t_+,t_-$, since $\gamma_{k,\epsilon,G_0}(t)$ converges to $0$ as $t\to+\infty$ and $-\infty$. Denote $l_{k,\epsilon,G} := \gamma_{k,\epsilon,G}(I_{k,\epsilon,G})$. We say that $G\in\mathcal{N}_0$ is {\it well-behaved}, denoted by $G\in\mathcal{WB}$, if $I_{k,\epsilon,G} = \mathbb{R}$ and $\gamma_{k,\epsilon,G_0}((-\infty,t_-))\subset\mathbb{D}_{r_0/2}$, $\gamma_{k,\epsilon,G_0}((t_+,+\infty))\subset\mathbb{D}_{r_0/2}$. From now on, the $\mathcal{N}_0$ and $r_0$ are always fixed.

\begin{lem}[{\cite[Corollary 3.7.5]{Ou}}]
      There exists $M>0$, such that $G\in\mathcal{WB}$ if $G\in\mathcal{N}_0$ satisfies the following condition:
    \[|\sum_{\sigma\in X}\mathfrak{Im}\,\iota(G,\sigma)|>M,\,\,\forall\,\, \emptyset\subsetneq X\subsetneq \mathrm{Fix}(G).\]
\end{lem}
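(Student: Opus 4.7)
The plan is to argue by contraposition: assume $G\in\mathcal{N}_0\setminus\mathcal{WB}$ and produce a proper subset $X\subsetneq\mathrm{Fix}(G)$ for which $|\sum_{\sigma\in X}\mathfrak{Im}\,\iota(G,\sigma)|$ is bounded by some universal constant $M$ depending only on $G_0$, $r_0$, and $\mathcal{N}_0$. The central tool is the multi-valued ``time coordinate''
$$\Phi_G(z)=\int\frac{dz}{G(z)-z}$$
on $K_0\setminus\mathrm{Fix}(G)$. Along any integral curve of the vector field $V(z)=i(G(z)-z)$ one has $\Phi_G(z(t))=\Phi_G(z(0))+it$, so the orbits coincide with the level curves of $\mathfrak{Re}\,\Phi_G$. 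By the residue theorem, the monodromy of $\Phi_G$ around any subset $X\subset\mathrm{Fix}(G)$ is $-2\pi i\sum_{\sigma\in X}\iota(G,\sigma)$, whose real part equals $2\pi\sum_{\sigma\in X}\mathfrak{Im}\,\iota(G,\sigma)$. Thus the obstruction to $\mathfrak{Re}\,\Phi_G$ being single-valued on a region enclosing $X$ is exactly the quantity we want to bound.

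Next I would translate the failure of well-behavedness into a trapping statement. Because $V$ is holomorphic and uniformly bounded on $K_0$ with zeros only at $\mathrm{Fix}(G)\subset\mathbb{D}_{r_0}$, any orbit contained in $K_0$ is defined for all $t\in\mathbb{R}$; so $G\notin\mathcal{WB}$ can only occur because one of the orbits $\gamma_{k,\epsilon,G}$, after entering the inner disk $\mathbb{D}_{r_0/2}$, fails to escape through $\partial\mathbb{D}_{r_0/2}$ in the transverse manner required. By continuous dependence on $V$ and on $\mathcal{N}_0$, the short-time behaviour up to the first transverse crossing is controlled; the only new phenomenon is a subsequent re-crossing or an orbit trapped inside $\mathbb{D}_{r_0/2}$. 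Since the vector field is holomorphic, a Poincaré--Bendixson-type argument (no limit cycles for holomorphic fields, the local picture at each simple fixed point being a source or a sink according to the sign of $\mathfrak{Im}\,\iota(G,\sigma)$) forces the $\omega$- and $\alpha$-limit sets of the trapped orbit to lie inside $\mathrm{Fix}(G)$, and to correspond to a nontrivial proper subcluster $X\subsetneq\mathrm{Fix}(G)$ (proper because the outer end $z_{k,\epsilon}\in\partial\mathbb{D}_{r_0}$ excludes at least one fixed point from the trapping region).

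With the trapped orbit in hand, I would construct an explicit closed loop $\Gamma$ enclosing $X$: take a finite arc of the orbit between two transverse intersections with $\partial\mathbb{D}_{r_0/2}$ (or with $\partial K_0$) witnessing the failure of well-behavedness, and close it with a short complementary arc $\beta$ lying on $\partial\mathbb{D}_{r_0/2}$ or on a connecting arc inside the annulus $K_0\setminus\mathbb{D}_{r_0/2}$. On the orbit piece of $\Gamma$, $d\Phi_G=i\,dt$ is purely imaginary, so it contributes nothing to $\mathfrak{Re}\oint_\Gamma d\Phi_G$. Therefore
$$\Bigl|2\pi\sum_{\sigma\in X}\mathfrak{Im}\,\iota(G,\sigma)\Bigr|=\Bigl|\mathfrak{Re}\oint_\Gamma d\Phi_G\Bigr|=\Bigl|\mathfrak{Re}\int_\beta\frac{dz}{G(z)-z}\Bigr|\leq \ell(\beta)\cdot\sup_{\beta}\frac{1}{|G(z)-z|}.$$
The right-hand side is bounded uniformly over $\mathcal{N}_0$, because on any connecting arc $\beta$ staying away from $\mathrm{Fix}(G)$ by a definite amount (which can be arranged since $\mathrm{Fix}(G)\subset\mathbb{D}_{r_0}$ while $\beta$ may be chosen in the annulus $\overline{\mathbb{D}_{3r_0/2}}\setminus\mathbb{D}_{r_0/2}$), the denominator $|G(z)-z|$ is bounded below by a constant depending only on $G_0$ and $\mathcal{N}_0$. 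Taking $M$ to be this universal bound yields the desired contradiction.

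The main obstacle I anticipate is the bookkeeping in the middle step: rigorously identifying a \emph{proper} enclosed subset $X$ and choosing the complementary arc $\beta$ so that it stays a definite distance from all fixed points, regardless of how the fixed points of $G$ may collide or split as $G$ varies in $\mathcal{N}_0$. This requires a careful case analysis according to how the trapped orbit can fail (re-entering $\mathbb{D}_{r_0/2}$ versus accumulating on an internal subcluster), combined with the local spiral picture near each simple fixed point to ensure that a clean closing arc $\beta$ exists. Once this is handled, the residue computation above closes the argument cleanly.
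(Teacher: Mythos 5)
The paper cites this lemma from Oudkerk's thesis \cite{Ou} without reproducing a proof, so there is no in-paper argument to compare against. Evaluated on its own terms, your sketch captures the right mechanism: the real part of the monodromy of $\Phi_G=\int\frac{dz}{G(z)-z}$ around a subcluster $X$ is exactly $2\pi\sum_{\sigma\in X}\mathfrak{Im}\,\iota(G,\sigma)$, a trajectory arc contributes something purely imaginary, a closing arc $\beta$ away from $\mathrm{Fix}(G)$ contributes something uniformly bounded, and this produces the desired bound on $\sum_X\mathfrak{Im}\,\iota$. That residue computation is sound.

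However, there are two genuine gaps. First, you have inverted the meaning of well-behavedness in the middle of the argument: $\gamma_{k,\epsilon,G}((t_+,+\infty))\subset\mathbb{D}_{r_0/2}$ is precisely what $\mathcal{WB}$ \emph{requires}, so ``trapped inside $\mathbb{D}_{r_0/2}$'' is the good case, and the Poincar\'e--Bendixson discussion of a trapped orbit's $\omega$-limit applies to a scenario where no contradiction is needed. The failure of $\mathcal{WB}$ is the re-crossing, where the orbit exits $\mathbb{D}_{r_0/2}$ again at some $t'>t_+$ (or $t<t_-$). Second, the step you yourself flag --- ``rigorously identifying a \emph{proper} enclosed subset $X$'' --- is not bookkeeping but the crux. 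If the re-crossing arc from $\gamma(t_+)$ to $\gamma(t')$, closed by $\beta\subset\partial\mathbb{D}_{r_0/2}$, encloses $X=\emptyset$ or $X=\mathrm{Fix}(G)$, the residue identity reads $i(t'-t_+)+\int_\beta d\Phi_G=0$ (or its complement), giving only $t'-t_+\leq\ell(\beta)\sup_\beta|G-z|^{-1}$, which bounds the re-exit time and says nothing about $\mathfrak{Im}\,\iota$. To rule this out you must show that for $G\in\mathcal{N}_0$ re-exit cannot happen that quickly --- e.g.\ via continuous dependence, $\gamma_G$ tracks $\gamma_{G_0}$ (which never re-exits) on $[t_+,t_++T]$ for some definite $T$ exceeding that bound. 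This comparison argument is entirely absent, and it is also where one would need to be quantitative about how $\mathcal{N}_0$ and $r_0$ were fixed, and to handle the case $I_{k,\epsilon,G}\neq\mathbb{R}$ (orbit leaving $K_0$) explicitly rather than folding it silently into ``re-crossing''.
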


\begin{thm}[{\cite[Proposition 2.3.2]{Ou}}]\label{thm.combinatorics.traject}
   Suppose $G\in\mathcal{WB}$, then 
\begin{enumerate}[label=\upshape(\Roman*)]
    \item\label{thm.combinatorics.traject:first} For each $k$ and $\epsilon$, $I_{k,\epsilon,G} = \mathbb{R}$, $\gamma_{k,\epsilon,G}(t)$ converges to $\sigma_{\pm}\in\mathrm{Fix}(G)$ as $t\to\pm\infty$. 
    \item\label{thm.combinatorics.traject:second} For any $\sigma\in\mathrm{Fix}(G)$, there exists $\gamma_{k,\epsilon,G}(t)$ such that $\gamma_{k,\epsilon,G}(-\infty) = \sigma$ or $\gamma_{k,\epsilon,G}(+\infty) = \sigma$. 
    \item\label{thm.combinatorics.traject:third} For all $k$, $\gamma_{k,-,G}(+\infty) = \gamma_{k,+,G}(+\infty)$, $\gamma_{k,-,G}(-\infty) = \gamma_{k-1,+,G}(-\infty)$ ($\gamma_{0,+,G} := \gamma_{\nu,+,G}$).
    \item\label{thm.combinatorics.traject:fourth} For each $k$ and $\epsilon$, either $\gamma_{k,\epsilon,G}(+\infty) = \gamma_{k,\epsilon,G}(-\infty) = \sigma$ (in which case $\sigma$ is a parabolic fixed point), or there exists a unique $j\not=k$, $\overline{\epsilon}\not=\epsilon$, such that $$\gamma_{k,\epsilon,G}(-\infty) = \gamma_{j,\overline{\epsilon},G}(-\infty)\not=\gamma_{k,\epsilon,G}(+\infty)=\gamma_{j,\overline{\epsilon},G}(+\infty).$$
    \item\label{thm.combinatorics.traject:fifth} For each $k$ and $\epsilon$, $l_{k,\epsilon,G}\cap G(l_{k,\epsilon,G}) = \emptyset$. Denote by $S_{k,\epsilon,G}$ the closed bounded Jordan domain surrounded by $\overline{l_{k,\epsilon,G}\cup G(l_{k,\epsilon,G})}$ not containing $\infty$. These different $S_{k,\epsilon,G}$ can only intersect one another at $\mathrm{Fix}(G)$. The set $S'_{k,\epsilon,G} := S_{k,\epsilon,G}\setminus(\gamma_{k,\epsilon,G}(+\infty)\cup\gamma_{k,\epsilon,G}(-\infty))$ is called a fundamental region.
\end{enumerate}
\end{thm}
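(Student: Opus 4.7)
The proof naturally splits into an analysis of the unperturbed germ $G_0$ followed by a perturbative topological argument for general $G\in\mathcal{WB}$. The guiding principle is that the flow $\dot z = i(G(z)-z)$ is, up to lower-order corrections, the flow along vertical geodesics of any local Fatou coordinate of $G$: if $\Phi$ satisfies $\Phi\circ G = \Phi + 1$, then expanding $\Phi(z+F(z)) - \Phi(z) = 1$ with $F := G - \mathrm{id}$ forces $\Phi'(z) = 1/F(z) + \mathcal{O}(1)$ near $\mathrm{Fix}(G)$, and the pushforward of $V := iF$ becomes $\dot w = i + o(1)$ in Fatou coordinate. In particular, the time-$1$ map $G$ acts as horizontal translation by $1$ while the flow of $V$ is approximately vertical; this immediately yields the fundamental region picture in \ref{thm.combinatorics.traject:fifth}.

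I would first verify \ref{thm.combinatorics.traject:first}--\ref{thm.combinatorics.traject:fifth} for $G=G_0$ directly. Using the classical normal form $w = -1/(\nu z^\nu)$ near $0$, $G_0$ becomes a near-translation $w\mapsto w+1+o(1)$, and $iz^{\nu+1}(1+\mathcal{O}(z))$ becomes $\dot w = i + o(1)$. In these coordinates the $2\nu$ initial points $z_{k,\pm}$ correspond to $2\nu$ tails escaping to $\infty$ in alternate attracting/repelling petals of $G_0$, and each trajectory $\gamma_{k,\epsilon,G_0}$ traces an approximate vertical line, returning to $0$ at both ends of time. All five claims are visible from this explicit picture.

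For general $G\in\mathcal{WB}$, \ref{thm.combinatorics.traject:first} would follow from Poincar\'e--Bendixson once periodic orbits are ruled out, which is immediate from the local first-integral property of $\mathrm{Re}\,\Phi$; combined with the well-behavedness hypothesis that forces every trajectory, once re-entering $\mathbb{D}_{r_0/2}$, to remain there, the $\omega$- and $\alpha$-limit sets are confined to $\mathrm{Fix}(G)$ and must reduce to single fixed points. For \ref{thm.combinatorics.traject:second}--\ref{thm.combinatorics.traject:fourth} I would argue by continuous deformation within a connected component of $\mathcal{WB}$ containing $G_0$: the endpoint maps $G\mapsto\gamma_{k,\epsilon,G}(\pm\infty)$ are locally constant (under the natural continuation of $\mathrm{Fix}(G)$), since any jump would require a trajectory to cross a fixed point, impossible for a holomorphic vector field with isolated zeros. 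Hence the combinatorics established for $G_0$ propagates to all of $\mathcal{WB}$; the matchings in \ref{thm.combinatorics.traject:third} reflect the cyclic order of the $z_{k,\pm}$ on $\partial\mathbb{D}_{r_0}$, while \ref{thm.combinatorics.traject:fourth} separates the parabolic-loop case from a proper heteroclinic connection between two distinct fixed points.

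The main obstacle is the continuity argument in \ref{thm.combinatorics.traject:first}--\ref{thm.combinatorics.traject:fourth} when several fixed points of $G$ are close to coalescing at $G_0$: a trajectory could in principle linger inside a cluster of nearby fixed points and fail to settle on a single limit. This is precisely where the quantitative well-behavedness condition $|\sum_{\sigma\in X}\mathfrak{Im}\,\iota(G,\sigma)|>M$ on every proper non-empty $X\subsetneq\mathrm{Fix}(G)$ must be invoked: it provides a strictly positive lower bound for the imaginary drift of the flow across the boundary of any such cluster, forcing each trajectory to commit to one of the two sides. Finally \ref{thm.combinatorics.traject:fifth} follows directly from the Fatou-coordinate picture: $l_{k,\epsilon,G}$ is an approximate vertical arc, $G(l_{k,\epsilon,G})$ is its horizontal translate by $1$, disjoint from it, and together with the common endpoints in $\mathrm{Fix}(G)$ they bound the Jordan fundamental region $S_{k,\epsilon,G}$.
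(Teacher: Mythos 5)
The paper does not prove this statement---it is Oudkerk's \cite[Proposition 2.3.2]{Ou} and is quoted without proof---so there is no in-paper argument to compare against. Judged on its own merits, your outline has a reasonable skeleton (normal form for $G_0$, Poincar\'e--Bendixson, Fatou-coordinate picture for \ref{thm.combinatorics.traject:fifth}), but the central deformation step is wrong. You assert that the endpoint maps $G\mapsto\gamma_{k,\epsilon,G}(\pm\infty)$ are locally constant because ``any jump would require a trajectory to cross a fixed point,'' and you conclude that ``the combinatorics established for $G_0$ propagates to all of $\mathcal{WB}$.'' Neither holds. An $\omega$-limit can change without the trajectory ever touching a zero of the vector field: this is the saddle-connection bifurcation, and it is exactly the degeneracy that the separatrices $\gamma_{k,\epsilon,G}$ are designed to detect. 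Moreover, the conclusion contradicts the content of the theory itself: the gate structure $\mathbf{gate}(G)$ is introduced precisely because it is \emph{not} constant over $\mathcal{WB}$, hence the stratification into the pieces $\mathcal{WB}(\mathscr{G})$. Indeed $G_0$ has gate structure $(*,\dots,*)$, while Lemma \ref{lem.perturb.Fatoucoordi} produces perturbations in $\mathcal{WB}$ with gate structure $(*,2)$. So the dichotomy in \ref{thm.combinatorics.traject:fourth} must be established directly for each $G$, not transported by continuity from $G_0$.

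A second issue concerns the hypothesis. You invoke the index inequality $|\sum_{\sigma\in X}\mathfrak{Im}\,\iota(G,\sigma)|>M$ as if it were what you are given, but the theorem assumes only $G\in\mathcal{WB}$, i.e.\ topological confinement of the trajectories in $\mathbb{D}_{r_0/2}$. The preceding Lemma (Oudkerk's Cor.~3.7.5) states the index inequality as a \emph{sufficient} condition for well-behavedness, not a consequence, so it cannot be assumed here. To rule out periodic orbits and lingering from $G\in\mathcal{WB}$ alone you can still use the local first integral $\mathfrak{Re}\,\Phi$, but the argument must go through the monodromy of $\Phi$ around subsets $X\subsetneq\mathrm{Fix}(G)$ (the period being $\pm 2\pi i\sum_{\sigma\in X}\iota(G,\sigma)$), rather than by citing the index bound. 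Finally, the disjointness $l_{k,\epsilon,G}\cap G(l_{k,\epsilon,G})=\emptyset$ in \ref{thm.combinatorics.traject:fifth} is genuinely delicate near the endpoints in $\mathrm{Fix}(G)$ where the Fatou-coordinate picture degenerates; it requires a quantitative estimate of the kind the $\mathcal{O}(w^{-\beta})$ error in the next theorem provides, not just the heuristic vertical-line picture.
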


For $G\in\mathcal{WB}$, we associate it with the {\it gate structure} $\mathbf{gate}(G) := (\mathrm{gate}_1(G),...,\mathrm{gate}_\nu(G))$, where
\[\mathrm{gate}_k(G):=\left\{
\begin{aligned}
&*, \text{ if } \overline{l_{k,+,G}} \text{ is a Jordan curve} \\
&j, \text{ if } \overline{l_{k,+,G}}\cup\overline{l_{j,-,G}} \text{ is a Jordan curve} 
\end{aligned}\right.
\]
A vector $\mathscr{G} = (\mathscr{G}_1,...,\mathscr{G}_\nu)$ is called {\it admissible} if it is realised by the gate structure of some $G\in \mathcal{WB}$. The collection of $G\in\mathcal{WB}$ having the gate structure $\mathscr{G}$ is denoted by $\mathcal{WB}(\mathscr{G})$.

\begin{defn}[The set $U_{k,\epsilon,G}$]
    For each $k$, if $\mathrm{gate}_k(G) = *$, define the set $U_{k,+,G}$ to be the Jordan domain bounded by $G^{-2}(\overline{l_{k,+,G}})$; if $\mathrm{gate}_k(G) = j$, define the $U_{k,+,G} = U_{j,-,G}$ to be the Jordan domain bounded by $G^{-2}(\overline{l_{k,+,G}})\cup G^{2}(\overline{l_{j,-,G}})$; finally if $j$ does not appear in $\mathbf{gate}(G)$, then define $U_{j,-,G}$ to be the Jordan domain bounded by $G^{2}(\overline{l_{j,-,G}})$.
\end{defn}

\begin{defn}[$\mathrm{Fix}^u(k,G)\text{ and }\mathrm{Fix}^l(k,G)$]
Let $G\in\mathcal{WB}$, $k$ be such that $\mathrm{gate}_k(G)\not = *$. Notice that $\mathbb{D}_{r_0/2}\setminus{U_{k,+,G}}$ has two connected components. Denote by $\mathrm{Upp}(k,G)$ the one containing $\gamma_{k,+,G}(+\infty)$ and $\mathrm{Low}(k,G)$ the other. Define 
$$\mathrm{Fix}^u(k,G) :=\mathrm{Fix}(G)\cap\mathrm{Upp}(k,G),\,\,\mathrm{Fix}^l(k,G) :=\mathrm{Fix}(G)\cap\mathrm{Low}(k,G).$$
\end{defn}

\begin{thm}[{\cite[Proposition 2.3.14, Lem. 3.4.1]{Ou}}]
    Let $G\in\mathcal{WB}$. For each $k,\epsilon$, there exists an analytic univalent map (called the Fatou coordinate) $\phi_{k,\epsilon,G}:U_{k,\epsilon,G}\longrightarrow\mathbb{C}$ such that \[\phi_{k,\epsilon,G}(G(z)) = \phi_{k,\epsilon,G}(z)+1,\,\,z,G(z)\in U_{k,\epsilon,G}.\]
    Moreover $\phi_{k,\epsilon,G}$ is unique up to adding a constant (in particular $\phi_{k,+,G}$ and $\phi_{j,-,G}$ differ by a constant if $\mathrm{gate}_k(G) = j$), $\phi_{k,\epsilon,G}(l_{k,\epsilon,G})$ is almost a vertical line: the error is controlled by $\mathcal{O}(w^{-\beta})$ for some $0<\beta<1$ (not depending on $G$) as $\mathfrak{Im}\,w\to\infty$.
\end{thm}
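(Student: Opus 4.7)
The plan is to construct $\phi_{k,\epsilon,G}$ by rectifying $G$ to a near-translation in a coordinate near infinity, building the Fatou coordinate on a fundamental region via a telescoping limit, and then extending by the functional equation. I would introduce the coordinate $w = -\tfrac{1}{\nu z^\nu}$, which is univalent on each of the small sectors cut out by the trajectories $l_{k,\epsilon,G}$. For the unperturbed germ $G_0$ this conjugates it to $F_0(w) = w + 1 + \mathcal{O}(|w|^{-1/\nu})$ near $w = \infty$, and the same expansion persists for all $G \in \mathcal{N}_0$ on the inverse image of $\mathbb{D}_{r_0}\setminus\mathrm{Fix}(G)$, with implicit constant uniform in $G$. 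In $w$-coordinates the trajectories satisfy $\dot w = i(F(w) - w) = i + \mathcal{O}(|w|^{-1/\nu})$, so they are almost vertical with deviation $\mathcal{O}(|w|^{1-1/\nu})$; consequently $V_{k,\epsilon,G} := w(U_{k,\epsilon,G})$ is a strip-like region whose width grows to infinity.

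On $V_{k,\epsilon,G}$, fix a base point $w_0$ inside a fundamental region and define
\[
\Phi_{k,\epsilon,G}(w) := (w - w_0) + \lim_{n\to\infty}\bigl(F^n(w) - F^n(w_0) - (w - w_0)\bigr).
\]
The expression inside the limit telescopes as $\sum_{j=0}^{n-1}\bigl[g(F^j(w)) - g(F^j(w_0))\bigr]$ with $g(\zeta) := F(\zeta) - \zeta - 1 = \mathcal{O}(|\zeta|^{-1/\nu})$, and combined with the linear growth $\mathrm{Re}\,F^j(w) \asymp j$ of forward orbits inside $V_{k,\epsilon,G}$, this series converges locally uniformly. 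The limit satisfies $\Phi \circ F = \Phi + 1$ and $\Phi(w) = w + \mathcal{O}(|w|^{1-1/\nu})$; pulling back by $w$ defines $\phi_{k,\epsilon,G}$ on a fundamental region, and the functional equation extends it univalently to $U_{k,\epsilon,G}$. The asymptotic claim $\phi_{k,\epsilon,G}(l_{k,\epsilon,G}) = \text{vertical} + \mathcal{O}(w^{-\beta})$ with $\beta = 1/\nu$ then follows from the near-verticality of $l_{k,\epsilon,G}$ in $w$-coordinates together with the error term for $\Phi$.

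For uniqueness, any other Fatou coordinate $\widetilde\phi$ on $U_{k,\epsilon,G}$ yields a $G$-invariant holomorphic function $h := \widetilde\phi - \phi_{k,\epsilon,G}$; it descends to the quotient Riemann surface $U_{k,\epsilon,G}/\langle G\rangle$, which via $\phi_{k,\epsilon,G}$ is biholomorphic to an infinite cylinder. The asymptotic normalization of both coordinates at the two ends forces $h$ to be bounded, hence constant on the cylinder. In the case $\mathrm{gate}_k(G) = j$, one has $U_{k,+,G} = U_{j,-,G}$ by definition, so both $\phi_{k,+,G}$ and $\phi_{j,-,G}$ descend to the same cylinder and differ by a constant.

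The main obstacle is the uniformity of all estimates in $G \in \mathcal{WB}$: as the fixed points $\mathrm{Fix}(G)$ coalesce when $G \to G_0$, the $w$-coordinate develops nearly-coincident singularities at finite points, and the working strip $V_{k,\epsilon,G}$ must be chosen uniformly far from all of them to preserve $F(w) - w - 1 = \mathcal{O}(|w|^{-1/\nu})$ with a constant independent of $G$. This is the heart of Oudkerk's perturbative analysis, and it is also what ensures that the exponent $\beta = 1/\nu$ depends only on $\nu$ and not on $G$.
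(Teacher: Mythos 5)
This statement is quoted from Oudkerk's thesis \cite{Ou}; the paper supplies no proof of its own, so there is nothing internal to compare against. On its own merits, your proposal contains a genuine gap at the very first step. The coordinate $w = -\tfrac{1}{\nu z^\nu}$ does conjugate the \emph{unperturbed} germ $G_0$ to $w \mapsto w + 1 + \mathcal{O}(|w|^{-1/\nu})$ near $\infty$, but the claim that ``the same expansion persists for all $G\in\mathcal{N}_0$\dots with implicit constant uniform in $G$'' is false, and not merely for reasons of uniformity. A perturbed $G$ has fixed points $\sigma_0,\dots,\sigma_r$ splitting off from $0$, and near any $\sigma_i$ one has $G(z)-z\to 0$, hence $F(w)-w\to 0$ rather than $1$; in $w$-coordinates the points $w(\sigma_i)=-\tfrac{1}{\nu\sigma_i^\nu}$ are finite. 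Crucially, the domains $U_{k,\epsilon,G}$ on which the Fatou coordinates must be defined have these very fixed points on their boundary (they are the limits $\gamma_{k,\epsilon,G}(\pm\infty)$ of the trajectories), so the region $V_{k,\epsilon,G}=w(U_{k,\epsilon,G})$ necessarily accumulates at the $w(\sigma_i)$. Your telescoping series $\sum_j\bigl(g(F^j(w))-g(F^j(w_0))\bigr)$ therefore has no reason to converge there, and the linear-growth assertion $\mathrm{Re}\,F^j(w)\asymp j$ also fails near those points.

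Your closing paragraph correctly identifies that coalescing fixed points create the difficulty, but the proposed remedy --- choosing $V_{k,\epsilon,G}$ ``uniformly far'' from the singularities --- is unavailable: one cannot stay away from $w(\sigma_i)$ while also covering a full fundamental region up to the boundary fixed points, and as $G\to G_0$ those singular images run off to infinity, which is exactly where a fundamental region must live. The standard fix, and the one underlying Oudkerk's treatment, is to replace $-\tfrac{1}{\nu z^\nu}$ by the rectifying time of the vector field already in play in this section, namely a branch of $w(z)=\int \frac{dz}{G(z)-z}$ (equivalently, the flow-time coordinate for $\dot z = i(G(z)-z)$, which is what defines the trajectories $l_{k,\epsilon,G}$). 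This coordinate carries the correct algebraic/logarithmic singularity at \emph{each} $\sigma_i$, sends the trajectories to genuine vertical lines, and satisfies $F(w)=w+1+$ (small) with an error controlled by the $\jmath$-indices, uniformly over $\mathcal{WB}$. The telescoping construction and the quotient-cylinder uniqueness argument that you outline then go through as written; the point is that they must be run in this adapted coordinate, not in $-\tfrac{1}{\nu z^\nu}$. Also note that the exponent you propose, $\beta=1/\nu$, is a guess not supported by your computation once the coordinate is corrected; the theorem only asserts the existence of \emph{some} $\beta\in(0,1)$ independent of $G$.
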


Let $G\in\mathcal{WB}(\mathscr{G})$. Define the $\mathscr{G}_k$-{\it lifted phase} by  
\[\tilde{\tau}_k(G):=\left\{
\begin{aligned}
&\phi_{j,-,G}-\phi_{k,+,G}, \text{ if } \mathscr{G}_k = j\\
&\infty, \text{ if } \mathscr{G}_k = *
\end{aligned}\right.
\]

Fix a large $\eta>0$ (not depending on $G$). Define the {\it upper/lower horn} by 
$$S^u_{k,\epsilon,G} := \{z\in S'_{k,\epsilon,G};\,\mathfrak{Im}\,\phi_{k,\epsilon,G}>\eta\}$$
$$S^l_{k,\epsilon,G} := \{z\in S'_{k,\epsilon,G};\,\mathfrak{Im}\,\phi_{k,\epsilon,G}<-\eta\}.$$ 
\begin{thm}[{\cite[Lem. 3.4.5]{Ou}}]\label{Thm.hornmap}
      If $\eta>0$ is large enough, then for any $z\in S^u_{k,-,G}$, there exists a smallest integer $p$ such that $G^p(z)\in S^u_{k,+,G}$. For any given Fatou coordinates $\phi_{k,\epsilon,G}$, define the {\it lifted horn map} by
      $$\tilde{E}^{k,u}_G: \phi_{k,-,G}(S^u_{k,-,G})\longrightarrow\mathbb{C},\,\,w\mapsto \phi_{k,+,G}\circ G^p\circ\phi^{-1}_{k,-,G}(w)-p.$$
      The lifted horn map can be extended to $\{\mathfrak{Im}\,w>\eta\}$ and satisfies $\tilde{E}^{k,u}_G(w+1) = \tilde{E}^{k,u}_G(w)+1$.

      Similarly one can define $\tilde{E}^{k,l}_G$. There exists constants $L^{k,u}_G,L^{k,l}_G$ such that $$\lim_{\mathfrak{Im}\,w\to+\infty}\tilde{E}^{k,u}_G(w)=L^{k,u}_G,\,\,\lim_{\mathfrak{Im}\,w\to-\infty}\tilde{E}^{k,l}_G(w)= L^{k,l}_G.$$
Moreover $$\sum_k (L^{k,l}_G - L^{k,u}_G) = -\sum_{\sigma\in\mathrm{Fix}(G)}\jmath(G,\sigma).$$
\end{thm}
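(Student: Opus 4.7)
The plan addresses the claims in order. First, $\phi_{k,-,G}$ conjugates $G|_{U_{k,-,G}}$ to the unit translation $w\mapsto w+1$. By Theorem~\ref{thm.combinatorics.traject}\ref{thm.combinatorics.traject:third}, the orbits $\gamma_{k,-,G}(t)$ and $\gamma_{k,+,G}(t)$ share the common limit $\sigma^u_k:=\gamma_{k,\pm,G}(+\infty)\in\mathrm{Fix}(G)$, so $S^u_{k,-,G}$ and $S^u_{k,+,G}$ both funnel into $\sigma^u_k$ as $\mathfrak{Im}\,\phi_{k,\pm,G}\to+\infty$. The almost-vertical estimate on $\phi_{k,\epsilon,G}(l_{k,\epsilon,G})$, uniform in $G\in\mathcal{N}_0$, ensures that for $\eta$ large enough the image $\phi_{k,-,G}(S^u_{k,-,G})$ contains a half-strip of fundamental width $1$; iterating $w\mapsto w+1$ then reaches $\phi_{k,-,G}(S^u_{k,-,G}\cap G^{-p}(S^u_{k,+,G}))$ in finitely many steps, and the minimal such $p$ defines $\tilde{E}^{k,u}_G$. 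The identity $\tilde{E}^{k,u}_G(w+1)=\tilde{E}^{k,u}_G(w)+1$ is immediate: if $p$ works for $z=\phi_{k,-,G}^{-1}(w)$ then $p-1$ works for $G(z)=\phi_{k,-,G}^{-1}(w+1)$, and substitution in the defining formula yields the equality. This periodicity extends $\tilde{E}^{k,u}_G$ holomorphically to all of $\{\mathfrak{Im}\,w>\eta\}$.

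For the limit $L^{k,u}_G$, set $g(w):=\tilde{E}^{k,u}_G(w)-w$. It is $1$-periodic and holomorphic above $\eta$, hence descends through $\zeta=e^{2\pi i w}$ to a holomorphic function on the punctured disk $\{0<|\zeta|<e^{-2\pi\eta}\}$. The $O(|w|^{-\beta})$ control on $\phi_{k,\epsilon,G}(l_{k,\epsilon,G})$ together with Koebe-type bounds on $\phi_{k,+,G}\circ\phi_{k,-,G}^{-1}$ near $\sigma^u_k$ shows that $g$ remains bounded as $\mathfrak{Im}\,w\to+\infty$, so by Riemann's removable singularity theorem $\zeta=0$ is removable and $L^{k,u}_G$ is the extended value. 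The argument for $L^{k,l}_G$ is identical, replacing upper horns by lower horns.

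The hard part, and the main obstacle, is the sum formula. The strategy is a residue computation carried out in two parallel ways. On one hand, the definition of $\iota$ and the residue theorem give $\sum_{\sigma}\iota(G,\sigma)=\frac{1}{2\pi i}\oint_{\partial\mathbb{D}_{r_0}}\frac{dz}{z-G(z)}$. On the other hand, the fundamental regions $S'_{k,\epsilon,G}$ glue via the upper and lower lifted horn maps into a piecewise-flat translation surface whose seams at the top and bottom of each gate $k$ are twisted by the constants $L^{k,u}_G$ and $L^{k,l}_G$. Running a closed cycle through this surface that winds once around each $\sigma\in\mathrm{Fix}(G)$ and traverses every gate, and comparing the accumulated Fatou-coordinate displacement with the local contribution at each $\sigma$, produces the identity. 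At a simple fixed point of multiplier $\rho=1-\iota^{-1}(G,\sigma)$, the Fatou coordinate is $\frac{1}{\log\rho}\log(z-\sigma)+\mathrm{const}$, so one full turn around $\sigma$ shifts the coordinate by $-2\pi i/\log\rho=-\jmath(G,\sigma)$; at a multiple fixed point of multiplicity $m>1$ the local contribution is $-2\pi i(\iota-m/2)=-\jmath(G,\sigma)$ by the Ecalle formal residue at a parabolic point of order $m$. The overall sign and the combinatorial bookkeeping of gates, orchestrated by Theorem~\ref{thm.combinatorics.traject}\ref{thm.combinatorics.traject:fourth}, deliver the claimed equality $\sum_k(L^{k,l}_G-L^{k,u}_G)=-\sum_\sigma\jmath(G,\sigma)$.
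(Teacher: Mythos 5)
This is a cited result: the paper attributes it to Oudkerk's thesis \cite[Lem.\ 3.4.5]{Ou} and gives no proof of its own, so there is no argument in the paper to compare yours against. Taking your sketch on its own merits: the first three parts (existence of the transit time $p$, the equivariance $\tilde{E}^{k,u}_G(w+1)=\tilde{E}^{k,u}_G(w)+1$, and the asymptotic constants $L^{k,u}_G$, $L^{k,l}_G$ via $1$-periodicity, boundedness and Riemann removability in the $e^{2\pi i w}$-chart) follow the standard Douady--Lavaurs--Shishikura pattern and your outline is consistent, though the boundedness of $g(w)=\tilde{E}^{k,u}_G(w)-w$ as $\mathfrak{Im}\,w\to+\infty$ is the actual content of that step and you assert it rather than derive it. You should also be explicit about edge cases in the transit-time argument (e.g.\ $p=0$), but these are minor.

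The sum formula is where the genuine gap lies. You posit a closed cycle on the glued translation surface winding once around each $\sigma\in\mathrm{Fix}(G)$ and traversing every gate, and you claim that matching the accumulated Fatou displacement to the local $\jmath$-contributions yields the identity. But the cycle is never constructed, the base point and orientation are not fixed, and --- crucially --- you do not show why the accumulated seam displacement equals $\sum_k(L^{k,l}_G-L^{k,u}_G)$ rather than some combination also involving the lifted phases $\tilde{\tau}_k(G)$, which Theorem~\ref{thm.oudkerk}~\ref{thm.oudkerk:fourth} ties to the very same $\jmath$-indices through a different bookkeeping. Your local computation that one turn around a simple fixed point of multiplier $\rho$ advances the Fatou coordinate by $2\pi i/\log\rho=-\jmath(G,\sigma)$ is correct, and the analogous statement for a multiple fixed point (via the iterative residue) is plausible, but these local facts alone do not produce the global identity: one must verify that the chosen cycle accumulates exactly the horn-map constants and nothing else, with consistent orientations across every gate. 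That combinatorial step, which you defer to ``Theorem~\ref{thm.combinatorics.traject}~\ref{thm.combinatorics.traject:fourth} orchestrating the bookkeeping,'' is precisely the substance of Oudkerk's proof and is not supplied here. As written, your argument names the pieces but does not assemble them.
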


Now we can introduce the {\it preferred normalization} of Fatou coordinates.
\begin{thm}[{\cite[Proposition 2.3.14, Proposition 2.4.13]{Ou}}]\label{thm.oudkerk}
     Let $\mathscr{G} = (\mathscr{G}_1,...,\mathscr{G}_\nu)$ be an admissible vector. For $G\in\mathcal{WB}(\mathscr{G})$, there exists a normalization of $\phi_{k,\epsilon,G}$, such that 
     \begin{enumerate}[label=\upshape(\Roman*)]
         \item\label{thm.oudkerk:first} $G\mapsto (\phi_{k,\epsilon,G}:U_{k,\epsilon,G}\longrightarrow\mathbb{C})$ is continuous with respect to the compact-open topology, 
         \item\label{thm.oudkerk:second} $G\mapsto\overline{U_{k,\epsilon,G}}$ is continuous with respect to the Hausdorff topology,
         \item\label{thm.oudkerk:third} all the constants $L^u_{k,G},L^l_{k,G}$ in Theorem \ref{Thm.hornmap} are 0 except for $L^{l}_{1,G}$. 
         \item\label{thm.oudkerk:fourth} let $\sigma_0(G) := \gamma_{1,-,G}(-\infty)$, then 
         \[
         \tilde{\tau}_k(G):=\left\{
         \begin{aligned}
         &+\sum_{\sigma\in\mathrm{Fix}^u(k,G)}\jmath(G,\sigma), \text{ if } \sigma_0(G)\not\in {Fix}^u(k,G)\\
         &-\sum_{\sigma\in\mathrm{Fix}^l(k,G)}\jmath(G,\sigma), \text{ if } \sigma_0(G)\not\in {Fix}^l(k,G).
         \end{aligned}\right.
         \]     
         \item\label{thm.oudkerk:fifth} let $G_n\in\mathcal{WB}(\mathscr{G})$ be a sequence such that $\tilde{\tau}_k(G_n)\to-\infty$ for all $k$, then 
         \[
         U_{k,s,G_n}\to\left\{
         \begin{aligned}
         &\overline{U_{k,+,G_0}}\cup\overline{U_{j,-,G_0}}, \text{ if } s=+,\,j=\mathscr{G}_k\not=*\\
         &\overline{U_{k,-,G_0}}\cup\overline{U_{j,-,G_0}}, \text{ if } s=-,\,\exists j\text{ s.t. }\mathscr{G}_j=k\\
         &\overline{U_{k,s,G_0}}\quad\quad\quad\quad\,\,\,\,\text{ otherwise.} 
         \end{aligned}\right.
         \]     
         \[\Phi_{k,s,G_n}:U_{k,s,G_n}\longrightarrow\mathbb{C}\text { converges to } \Phi_{k,s,G_0}:U_{k,s,G_0}\longrightarrow\mathbb{C}.
         \]
         \item\label{thm.oudkerk:sixth} Under the same hypothesis in \ref{thm.oudkerk:fifth}, for every $z\in \bigcap_{n}S'_{k,+,G_n}$ and $n\geq 1$ there exists a smallest integer $N_n$ such that 
         $$\phi_{k,-,G_n}(G_n^{N_n}(z)) = \phi_{k,+,G_n}(z)+N_n+\tau_k(G_n).$$
         Moreover $N_n$ tends to infinity as $n$ goes to infinity.
\end{enumerate}
\end{thm}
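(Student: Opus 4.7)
The plan is to construct the preferred normalization by starting from the (non-unique) Fatou coordinates guaranteed by the previous theorem and fixing the additive ambiguity via the boundary behavior of the lifted horn maps. For some initial choice of $\phi_{k,\epsilon,G}$, the horn maps $\tilde{E}^{k,u}_G,\tilde{E}^{k,l}_G$ are holomorphic and commute with $w\mapsto w+1$, so they descend to punctured disks around the cusps and admit asymptotic constants $L^{k,u}_G,L^{k,l}_G$ at $\pm i\infty$. Via the summation identity $\sum_k(L^{k,l}_G-L^{k,u}_G) = -\sum_\sigma \jmath(G,\sigma)$ of Theorem \ref{Thm.hornmap}, I have exactly $2\nu$ additive constants for the $\phi_{k,\pm,G}$ (minus the gate identifications $\phi_{k,+,G}=\phi_{j,-,G}+\mathrm{const}$ when $\mathscr{G}_k=j$), and $2\nu-1$ boundary constants to kill. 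I would absorb constants successively by walking through the gate graph so that every horn-map constant vanishes except $L^l_{1,G}$, which then carries the full topological obstruction above. A careful induction on the gate graph, combined with an appropriate choice of base point $\sigma_0(G)=\gamma_{1,-,G}(-\infty)$, yields \ref{thm.oudkerk:third}.

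For \ref{thm.oudkerk:first} and \ref{thm.oudkerk:second}, I would use that the trajectories $l_{k,\epsilon,G}$ and fundamental regions $S'_{k,\epsilon,G}$ depend continuously on $G\in\mathcal{N}_0\cap\mathcal{WB}$ (Hausdorff continuity), straight from the continuous dependence of solutions of the real ODE $\dot z = i(G(z)-z)$ on $G$. A univalent Fatou coordinate on $U_{k,\epsilon,G}$ is then uniquely determined up to an additive constant by the equation $\phi\circ G=\phi+1$, and one particular representative — say, pulled back from a standard chart at $\infty$ through $z\mapsto -1/(\nu z^\nu)$ near $\sigma_0(G)$ — can be shown to depend continuously on $G$ in the compact-open topology. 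The book-keeping performed in the previous paragraph only involves continuous quantities ($\iota(G,\sigma)$, $\jmath(G,\sigma)$, and the horn-map constants, which are continuous functions of $G$), so the preferred normalization inherits continuity.

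For the phase formula \ref{thm.oudkerk:fourth}, I would interpret $\tilde\tau_k(G) = \phi_{j,-,G} - \phi_{k,+,G}$ as the total translation picked up by analytically continuing one coordinate to the other through the upper (resp.\ lower) region of the gate. Realising this as a contour integral of $dz/(z-G(z))$ around the enclosed fixed points and applying the residue theorem, the local contributions match the definition \eqref{eq.j-index} of $\jmath(G,\sigma)$ exactly — multiplicity-one fixed points giving the logarithmic term, multiple fixed points giving the $2\pi i(\iota-m/2)$ term — with a global sign determined by whether $\sigma_0(G)$ lies above or below the gate. For \ref{thm.oudkerk:fifth} and \ref{thm.oudkerk:sixth}, once $\tilde\tau_k(G_n)\to-\infty$ for every $k$, the gates ``open up'': I would show that trajectories $l_{k,\epsilon,G_n}$ converge to the corresponding trajectories of $G_0$ (now with some fixed points merged), that the closures $\overline{U_{k,s,G_n}}$ Hausdorff-converge to the prescribed unions, and that the functional equation $\phi\circ G=\phi+1$ forces convergence of Fatou coordinates on compacta. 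The integer $N_n$ in \ref{thm.oudkerk:sixth} is then read off directly from the normalization $\tilde\tau_k(G_n) = \phi_{k,-,G_n}\circ G_n^{N_n} - \phi_{k,+,G_n} - N_n$ once one knows the Fatou coordinates are continuous.

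The main obstacle will be item \ref{thm.oudkerk:fifth}. The subtlety is that as the gate phase degenerates, two fundamental regions merge with their neighbors, and I must verify both that the limiting Fatou coordinate extends past the now-vanished gate \emph{and} that $U_{k,s,G_n}$ really Hausdorff-converges to the claimed union rather than some proper subset — no spurious fixed points or trajectories should pinch the domain from inside. The only tool for this is the continuous dependence on $G$ of the ODE $\dot z = i(G(z)-z)$, combined with the fact that the preferred normalization prevents the Fatou coordinates from drifting to infinity on any compactum of the limit domain; making this uniform near the degenerating gate is where the delicate analysis lies.
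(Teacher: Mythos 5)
The paper does not prove Theorem \ref{thm.oudkerk}: it is cited verbatim from Propositions 2.3.14 and 2.4.13 of Oudkerk's thesis \cite{Ou}, and no argument or sketch is supplied anywhere in the text. There is therefore no in-paper proof to compare your proposal against; the only benchmark is Oudkerk's thesis itself. That said, your outline is a plausible reconstruction — it correctly identifies the additive ambiguity of Fatou coordinates, the cohomological identity $\sum_k(L^{k,l}_G-L^{k,u}_G)=-\sum_\sigma\jmath(G,\sigma)$ from Theorem \ref{Thm.hornmap}, the residue-theorem mechanism behind \ref{thm.oudkerk:fourth}, and the continuous dependence on the ODE as the engine for \ref{thm.oudkerk:first}, \ref{thm.oudkerk:second}, \ref{thm.oudkerk:fifth}.

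The one place your sketch has a concrete gap is item \ref{thm.oudkerk:third}. You count $2\nu-1$ translation freedoms against $2\nu-1$ horn-map constants to kill and declare the matter closed, but the constraints must also be \emph{independent}, and this is precisely the part you leave to ``a careful induction on the gate graph'' without specifying the graph. If both $\tilde{E}^{k,u}_G$ and $\tilde{E}^{k,l}_G$ carried $\phi_{k,-,G}$ into $\phi_{k,+,G}$ — as a naive reading of Theorem \ref{Thm.hornmap} suggests — then a change $\phi_{k,\pm,G}\mapsto\phi_{k,\pm,G}+c_{k,\pm}$ would shift $L^{k,u}_G$ and $L^{k,l}_G$ by the identical amount $c_{k,+}-c_{k,-}$, making their difference normalization-invariant and the constraints $L^{k,u}_G=L^{k,l}_G=0$ ($k\geq2$) unachievable by translation alone. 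The statement is only salvageable because the two horn maps at index $k$ in fact circle \emph{different} fixed points and therefore connect \emph{different} pairs of fundamental regions: by Theorem \ref{thm.combinatorics.traject}\,\ref{thm.combinatorics.traject:third}, $S'_{k,-,G}$ shares one cusp with $S'_{k,+,G}$ and its other cusp with $S'_{k-1,+,G}$, so the constraint graph is a single $2\nu$-cycle carrying exactly one cohomological obstruction (detected by $\sum_\sigma\jmath(G,\sigma)$) rather than $\nu$ independent $2$-cycles each carrying its own. You must make this combinatorics explicit — including how the gate identifications $U_{k,+,G}=U_{\mathscr{G}_k,-,G}$ splice extra edges into that cycle for each admissible $\mathscr{G}$ — before the dimension count becomes a proof. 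Your treatment of \ref{thm.oudkerk:fifth} and \ref{thm.oudkerk:sixth} is at the right level of detail for a sketch but, as you yourself flag, the Hausdorff convergence of $\overline{U_{k,s,G_n}}$ and the uniform equicontinuity of the Fatou coordinates near the degenerating gate require local estimates that go beyond ``continuous dependence on the ODE,'' and Oudkerk's actual argument there is substantial.
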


\begin{defn}[Preferred normalization]\label{def.prefered.nor}
      Let $\tilde{\phi}_{k,\epsilon,G}$ be the Fatou coordinates normalized by $\tilde{\phi}_{k,\epsilon,G}(z_{k,\epsilon}) = 0$. Starting from $\tilde{\phi}_{1,+,G}$, modify in the counter-clockwise direction the normalization of all $\tilde{\phi}_{k,\epsilon,G}$ except for $\tilde{\phi}_{1,-,G}$ such that Theorem \ref{thm.oudkerk} \ref{thm.oudkerk:third} holds. This modified normalization is called the preferred normalization.  
\end{defn}

\subsection{Gate structure for near parabolic-degenerate cubic polynomials}\label{sec.parabolic-degenerate}
Consider the parabolic cubic polynomial family
\begin{equation}\label{eq.family_a1}
   f_{a}(z) =  z+az^2 + z^3,\,\,a\in\mathbb{C}.
\end{equation}
For $a\in\mathbb{C}^*$, define $\tau(a) := \frac{-2\pi i}{\log(1+a^2)}$ with $\mathfrak{Im}\,\log(\cdot)\in(-\pi,\pi]$.

We deduce from the theory in Section \ref{sec.oudkerk} the following:
\begin{lem}\label{lem.perturb.Fatoucoordi}
Fix $0< \eta\ll 1$. Let $S_\eta\subset\mathbb{C}$ be the open sector in the first quadrant bounded by rays $y = \tan(\eta)\cdot x$ and $y = \tan(\frac{\pi}{2}-\eta)\cdot x$. Set $\Omega_{\eta,r} = S_\eta\cap{\mathbb{D}}(r)$. Then for $r$ small (not depending on $\eta$)
\begin{itemize}
    \item $f_a$ has gate structure $(*,2)$ for $a\in\overline{\Omega_{\eta,r}}\setminus\{0\}$: there are simply connected non intersecting domains $P_a,P_{1,\pm,f_a}$ with piece-wise smooth boundaries ($P_a = P_{2,-,f_a} = P_{2,+,f_a}$), such that $\{0\} = \partial P_a\cap\partial P_{1,+,f_a}\cap\partial P_{1,-,f_a}$, $-a\in\partial P_a$. Moreover $\overline{P_{1,\pm,f_a}},\overline{P_{a}}$ vary continuously in $a$ with respect to the Hausdorff topology.
    
    \item for $a\in\overline{\Omega_{\eta,r}}\setminus\{0\}$, there are attracting $(+)$ and repelling $(-)$ Fatou coordinates with the preferred normalization (see Definition \ref{def.prefered.nor}) $\phi_{1,\pm,f_a}: P_{1,\pm,f_a}\longrightarrow \mathbb{C}$, $\phi_{2,\pm,f_a}: P_{a}\longrightarrow \mathbb{C}$ varying analytically for $a\in{\Omega_{\eta,r}}$ and continuously for $a\in\overline{\Omega_{\eta,r}}\setminus\{0\}$; Moreover $\phi_{2,-,f_a} = \phi_{2,+,f_a}+\tau(a)$, $\phi_{1,\pm,f_a},\phi_{2,\pm,f_a}$ converge in the compact-open topology to $\phi_{k,\pm,f_0}: P_{k,\pm,f_0}\longrightarrow \mathbb{C}$, $k=1,2$; $\lim\limits_{a\to0}\overline{P_{1,\pm,f_a}}=\overline{P_{1,\pm,f_0}}$, $\lim\limits_{a\to0}\overline{P_{a}}=\overline{P_{2,+,f_0}\cup P_{2,-,f_0}}$ in the Hausdorff topology.
\end{itemize}
\end{lem}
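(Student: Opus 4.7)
The plan is to apply the machinery of Section~\ref{sec.oudkerk} to the family $\{f_a\}$ in three stages: first, verify that $f_a$ belongs to $\mathcal{WB}$ with gate structure $(*,2)$; second, apply Theorem~\ref{thm.oudkerk} to obtain the preferred Fatou coordinates and identify the shift $\tilde\tau_2(f_a)$ with $\tau(a)$; third, verify the hypothesis $\tilde\tau_2(f_a)\to-\infty$ needed to pass to the limit $a\to 0$.

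\textbf{Step 1 --- Well-behavedness and gate structure.} The fixed points of $f_a$ are $0$ (of multiplicity $2$) and $-a$ (simple), so $\nu=2$. A residue computation on $z-f_a(z)=-z^2(z+a)$ yields
\[
\iota(f_a,0)=\frac{1}{a^2},\qquad \iota(f_a,-a)=-\frac{1}{a^2}.
\]
For $a=re^{i\theta}\in\overline{\Omega_{\eta,r}}\setminus\{0\}$ one has $|\mathfrak{Im}\,\iota(f_a,0)|=\sin(2\theta)/|a|^2\geq \sin(2\eta)/r^2$, which exceeds the threshold $M$ of the lemma preceding Theorem~\ref{thm.combinatorics.traject} once $r$ is small; hence $f_a\in\mathcal{WB}$. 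To pin down the gate structure I will analyze the real-time trajectories of $\dot z=iz^2(z+a)$. At $a=0$ the vector field $\dot z=iz^3$ has a single multiplicity-$3$ fixed point at $0$, and all four trajectories $\gamma_{k,\pm,f_0}$ close up at $0$, giving $f_0$ gate structure $(*,*)$. Turning $a$ on in $\Omega_{\eta,r}$ places $-a$ in the third quadrant, well separated from the attracting/repelling axes $\pm 1,\pm i$ of $f_0$: the trajectories $l_{1,\pm}$ starting near the upper attracting and right-repelling directions are small perturbations of the $f_0$-loops and still close up around $0$, whereas $l_{2,-}$ and $l_{2,+}$ starting near $-r_0$ and $-r_0 i$ now both flow into $-a$, hence by Theorem~\ref{thm.combinatorics.traject}\ref{thm.combinatorics.traject:fourth} join into a Jordan curve bounding a region $P_a$ with $-a\in\partial P_a$. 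This gives $\mathrm{gate}_1(f_a)=*,\ \mathrm{gate}_2(f_a)=2$, and the geometric picture of $P_a,P_{1,\pm,f_a}$ described in the statement.

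\textbf{Step 2 --- Fatou coordinates and shift.} Theorem~\ref{thm.oudkerk} applied to $f_a\in\mathcal{WB}((*,2))$ provides preferred Fatou coordinates $\phi_{k,\pm,f_a}$ on $U_{k,\pm,f_a}$, where I set $P_a:=U_{2,\pm,f_a}$ and $P_{1,\pm,f_a}:=U_{1,\pm,f_a}$. Analytic dependence on $a\in\Omega_{\eta,r}$, continuous extension to $\overline{\Omega_{\eta,r}}\setminus\{0\}$, and the Hausdorff continuity of $\overline{P_a},\overline{P_{1,\pm,f_a}}$ follow from parts \ref{thm.oudkerk:first}--\ref{thm.oudkerk:second}. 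To identify $\tilde\tau_2(f_a)$ I note that $\sigma_0(f_a)=\gamma_{1,-,f_a}(-\infty)=0$ (since $l_{1,+}$ closes at $0$ and by the combinatorial rule \ref{thm.combinatorics.traject:third}), so $0\notin\mathrm{Fix}^u(2,f_a)=\{-a\}$; part \ref{thm.oudkerk:fourth} then gives $\tilde\tau_2(f_a)=\jmath(f_a,-a)=-2\pi i/\log(1+a^2)=\tau(a)$, whence $\phi_{2,-,f_a}=\phi_{2,+,f_a}+\tau(a)$.

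\textbf{Step 3 --- Limit $a\to 0$.} The remaining convergence statements are the conclusion of Theorem~\ref{thm.oudkerk}\ref{thm.oudkerk:fifth}, whose hypothesis requires $\tilde\tau_2(f_a)\to-\infty$ uniformly on $\Omega_{\eta,r}$. Expanding $\tau(a)=-2\pi i/a^2+O(1)$ for $a=re^{i\theta}$ yields
\[
\mathfrak{Re}\,\tau(a)=-\frac{2\pi\sin(2\theta)}{r^2}+O(1)\leq -\frac{2\pi\sin(2\eta)}{r^2}+O(1),
\]
which tends to $-\infty$ uniformly on $\Omega_{\eta,r}$ as $r\to 0$. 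Part~\ref{thm.oudkerk:fifth} then delivers the Hausdorff convergences $\overline{P_a}\to\overline{P_{2,+,f_0}\cup P_{2,-,f_0}}$ and $\overline{P_{1,\pm,f_a}}\to\overline{P_{1,\pm,f_0}}$ together with the compact-open convergence $\phi_{k,\pm,f_a}\to\phi_{k,\pm,f_0}$. The main expected obstacle is Step~1: the sector hypothesis $\theta\in[\eta,\pi/2-\eta]$ is critical, since at $\theta=0$ (resp. $\pi/2$) the fixed point $-a$ lands on a repelling (resp.\ attracting) axis of $f_0$ and the gate structure degenerates; ruling out alternative gate structures like $(2,*)$ on the full sector is most cleanly done by connectedness of $\Omega_{\eta,r}$ together with a single reference computation (say at $\theta=\pi/4$) and continuity of the trajectories in $a$.
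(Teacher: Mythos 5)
Your overall strategy matches the paper's: reduce the gate-structure determination to a single reference computation plus openness of the gate structure and connectedness of $\Omega_{\eta,r}$, then cite Oudkerk's machinery for the Fatou coordinates. Your Steps 2--3, which the paper leaves implicit, check out: $\jmath(f_a,-a)=-2\pi i/\log(1+a^2)=\tau(a)$ with $\sigma_0(f_a)=0\notin\mathrm{Fix}^u(2,f_a)$ gives $\tilde\tau_2(f_a)=\tau(a)$ via Theorem~\ref{thm.oudkerk}\ref{thm.oudkerk:fourth}, and $\mathfrak{Re}\,\tau(a)=-2\pi\sin(2\theta)/|a|^2+O(1)\to-\infty$ on $\Omega_{\eta,r}$ supplies the hypothesis of Theorem~\ref{thm.oudkerk}\ref{thm.oudkerk:fifth}.

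However, Step~1 has a genuine gap. The perturbative heuristic (``small perturbations of the $f_0$-loops,'' ``$-a$ is in the third quadrant, away from the axes'') cannot carry the weight you place on it: for $a\to 0$ the vector field $\dot z=iz^2(z+a)$ is \emph{not} a regular perturbation of $\dot z=iz^3$ near the fixed points, the triple zero splits into a double zero at $0$ and a simple zero at $-a$, and whether the trajectory issued from $z_{1,-}=r_0$ converges forward to $0$ or to $-a$ is exactly the delicate question that determines the gate. You correctly identify at the end that the right move is one explicit computation at a reference angle (say $\theta=\pi/4$) plus continuity and connectedness, but you do not actually perform it. This computation is essentially the entire content of the paper's proof: the ODE $\dot z=-iz^2(z+a_0)$ is integrated to $Y(z)-Y(r_0)=it$ with $Y(z)=(z\log(a_0+z)-a_0-z\log z)/(a_0^2 z)$, and then taking real parts under the hypothesis $z\to -a_0$ produces $\tfrac{3\pi}{4r^2}+\lim_{z\to -a_0}\tfrac{\arg(z+a_0)}{r^2}+\tfrac{\sqrt 2}{2rr_0}-\tfrac{\arg(r_0+a_0)}{r^2}=0$, which is impossible for small $r$, showing $\gamma_{1,-,f_{a_0}}(+\infty)=0$ and hence $-a_0\in V_3$, giving the gate $(*,2)$. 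Without this (or an equivalent) quantitative argument, Step~1 is only a plan, not a proof; the alternative gate structure $(2,*)$ (which does occur on the sector $-i\Omega_{\eta,r}$, as Lemma~\ref{lem.perturb.Fatoucoordi'} shows) is not actually ruled out on $\Omega_{\eta,r}$.

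A smaller point: your bound for well-behavedness gives a threshold $r<\sqrt{\sin(2\eta)/M}$ that depends on $\eta$, which is at slight tension with the parenthetical ``$r$ small (not depending on $\eta$)'' in the statement; worth flagging but not fatal, since for each fixed $\eta$ one can take $r$ accordingly.
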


\begin{proof}
Consider the differential equation of real time
\begin{equation}\label{eq.vectorfield.f_a}
    \dot{z} = i(f_a(z)-z),\,\,z(0) = z_{k,\epsilon}.
\end{equation}
Its equilibrium is $z=0$ and $z=-a$. By a direct computation, $z=-a$ is a sink if $\mathfrak{Im}\,f'(-a)>0$ and a source if $\mathfrak{Im}\,f'(-a)<0$. When $a\in\Omega_{\eta,r}$, $z=-a$ is a sink. Therefore by Theorem \ref{thm.combinatorics.traject} \ref{thm.combinatorics.traject:first}, the four backward trajectories ($t\to-\infty$) of (\ref{eq.vectorfield.f_a}) must converge to $z=0$. Now $$\mathbb{D}_{r_0/2}\setminus\bigcup_{k,\epsilon}\gamma_{k,\epsilon,f_a}([-\infty,0])$$
has four connected components, among which there is a unique component $V$ containing $-a$. We name these for components $V_1,V_2,V_3,V_4$ in the counter clockwise direction with $\partial V_1$ intersecting $\gamma_{1,+,f_a},\gamma_{1,-,f_a}$. 

In order to prove that $f_a$ has gate structure $(*,2)$ for all $a\in\Omega_{\eta,r}$, it suffices to find one parameter $a_0$ having gate structure $(*,2)$ since the gate structure is an open property. Consider the parameter $a_0 = re^{\pi i/4}$ with some small $r$ to be determined latter. To prove $f_{a_0}$ has gate structure $(*,2)$, it suffices to show that the fixed point $-a_0$ belongs to $V_3$, and this is reduced to show that $\gamma_{1,-,f_{a_0}}(+\infty) = 0$. 

Let us consider Equation (\ref{eq.vectorfield.f_a}) with $a = a_0$ and recall the initial condition $z(0) = z_{1,-} = r_0>0$ (which does not depend on $r$). One can solve directly the equation and get 
\begin{equation}\label{eq.solution}
    Y(z)-Y(r_0) = it,\,\,\text{where}\,\,
Y(z) = \frac{(z\log(a_0+z)-a_0-z\log(z))}{a_0^2z}.
\end{equation}
Notice that the logarithm is defined along the trajectory of $z$ and $$\log(a_0+z) = \log|a_0+z| + i(\arg(a_0+z) + 2k_z\pi),$$
where $k_z$ is an integer that depends on $z$. Suppose by contradiction that $z\to -a_0$ as $t\to +\infty$. Comparing the real part on both sides of (\ref{eq.solution}), it is not hard to see by plugging in $a_0 = re^{\pi i/4}$ that $k_z$ has to be $0$ for $t$ large enough. Take the real part on both sides of (\ref{eq.solution}). A straightforward computation gives
$$\frac{3\pi}{4r^2}+\left(\lim_{z\to-a_0}\frac{\arg(z+a_0)}{r^2}\right)+\frac{\sqrt{2}}{2rr_0}-\frac{\arg(r_0+a_0)}{r^2} = 0.$$
But clearly this cannot hold for $r$ small enough, leading to a contradiction.

\end{proof}

Under a similar proof as above, we can get:

\begin{lem}\label{lem.perturb.Fatoucoordi'}
Under the same hypothesis as in Lemma \ref{lem.perturb.Fatoucoordi}, we have
\begin{itemize}
    \item $f_a$ has gate structure $(2,*)$ for $a\in\overline{-i\Omega_{\eta,r}}\setminus\{0\}$: there are simply connected non intersecting domains $P_a,P_{2,+,f_a},P_{1,-,f_a}$ with piece-wise smooth boundaries ($P_a = P_{2,-,f_a} = P_{2,+,f_a}$), such that $\{0\} = \partial P_a\cap\partial P_{2,+,f_a}\cap\partial P_{1,-,f_a}$, $-a\in\partial P_a$. Moreover $\overline{P_{2,+,f_a}},\overline{P_{1,-,f_a}},\overline{P_{a}}$ vary continuously in $a$ with respect to the Hausdorff topology.
    
    \item for $a\in\overline{-i\Omega_{\eta,r}}\setminus\{0\}$, there are attracting $(+)$ and repelling $(-)$ Fatou coordinates with the preferred normalization $\phi_{1,+,f_a},\phi_{2,-,f_a}: P_{a}\longrightarrow \mathbb{C}$, $\phi_{2,+,f_a}: P_{2,\pm,f_a}\longrightarrow \mathbb{C}$, $\phi_{1,-,f_a}: P_{1,-,f_a}\longrightarrow \mathbb{C}$ varying analytically for $a\in{-i\Omega_{\eta,r}}$ and continuously for $a\in\overline{-i\Omega_{\eta,r}}\setminus\{0\}$; Moreover $\phi_{2,-,f_a} = \phi_{1,+,f_a}-\tau(a)$, $\phi_{1,\pm,f_a},\phi_{2,\pm,f_a}$ converge in the compact-open topology to $\phi_{k,\pm,f_0}: P_{k,\pm,f_0}\longrightarrow \mathbb{C}$, $i=1,2$; $\lim\limits_{a\to0}\overline{P_{2,+,f_a}}=\overline{P_{2,+,f_0}}$, $\lim\limits_{a\to0}\overline{P_{1,-,f_a}}=\overline{P^{-}_{0,1}}$, $\lim\limits_{a\to0}\overline{P_{a}}=\overline{P_{1,+,f_0}\cup P_{2,-,f_0}}$ in the Hausdorff topology.
\end{itemize}
\end{lem}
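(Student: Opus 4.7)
The plan is to deduce this lemma from Lemma~\ref{lem.perturb.Fatoucoordi} by exploiting the anti-holomorphic symmetry $z \mapsto \bar z$ of the family $\{f_a\}$. As subsets of $\mathbb C$, $-i\Omega_{\eta,r}$ coincides with the complex-conjugate set $\{\bar a : a\in\Omega_{\eta,r}\}$, and one has the identity $f_a(z)=\overline{f_{\bar a}(\bar z)}$. Hence for $a\in -i\Omega_{\eta,r}$, the map $f_a$ is anti-holomorphically conjugate to $f_{\bar a}$ (with $\bar a\in\Omega_{\eta,r}$) via complex conjugation. I would begin by checking that the trajectories of the real-time vector field $\dot z=i(f_a(z)-z)$ are time-reversed complex conjugates of those of $f_{\bar a}$: the identity $\gamma_{k,\epsilon,f_a}(t)=\overline{\gamma_{k',\epsilon,f_{\bar a}}(-t)}$ holds, where the index swap $(k,\epsilon)\mapsto(k',\epsilon)$ is forced by the action of conjugation on the starting points, namely $(1,-)\leftrightarrow(1,-)$, $(2,-)\leftrightarrow(2,-)$, and $(1,+)\leftrightarrow(2,+)$.

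Combining this with the gate structure $(*,2)$ of $f_{\bar a}$ and the interchange of $\pm\infty$ endpoints under time reversal, I would deduce that for $f_a$ the trajectories $l_{1,-,f_a}$ and $l_{2,+,f_a}$ are closed loops at $0$, while $l_{1,+,f_a}$ and $l_{2,-,f_a}$ each connect $-a$ to $0$. This yields the gate structure $(2,*)$ and identifies the three domains as complex conjugates of those for $f_{\bar a}$: $P_a:=P_{1,+,f_a}=P_{2,-,f_a}=\{\bar z:z\in P_{\bar a}\}$, $P_{2,+,f_a}=\{\bar z:z\in P_{1,+,f_{\bar a}}\}$, and $P_{1,-,f_a}=\{\bar z:z\in P_{1,-,f_{\bar a}}\}$.

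I would then define the Fatou coordinates by $\phi_{k,\epsilon,f_a}(z):=\overline{\phi_{k',\epsilon,f_{\bar a}}(\bar z)}$; the Fatou equation $\phi\circ f=\phi+1$ is preserved under $\overline{\cdot}$ because $1\in\mathbb R$. Conjugating the first-lemma identity $\phi_{2,-,f_{\bar a}}=\phi_{2,+,f_{\bar a}}+\tau(\bar a)$ and using the direct computation $\overline{\tau(\bar a)}=-\tau(a)$ (from $\tau(a)=-2\pi i/\log(1+a^2)$) then gives $\phi_{2,-,f_a}=\phi_{1,+,f_a}-\tau(a)$. Continuity, analyticity on $-i\Omega_{\eta,r}$, and the Hausdorff limits as $a\to 0$ will follow from the corresponding properties of Lemma~\ref{lem.perturb.Fatoucoordi} together with the continuity of complex conjugation.

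The main obstacle will be verifying that the conjugated coordinates coincide with the \emph{preferred} normalization for $f_a$ (Definition~\ref{def.prefered.nor}). The anchor $\sigma_0(f_a)=\gamma_{1,-,f_a}(-\infty)=0$ agrees automatically with $\sigma_0(f_{\bar a})=0$; however, the preferred normalization requires the horn-map constants of Theorem~\ref{Thm.hornmap} to vanish except for $L^{l}_{1,G}$, and under complex conjugation these constants swap their upper/lower labels and pick up a conjugation. I expect this to reduce to a finite bookkeeping exercise using the identity $\sum_k(L^{k,l}_G-L^{k,u}_G)=-\sum_{\sigma\in\mathrm{Fix}(G)}\jmath(G,\sigma)$ together with the transformation rule $\jmath(f_a,\cdot)=\overline{\jmath(f_{\bar a},\bar\cdot)}$.
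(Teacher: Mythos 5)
Your reduction to Lemma~\ref{lem.perturb.Fatoucoordi} via the anti-holomorphic symmetry $f_a(z)=\overline{f_{\bar a}(\bar z)}$ is a genuinely different route from the paper's, which simply redoes the trajectory/ODE analysis of Lemma~\ref{lem.perturb.Fatoucoordi} on the rotated sector. The symmetry handles the gate structure, the identification of $P_a$, $P_{2,+,f_a}$, $P_{1,-,f_a}$ with conjugate domains, and the observation $\overline{\tau(\bar a)}=-\tau(a)$ cleanly and correctly, and the index swap $(1,+)\leftrightarrow(2,+)$, $(k,-)\leftrightarrow(k,-)$ forced by $\overline{z_{k,\epsilon}}$ is right.

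The gap is in the last step, and it is not merely bookkeeping: the conjugated Fatou coordinates $\phi_{k,\epsilon,f_a}(z):=\overline{\phi_{k',\epsilon,f_{\bar a}}(\bar z)}$ are \emph{not} the preferred normalization for $f_a$. You can check this directly in the degenerate limit $a=0$ using the explicit preferred normalization written out in the proof of Lemma~\ref{lem.escaping.strip}: there the paper gives $\phi_{1,+,f_0}=\tilde\phi_{1,+,f_0}-\mathfrak{Re}\,w+\tfrac{3\pi i}{4}$ and $\phi_{2,+,f_0}=\tilde\phi_{2,+,f_0}-\mathfrak{Re}\,w+\tfrac{9\pi i}{4}$, while the anti-holomorphic symmetry of $f_0$ gives $\tilde\phi_{1,+,f_0}(z)=\overline{\tilde\phi_{2,+,f_0}(\bar z)}$, so that $\overline{\phi_{2,+,f_0}(\bar z)}=\tilde\phi_{1,+,f_0}(z)-\mathfrak{Re}\,w-\tfrac{9\pi i}{4}$, which differs from $\phi_{1,+,f_0}(z)$ by $3\pi i$. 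The source of the discrepancy is structural: Definition~\ref{def.prefered.nor} anchors at $\tilde\phi_{1,-}$ and then proceeds \emph{counter-clockwise}, and Theorem~\ref{thm.oudkerk}\ref{thm.oudkerk:third} singles out the lower-left constant $L^{l}_{1,G}$ as the non-vanishing one; both choices are orientation-dependent, and complex conjugation reverses orientation and exchanges upper and lower horns. Moreover, because your correspondence sends $(k,+)\mapsto(3-k,+)$ but $(k,-)\mapsto(k,-)$, the conjugate of the horn map $\tilde E^{k,u}_{f_a}=\phi_{k,+,f_a}\circ f_a^p\circ\phi_{k,-,f_a}^{-1}-p$ involves $\phi_{3-k,+,f_{\bar a}}$ together with $\phi_{k,-,f_{\bar a}}$, which are \emph{not} the two coordinates appearing in any single standard horn map $\tilde E^{j,u/l}_{f_{\bar a}}$ of $f_{\bar a}$; so the constants do not simply ``swap labels and conjugate.'' It happens that $\phi_{2,-}-\phi_{1,+}$ is unchanged (the correction constants $3\pi i$ cancel), so your $-\tau(a)$ identity is correct, but the claimed convergence $\phi_{k,\pm,f_a}\to\phi_{k,\pm,f_0}$ in compact-open topology would be off by a constant if you use the conjugated coordinates. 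To close the gap you should apply Theorem~\ref{thm.oudkerk} directly to $f_a$ to produce the preferred normalization (as the paper implicitly does), using the conjugation only for the gate structure and domains, or else compute the correction constants $c_{k,\epsilon}(a)$ between the conjugated and preferred normalizations explicitly.
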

\begin{figure}[ht]
\centering 
\includegraphics[width=0.7\textwidth]{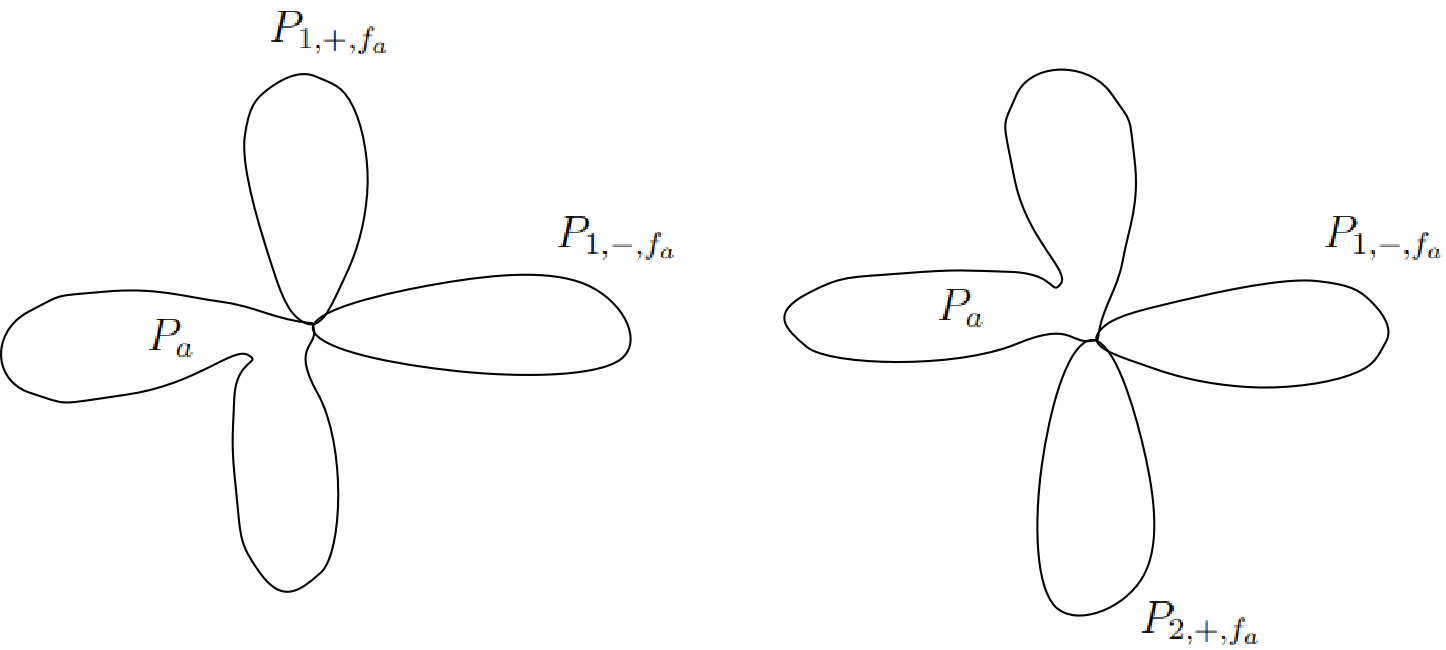} 
\caption{Gate structure $(*,2)$ on the left and $(2,*)$ on the right.} 
\label{fig.gate-structure} 
\end{figure}

Using the same argument as \cite[Proposition 2.2]{Ta}, one can show the following:
\begin{lem}[Orbit correspondence]\label{lem.orbit.correspond}
Let $X\subset P_{2,+,f_0}$ and $Y\subset P_{2,-,f_0}$ be compact sets. Let $\Omega_{\eta,r}$ and $\phi_{k,\pm,f_a}$ be as in Lemma \ref{lem.perturb.Fatoucoordi}. If $\eta$ and $r$ are sufficiently small, then given any continuous functions $x:\overline{\Omega_{\eta,r}}\times[0,1]\longrightarrow X$ and $y:\overline{\Omega_{\eta,r}}\times[0,1]\longrightarrow Y$, holomorphic in $a$ for $a\in\Omega_{\eta,r}$, the equations
\begin{equation}\label{eq.orbit-correspondence}
    f^n_a(x(a,t)) = y(a,t) \text{ and } \phi_{2,+,f_a}(x(a,t)) -\tau(a) + n = \phi_{2,-,f_a}(y(a,t))
\end{equation}
have a unique solution $a_n(t)\in\Omega_{\eta,r}$ in common which depends continuously on $t$, provided that $n$ is large enough. Moreover, $a_n(t)\to 0$ uniformly on $t$ when $n\to\infty$. 
\end{lem}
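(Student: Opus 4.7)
The argument is a standard Lavaurs--Shishikura implicit-function scheme. Using the orbit-transit formula of Theorem \ref{thm.oudkerk}\ref{thm.oudkerk:sixth} in the preferred Fatou coordinates supplied by Lemma \ref{lem.perturb.Fatoucoordi}, the two equations in (\ref{eq.orbit-correspondence}) jointly reduce to a single scalar holomorphic equation in the parameter $a$; this equation is then inverted by a contraction argument in the $\jmath$-index coordinate $w=\tau(a)$ near infinity.

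\textbf{Reduction to one scalar equation.} Lemma \ref{lem.perturb.Fatoucoordi} produces the gate structure $(*,2)$ for $a\in\overline{\Omega_{\eta,r}}\setminus\{0\}$ on which $\phi_{2,-,f_a}$ is univalent on $P_a$, and Theorem \ref{thm.oudkerk}\ref{thm.oudkerk:sixth} expresses $\phi_{2,-,f_a}(f_a^n(z))$ as an explicit affine function of $\phi_{2,+,f_a}(z)$, $n$ and $\tau(a)$ whenever the orbit $\{f_a^k(z)\}_{0\le k\le n}$ stays inside the Ecalle region $P_a$. Combining these two facts, the system (\ref{eq.orbit-correspondence}) reduces, under this orbit-containment hypothesis, to the scalar equation
\[\tau(a)=B(a,t)-n,\qquad B(a,t):=\phi_{2,-,f_a}(y(a,t))-\phi_{2,+,f_a}(x(a,t)).\]
The continuous dependence of the preferred Fatou coordinates and of their domains on $a\in\overline{\Omega_{\eta,r}}$ (Lemma \ref{lem.perturb.Fatoucoordi}), combined with the inclusions $X\subset P_{2,+,f_0}$, $Y\subset P_{2,-,f_0}$ and the continuity of $x,y$, shows that $B$ extends continuously to $\overline{\Omega_{\eta,r}}\times[0,1]$; in particular $|B|\le C$ for some constant $C$.

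\textbf{Inverting $\tau$ and the fixed-point argument.} Because $\log(1+a^2)=a^2+\mathcal{O}(a^4)$, one has $\tau(a)=-2\pi i/a^2+\mathcal{O}(1)$; after shrinking $r$ if necessary, the restriction $\tau:\Omega_{\eta,r}\to\mathbb{C}$ is a conformal isomorphism onto a region $\widetilde\Omega\subset\mathbb{C}$ containing a sector at infinity, and the inverse satisfies $|(\tau^{-1})'(w)|=\mathcal{O}(|w|^{-3/2})$ as $|w|\to\infty$. For $n$ large enough that $B(a,t)-n$ lies in $\widetilde\Omega$ for every $a\in\Omega_{\eta,r}$ close to $0$, define
\[T_n^t:\{|a|\le\rho_n\}\cap\Omega_{\eta,r}\longrightarrow\Omega_{\eta,r},\qquad T_n^t(a):=\tau^{-1}\bigl(B(a,t)-n\bigr),\]
with $\rho_n$ of order $n^{-1/2}$. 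The estimates $|\partial_a T_n^t|=|(\tau^{-1})'(B-n)|\cdot|\partial_a B|=\mathcal{O}(n^{-3/2})$ and $|T_n^t(a)|=\mathcal{O}(n^{-1/2})$ make $T_n^t$ a strict self-contraction on its domain. The Banach fixed-point theorem produces a unique fixed point $a_n(t)\in\Omega_{\eta,r}$, depending continuously on $t$, with $|a_n(t)|=\mathcal{O}(n^{-1/2})\to 0$ uniformly on $t\in[0,1]$.

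\textbf{Main obstacle.} The delicate step is the \emph{a posteriori} verification of the orbit-containment hypothesis used in the reduction: at $a=a_n(t)$ one must ensure that the forward orbit of $x(a_n(t),t)$ under $f_{a_n(t)}$ really stays inside $P_{a_n(t)}$ for $0\le k\le n$ and does not wrap once around the nearby fixed point $-a_n(t)$, which would shift the effective Fatou translation by an extra multiple of $\tau(a)$ and manufacture a spurious solution. This is where the location of the parameter sector $\Omega_{\eta,r}$ is essential: the sign of $\mathfrak{Im}\,\tau(a)$ is fixed throughout $\Omega_{\eta,r}$ (it matches the sign of $\mathfrak{Im}(-1/a^2)$), forcing the Fatou-coordinate segment $\{\phi_{2,+,f_a}(x(a,t))+k\}_{0\le k\le n}$ to remain inside the univalent image $\phi_{2,+,f_a}(P_a)$, which certifies orbit containment and hence the full equivalence with the first equation.
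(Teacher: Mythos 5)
The paper does not actually supply a proof of Lemma \ref{lem.orbit.correspond}: it simply defers to \cite[Proposition 2.2]{Ta}, so your proposal is necessarily supplying details that the paper omits. Your overall reduction --- rewrite the transit condition as a scalar equation $\tau(a)=\text{(bounded)}\mp n$ in the phase coordinate and exploit that $\tau$ is a conformal isomorphism from $\Omega_{\eta,r}$ onto a sector near infinity --- is exactly the standard Lavaurs/Tan Lei/Shishikura strategy and matches the spirit of the cited argument. The difference in technique is in how the scalar equation is solved: you use a Banach fixed-point argument in the $w=\tau(a)$ coordinate, whereas the Tan Lei argument the paper invokes is a Rouch\'e/argument-principle count (the winding number of $a\mapsto\tau(a)-n+O(1)$ around $0$ along a small arc in $\Omega_{\eta,r}$ is $1$ for $n$ large, which gives uniqueness of the zero and, since the winding number is homotopy-invariant, continuity in $t$ for free). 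Both routes are legitimate; the Rouch\'e route has the advantage of needing only continuity of $B$ in $a$, not Lipschitz control.

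There are two concrete gaps in your contraction argument. First, the estimate $\lvert\partial_a T_n^t\rvert=\lvert(\tau^{-1})'(B-n)\rvert\cdot\lvert\partial_a B\rvert=\mathcal{O}(n^{-3/2})$ silently assumes $\lvert\partial_a B\rvert=\mathcal{O}(1)$; Lemma \ref{lem.perturb.Fatoucoordi} only gives \emph{continuous} dependence of the Fatou coordinates up to $a=0$, and $\partial_a\phi_{2,\pm,f_a}$ generally blows up as $a\to 0$. One can rescue the contraction using a Cauchy estimate $\lvert\partial_a B(a,t)\rvert\lesssim\operatorname{dist}(a,\partial\Omega_{\eta,r})^{-1}$ coming from the boundedness of $B$ and holomorphy in $a$, but this yields $\lvert\partial_a T_n^t\rvert=\mathcal{O}(n^{-3/2}\cdot\rho_n^{-1})=\mathcal{O}(n^{-1})$ rather than $\mathcal{O}(n^{-3/2})$, and, more importantly, one must then be careful that $T_n^t$ keeps the iterates away from the rays $\partial S_\eta$ where the Cauchy estimate degrades; this needs to be argued, not assumed. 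Second, your resolution of the ``main obstacle'' is not correct as stated: the claim that ``the sign of $\mathfrak{Im}\,\tau(a)$ is fixed throughout $\Omega_{\eta,r}$'' fails, since for $a=re^{i\theta}$ with $\theta\in(\eta,\tfrac{\pi}{2}-\eta)$ one has $\mathfrak{Im}\,\tau(a)\approx -2\pi r^{-2}\cos 2\theta$, which changes sign across $\theta=\pi/4$. The orbit-containment step therefore still needs a genuine argument; in the sources it is handled by restricting $x,y$ to compacts well inside $P_{2,+,f_0},P_{2,-,f_0}$, using the Hausdorff convergence $\overline{P_a}\to\overline{P_{2,+,f_0}\cup P_{2,-,f_0}}$ from Lemma \ref{lem.perturb.Fatoucoordi} together with Theorem \ref{thm.oudkerk}\ref{thm.oudkerk:sixth} to ensure the transit orbit stays in $P_a$ and the transit time is the \emph{smallest} such $n$.
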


\begin{lem}\label{lem.escaping.strip}
   Let $\psi_{1,-,f_a}$ be the extension of $(\phi_{1,-,f_a})^{-1}$ by the relation $f_a\circ\psi_{1,-,f_a}(z) = \psi_{1,-,f_a}(z+1)$.
   Then for all $\tau\in\C$ with $\mathfrak{Im}\,\tau\geq0$ and $a\in\Omega_{\eta,r}$, there exists a region $U_{\tau,a}\subset P_{1,+,f_a}$ such that 
   \begin{itemize}
       \item $\psi_{1,-,f_a}\circ T_\tau \circ\phi_{1,+,f_a}(U_{\tau,a})\subset K^c_a$;
       \item $\bigcup_{n\in\mathbb{Z}}(\phi_{1,+,f_a}(U_{\tau,a})+n)$ contains a uniform $T_1$-invariant strip $B$ (not depending on $a$);
       \item the pull back of $U_{\tau,a}$ by $f_{a}$ will enter $P_a$.
   \end{itemize} 
\end{lem}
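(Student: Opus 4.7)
The strategy is to construct $U_{\tau,a}$ by pulling back, through $\phi_{1,+,f_a}^{-1}\circ T_{-\tau}$, a uniform window $B_0$ in the Fatou coordinate plane where $\psi_{1,-,f_a}$ takes values outside $K_a$. First observe that, since $f_a$ is a polynomial, the functional equation $\psi_{1,-,f_a}(w+1)=f_a\circ\psi_{1,-,f_a}(w)$ extends $\psi_{1,-,f_a}$, originally defined on $\phi_{1,-,f_a}(P_{1,-,f_a})$, to an entire map $\C\to\C$.

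The second step is to locate a uniform escape window (possibly depending on $\tau$ but not on $a$). For $a=0$, the repelling petal $P_{1,-,f_0}$ is attached to the parabolic fixed point $0$ along a repelling axis, and external rays of $f_0$ landing at $0$ (which lie in $K(f_0)^c$) penetrate this petal along that axis. Pick $z_0\in P_{1,-,f_0}\cap K(f_0)^c$ whose Fatou coordinate $w_0:=\phi_{1,-,f_0}(z_0)$ has imaginary part exceeding $\mathfrak{Im}\,\tau$ by a margin larger than the range bound of $\phi_{1,+,f_a}$, and let $B_0=B_0(\tau)$ be an open round disk around $w_0$ of radius strictly larger than $1/2$, small enough that $\psi_{1,-,f_0}(B_0)\subset K(f_0)^c$. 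By the continuous dependence of the preferred Fatou coordinates on $a$ (Theorem \ref{thm.oudkerk}\ref{thm.oudkerk:first}) together with the upper semicontinuity of $a\mapsto K(f_a)$, after possibly shrinking $r$ the inclusion $\psi_{1,-,f_a}(B_0)\subset K_a^c$ persists uniformly for all $a\in\Omega_{\eta,r}$.

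Define $U_{\tau,a}:=\phi_{1,+,f_a}^{-1}(B_0-\tau)\cap P_{1,+,f_a}$; the margin condition on $\mathfrak{Im}\,w_0$ ensures that $B_0-\tau$ sits inside the range of $\phi_{1,+,f_a}$ uniformly in $a$. The first bullet follows at once from $\psi_{1,-,f_a}\circ T_\tau\circ\phi_{1,+,f_a}(U_{\tau,a})\subset\psi_{1,-,f_a}(B_0)\subset K_a^c$. The second bullet follows because the radius of $B_0$ exceeds $1/2$, so $\bigcup_{n\in\Z}(B_0-\tau+n)$ contains a horizontal strip $B$ depending only on $B_0$ and $\tau$, hence uniform in $a$.

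The most delicate point is the third bullet. I would arrange $w_0$ so that $U_{\tau,a}$ lies near the boundary piece of $P_{1,+,f_a}$ adjacent to $P_a$ in the gate $(*,2)$ configuration, and then invoke the fact that the image $f_a(P_a)$ of the bridge under the cubic map $f_a$ covers a neighborhood of this boundary piece, thereby yielding a local inverse branch of $f_a$ sending $U_{\tau,a}$ into $P_a$. Verifying this geometric claim about $f_a(P_a)$ is the main obstacle: I would do it by continuously deforming from $a=0$, where $P_a$ degenerates into $\overline{P_{2,+,f_0}\cup P_{2,-,f_0}}$ (Lemma \ref{lem.perturb.Fatoucoordi}), analyzing $f_0$ explicitly on these two petals, and transferring the conclusion to $a\in\Omega_{\eta,r}$ via the Hausdorff continuity of $\overline{P_a}$ in $a$.
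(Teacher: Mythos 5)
Your construction places $U_{\tau,a}$ on the wrong side of $P_{1,+,f_a}$, and this breaks both the second and the third bullet. Let me explain.

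You choose $w_0=\phi_{1,-,f_0}(z_0)$ with $\mathfrak{Im}\,w_0 > \mathfrak{Im}\,\tau$ plus a margin, so that $B_0-\tau$ sits at positive imaginary height. There are two problems with this.

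First, the set $\psi_{1,-,f_0}^{-1}\bigl(\mathbb{C}\setminus K_0\bigr)$ is contained in a $T_1$-invariant strip of \emph{bounded} vertical extent around $\mathbb{R}$: the curves $\psi_{1,-,f_0}^{-1}(\partial B_0^*)$ sit at bounded height, and beyond them $\psi_{1,-,f_0}$ maps into the immediate basins, hence into $K_0$. So for $\mathfrak{Im}\,\tau$ large your $w_0$ simply does not exist; there is no point of the repelling petal at large imaginary Fatou height that lies outside $K_0$.

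Second, and more fundamentally, even when $\mathfrak{Im}\,\tau$ is small the resulting $U_{\tau,a}=\phi_{1,+,f_a}^{-1}(B_0-\tau)$ lives in the \emph{upper horn} of $P_{1,+,f_a}$ (since $\mathfrak{Im}(B_0-\tau)>0$). But for $f_0(z)=z+z^3$ the upper horn of $P_{1,+,f_0}$ (large positive $\mathfrak{Im}\,\phi_{1,+}$) is the first-quadrant sepal adjacent to the repelling petal $P_{1,-,f_0}$, while the lower horn (large negative $\mathfrak{Im}\,\phi_{1,+}$) is the second-quadrant sepal adjacent to $P_{2,-,f_0}$ — and $P_{2,-,f_0}\cup P_{2,+,f_0}$ is precisely what degenerates to the gate $P_a$ under the $(*,2)$ perturbation. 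So your $U_{\tau,a}$ is adjacent to $P_{1,-,f_a}$, not to $P_a$, and its $f_a$-pullbacks enter $P_{1,-,f_a}$ rather than $P_a$: the third bullet fails. You notice yourself that $U_{\tau,a}$ must lie next to $P_a$, but your choice of $w_0$ puts it on the opposite side.

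The paper resolves this by keeping $\tilde B$ centered on $\mathbb{R}$ (which, by the $z\mapsto\bar z$ symmetry of $f_0$, is exactly $\psi_{1,-,f_0}^{-1}$ of the $0$-angle external ray) and accepting that $\tilde B-\tau$ sits at height $-\mathfrak{Im}\,\tau\le 0$. It then uses the explicit computation $\jmath(f_0,0)=-3\pi i$ and the horn-map constants to determine the exact shift $\frac{3\pi i}{4}$ between the symmetric normalization $\tilde\phi_{1,+,f_0}$ and the preferred normalization $\phi_{1,+,f_0}$, and the $z\mapsto -z$ symmetry to show that $\phi_{1,+,f_0}^{-1}\circ T_{-\tau}(\tilde B)$ contains a sepal difference $S\setminus\overline{S'}$ joining $P_{1,+,f_0}$ to $P_{2,-,f_0}$. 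That geometric fact is what carries over by Lemma \ref{lem.perturb.Fatoucoordi} to give a region crossing the gate $P_a$. Your proposal is missing this whole translation between the two normalizations, which is the technical heart of the argument, and without it the third bullet has no justification.
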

\begin{proof}
    It is easy to see that $\jmath(f_0,0) = -3\pi i$ (recall (\ref{eq.j-index})). If we take the Fatou coordinates $\tilde{\phi}_{k,\epsilon,f_0}$ with normalization $\tilde{\phi}_{k,\epsilon,f_0}(z_{k,\epsilon}) = 0$ (notice that this is different from the preferred nomalisation in Definition \ref{def.prefered.nor}), then by symmetry, $\tilde{\phi}_{2,+,f_0}(z)=\tilde{\phi}_{1,+,f_0}(-z)$, $\tilde{\phi}_{2,-,f_0}(z)=\tilde{\phi}_{1,-,f_0}(-z)$. Therefore the corresponding constants $\tilde{L}^{k,l}_{f_0},\tilde{L}^{k,u}_{f_0}$ associated to the lifted horn maps (recall Theorem \ref{Thm.hornmap}) satisfy 
    $$\tilde{L}^{1,u}_{f_0} = \overline{\tilde{L}^{1,l}_{f_0}} = \tilde{L}^{2,u}_{f_0} = \overline{\tilde{L}^{2,l}_{f_0}}.$$
    By Theorem \ref{Thm.hornmap}, we have 
    $$L^{1,l}_{f_0}-L^{1,u}_{f_0}+L^{2,l}_{f_0}-L^{2,u}_{f_0} = 3\pi i.$$
    Set $w := \tilde{L}^{1,u}_{f_0}$.
    Thus $\mathfrak{Im}\,w = -\frac{3\pi}{4}$. Then the preferred Fatou coordinates satisfy 
    $$\phi_{1,-,f_0} = \tilde{\phi}_{1,-,f_0},\,\,\phi_{1,+,f_0} = \tilde{\phi}_{1,+,f_0}-\mathfrak{Re}\,w+\frac{3\pi i}{4},$$
    $$\phi_{2,-,f_0} = \tilde{\phi}_{2,-,f_0}+\frac{3\pi i}{2},\,\,\phi_{2,+,f_0} = \tilde{\phi}_{2,+,f_0}-\mathfrak{Re}\,w+\frac{9\pi i}{4}.$$
    On the other hand, $\psi_{1,-,f_0}(\mathbb{R})$ is the $0$-angle external ray of $f_0$ by symmetry. Therefore there exists a $T_1$-invariant open strip $\tilde{B}$ containing $\mathbb{R}$ such that $\psi_{1,-,f_0}(\tilde{B})\subset\mathbb{C}\setminus K_{0}$. By continuity dependence of Fatou coordinate (Lemma \ref{lem.perturb.Fatoucoordi}), we can take $\tilde{B}$ smaller if necessary so that for all $a\in\Omega_{\eta,r}$, $\psi_{1,-,f_a}(\tilde{B})\subset \mathbb{C}\setminus K(f_{1,a})$. By symmetry, $\tilde{\phi}_{1,+,f_0}^{-1}(\tilde{B})$ contains $i\mathbb{R}\cap P_{1,+,f_0}$. Therefore by taking $\tilde{B}$ smaller again if necessary, ${\phi}_{1,+,f_0}^{-1}\circ T_{-\tau}(\tilde{B})$ contains a region that can be written as $S\setminus \overline{S'}$, where $S'\subset S$ are sepals of $f_0$ intersecting with $P_{1,+,f_0}$ and $P_{2,-,f_0}$. The lemma follows by continuity dependence on the Fatou coordinate.
\end{proof}

\begin{prop}\label{Propositionexistence}
For any $\tau\in\C$ with $\mathfrak{Im}\,\tau\geq0$, there exists a curve $\mathcal{G}_\tau$ converging to the degenerate parabolic parameter $0$ so that for all $a\in\mathcal{G}_\tau$ 
\begin{itemize}
    \item two critical points of $f_a$ are contained in the immediate parabolic basin of $0$;
    \item there is a critical point $c_a$ of $f_a$ such that $\psi_{1,-,f_a}\circ T_\tau\circ\phi_{1,+,f_a}(c_a)\in \mathbb{C}\setminus K(f_{a})$.
\end{itemize}
 
\end{prop}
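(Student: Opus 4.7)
The plan is to construct $\mathcal{G}_\tau$ inside a sub-sector of $\Omega_{\eta,r}$, exploiting the fact that for the critical point of $f_a$ approaching $-i/\sqrt{3}$ as $a\to 0$, the attracting Fatou coordinate picks up a divergent phase, which can be tuned via Lemma~\ref{lem.orbit.correspond} to realize the required target value.

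First, I would restrict attention to the sub-sector $S\subset\Omega_{\eta,r}$ defined by $|1+a^2|>1$ (equivalently $\arg a\in(\eta,\pi/4)$ for small $\eta>0$), so that the second fixed point $-a$ of $f_a$ is repelling. By continuity from $f_0$, both critical points of $f_a$ remain in the unique immediate parabolic Fatou component of $0$ for $a\in S$, disposing of the first bullet of the proposition. Lemma~\ref{lem.perturb.Fatoucoordi} yields the gate structure $(*,2)$ with continuously varying preferred Fatou coordinates on $S$, and Lemma~\ref{lem.escaping.strip} provides the escaping region $U_{\tau,a}\subset P_{1,+,f_a}$ together with the uniform $T_1$-invariant strip $B\subset\bigcup_{n\in\mathbb{Z}}(\phi_{1,+,f_a}(U_{\tau,a})+n)$.

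Write $L_a^\tau:=\psi_{1,-,f_a}\circ T_\tau\circ\phi_{1,+,f_a}$. The functional equations $\phi_{1,+,f_a}\circ f_a=\phi_{1,+,f_a}+1$ and $f_a\circ\psi_{1,-,f_a}=\psi_{1,-,f_a}\circ T_1$ give $L_a^\tau\circ f_a=f_a\circ L_a^\tau$; together with the total $f_a$-invariance of $K(f_a)$, the escape condition $L_a^\tau(c_a)\in\mathbb{C}\setminus K(f_a)$ reduces to finding $n\ge 0$ with $f_a^n(c_a)\in U_{\tau,a}$, and hence (by the injectivity of $\phi_{1,+,f_a}$ on $P_{1,+,f_a}$ and the covering property of $B$) to the \emph{phase condition} $\phi_{1,+,f_a}(c_a)\in B$.

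I would then take $c_a$ to be the critical point of $f_a$ tending to $-i/\sqrt{3}$ as $a\to 0$. At $a=0$ this critical orbit is attracted to $0$ through the lower sepal $P_{2,+,f_0}$; for $a\in S\setminus\{0\}$ that lower direction is absorbed into $P_a=P_{2,\pm,f_a}$, which is no longer an attracting petal of $0$ because $-a$ is repelling, so the orbit of $c_a$ must transit $P_a$ and subsequently enter the unique remaining attracting petal $P_{1,+,f_a}$. The transit time is governed by the gate-$2$ phase $\tau(a)=-2\pi i/\log(1+a^2)$, which diverges as $a\to 0$ in $S$. Consequently, $\phi_{1,+,f_a}(c_a)$ does not converge to a fixed value as $a\to 0$ (in contrast to what happens for the other critical point), and its imaginary part can be driven to any prescribed real value by choosing $a$ along an appropriate curve in $S$.

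To produce $\mathcal{G}_\tau$ explicitly I would apply Lemma~\ref{lem.orbit.correspond} with $x(a,t)\in P_{2,+,f_0}$ tracking the $f_0$-orbit of $c_0$ as it enters $P_{2,+,f_0}$, and $y(a,t)\in P_{2,-,f_0}$ chosen so that the equations $f_a^n(x(a,t))=y(a,t)$ and $\phi_{2,+,f_a}(x)-\tau(a)+n=\phi_{2,-,f_a}(y)$ encode precisely the condition $\phi_{1,+,f_a}(c_a)\in B$ after the orbit crosses from $P_a$ into $P_{1,+,f_a}$. The lemma yields solutions $a_n(t)\in S$ with $a_n(t)\to 0$ uniformly as $n\to\infty$; the curve $\mathcal{G}_\tau$ is their continuous trace.

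The main obstacle is the precise bookkeeping of the phase across the transition $P_a\rightsquigarrow P_{1,+,f_a}$: Lemma~\ref{lem.orbit.correspond} furnishes an equation in the gate-$2$ Fatou coordinates, whereas the phase condition lives in the gate-$1$ coordinate $\phi_{1,+,f_a}$. Matching these requires the horn-map constants and the $\tilde{\tau}_k$-formulas of Theorem~\ref{thm.oudkerk}\ref{thm.oudkerk:fourth} in the preferred normalization of Definition~\ref{def.prefered.nor}, together with an analysis of how the divergent $\tau(a)\to\infty$ balances against the iterate $n$. Carrying this matching out—so that the resulting $a_n$ indeed realize the gate-$1$ phase condition for the fixed Lavaurs parameter $\tau$—is the technical heart of the proof.
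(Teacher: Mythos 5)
Your proposal correctly identifies the main tools — the gate structure of Lemma~\ref{lem.perturb.Fatoucoordi}, the escaping strip of Lemma~\ref{lem.escaping.strip}, the orbit-correspondence Lemma~\ref{lem.orbit.correspond}, and the role of the lower critical point $c_a\to -i/\sqrt{3}$ — but there are two genuine gaps that keep this from being a proof, and a minor structural issue.

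The first gap is the claim that ``by continuity from $f_0$, both critical points of $f_a$ remain in the unique immediate parabolic Fatou component of $0$ for $a\in S$.'' This is not a continuity statement. At $a=0$ the parabolic fixed point is degenerate of valence $\nu=2$: there are two attracting petals and two components of the immediate basin, one per critical point. For $a\neq 0$ in $\Omega_{\eta,r}$, only the gate-$1$ attracting petal $P_{1,+,f_a}$ survives at $0$; whether the orbit of the lower critical point, after transiting the open gate at $P_a$, re-enters the immediate basin of $0$ (rather than landing in the basin of $\infty$, or in the Julia set) is exactly what must be established, and it is parameter-dependent. The paper proves it only at the end, for the specific parameters $a_n(t)$ produced by the orbit-correspondence argument, by extending the curve $y(a_n(t),t)$ backward along the critical orbit and invoking the argument of~\cite[Lem.~3.6]{Ta}.

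The second gap is the one you flag yourself as ``the technical heart of the proof'' and leave open: reconciling a phase condition stated in the gate-$1$ Fatou coordinate with an orbit-correspondence equation in the gate-$2$ coordinates, via the horn-map constants and Theorem~\ref{thm.oudkerk}\ref{thm.oudkerk:fourth}. The paper's proof avoids this entirely. Instead of reducing to a condition $\phi_{1,+,f_a}(c_a)\in B$ and pushing it through gate-$2$ arithmetic, the paper builds the escape target directly into the definition of the set $Y_a$ to be hit: it takes a segment $h(a,\tilde Y_0)$ of the $0$-angle external ray, applies $\phi_{1,-,f_a}$, translates by $T_{-\tau}$, maps by $\phi_{1,+,f_a}^{-1}$ to obtain $\hat Y_a\subset P_{1,+,f_a}$, and then pulls $\hat Y_a$ back by $f_a^N$ to land inside $P_a\cap P_{2,-,f_0}$, giving $Y_a$. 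Lemma~\ref{lem.orbit.correspond} applied with $x(a,t)=f_a^M(c_a)$ and $y(a,t)\in Y_a$ then yields parameters $a_n(t)$ along which $f_{a_n}^{n+M}(c_{a_n})$ lands on $Y_{a_n}$; the Lavaurs escape of $c_{a_n}$ then follows from $L_a\circ f_a=f_a\circ L_a$ and the full invariance of $K(f_a)$, with no horn-map bookkeeping. Your approach could in principle be pushed through, but it requires a nontrivial calculation that you have not carried out; the paper's construction shows that calculation is avoidable.

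Finally, the restriction to the sub-sector $S=\{\,\arg a\in(\eta,\pi/4)\,\}$ where $-a$ is repelling is neither needed nor a priori compatible with the machinery: Lemmas~\ref{lem.perturb.Fatoucoordi}, \ref{lem.escaping.strip}, and \ref{lem.orbit.correspond} are stated over the full sector $\Omega_{\eta,r}$, and Lemma~\ref{lem.orbit.correspond} produces solutions $a_n(t)\in\Omega_{\eta,r}$ with no guarantee that they fall in $S$. If you insist on $S$, you need an additional argument to localize the solutions.
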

\begin{proof}
By symmetry, $\mathbb{R}_{>0}$ coincides with the $0$-angle external ray $R_0$ of $f_0$. Hence $R_0$ will enter $P_{1,-,f_0}$. Let $\tilde{Y}_0\subset R_0\cap P_{1,-,f_0}$ be a closed subinterval parametrized by $\tilde{y}(t)$, $t\in[0,1]$ such that $f_0(\tilde{y}(0)) = y(1)$. Let $h:\overline{\Omega_{\eta,r}}\times R_0\longrightarrow\C$ be the continuous motion of the external ray $R_0$ induced by the Böttcher coordinate. We define 
\begin{equation}\label{eq.shooting.externalray}
    \hat{Y}_a := \phi_{1,+,f_a}^{-1}\circ T_{-\tau}\circ\phi_{1,-,f_a}(h(a,\tilde{Y}_0)).
\end{equation}
By Lemma \ref{lem.escaping.strip} and continuity dependence on the Fatou coordinate, there exists $N$ such that for all $a\in\overline{\Omega_{\eta,r}}$, we can pull back univalently $\hat{Y}_a$ by $f_a^N$ to get $Y_a$, such that $Y_a\Subset P_a\cap P_{2,-,f_0}$. Let $y(a,t)$ be the induced parametrization of $Y_a$.

Let $c_0$ be the critical point of $f_0$ contained in the lower-half plane. Notice that there is an analytic continuation of the critical point $c_a$ of $f_a$ for $a\in\Omega_{\eta,r}$ so that $c_a\to c_0$ as $a\to0$. Since there exists an $M$ such that $f^M_0(c_0)\in P_{2,+,f_0}$, one may take $r$ smaller if necessary such that $f^M_a(c_a)\in  P_{2,+,f_0}$ for all $a\in\overline{\Omega_{\eta,r}}$. Apply Lemma \ref{lem.orbit.correspond} to $y(a,t)$ and $f^M_a(c_a)$, we get $a_n(t)$ such that $f^{n+M}_{a_n(t)}(c_{a_n(t)}) = y(a_n(t),t)$. Notice that $f^{n+1+M}_{a_n(0)}(c_{a_n(0)}) = y(a_n(0),1)$. Thus by uniqueness of $a_n(t)$, $a_{n+1}(1) = a_n(0)$. Therefore $\mathcal{G}_\tau := \bigcup_n \{a_n(t);\,t\in[0,1]\}$ is a curve converging to $a=0$. The same argument as in the end of the proof of \cite[Lem. 3.6]{Ta} will show that if one extends $y(a_n(t),t)$ by pulling back by $f^{n+M}_{a_n(t)}$, then $c_{a_n(t)}$ belongs to the extended curve. This implies that $c_{a_n(t)}$ is also contained in the immediate parabolic basin of $0$. By Lemma \ref{lem.escaping.strip}, we have $\psi_{1,-,f_a}\circ T_\tau\circ\phi_{1,+,f_a}(c_a)\in \mathbb{C}\setminus K(f_{a})$.
\end{proof}

For $\lambda$ close to 1, $z=0$ splits into two fixed points $0$ and $\sigma_{\lambda,a}$ for $f_{\lambda,a}$. The following lemma gives a condition when $\sigma_{\lambda,a}$ is attracting:
\begin{lem}\label{lem.parabolic.attracting}
Let $\lambda_n = e^{2\pi i\alpha_n}\in\overline{\mathbb{D}}$ converge to $1$ ($\mathfrak{Im}\,\alpha_n\geq 0$) and $N_n\to+\infty$, $\tau\in\mathbb{C}$ such that
$$N_n-\frac{1}{\alpha_n} = \tau+o(1).$$
Then for all $a\in\C$ satisfying 
\begin{equation}\label{eq.ctau}
    |a^2-\frac{c_\tau}{2}|<\frac{c_\tau}{2}\,\quad\text{ where }\quad c_\tau = \frac{1}{1+\frac{1}{2\pi}\mathfrak{Im}\,\tau},
\end{equation}
$\sigma_{\lambda_n,a}$ is always an attracting fixed point for $n$ large enough.
\end{lem}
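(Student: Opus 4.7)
The plan is to derive an asymptotic expansion for the multiplier $f'_{\lambda_n,a}(\sigma_{\lambda_n,a})$ as $n\to\infty$ and then check that the stated disk condition is exactly the set of $a$ on which the limiting multiplier has modulus strictly less than $1$; since that inequality is open, it will force $\sigma_{\lambda_n,a}$ to be attracting for all $n$ large enough.

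First I would reduce the multiplier to a tractable form. The non-zero fixed points of $f_{\lambda,a}$ solve $\sigma^2+a\sigma+(\lambda-1)=0$; writing $\epsilon=\lambda-1$ and eliminating the cross term via this relation one obtains
\[ f'_{\lambda,a}(\sigma) \;=\; \lambda + 2a\sigma + 3\sigma^2 \;=\; 1 - \epsilon + \sigma^2. \]
Vieta gives $\sigma_1\sigma_2=\epsilon$ and $\sigma_1+\sigma_2=-a$, so the branch $\sigma_{\lambda,a}\to 0$ (the one we care about) satisfies $\sigma_{\lambda,a}=-\epsilon/a+\mathcal{O}(\epsilon^2)$, valid since $a\ne 0$ throughout the open disk in (\ref{eq.ctau}) ($a=0$ lies on its boundary). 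Hence $\sigma_{\lambda,a}^2=\epsilon^2/a^2+\mathcal{O}(\epsilon^3)$ and
\[ |f'_{\lambda,a}(\sigma_{\lambda,a})|^2 - 1 \;=\; -2\,\mathfrak{Re}(\epsilon) + 2\,\mathfrak{Re}(\epsilon^2/a^2) + |\epsilon|^2 + \mathcal{O}(|\epsilon|^3). \]

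Next I would feed in the hypothesis $1/\alpha_n=N_n-\tau+o(1)$. From it one reads off $|\alpha_n|^2=N_n^{-2}+o(N_n^{-2})$, $\mathfrak{Re}(\alpha_n^2)=N_n^{-2}+o(N_n^{-2})$, $\mathfrak{Im}(\alpha_n)=\mathfrak{Im}(\tau)\,N_n^{-2}+o(N_n^{-2})$, and $\mathfrak{Re}(\alpha_n^2/a^2)=\mathfrak{Re}(a^{-2})\,N_n^{-2}+o(N_n^{-2})$. A routine second-order expansion of $\epsilon_n=e^{2\pi i\alpha_n}-1$ gives $\mathfrak{Re}(\epsilon_n)=-2\pi\mathfrak{Im}(\alpha_n)-2\pi^2\mathfrak{Re}(\alpha_n^2)+\mathcal{O}(|\alpha_n|^3)$, $\epsilon_n^2=-4\pi^2\alpha_n^2+\mathcal{O}(|\alpha_n|^3)$, $|\epsilon_n|^2=4\pi^2|\alpha_n|^2+\mathcal{O}(|\alpha_n|^3)$. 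Substituting and collecting gives
\[ \lim_{n\to\infty} N_n^2\bigl(|f'_{\lambda_n,a}(\sigma_{\lambda_n,a})|^2-1\bigr) \;=\; 4\pi\,\mathfrak{Im}(\tau) + 8\pi^2 - 8\pi^2\,\mathfrak{Re}(a^{-2}), \]
which is strictly negative precisely when $\mathfrak{Re}(a^{-2})>1+\mathfrak{Im}(\tau)/(2\pi)=1/c_\tau$.

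Finally I would check that for $w=a^2$ the half-plane inequality $\mathfrak{Re}(1/w)>1/c_\tau$ is equivalent to $|w-c_\tau/2|<c_\tau/2$: writing $\mathfrak{Re}(1/w)=\mathfrak{Re}(w)/|w|^2$, multiplying through by $c_\tau|w|^2>0$ and completing the square recovers exactly (\ref{eq.ctau}). The main point of care is the bookkeeping in the expansion: $\mathfrak{Im}(\alpha_n)$, $|\alpha_n|^2$ and $\mathfrak{Re}(\alpha_n^2)$ are all of the same order $N_n^{-2}$, so the nominally first-order term $-2\mathfrak{Re}(\epsilon_n)$ and the nominally second-order terms $2\mathfrak{Re}(\epsilon_n^2/a^2)$ and $|\epsilon_n|^2$ all contribute simultaneously to the leading coefficient; in particular the ``$+8\pi^2$'' is the sum of $4\pi^2\mathfrak{Re}(\alpha_n^2)$ and $4\pi^2|\alpha_n|^2$, each of which produces the same leading constant.
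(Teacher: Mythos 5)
Your computation is correct, but it follows a different path from the paper's. You expand the multiplier $f'_{\lambda_n,a}(\sigma_{\lambda_n,a}) = 1-\epsilon_n+\sigma_{\lambda_n,a}^2$ to second order in $\alpha_n$; the arithmetic checks out, including the point you rightly flag that $\mathfrak{Im}\,\alpha_n$, $\mathfrak{Re}(\alpha_n^2)$, and $|\alpha_n|^2$ all contribute at the same order $N_n^{-2}$, and the limiting quantity $4\pi\,\mathfrak{Im}\,\tau+8\pi^2\bigl(1-\mathfrak{Re}(a^{-2})\bigr)$ is negative precisely on the disk (\ref{eq.ctau}). The paper avoids locating $\sigma_{\lambda_n,a}$ and expanding $|f'(\sigma_{\lambda_n,a})|^2$ by working with holomorphic indices instead: it invokes Lemma \ref{lem.iota.attracting} ($\sigma$ attracting if and only if $\mathfrak{Re}\,\iota>\frac{1}{2}$) together with the Rouch\'e/continuity identity $\iota(f_{\lambda_n,a},\sigma_{\lambda_n,a})+\iota(f_{\lambda_n,a},0)\to\iota(f_a,0)=1/a^2$ as the two simple fixed points coalesce back into the parabolic one. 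Since $\mathfrak{Re}\,\iota(f_{\lambda_n,a},0)=\frac{1}{2}-\frac{1}{2}\mathfrak{Im}\cot(\pi\alpha_n)\to\frac{1}{2}+\frac{1}{2\pi}\mathfrak{Im}\,\tau$, one reads off $\mathfrak{Re}\,\iota(f_{\lambda_n,a},\sigma_{\lambda_n,a})\to\mathfrak{Re}(a^{-2})-\frac{1}{2}-\frac{1}{2\pi}\mathfrak{Im}\,\tau$, which exceeds $\frac{1}{2}$ exactly when $\mathfrak{Re}(a^{-2})>1/c_\tau$, i.e.\ when (\ref{eq.ctau}) holds. The index argument packages your three simultaneous $O(N_n^{-2})$ contributions into a single cotangent expansion plus the index-sum identity, which is structurally cleaner and less prone to a dropped or mis-signed term; your route is more elementary and self-contained, at the cost of the heavier bookkeeping you describe, but both reduce to the same half-plane inequality $\mathfrak{Re}(a^{-2})>1/c_\tau$ and then to the disk (\ref{eq.ctau}) via the M\"obius identification $w\mapsto 1/w$.
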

\begin{proof}
For $\lambda = e^{2\pi i \alpha}$, it is easy to calculate the holomorphic indices $$\mathfrak{Re}\,\iota(f_{\lambda,a},\sigma_{\lambda,a}) =\frac{1}{2}-\frac{1}{2}\mathfrak{Im}\,\frac{\sin2\pi\alpha}{1-\cos2\pi\alpha},\quad\iota(f_a,0) = \frac{1}{a^2}.$$ 
Hence if $n$ is large enough, $\mathfrak{Re}\,\iota(f_{\lambda_n,a},\sigma_{\lambda_n,a}) = \frac{1}{2}+\frac{1}{2\pi}\mathfrak{Im}\,\tau$; if $a$ satisfies (\ref{eq.ctau}), then $\mathfrak{Re}\,\iota(f_a,0)>1+\frac{1}{2\pi}\mathfrak{Im}\,\tau$. By Rouché's Theorem,
$$\lim_{n\to\infty}\iota(f_{\lambda_n,a},\sigma_{\lambda_n,a})+\iota(f_{\lambda_n,a},0) = \iota(f_a,0).$$
Taking the real part on both sides, we get $\mathfrak{Re}\,\iota(f_{\lambda,a},\sigma_{\lambda,a})>\frac{1}{2}$. Hence $\sigma_{\lambda_n,a}$ is attracting by Lemma \ref{lem.iota.attracting}.
\end{proof}

\subsection{Parabolic slice $\mathrm{Per}_1(1)$}\label{sec.parabolicslice}
From now on to the end of the article, we will work with the following more convenient family
\begin{equation}\label{eq.family-gs}
    {g}_{s}(z) =  z\left(1-\frac{s+1/s}{2}z + \frac{1}{3}z^2\right),\,\,s\in \mathbb{C}^*,
\end{equation}
where the two finite critical points $s,1/s$ of $g_s$ are marked out. We say that a parameter $s$ is {\it Misiurezwicz parabolic} if one of the critical points $s,1/s$ will eventually hit $z=0$ under the iteration of $g_s$. The families (\ref{eq.family-gs}) and (\ref{eq.family_a1}) have the following obvious relation:

\begin{lem}\label{lem.relation-gs-fa}
    $g_{s}$ is conjugate to $f_{a}$ by $z\mapsto \sqrt{3}\cdot z$ with $a = \sigma(s) := - \sqrt{{3}}\cdot\frac{s+1/s}{2}$. Moreover $\sigma$ is doubly ramified at $s=\pm1$.
\end{lem}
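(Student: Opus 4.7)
The plan is a direct verification in two stages. First, I would check the conjugation by substitution: setting $h(z) := \sqrt{3}\,z$, one computes
\[h^{-1}\circ g_s\circ h(z) \;=\; \tfrac{1}{\sqrt{3}}\,g_s(\sqrt{3}z) \;=\; z \;-\; \tfrac{\sqrt{3}(s+1/s)}{2}\,z^2 \;+\; z^3,\]
which is exactly $f_{\sigma(s)}(z)$ for $\sigma(s)=-\sqrt{3}(s+1/s)/2$. The factor $\sqrt{3}$ is precisely what is needed to absorb the leading coefficient $1/3$ of $g_s$ into the leading coefficient $1$ of $f_a$. As a sanity check, $h$ should send the marked critical points $s,\,1/s$ of $g_s$ to critical points of $f_{\sigma(s)}$: the roots of $f'_{\sigma(s)}(z) = 1 + 2\sigma(s)z + 3z^2$ are $s/\sqrt{3}$ and $1/(\sqrt{3}s)$, and indeed $h$ maps them back to $s$ and $1/s$, which justifies the critical marking convention of the family (\ref{eq.family-gs}).

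Second, for the ramification at $s=\pm 1$, I would differentiate directly: $\sigma'(s) = -\tfrac{\sqrt{3}}{2}(1 - s^{-2})$, which vanishes exactly on $\{\pm 1\}\subset\C^*$, while $\sigma''(s) = -\sqrt{3}\,s^{-3}$ is nonzero there. So the local degree of $\sigma$ at each of $\pm 1$ is exactly $2$; i.e. $\sigma$ is doubly ramified. This is also the combinatorially natural answer because $\sigma(s)=\sigma(1/s)$, so $\sigma\colon\C^*\to\C$ is a degree-two branched cover, and its ramification points must be the fixed points of the involution $s\mapsto 1/s$, namely $s=\pm1$.

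There is no real obstacle here — the lemma is a bookkeeping identity whose only purpose is to let the rest of the paper work in the coordinates (\ref{eq.family-gs}) while freely quoting the near-degenerate-parabolic analysis carried out for (\ref{eq.family_a1}) in Section \ref{sec.parabolic-degenerate}.
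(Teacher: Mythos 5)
Your verification is correct, and since the paper simply labels this relation ``obvious'' and supplies no proof, a direct computation like yours is exactly what is intended. Both halves of your argument check out: the affine substitution $\tfrac{1}{\sqrt{3}}g_s(\sqrt{3}z)$ reproduces $z+\sigma(s)z^2+z^3$ with the stated $\sigma$, and $\sigma'(s)=-\tfrac{\sqrt{3}}{2}(1-s^{-2})$ vanishes to order exactly one at $s=\pm1$, so the local degree there is $2$; the remark that $\sigma$ factors through the degree-two involution $s\mapsto 1/s$ on $\C^*$ is a nice structural confirmation of the same fact.
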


Notice that $z=0$ is a degenerate parabolic fixed point for $g_s$ if and only if $s=\pm i$. For $s\in\mathbb{C}^*\setminus\{i,-i\}$, denote by $B^*_s$ the immediate parabolic basin of $g_s$ at 0. Let us consider the union of adjacent parabolic components:
\begin{equation}\label{eq.adjacentlocus}
    \{s\in\mathbb{C}^*\setminus\{i,-i\};\,s,1/s\in B^*_s\}
\end{equation}

We summarize some of the results in \cite{Z} as follows:
\begin{thm}\label{thm.runze}
   The set (\ref{eq.adjacentlocus}) is open and has exactly two components $\mathcal{H}\subset\mathbb{H},-\mathcal{H}\subset\mathbb{-H}$ ($\mathbb{H} := \{s\in\mathbb{C};\mathfrak{Re}\,s>0\}$ is the right half-plane), both of which are symmetric with respect to $s\mapsto 1/s$. Moreover $\mathcal{H}$ has the following properties:
   
   \begin{enumerate}[label=\upshape(\Roman*)]
       \item\label{thm.rurnze:first} ({\cite[Theorem A]{Z}}.) $\partial\mathcal{H}$ is a Jordan curve and Misiurezwicz parabolic parameters are dense in $\partial\mathcal{H}$.
       \item\label{thm.rurnze:second}  The projection of $\overline{\mathcal{H}}$ to family (\ref{eq.family_a1}) via Lemma \ref{lem.relation-gs-fa} is exactly $\mathcal{K}_1\cap\overline{\mathbb{H}}$ (recall $\mathcal{K}_1$ in Definition \ref{defn.central-part}).
       \item\label{thm.rurnze:third} ({\cite[Proposition 3.33]{Z}}.) There exists a curve $\mathcal{I}\subset\mathcal{H}$ symmetric under $s\mapsto1/s$ connecting $s=i$ and $-i$, cutting $\mathcal{H}$ into two connected components $\mathcal{D}_+,\mathcal{D}_-$ (see Figure \ref{fig.region_D+}) such that there is a conformal isomorphism $$\Phi:\mathcal{D}_+\longrightarrow B(\mathfrak{p}_1)\setminus\overline{\Omega},\quad\text{ satisfying }\quad\phi(\Phi(s)) = \phi_s(s),$$ 
       where
       \begin{itemize}
           \item $B(\mathfrak{p}_1)$ is the parabolic basin of $\mathfrak{p}_1(z) = z+z^2$;
           \item $\phi$ is the attracting Fatou coordinate of $\mathfrak{p}_1$ normalized by $\phi(-\frac{1}{2})=0$;
           \item $\Omega$ is the attracting petal of $\mathfrak{p}_1$ such that $\phi(\Omega)=\mathbb{H}$;
           \item $\phi_s$ is the attracting Fatou coordinate of $g_s$ nomalized by $\phi_s(1/s)=0$.
       \end{itemize}
   \end{enumerate}
\end{thm}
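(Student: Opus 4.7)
The plan is to split Theorem \ref{thm.runze} into three parts: the general topological properties of (\ref{eq.adjacentlocus}) stated in the preamble, the enumeration of connected components together with item \ref{thm.rurnze:second}, and items \ref{thm.rurnze:first} and \ref{thm.rurnze:third}, which are essentially translations of results proved in \cite{Z} via Lemma \ref{lem.relation-gs-fa}.

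Openness of (\ref{eq.adjacentlocus}) reduces, via Lemma \ref{lem.relation-gs-fa}, to the openness of the adjacent parabolic locus in $\mathrm{Per}_1(1)$, which is built into Definition \ref{defn.central-part}. The involution $s\mapsto 1/s$ fixes $g_s$ because the coefficient $(s+1/s)/2$ is invariant, and it swaps the marked critical points $s,1/s$, so the condition $s,1/s\in B^*_s$ is symmetric under this involution. Similarly, the conjugation $g_{-s}(z)=-g_s(-z)$ yields $B^*_{-s}=-B^*_s$ and swaps $\{s,1/s\}\mapsto\{-s,-1/s\}$, so the condition is invariant under $s\mapsto -s$ as well.

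For the component count and item \ref{thm.rurnze:second}: when $\lambda=1$ and $a\neq 0$, the parabolic fixed point $0$ of $f_a$ admits a single attracting petal, so the cycle of its immediate parabolic basin has length one; hence the bitransitive alternative in Definition \ref{defn.central-part} is vacuous and $\mathcal{K}_1$ coincides with the closure of the union of adjacent parabolic components in $\mathrm{Per}_1(1)$. By Lemma \ref{lem.relation-gs-fa}, $\sigma^{-1}(\mathcal{K}_1)$ is precisely the closure of (\ref{eq.adjacentlocus}). The map $\sigma:\mathbb{C}^*\to\mathbb{C}$ is a degree-two branched cover with deck transformation $s\mapsto 1/s$, ramified at $s=\pm 1$; by direct computation, at $s=\pm 1$ the double critical point of $g_{\pm 1}$ iterates to the parabolic fixed point $0$, so $\pm 1$ lies in (\ref{eq.adjacentlocus}) and hence in the interior of $\sigma^{-1}(\mathcal{K}_1)$. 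Combining this with the connectedness and fullness of $\mathcal{K}_1$ from \cite[Theorem A]{Z} and with the $a\mapsto -a$ symmetry of $\mathcal{K}_1$ (coming from $f_{-a}(-z)=-f_a(z)$), one concludes that $\sigma^{-1}(\mathcal{K}_1)$ consists of exactly two components, interchanged by $s\mapsto -s$ and each symmetric under $s\mapsto 1/s$. These are $\overline{\mathcal{H}}$ and $-\overline{\mathcal{H}}$, yielding simultaneously the component count and item \ref{thm.rurnze:second} (after accounting for the $a\mapsto -a$ symmetry).

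Items \ref{thm.rurnze:first} and \ref{thm.rurnze:third} are then translations of \cite[Theorem A]{Z} and \cite[Proposition 3.33]{Z} into the $s$-parametrization, via the restriction of $\sigma$ to $\overline{\mathbb{H}}$, which is a homeomorphism onto its image modulo the $s\mapsto 1/s$ folding. The main obstacle throughout is the connectedness argument used in the component count: one must verify that the pull-back under the branched cover $\sigma$ of the single connected full continuum $\mathcal{K}_1$ (intersected with a half $a$-plane) remains connected in the corresponding half of the $s$-plane. This relies crucially on the ramification points $s=\pm 1$ lying in the interior of the preimage, which guarantees that the two sheets of $\sigma$ over $\mathcal{K}_1$ are joined into a single component on each side of the imaginary $s$-axis; any alternative scenario would contradict the fullness of $\mathcal{K}_1$.
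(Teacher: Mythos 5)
The paper does not actually prove Theorem~\ref{thm.runze}; it introduces it with the phrase ``We summarize some of the results in \cite{Z} as follows,'' and the items carry explicit pointers to \cite[Theorem A]{Z} and \cite[Proposition 3.33]{Z}. Your attempt to re-derive the statement from first principles is therefore a genuinely different route, and most of it is sound: the openness reduction, the $s\mapsto 1/s$ and $s\mapsto -s$ symmetries of (\ref{eq.adjacentlocus}), the identification of $s=\pm 1$ as ramification points of $\sigma$, and the petal argument placing the double critical point of $g_{\pm 1}$ in the immediate basin are all correct.

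The gap is in the passage ``hence the bitransitive alternative in Definition \ref{defn.central-part} is vacuous and $\mathcal{K}_1$ coincides with the closure of the union of adjacent parabolic components.'' Vacuity of the bitransitive alternative only tells you that $\mathcal{K}_1$ is the closure of all parabolic components $\mathcal{U}$ linked to an adjacent component by a curve $\gamma$ of the prescribed type; it does not tell you that every such $\mathcal{U}$ is itself adjacent. Capture-type parabolic components (where one critical point is captured after finitely many iterates) could a priori be attached to the adjacent ones by admissible curves and thereby enlarge $\mathcal{K}_1$ beyond $\overline{\bigcup\{\mathcal{U}:\mathcal{U}\text{ adjacent}\}}$. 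Ruling this out is precisely the structural content of item \ref{thm.rurnze:second}, so your argument as written proves item \ref{thm.rurnze:second} by assuming it. To close the gap you would need the relevant classification of parabolic components in $\mathrm{Per}_1(1)$ from \cite{Z}, which is exactly what the paper's citation supplies. Once that is granted, your branched-cover argument for the component count (using that $s=\pm 1$ lie in the interior, so the two sheets of $\sigma$ are fused) is a clean and correct way to get the two components and their symmetries, and items \ref{thm.rurnze:first} and \ref{thm.rurnze:third} do indeed translate directly through $\sigma$.
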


The following lemma follows immediately from Theorem \ref{thm.runze} \ref{thm.rurnze:third}:
\begin{lem}\label{lem.unbounded.curveR}
    Let $\mathcal{R}\subset\mathcal{H}$ be a curve that is not compactly contained in $\mathcal{H}$. Let $s_0\in\partial\mathcal{H}$ be an accumulation point of $\mathcal{R}$ and $\mathcal{V}$ be any neighborhood of $s_0$. For $s\in\C\setminus\{i,-i\}$, let $\phi_s: B^*_s\longrightarrow\C$ be any attracting Fatou coordinate of $g_s$. Then $\phi_s(s)-\phi_s(1/s)$ is unbounded for $s\in\mathcal{R}\cap\mathcal{V}$.
\end{lem}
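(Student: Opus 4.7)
The plan is to reformulate the statement via the conformal parametrization $\Phi:\mathcal{D}_+\to B(\mathfrak{p}_1)\setminus\overline{\Omega}$ from Theorem \ref{thm.runze}\ref{thm.rurnze:third} and then exploit the properness of the Fatou coordinate $\phi$ of $\mathfrak{p}_1$. After extracting a sequence $s_n\in\mathcal{R}\cap\mathcal{V}$ with $s_n\to s_0\in\partial\mathcal{H}$, the involution $s\mapsto 1/s$ (under which $g_s=g_{1/s}$, the intrinsic quantity $\phi_s(s)-\phi_s(1/s)$ changes sign, and $\mathcal{D}_\pm$ swap while $\partial\mathcal{H}$ is preserved) lets me pass to a further subsequence with $s_n\in\mathcal{D}_+$. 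With $\phi_{s_n}$ normalized by $\phi_{s_n}(1/s_n)=0$, Theorem \ref{thm.runze}\ref{thm.rurnze:third} reads $\phi_{s_n}(s_n)-\phi_{s_n}(1/s_n)=\phi(\Phi(s_n))$, reducing the task to proving $|\phi(\Phi(s_n))|\to\infty$.

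The main technical input is properness of $\phi:B(\mathfrak{p}_1)\to\C$, meaning $\phi^{-1}(K)$ is a compact subset of $B(\mathfrak{p}_1)$ for every compact $K\subset\C$. I would prove this by a standard first-entry-time argument: for $z\in\phi^{-1}(K)$, the entry time $n(z):=\min\{n\geq 0:\mathfrak{p}_1^n(z)\in\Omega\}$ satisfies $\mathfrak{Re}\,\phi(\mathfrak{p}_1^{n(z)}(z))\leq 1$ whenever $n(z)\geq 1$, since the previous iterate lies outside $\Omega=\phi^{-1}(\mathbb{H})$; combined with $\phi(z)=\phi(\mathfrak{p}_1^{n(z)}(z))-n(z)\in K$, this bounds $n(z)$ in terms of $K$, so $\phi^{-1}(K)$ lies in a finite union of iterated $\mathfrak{p}_1$-preimages of a compact subset of $\Omega$. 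Now suppose by contradiction that $\phi(\Phi(s_n))$ stays bounded; properness then confines $\Phi(s_n)$ to a compact subset of $B(\mathfrak{p}_1)$, and after extraction $\Phi(s_n)\to z_*\in B(\mathfrak{p}_1)$. Since $\Phi(s_n)\in B(\mathfrak{p}_1)\setminus\overline{\Omega}$ and the parabolic fixed point $0$ does not lie in $B(\mathfrak{p}_1)$, either $z_*\in B(\mathfrak{p}_1)\setminus\overline{\Omega}$ or $z_*\in\partial\Omega\setminus\{0\}$.

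In the first case, continuity of $\Phi^{-1}$ forces $s_n\to\Phi^{-1}(z_*)\in\mathcal{D}_+\subset\mathcal{H}$, contradicting $s_n\to s_0\in\partial\mathcal{H}$. The remaining case $z_*\in\partial\Omega\setminus\{0\}$ is the main obstacle. Near such a point, $\partial(B(\mathfrak{p}_1)\setminus\overline{\Omega})$ is the smooth Jordan arc $\partial\Omega$, so local conformal boundary regularity extends $\Phi^{-1}$ continuously to a neighborhood of $z_*$; the landing point lies on $\partial\mathcal{D}_+$ and one needs to show it sits in $\mathcal{I}\setminus\{i,-i\}\subset\operatorname{int}\mathcal{H}$ rather than on the arc $\partial\mathcal{H}\cap\partial\mathcal{D}_+$. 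For this I invoke the prime-end correspondence of the conformal isomorphism $\Phi$ combined with the dynamical identity $\phi(\Phi(s))=\phi_s(s)$: the value $\phi_s(s)$ stays bounded as $s$ approaches $\mathcal{I}$ (where both critical points remain in $B^*_s$ at bounded Fatou-coordinate distance), whereas it must diverge as $s$ approaches $\partial\mathcal{H}$, using that Misiurewicz parabolic parameters are dense in $\partial\mathcal{H}$ by Theorem \ref{thm.runze}\ref{thm.rurnze:first} and that at such a parameter the critical orbit hits the parabolic fixed point $0$. This pins down the boundary correspondence $\mathcal{I}\leftrightarrow\partial\Omega$ and $\partial\mathcal{H}\cap\partial\mathcal{D}_+\leftrightarrow\partial B(\mathfrak{p}_1)$, placing $\Phi^{-1}(z_*)\in\operatorname{int}\mathcal{H}$ and yielding the desired contradiction.
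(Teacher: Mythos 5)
Your reduction via $\Phi$ and the identity $\phi_s(s)-\phi_s(1/s)=\phi(\Phi(s))$ is the right starting point, but the main technical input you rely on is false, and this breaks the contradiction argument at its first step.

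\textbf{The extended Fatou coordinate $\phi:B(\mathfrak{p}_1)\to\C$ is not proper.} Every fiber of $\phi$ is infinite: for any $w\in\C$,
\[
\phi^{-1}(w)=\bigcup_{k\geq 0}\mathfrak{p}_1^{-k}\bigl((\phi|_\Omega)^{-1}(w+k)\bigr),
\]
and this increasing union of $\mathfrak{p}_1^{-k}$--preimages of the points $(\phi|_\Omega)^{-1}(w+k)\to 0$ accumulates on the Julia set $\partial B(\mathfrak{p}_1)$ (backward iteration drives points toward $J$). A discrete set with a limit point on $\partial B(\mathfrak{p}_1)$ cannot be compact in $B(\mathfrak{p}_1)$, so $\phi^{-1}(\{w\})$ already fails compactness. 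The specific gap in your first-entry-time proof is the tacit identification $\Omega=\phi^{-1}(\mathbb{H})$: since $\sigma(z)=-1-z$ satisfies $\mathfrak{p}_1\circ\sigma=\mathfrak{p}_1$ and hence $\phi\circ\sigma=\phi$, the reflected domain $\sigma(\Omega)$ is disjoint from $\Omega$ (by injectivity of $\phi|_\Omega$ they can meet only at the fixed point $-1/2$ of $\sigma$, which lies on $\partial\Omega$) yet $\phi(\sigma(\Omega))=\mathbb{H}$. So $\mathfrak{p}_1^{n(z)-1}(z)\notin\Omega$ does not force $\mathfrak{Re}\,\phi(\mathfrak{p}_1^{n(z)-1}(z))\leq 0$, and the bound on $n(z)$ collapses. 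Consequently the case $\Phi(s_n)\to z_*\in\partial B(\mathfrak{p}_1)$ with $\phi(\Phi(s_n))$ bounded is a genuine possibility that your proof does not address, and this is exactly the case that must be ruled out.

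A secondary concern: even granting $z_*\in\partial\Omega\setminus\{0\}$, the prime-end identification $\mathcal{I}\leftrightarrow\partial\Omega$, $\partial\mathcal{D}_+\cap\partial\mathcal{H}\leftrightarrow\partial B(\mathfrak{p}_1)$ is justified by asserting that $\phi_s(s)$ ``must diverge as $s$ approaches $\partial\mathcal{H}$'', which is essentially the statement being proved; invoking density of Misiurewicz parabolic parameters does not by itself establish that $\phi_s(s)$ blows up along approach to all of $\partial\mathcal{H}\cap\partial\mathcal{D}_+$. The paper avoids both problems by a different route: it uses the parameter internal equipotentials and filled-in internal equipotentials constructed in \cite{Z} together with their landing properties, which directly measure the ``depth'' of $\phi(h_s(g_s(s)))-\phi(h_s(g_s(1/s)))$ as $\mathcal{R}$ approaches $\partial\mathcal{H}$ and make no appeal to properness of $\phi$ on the whole basin.
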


Finally, we have the following topological description on the Julia set of $g_s$:
\begin{thm}[{\cite[Theorem 1,2]{Ro}}]\label{thm.pascale}
    For $s\in\mathbb{C}^*\setminus\{i,-i\}$, the boudary of the immediate parabolic basin $\partial B^*_s$ is always a Jordan curve. If moreover $s\in\overline{\mathcal{H}}$, then the Julia set $J_s$ can be decomposed into the union of $\overline{B^*_s}$ and countable many pairwise-disjoint limbs attached to $\partial B^*_s$: 
    $$J_s=\overline{B^*_s}\cup\left(\bigsqcup_{n} \mathrm{Limb}_n\right).$$
\end{thm}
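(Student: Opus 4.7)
The plan is to adapt the Yoccoz puzzle machinery to the parabolic cubic $g_s$, in the spirit of Roesch's approach to local connectivity of parabolic Julia sets.

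For the first assertion, since $s\neq\pm i$ the parabolic fixed point $z=0$ is non-degenerate (single attracting petal), and $g_s:B^*_s\to B^*_s$ is a proper holomorphic branched cover of degree $2$ or $3$. First, I would choose a repelling periodic point $\alpha\in\partial B^*_s$, and use the finitely many external rays landing at $\alpha$ together with a small external equipotential to cut out a depth-$0$ puzzle in a neighborhood of $J_s$; pulling back by $g_s$ refines this into a nested puzzle $\{P_n^{(i)}\}$. Using the attracting Fatou coordinate at $0$ and Shishikura-style horn-map estimates, one shows that the puzzle pieces shrink to points at every point of $\partial B^*_s$ outside the grand orbit of $0$, yielding local connectivity of $\partial B^*_s$. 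The non-degeneracy of the parabolic fixed point---failing precisely at the excluded parameters $s=\pm i$---implies that $0$, and therefore each iterated preimage of $0$ in $\partial B^*_s$, is accessed from inside $B^*_s$ by exactly one prime end. By Carathéodory's theorem a Riemann map $\mathbb D\to B^*_s$ then extends continuously to $\overline{\mathbb D}$, and the absence of such pinch points makes the extension injective on $\partial\mathbb D$, so $\partial B^*_s$ is a Jordan curve.

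For the second assertion, assume $s\in\overline{\mathcal H}$. By the definition of $\mathcal H$ in (\ref{eq.adjacentlocus}) and continuity, both critical points $s,1/s$ lie in $\overline{B^*_s}$, so $J_s$ is connected and every bounded Fatou component of $g_s$ is an iterated preimage of $B^*_s$. Let $\{\mathrm{Limb}_n\}$ enumerate the connected components of $J_s\setminus\overline{B^*_s}$; these are at most countably many continua, and the closure of each must meet $\partial B^*_s$ because $J_s$ is connected. By the Jordan-curve property of $\partial B^*_s$ combined with the simple connectedness of $B^*_s$, two distinct accumulation points of $\overline{\mathrm{Limb}_n}$ on $\partial B^*_s$ would, joined by an arc through $\mathrm{Limb}_n$, produce a crosscut of $B^*_s$ into two disjoint open pieces; hence each limb attaches to $\partial B^*_s$ at a unique point. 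Pairwise disjointness of the $\mathrm{Limb}_n$ is immediate from their definition as distinct connected components.

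The hard part will be the puzzle-piece shrinking in the first assertion. Near the parabolic fixed point $0$ the hyperbolic metric of $B^*_s$ is not uniformly contracted by the inverse branches of $g_s$, so the standard Yoccoz trick breaks down. To overcome this one must work in the Fatou coordinate---where the dynamics becomes a unit translation---and exploit the geometry of fundamental sepals and horn maps à la Lavaurs--Shishikura in order to obtain a quantitative shrinking estimate for the puzzle pieces accumulating at $0$ and at its iterated preimages.
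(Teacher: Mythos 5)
This statement is not proved in the paper: it is imported verbatim from Roesch (the citation \cite{Ro}), so there is no internal proof to compare against. Evaluating your sketch on its own merits, the general strategy --- Yoccoz puzzle, Fatou-coordinate analysis near the parabolic, then a limb decomposition --- is indeed the shape of Roesch's argument, but there are genuine gaps at the points where the real work happens.

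First, for the Jordan-curve claim you conflate local connectivity with injectivity of the Carath\'eodory extension. Local connectivity of $\partial B^*_s$ (which you attribute to shrinking puzzle pieces) gives a continuous extension $\overline{\mathbb D}\to\overline{B^*_s}$, but to conclude $\partial B^*_s$ is a Jordan curve you must rule out identifications on $\partial\mathbb D$ at \emph{every} boundary point, not only at the grand orbit of $0$. You argue that non-degeneracy of the parabolic gives a single prime end at $0$ and at its iterated preimages, but you say nothing about other points. In Roesch's proof this injectivity is a separate step: one shows that any two distinct prime-end impressions are separated by some puzzle piece, using definite moduli of the puzzle annuli. As written your proposal simply asserts ``the absence of such pinch points'' without giving a reason. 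Second, the puzzle-shrinking claim itself is the core of the theorem and you explicitly defer it (``the hard part will be\ldots''), so as a proof the proposal is incomplete precisely where Roesch's argument is nontrivial.

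For the limb decomposition, the crosscut argument is flawed: an arc through $\mathrm{Limb}_n$ joining two points of $\partial B^*_s$ lies in $J_s\setminus B^*_s$, hence is \emph{not} a crosscut of $B^*_s$. What you actually get is a Jordan curve in $\overline{\mathrm{Limb}_n}\cup\partial B^*_s$ whose bounded complementary component must be analyzed; one then argues that this trapped open set would have to contain a Fatou component whose grand orbit never reaches $\infty$ or $B^*_s$, a contradiction --- but that argument in turn needs local connectivity of the full Julia set $J_s$ (not just $\partial B^*_s$), which your sketch never establishes. Roesch's Theorem 2 is precisely the statement that $J_s$ is locally connected and that the limb decomposition holds; deriving the limb decomposition from $\partial B^*_s$ being a Jordan curve alone, without that additional local-connectivity input, does not work.
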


\begin{figure}[ht]
\centering 
\includegraphics[width=0.5\textwidth]{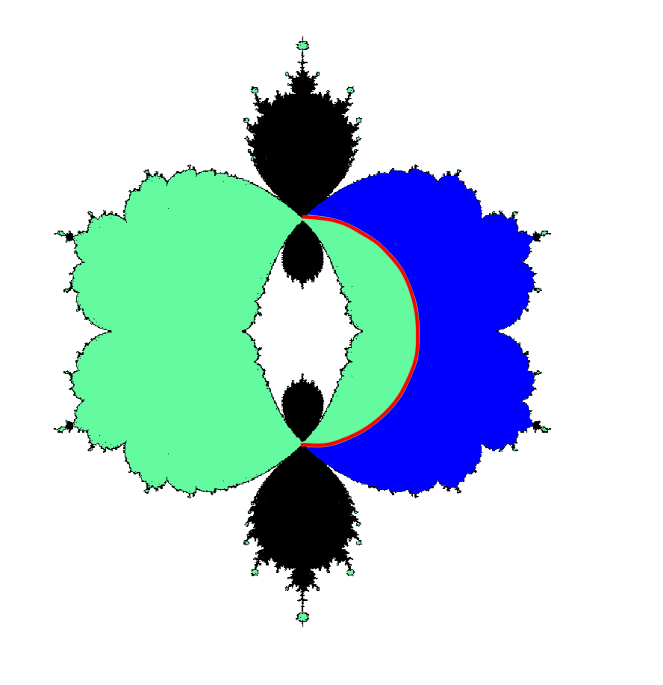} 
\caption{The curve $\mathcal{I}$ is colored in red; the region $\mathcal{D}_+$ is colored in blue.} 
\label{fig.region_D+} 
\end{figure}

\section{Cubic Lavaurs map and escaping region}\label{sec.dynamical}

\subsection{Basic setting}\label{subsec.basic-setting}
Let $s\in\mathbb{C}^*\setminus\{i,-i\}$. Recall that $B^*_s$ is the immediate parabolic basin at $0$ of $g_s$. Since $B^*_s$ contains at least one critical point of $g_s$, we make the following assumption:
\begin{assum}[Assumption on $s$]\label{assum.s1}
   $s\in\mathbb{C}^*\setminus\{i,-i\}$, $1/s\in B^*_s$.
\end{assum}

 Let $\phi_s:B^*_s\longrightarrow\C$ be the attracting Fatou coordinate normalized by $\phi_s(1/s) = 0$. Let $\psi_s^{\bullet}:\C\longrightarrow\C$ be the extended map of the inverse repelling Fatou coordinate, normalized by 
$$\tilde{H}_s(w) := \phi_s\circ\psi_s^{\bullet}(w) = w+o(1),\text{ as }\mathfrak{Im}\,w\to+\infty.$$
We denote $\psi_{s,\tau} := \psi_s^{\bullet}\circ T_\tau$.

\begin{defn}[$T_1$-invariant curves $\mathfrak{l}^u_{s,\tau},\mathfrak{l}^l_{s,\tau}$]\label{defn.invariant-curves}
By Theorem \ref{thm.pascale}, $(\psi_{s,\tau})^{-1}(\partial B^*_s)$ consists of two $T_1$-invariant curves. Denote by $\mathfrak{l}^u_{s,\tau},\mathfrak{l}^l_{s,\tau}$ these two curves such that $\mathfrak{l}^u_{s,\tau}$ (resp. $\mathfrak{l}^l_{s,\tau}$) is the boundary of the component of $\mathbb{C}\setminus(\mathfrak{l}^u_{s,\tau}\cup \mathfrak{l}^l_{s,\tau})$ that contains an upper-(resp. lower-)half plane. Let $\tilde{W}^u_{s,\tau}$ be the upper component of $\mathbb{C}\setminus(\mathfrak{l}^u_{s,\tau}\cup \mathfrak{l}^l_{s,\tau})
$. 
\end{defn}

\begin{defn}[Horn maps]\label{def.hornmap}
    For $\tau\in\C$, set $T_\tau:z\mapsto z+\tau$ to be the translation. The {\it lifted horn map} of phase $\tau$ is defined by $$\tilde{H}_{s,\tau} :\tilde{W}^u_{s,\tau}\longrightarrow\C,\quad w\mapsto \tilde{H}_s\circ T_\tau(w).$$
    The corresponding {\it projective horn map} $H_{s,\tau}:W^u_{s,\tau}\longrightarrow\C$ is the projection of $\tilde{H}_{s,\tau}$ via $\mathrm{ixp}(w) := e^{2\pi iw}$, where $W^u_{s,\tau} = \mathrm{ixp}(W^u_{s,\tau})$. It is immediate that $H_{s,\tau}(0) = 0$, $H_{s,\tau}'(0) = e^{2\pi i\tau}$.
\end{defn}

\begin{defn}[Lavaurs map]\label{def.lavaursmap}
    The {\it Lavaurs map} $L_{s,\tau}:B^*_s\longrightarrow\C$ for $g_s$ of phase $\tau$ is defined by 
$$L_{s,\tau} :=\psi_s^\bullet\circ T_\tau \circ\phi_s = \psi_{s,\tau}\circ\phi_s.$$
\end{defn}

Due to Lavaurs and Shishikura, the limiting dynamics of $g_s$ is encoded by Lavaurs map:
\begin{thm}[{\cite[Proposition 3.2.2]{Sh2}}]\label{thm.shishikura-lavaurs}
    Let $\lambda\in\C^*$ and denote $g_{\lambda,s}= \lambda\cdot g_s$.
    Let $\lambda_n = e^{2\pi i \alpha_n}$ with $\alpha_n\in\C$ such that $\lim\limits_{n\to\infty}\lambda_n=1$. Assume that there exists a sequence of positive integers $N_n$ converging to $+\infty$ and $\tau\in\C$ such that 
    \begin{equation}\label{eq.limphase}
        \lim_{n\to\infty}(N_n-\frac{1}{\alpha_n}) = \tau.
    \end{equation}
    
    Then for all $s\in\C^*\setminus\{i,-i\}$, $g_{\lambda_n,s}^{N_n}$ converges uniformly to $L_{s,\tau}$ on compact subsets of $B^*_s$.
\end{thm}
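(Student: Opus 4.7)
}
The plan is to apply the classical parabolic implosion machinery to the family $g_{\lambda_n,s}$. Fix $s$; by assumption $s\neq\pm i$, so the parabolic fixed point $0$ of $g_s$ is non-degenerate (of multiplicity $2$). For $\lambda_n=e^{2\pi i\alpha_n}$ with $\alpha_n\to 0$, the fixed point at $0$ of $g_{\lambda_n,s}$ splits into a near-parabolic pair, and Abel's equation can be solved on slightly shrunken attracting and repelling petals to yield holomorphic Fatou coordinates $\phi_n^{\mathrm{att}}$ and $\phi_n^{\mathrm{rep}}$ for $g_{\lambda_n,s}$. With the preferred normalization matching $\phi_s(1/s)=0$ and $\phi_s\circ\psi_s^\bullet(w)=w+o(1)$ as $\mathfrak{Im}\,w\to+\infty$, one obtains $\phi_n^{\mathrm{att}}\to\phi_s$ and $(\phi_n^{\mathrm{rep}})^{-1}\to\psi_s^\bullet$ uniformly on compact subsets of the unperturbed petals, with the perturbed petals Hausdorff-converging to the unperturbed ones.

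The technical heart is the Douady--Lavaurs transit identity
\[
\phi_n^{\mathrm{rep}}(z) = \phi_n^{\mathrm{att}}(z) - \frac{1}{\alpha_n} + o(1),
\]
valid uniformly on compact subsets of the overlap of the two charts. This reflects the fact that both Fatou coordinates descend to uniformizations of a cylindrical neighborhood of $0$ whose modulus is controlled by the holomorphic index $\iota(g_{\lambda_n,s},0)=1/(1-\lambda_n)=1/(2\pi i\alpha_n)+O(1)$, and matching the two descents forces the stated difference. In the non-degenerate parabolic case this is classical and can be extracted from the Oudkerk/Shishikura framework recalled in Section \ref{sec.oudkerk}, specialized to a single non-degenerate parabolic fixed point.

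To conclude, fix a compact $K\subset B^*_s$. Since $B^*_s$ is the parabolic basin, there exists $M\geq 0$ with $g_s^M(K)$ relatively compact in the domain of $\phi_s$. By continuity in $\lambda$ and the convergence of the Fatou coordinates, for $n$ large $g_{\lambda_n,s}^M(K)$ lies in the domain of $\phi_n^{\mathrm{att}}$, and iterating $N_n-M$ more times gives, uniformly on $K$,
\[
\phi_n^{\mathrm{att}}\bigl(g_{\lambda_n,s}^{N_n}(z)\bigr) = \phi_s(z)+N_n+o(1).
\]
For $n$ large this value lands in the overlap of the two charts (since the repelling petal covers an arbitrarily large half-plane in repelling coordinate); applying the transit identity and inverting $\phi_n^{\mathrm{rep}}$,
\[
g_{\lambda_n,s}^{N_n}(z) = (\phi_n^{\mathrm{rep}})^{-1}\!\left(\phi_s(z)+\left(N_n-\tfrac{1}{\alpha_n}\right)+o(1)\right) \longrightarrow \psi_s^\bullet\bigl(\phi_s(z)+\tau\bigr) = L_{s,\tau}(z),
\]
using $N_n-1/\alpha_n\to\tau$ and the compact-open convergence $(\phi_n^{\mathrm{rep}})^{-1}\to\psi_s^\bullet$.

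The main obstacle is the construction and convergence of the perturbed Fatou coordinates together with the transit identity; a subordinate but necessary point is checking that $g_{\lambda_n,s}^{N_n}(z)$ really lands in the domain of $\phi_n^{\mathrm{rep}}$ so that the transit identity may be applied. Both issues reduce to standard non-degenerate parabolic implosion in one variable and are much simpler than the degenerate situation at $s=\pm i$ handled via Oudkerk's theory in Section \ref{sec.oudkerk}; in particular no gate combinatorics arises here, and the preferred normalization is the obvious one.
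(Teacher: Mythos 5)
The paper does not prove this statement; it is quoted verbatim from Shishikura's article (the citation \cite[Proposition 3.2.2]{Sh2}), so there is no in-paper proof to compare against. Your proposal is a correct sketch of exactly that standard non-degenerate parabolic-implosion argument: since $s\neq\pm i$ the parabolic point has multiplicity $2$, one constructs perturbed Fatou coordinates, establishes their convergence to $\phi_s$ and $\psi_s^\bullet$, and uses the transit (Douady--Lavaurs) identity together with $N_n-1/\alpha_n\to\tau$ to pass to the limit.

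Two small imprecisions worth flagging, neither of which damages the argument. First, the holomorphic index of $g_{\lambda_n,s}$ at $0$ is $1/(1-\lambda_n)=-\tfrac{1}{2\pi i\alpha_n}+\tfrac12+O(\alpha_n)$, not $+\tfrac{1}{2\pi i\alpha_n}+O(1)$; the sign matters if one wants to justify the transit formula through the index computation, though your stated transit identity $\phi_n^{\mathrm{rep}}=\phi_n^{\mathrm{att}}-1/\alpha_n+o(1)$ is the one consistent with your final limit. Second, the equality $\phi_n^{\mathrm{att}}(g_{\lambda_n,s}^{N_n}(z))=\phi_s(z)+N_n+o(1)$ tacitly uses that the attracting perturbed Fatou coordinate extends (via the Abel relation $\phi_n^{\mathrm{att}}\circ g=\phi_n^{\mathrm{att}}+1$) along the whole finite orbit from the attracting petal across the gate; in the standard setup this is exactly the point handled by defining the perturbed coordinates on a single ``egg'' domain bounded by the two fixed points, so that both $\phi_n^{\mathrm{att}}$ and $\phi_n^{\mathrm{rep}}$ are simultaneously available there. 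Stating that explicitly would close the one gap you yourself identify (``checking that $g_{\lambda_n,s}^{N_n}(z)$ really lands in the domain of $\phi_n^{\mathrm{rep}}$''). Beyond these, the reduction to the non-degenerate one-petal case and the observation that no Oudkerk gate combinatorics is needed for $s\neq\pm i$ are exactly right.
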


\begin{remnota*}
    In the rest of the article, we will omit the subscript “$\tau$” in every notation if it will be irrelevant to the statement or the proof. For example, we will write $L_s= L_{s,\tau}$, $\phi_s = \phi_{s,\tau}$, $\psi_s = \psi_{s,\tau}$ and $H_s = H_{s,\tau}$, $\tilde{H}_s = \tilde{H}_{s,\tau}$, etc..
\end{remnota*}

\subsection{Singular values}
Let us first recall the definition of singular set:
\begin{defn}[Singular set]
    Let $W\subset\hat{\C}$ be an open subset, $f:W\longrightarrow\hat{\C
    }$ be a non locally constant holomorphic map. We call $v\in f(W)$ an asymptotic value of $f$, if there exists a path $\gamma\subset W$ parametrized by $\gamma(t)$, $t\geq 0$ such that $$\forall\text{ compact }K\subset W,\,\gamma\cap(W\setminus K)\not = \emptyset,\quad\lim_{t\to+\infty}f(\gamma(t)) = v.$$  
    The singular set of $f$ is defined by 
    $$S(f) := \overline{\{v\in f({W});\,v\text{ is a critical value or asymptotic value of }f\}}.$$
    If $S(f)$ is finite, then $f$ is called finite type.
\end{defn}
The following resulst is well-known:
\begin{prop}[{\cite[Proposition 1]{Er}}]\label{Propositioncoveringmap}
    $f:W\setminus f^{-1}(S(f))\longrightarrow f(W)\setminus S(f)$ is a non ramified covering map.
\end{prop}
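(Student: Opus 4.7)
Since every critical value of $f$ lies in $S(f)$ by definition, the restriction of $f$ to $W\setminus f^{-1}(S(f))$ has no critical points and is therefore already a local biholomorphism. To upgrade this to the covering property over $f(W)\setminus S(f)$, I will verify the evenly covered condition at each point of the target by a path-lifting argument, in which the absence of both critical and asymptotic values in the base is the decisive ingredient.

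Fix $y_0\in f(W)\setminus S(f)$. Since $S(f)$ is closed, there is a simply connected open neighborhood $D\ni y_0$ with $\overline{D}\subset f(W)\setminus S(f)$. For each $x\in f^{-1}(y_0)$, let $U_x$ be the connected component of $f^{-1}(D)$ containing $x$; the $U_x$ are pairwise disjoint and their union is $f^{-1}(D)$. Because $f|_{U_x}$ is a local biholomorphism and $D$ is simply connected, it is enough to show that $f\colon U_x\longrightarrow D$ is a covering map, since any covering of a simply connected space by a connected space is a homeomorphism, after which the evenly covered decomposition $f^{-1}(D)=\bigsqcup_{x\in f^{-1}(y_0)} U_x$ is immediate.

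The covering property reduces to path-lifting. Let $\gamma\colon[0,1]\to D$ be a path with $\gamma(0)=y_0$, and let $T$ be the supremum of those $t\in[0,1]$ for which a continuous lift $\tilde\gamma\colon[0,t)\to U_x$ starting at $x$ exists. Suppose for contradiction that $T<1$. If $\tilde\gamma(t)$ admitted an accumulation point $x_*\in W$ as $t\to T^-$, then by continuity $f(x_*)=\gamma(T)\in D$, forcing $x_*\in W\setminus f^{-1}(S(f))$, and a local inverse of $f$ at $x_*$ would extend $\tilde\gamma$ past $T$, contradicting maximality. Hence $\tilde\gamma(t)$ leaves every compact subset of $W$ as $t\to T^-$, while $f\circ\tilde\gamma(t)=\gamma(t)\to\gamma(T)$. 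After reparametrizing the interval $[0,T)$ as $[0,+\infty)$, this exhibits $\gamma(T)\in D$ as an asymptotic value of $f$, contradicting $\overline{D}\cap S(f)=\emptyset$. Therefore $T=1$, a global lift exists, and $f\colon U_x\longrightarrow D$ is a covering.

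The main subtlety is the last step: ensuring that a failure to lift must produce an asymptotic value rather than something subtler. The only two possible obstructions are the lift converging to a critical point or the lift escaping from $W$ altogether, and the delicate intermediate possibility that the lift accumulates on $f^{-1}(S(f))\subset W$ is precisely what is ruled out by the continuity observation $f(\tilde\gamma(t))\to\gamma(T)\notin S(f)$. Once this is cleared, the evenly covered structure follows and the proposition is proved.
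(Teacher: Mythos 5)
Your proof is correct; the paper itself supplies no argument and simply cites \cite[Proposition 1]{Er}, and what you have written is the standard path-lifting/completeness proof of that statement (essentially the Gross--Iversen argument). One minor point worth recording: when the lift accumulates at some $x_*\in W$, the relation $f(x_*)=\gamma(T)\in D$ not only rules out $x_*\in f^{-1}(S(f))$ but also places $x_*$ in $f^{-1}(D)$, hence in $U_x$ itself (components of an open subset of a Riemann surface are open, and $x_*\in\overline{U_x}$), after which one still has to check that the local inverse of $f$ at $x_*$ agrees with $\tilde\gamma$ on a terminal subinterval of $[0,T)$ via the usual open-closed connectedness argument --- that is the step that genuinely extends the lift past $T$. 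These are routine, and the proposal is sound.
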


It is immediate from Definition \ref{def.hornmap} that both $\tilde{H}_{s}$ and $H_{s}$ have at most two critical values:
\begin{lem}\label{lem.two-crit-value}
    The two possible critical values of $\tilde{H}_{s}$ are $\phi_s(s)$ and $\phi_s(1/s)$; the two of $H_{s}$ are $\mathrm{ixp}\circ\phi_s(s), \mathrm{ixp}\circ\phi_s(1/s)$.
\end{lem}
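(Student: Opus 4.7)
The plan is to apply the chain rule to $\tilde H_s(w)=\phi_s\circ\psi_s^\bullet(w+\tau)$ and then trace the vanishing of each factor back, via the respective functional equations, to the critical points of $g_s$. Since $\tilde H_s$ is $T_1$-equivariant, the set of critical values is invariant under integer translations, and the claim is that modulo this translation action there are only two critical value classes.

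First I would differentiate: $(\tilde H_s)'(w)=\phi_s'\bigl(\psi_s^\bullet(w+\tau)\bigr)\cdot(\psi_s^\bullet)'(w+\tau)$, so a critical point of $\tilde H_s$ at $w$ splits into two cases. In Case~1, $(\psi_s^\bullet)'(w+\tau)=0$. Differentiating the defining relation $\psi_s^\bullet(u+1)=g_s\!\circ\!\psi_s^\bullet(u)$ gives $(\psi_s^\bullet)'(u+1)=g_s'(\psi_s^\bullet(u))\cdot(\psi_s^\bullet)'(u)$. Because $\psi_s^\bullet$ is univalent on the repelling petal (near $\mathfrak{Im}\,u=+\infty$), iterating this identity backward shows that $(\psi_s^\bullet)'(w+\tau)=0$ holds if and only if there is a minimal $k\ge 1$ with $\psi_s^\bullet(w+\tau-k)$ equal to some critical point $c\in\{s,1/s\}$ of $g_s$. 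Then $\psi_s^\bullet(w+\tau)=g_s^{\,k}(c)$, so the corresponding critical value is
$$\tilde H_s(w)=\phi_s\bigl(g_s^{\,k}(c)\bigr)=\phi_s(c)+k\ \in\ \phi_s(c)+\mathbb{Z}.$$

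In Case~2, $\phi_s'(\psi_s^\bullet(w+\tau))=0$ with $\psi_s^\bullet(w+\tau)\in B^*_s$. Differentiating the Abel relation $\phi_s\circ g_s=T_1\circ\phi_s$ yields $\phi_s'(g_s(z))\cdot g_s'(z)=\phi_s'(z)$, so inductively the critical set of $\phi_s$ inside $B^*_s$ is exactly the backward orbit of the critical points of $g_s$ lying in $B^*_s$. Writing $z=\psi_s^\bullet(w+\tau)$, this means $g_s^{\,n}(z)=c$ for some $n\ge 0$ and $c\in\{s,1/s\}\cap B^*_s$, and the critical value is
$$\tilde H_s(w)=\phi_s(z)=\phi_s(c)-n\ \in\ \phi_s(c)+\mathbb{Z}.$$

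Combining the two cases, every critical value of $\tilde H_s$ lies in $(\phi_s(s)+\mathbb{Z})\cup(\phi_s(1/s)+\mathbb{Z})$, with the convention that $\phi_s(s)$ is omitted if $s\notin B^*_s$. Hence modulo $\mathbb{Z}$ there are at most the two classes $\phi_s(s)$, $\phi_s(1/s)$. Descending via $\mathrm{ixp}$ to the projective horn map $H_s$ then yields the two critical values $\mathrm{ixp}\circ\phi_s(s)$ and $\mathrm{ixp}\circ\phi_s(1/s)$. The only subtle step is the backward tracing in Case~1: one must check that the factor $(\psi_s^\bullet)'$ is nonzero far up in the imaginary direction (which is automatic since $\psi_s^\bullet$ is the inverse of the repelling Fatou coordinate on the petal) so that every new zero of $(\psi_s^\bullet)'$ is forced to come from a critical point of $g_s$ one step earlier. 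Everything else is a chain-rule bookkeeping of the two functional equations.
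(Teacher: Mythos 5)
Your proof is correct and carries out exactly the chain-rule computation that the paper intends when it calls this lemma ``immediate from Definition \ref{def.hornmap}'': differentiate $\tilde H_s=\phi_s\circ\psi_s^\bullet\circ T_\tau$, and trace the two factors back through the Abel relations $\psi_s^\bullet\circ T_1=g_s\circ\psi_s^\bullet$ and $\phi_s\circ g_s=T_1\circ\phi_s$ to the critical points $s,1/s$ of $g_s$. Two small points of phrasing are worth fixing: the non-vanishing of $(\psi_s^\bullet)'$ that anchors the backward tracing holds because $\psi_s^\bullet$ is univalent on a \emph{left} half-plane $\{\mathfrak{Re}\,w\ll 0\}$ (where it agrees with the inverse repelling Fatou coordinate), not ``far up in the imaginary direction''; and the identity $\phi_s(g_s^k(c))=\phi_s(c)+k$ silently assumes $c,g_s(c),\dots,g_s^k(c)\in B^*_s$, so when $c=s\notin B^*_s$ the associated critical value class should be read as $\phi_s(g_s^m(s))-m\pmod{\mathbb Z}$ for the least $m$ with $g_s^m(s)\in B^*_s$ (and dropped only if no such $m$ exists), rather than being ``omitted'' outright---this does not change the count of at most two classes, which is all the lemma asserts.
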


According to Lavaurs \cite[Proposition 1.5.2]{La}, it is easy to verify from Definition \ref{def.lavaursmap} that the critical points of $L_s = L_{s,\tau}$ are classified into following two types:
\begin{propdef}\label{def.twotypes.criticalpoint}
    A critical point $c$ of $L_{s,\tau}$ satisfies either 
\begin{itemize}
    \item $g_s^m(c) = s$ or $1/s$ for some $m\geq 0$; in this case we say that $c$ is {\it absolute}.
    \item $L_{s,\tau-n}(c)  = s$ or $1/s$ for some $n\geq 1$; in this case we say that $c$ is {\it moving}.
\end{itemize}
\end{propdef}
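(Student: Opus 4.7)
The plan is to unpack the composition $L_{s,\tau}=\psi_s^\bullet\circ T_\tau\circ \phi_s$ via the chain rule and identify the critical loci of the two outside factors separately, using the functional equations they satisfy with $g_s$.

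First I would note that a point $c\in B^*_s$ is critical for $L_{s,\tau}$ if and only if
\[
L'_{s,\tau}(c)=(\psi_s^\bullet)'\bigl(\phi_s(c)+\tau\bigr)\cdot\phi_s'(c)=0,
\]
so it suffices to describe separately where $\phi_s'$ and $(\psi_s^\bullet)'$ vanish. For $\phi_s$, differentiating the Abel equation $\phi_s\circ g_s=\phi_s+1$ gives $\phi_s'(g_s(z))\,g_s'(z)=\phi_s'(z)$; since $\phi_s'$ is nowhere zero on some attracting petal deep in $B^*_s$, iterating this relation forward shows that $\phi_s'(z)=0$ holds precisely when some forward iterate $g_s^m(z)$ ($m\geq 0$) is one of the critical points of $g_s$ lying in $B^*_s$, i.e. $g_s^m(c)\in\{s,1/s\}$. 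This recovers exactly the \emph{absolute} critical points.

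For $\psi_s^\bullet$, differentiating $g_s\circ\psi_s^\bullet=\psi_s^\bullet\circ T_1$ gives $g_s'(\psi_s^\bullet(w))\,(\psi_s^\bullet)'(w)=(\psi_s^\bullet)'(w+1)$. Because $\psi_s^\bullet$ is a conformal isomorphism from a right half-plane onto a repelling petal (no critical points there), pulling back this identity tells us that $(\psi_s^\bullet)'(w_0)=0$ if and only if there is a smallest $n\geq 1$ with $g_s'(\psi_s^\bullet(w_0-n))=0$, that is $\psi_s^\bullet(w_0-n)\in\{s,1/s\}$. Applying this to $w_0=\phi_s(c)+\tau$ and using $\psi_s^\bullet(\phi_s(c)+\tau-n)=\psi_s^\bullet\circ T_{\tau-n}\circ\phi_s(c)=L_{s,\tau-n}(c)$ yields exactly the \emph{moving} critical condition $L_{s,\tau-n}(c)\in\{s,1/s\}$.

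Combining the two cases, every critical point of $L_{s,\tau}$ is either absolute or moving, and conversely each such point is a genuine critical point by the chain rule. There is no real obstacle here; the only minor bookkeeping issue is to check that the two cases are mutually exhaustive but not mutually exclusive in the definitional sense — a critical point could in principle satisfy both conditions (e.g.\ if $s$ is Misiurewicz parabolic and $\tau$ is tuned accordingly), but this is harmless for the definition, which simply names two (overlapping) sources of criticality. I would conclude by remarking that absolute critical points depend only on $s$ while moving ones genuinely depend on $\tau$, which justifies the terminology.
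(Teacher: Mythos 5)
Your proof is correct and follows exactly the standard chain-rule decomposition implicit in the paper's citation of Lavaurs: differentiating $L_{s,\tau}=\psi_s^\bullet\circ T_\tau\circ\phi_s$, propagating vanishing of $\phi_s'$ forward through the Abel equation $\phi_s\circ g_s=\phi_s+1$ to obtain the absolute critical points, and propagating vanishing of $(\psi_s^\bullet)'$ backward through $g_s\circ\psi_s^\bullet=\psi_s^\bullet\circ T_1$ to obtain the moving ones. One small slip of orientation: $\psi_s^\bullet$ is univalent on a \emph{left} half-plane $\{\mathfrak{Re}\,w<-C\}$ (the image of the repelling petal under the repelling Fatou coordinate), which is consistent with your use of $w_0-n$ for $n\geq 1$; this does not affect the argument.
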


\begin{lem}\label{lem.asymptotic.psi}
    Let $s\in\mathbb{C}^*\setminus\{i,-i\}$. The extended inverse Fatou coordinate $\psi_s:\mathbb{C}\longrightarrow\mathbb{C}$ has a unique asymptotic value $z=0$; the attracting Fatou coordinate $\phi_s:B_s^*\longrightarrow\C$ has no asymptotic value. The Lavaurs map $L_{s}:B_s^*\longrightarrow\C$ has a unique asymptotic value $z=0$; the projective horn map $H_s:W^u_s\longrightarrow\mathbb{C}$ has two unique asymptotic values $z=0,\infty$. Therefore $H_s$ is a finite type map by Lemma \ref{lem.two-crit-value}.
\end{lem}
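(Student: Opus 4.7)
The plan is to identify the asymptotic values of each map by combining the semiconjugacy identities $\phi_s(g_s(z))=\phi_s(z)+1$ and $\psi_s(w+1)=g_s(\psi_s(w))$ with direct analysis of boundary behavior. For $\psi_s:\C\to\C$, the normalization $\phi_s\circ\psi_s(w)=w+o(1)$ as $\mathfrak{Im}\,w\to+\infty$ forces $\psi_s(w)\to 0$ in this direction, since $\phi_s$ is biholomorphic from a neighborhood of the parabolic fixed point $0$ in the upper attracting petal onto a neighborhood of $+i\infty$ in the Fatou image; a parallel argument handles $\mathfrak{Im}\,w\to-\infty$. Hence $0$ is an asymptotic value. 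For uniqueness, I observe that the set of asymptotic values is $g_s$-invariant: if $\gamma(t)\to\infty$ with $\psi_s\circ\gamma\to v$, then $\psi_s(\gamma(t)+n)=g_s^n(\psi_s(\gamma(t)))\to g_s^n(v)$. Since $\psi_s(\C)\subset B^*_s$ (built up from the repelling petal via forward $g_s$-iteration, with $B^*_s$ forward-invariant), any asymptotic value lies in $\overline{B^*_s}$; combined with the $\phi_s$ analysis below, it must in fact lie on $\partial B^*_s$, and a preimage-component argument then excludes everything but $0$.

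For $\phi_s:B^*_s\to\C$, I would show $\phi_s(z)\to\infty$ as $z\to\partial B^*_s$, which rules out finite asymptotic values. Pick an attracting petal $P_a\Subset B^*_s$ on which $\phi_s$ is uniformly bounded on the first-entry fundamental domain, and set $N(z):=\min\{n\geq 0:g_s^n(z)\in P_a\}$. Since $B^*_s$ is bounded and $g_s^k$ is continuous, $N(z_n)\to\infty$ whenever $z_n\to\zeta\in\partial B^*_s$: otherwise $g_s^k(\zeta)\in\overline{P_a}\Subset B^*_s$ would contradict $\zeta\in\partial B^*_s$. From $\phi_s(z)=\phi_s(g_s^{N(z)}(z))-N(z)$, this forces $\mathrm{Re}\,\phi_s(z)\to-\infty$. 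Consequently, any asymptotic path for $L_s=\psi_s\circ\phi_s$ in $B^*_s$ satisfies $\phi_s\circ\gamma\to\infty$, so $L_s\circ\gamma$ converges (when it does) to the unique asymptotic value of $\psi_s$, namely $0$; realization of $0$ is achieved by taking $\gamma(t)\to\partial B^*_s$ with $\phi_s(\gamma(t))\to+i\infty$ along a preimage of a vertical half-line in Fatou coordinates.

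For $H_s:W^u_s\to\C$, I analyze the two sides of $\partial W^u_s$. The inner boundary $\{0\}$ is reached as $\mathfrak{Im}\,w\to+\infty$ in $\tilde{W}^u_s$, where $\tilde{H}_s(w)=w+o(1)$ yields $|H_s(z)|=|e^{2\pi i\tilde{H}_s(w)}|\to 0$, so $0$ is asymptotic (and $H_s$ in fact extends holomorphically at $0$). The outer boundary of $W^u_s$ is the projection under $\mathrm{ixp}$ of $\mathfrak{l}^u_{s,\tau}\cup\mathfrak{l}^l_{s,\tau}$, where $\psi^\bullet_s(w+\tau)$ approaches $\partial B^*_s$; by the $\phi_s$ analysis above, $\mathrm{Re}\,\tilde{H}_s(w)\to-\infty$ with $|\mathfrak{Im}\,\tilde{H}_s|\to\infty$ along approaching paths, and the sign of the imaginary part can be arranged so that $|H_s|\to\infty$, giving $\infty$ as the second asymptotic value. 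Combining with Lemma \ref{lem.two-crit-value} (at most two critical values) shows $|S(H_s)|<\infty$, hence $H_s$ is of finite type. The main obstacle is the uniqueness part for $\psi_s$: ruling out Julia-set candidate values $v\in\partial B^*_s\setminus\{0\}$. My approach is the preimage-component argument---$v$ being asymptotic forces an unbounded component of $\psi_s^{-1}(U)$ for every small neighborhood $U\ni v$, and the semiconjugacy propagates this to the entire $g_s$-orbit of $v$ in $J(g_s)$, obstructed by the boundedness of $\psi_s$-preimages of compact subsets of $J(g_s)$ dictated by the parabolic preimage structure.
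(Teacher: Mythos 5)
Your outline for the \emph{existence} of the asymptotic value $0$ for $\psi_s$, the composition arguments passing to $L_s$ and $H_s$, and the observation that $\mathfrak{Re}\,\phi_s\to-\infty$ along paths to $\partial B_s^*$ are all sound (modulo a small imprecision: no attracting petal $P_a$ can satisfy $P_a\Subset B_s^*$ since $0\in\overline{P_a}\setminus B_s^*$; what you actually use is that $\mathfrak{Re}\,\phi_s$ is bounded above on the fundamental crescent, which is true, and the case where $g_s^k(z_n)\to 0$ has to be handled separately but is easy). These parts are close in spirit to the paper, which also uses the inclusion $S(f_1\circ f_2)\subset S(f_1)\cup f_1(S(f_2))$ for $L_s$ and $H_s$.

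The genuine gap is in your uniqueness argument for $\psi_s$. You assert that $\psi_s(\mathbb C)\subset B_s^*$, ``built up from the repelling petal via forward $g_s$-iteration, with $B_s^*$ forward-invariant.'' This is false: the repelling petal is \emph{not} contained in $B_s^*$; it straddles $\partial B_s^*$ and contains points of the basin of $\infty$ and of $J_s$ as well, and forward iteration of $g_s$ then spreads the image over essentially all of $\mathbb C$. Indeed the paper itself works repeatedly with $\psi_s^{-1}(\mathbb C\setminus K_s)$ (Definition~\ref{defn.invariant-curves}, Lemma~\ref{lem.psi}), which is nonempty precisely because $\psi_s$ hits $\mathbb C\setminus K_s$. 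Consequently your claim that all asymptotic values lie in $\overline{B_s^*}$ is unjustified. The ``preimage-component'' obstruction you invoke at the end is also not available: you appeal to ``the boundedness of $\psi_s$-preimages of compact subsets of $J(g_s)$,'' but $\psi_s^{-1}(\partial B_s^*)$ is a union of two unbounded $T_1$-invariant curves $\mathfrak l^u_{s,\tau}\cup\mathfrak l^l_{s,\tau}$, so compact subsets of $J_s$ have unbounded $\psi_s$-preimages and no such boundedness holds. The paper's route is quite different and avoids both pitfalls: it identifies the critical values of $\psi_s$ with the postcritical set $P_{g_s}$ of $g_s$, and then establishes a path-lifting property for $\psi_s$ over $\mathbb C\setminus(\{0\}\cup\overline{P_{g_s}})$ by (a) lifting a test curve $\gamma$ by the covering map $g_s^{n_0}$ away from its postcritical set, (b) using the shrinking lemma to force the lift into a petal neighborhood of $0$ for $n_0$ large, and (c) applying $(\psi_s|_{\hat P_-})^{-1}$ on the repelling petal and translating by $n_0$. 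You would need an argument of this covering/shrinking-lemma type to rule out other asymptotic values; the image-constraint and preimage-boundedness route does not close.
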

\begin{proof}
    We prove that $0$ is the unique asymptotic value for $\psi_s$; the result for $\phi_s$ will be similar. It is clear that $0$ is an asymptotic value, since if one takes a path $\gamma$ contained in the left-half plane and $\gamma$ converges to infinity, then $\psi_s(\gamma)$ converges to $z=0$.

    It remains to prove the uniqueness. Notice that the critical value set of $\psi_s$ equals the post-critical set $P_{g_s}$ of $g_s$. Thus it suffices to verify that $\psi_s$ satisfies the following lifting property: let $\gamma\subset\C\setminus(\{0\}\cup \overline{P_{g_s}})$ be any curve parametrized by $t\in[0,1]$; let $w\in\C$ be such that $\psi_s(w) = \gamma(0)$, then there exists a curve $\tilde{\gamma}= \tilde{\gamma}(t)$ such that $\tilde{\gamma}(0) = a$, $\psi_s(\tilde{\gamma}(t)) = \gamma(t)$.

    Take a pair of an ample attracting petal $\hat{P}_+$ and an ample repelling petal $\hat{P}_-$ of $g_s$ (that is, $U = \hat{P}_+\cup\hat{P}_-\cup\{0\}$ is an open neighborhood of $g_s$) such that $\gamma\cap U= \emptyset$. Take $n_0\geq 0$ large enough, such that $w-n_0$ belongs to a left-half plane on which $\psi_s$ is injective and $\psi_s(w-n_0)\in \hat{P}_-$. Since $g_s^{n_0}:\C\setminus g_s^{-n_0}(\overline{P_{g_s}})\longrightarrow\C\setminus\overline{P_{g_s}}$ is a covering map (notice that $P_{g_s} = P_{g_s^{n_0}}$), there exists $\eta = \eta(t)$ such that $g_s^{n_0}(\eta(t)) = \gamma(t)$ with $\eta(0) = \psi_s(w-n_0)$. 
    Since $\gamma\cap P_{g_s} = \emptyset$, the diameter of $g_{s}^{-n}(\gamma)$ tends to 0 as $n$ goes to infinity by the shrinking lemma. Thus by modifying $n_0$ larger if necessary, we may suppose that $\eta\subset U$. Notice that $\eta\cap\hat{P}_+= \emptyset$, otherwise $\gamma = g_s^{n_0}(\eta)$ intersects with $\hat{P}_+$. Thus $\tilde{\gamma}(t) := (\psi|_{\hat{P}_-})^{-1}(\eta(t))+n_0$ will satisfy the desired lifting property.

    Finally we verify that $0$ is the unique asymptotic value for $L_s$ (it will be similar for $H_s$). For two composable analytic maps $f_1,f_2$, we have the trivial inclusion $S(f_1\circ f_2) \subset S(f_1)\cup f_1(S(f_2))$. Thus it suffices to prove that $0$ is an asymptotic value for $L_s$. This is clear since we can take a curve $\xi\subset B^*_s$ converging to $0$, such that the imaginary part of $\phi_s(\xi)$ tends to infinity. Then $\psi_s(T_\tau(\xi))$ converges to $0$.
\end{proof}

\begin{lem}\label{lem.at-least-one}
     If $\mathfrak{Im}\,\tau\geq 0$, then at least one of $\phi_{s,\tau}(s),\phi_{s,\tau}(1/s)$ belongs to $\overline{\tilde{W}^u_{s,\tau}}$.
\end{lem}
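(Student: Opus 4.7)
The plan is to pass to the projective picture via the covering $\mathrm{ixp}(w)=e^{2\pi iw}$ and exploit the dynamics of the horn map at its fixed point $0$. After projecting, the claim is equivalent to asking that at least one of the two critical values $v_1=\mathrm{ixp}(\phi_{s,\tau}(1/s))=1$ or $v_2=\mathrm{ixp}(\phi_{s,\tau}(s))$ of the projective horn map $H_{s,\tau}:W^u_s\to\C^*$ belongs to $\overline{W^u_s}$, where $W^u_s=\mathrm{ixp}(\tilde{W}^u_{s,\tau})$ is a punctured neighborhood of $0$ in $\C^*$ bounded outwardly by a Jordan curve. The normalization $\tilde{H}_{s,\tau}(w)=w+\tau+o(1)$ as $\mathfrak{Im}\,w\to+\infty$ gives $H_{s,\tau}(0)=0$ with multiplier $e^{2\pi i\tau}$; the assumption $\mathfrak{Im}\,\tau\ge 0$ therefore makes $0$ an attracting or indifferent fixed point.

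I would first handle the attracting case $\mathfrak{Im}\,\tau>0$. The map $H_{s,\tau}$ is finite type by Lemma \ref{lem.asymptotic.psi}, with singular set $\{0,\infty,v_1,v_2\}$. Applying the Fatou--Sullivan correspondence for the Speiser class, the immediate basin of the attracting fixed point $0$ contains a singular value. The asymptotic value $\infty$ sits on the outer Jordan boundary of $W^u_s$, where $\psi_{s,\tau}(w)$ escapes $K_{g_s}$; orbits in the basin of $0$ remain in a relatively compact subset of $W^u_s$, so $\infty$ does not lie in the basin. To exclude the \emph{trivial} case in which the asymptotic value $0$ at the fixed point itself is the only singular value in the basin, I would argue: if neither $v_1$ nor $v_2$ lay in $\overline{W^u_s}$, then $H_{s,\tau}$ would map $W^u_s$ into $\C^*\setminus\{v_1,v_2\}$ as an unramified covering onto its image; uniformizing the target by its hyperbolic universal cover yields a holomorphic self-map of $\D$ fixing the preimage of $0$, and the Schwarz--Pick inequality forces a uniform strict contraction at $0$, contradicting the fact that $|e^{2\pi i\tau}|$ can be made arbitrarily close to $1$ within the attracting range. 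Consequently at least one critical value belongs to the immediate basin and hence to $W^u_s\subset\overline{W^u_s}$.

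The indifferent case $\mathfrak{Im}\,\tau=0$ reduces to the above by approximation. Take $\tau_n=\tau+i/n$ with $\mathfrak{Im}\,\tau_n>0$, apply the attracting case to obtain a critical value $v_{i_n}\in\overline{W^u_{s,\tau_n}}$ for each $n$, and pass to the limit: the pair $\{v_1,v_2\}$ does not depend on $\tau$, and $\overline{W^u_{s,\tau_n}}$ converges to $\overline{W^u_{s,\tau}}$ in the Hausdorff topology by continuity of $\psi_s^{\bullet}\circ T_{\tau_n}$ in $\tau_n$. Extracting a subsequence on which $i_n$ is constant then produces a critical value in $\overline{W^u_{s,\tau}}$. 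The main obstacle is making the Schwarz--Pick step in the attracting case rigorous, since the interplay between the asymptotic value $0$ at the fixed point and the two critical values has to be untangled carefully. A geometric alternative, perhaps cleaner, would bypass the horn map dynamics entirely and directly locate $v_1$ and $v_2$ relative to the outer boundary curve $\mathrm{ixp}(\mathfrak{l}^u_{s,\tau})$ using the Jordan curve description of $\partial B^*_s$ from Theorem \ref{thm.pascale} together with the explicit $\tau$-shift $\tilde{W}^u_{s,\tau}=\tilde{W}^u_{s,0}-\tau$.
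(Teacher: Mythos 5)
Your overall strategy matches the paper's: project to the projective horn map $H_{s,\tau}$, use the dynamics at the fixed point $0$ (attracting when $\mathfrak{Im}\,\tau>0$), and handle the indifferent boundary case $\mathfrak{Im}\,\tau=0$ by a limiting/perturbation argument. The paper is shorter at the crux: it perturbs $\tau$ to $\tau'$ so that $|H_{s,\tau'}'(0)|<1$ while the (open) contradiction hypothesis persists, and then directly invokes Shishikura's Lemma~4.5.2, which asserts that an attracting fixed point of a non-M\"obius finite-type map attracts at least one \emph{critical} value. This is precisely the refinement needed to rule out the ``trivial case'' you flag (the basin containing only the asymptotic value $0$ sitting at the fixed point itself).

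Your attempt to re-derive this via Schwarz--Pick has a genuine gap. The claim that, if $v_1,v_2\notin\overline{W^u_{s,\tau}}$, ``$H_{s,\tau}$ would map $W^u_{s,\tau}$ into $\C^*\setminus\{v_1,v_2\}$ as an unramified covering onto its image'' is false: $v_1,v_2$ are critical \emph{values} of $H_{s,\tau}$, hence images of critical points lying in the domain $W^u_{s,\tau}$, so $v_1,v_2\in H_{s,\tau}(W^u_{s,\tau})$ regardless of whether they lie in $\overline{W^u_{s,\tau}}$. Consequently $H_{s,\tau}$ does not map into $\C^*\setminus\{v_1,v_2\}$, and the uniformization setup producing a self-map of $\D$ does not go through. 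Separately, even if a strict contraction at $0$ were established, ``contradicting the fact that $|e^{2\pi i\tau}|$ can be made arbitrarily close to $1$'' is not a contradiction for a fixed $\tau$ with $\mathfrak{Im}\,\tau>0$ (where $|e^{2\pi i\tau}|<1$ is expected); one would need uniformity of the contraction constant over $\tau$, which is not automatic since both $W^u_{s,\tau}$ and $H_{s,\tau}$ vary with $\tau$. You correctly identify this step as ``the main obstacle''; the paper's citation of Shishikura's Lemma~4.5.2 is exactly what closes it, and absent that (or an equivalent ad hoc argument distinguishing critical from asymptotic values) the proof is incomplete. Your proposed geometric alternative via Theorem~\ref{thm.pascale} and the explicit $\tau$-shift is only sketched and would need to be carried out to count as a proof.
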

\begin{proof}
    Since $H_{s,\tau}'(0) = e^{2\pi i\tau}$ (recall Definition \ref{def.hornmap}), $|H_{s,\tau}'(0)| \leq 1$ if $\mathfrak{Im}\,\tau\geq 0$. Suppose the contrary that $\phi_{s,\tau}(s),\phi_{s,\tau}(1/s)\not\in\overline{\tilde{W}^u_{s,\tau}}$. One can perturb $\tau$ to $\tau'$ such that $|H_{s,\tau'}'(0)| <1$ while $\phi_{s,\tau'}(s),\phi_{s,\tau'}(1/s)\not\in\overline{\tilde{W}^u_{s,\tau'}}$. Therefore the two critical values $\mathrm{ixp}\circ\phi_{s,\tau'}(s),\mathrm{ixp}\circ\phi_{s,\tau'}(1/s)$ of $H_{s,\tau'}$ does not belong to $\overline{W^u_{s,\tau'}}$. On the other hand, $0$ is an attracting fixed point of $H_{s,\tau'}$ and $H_{s,\tau'}$ is a finite type map by Lemma \ref{lem.asymptotic.psi}. However, an attracting fixed point of a non mobius transformation finite type map attracts at least one critical value (cf. \cite[Lemma 4.5.2]{Sh2}). This leads to a contradiction.
\end{proof}

From now on, we always make without exception the following assumption on $\tau$:
\begin{assum}[Assumption on $\tau$]\label{assum.tau}
    $\mathfrak{Im}\,\tau\geq 0$.
\end{assum}

The following lemma justifies why we make Assumption \ref{assum.tau}:   
\begin{lem}\label{lem.assumtau}
    Let $g_{\lambda_n,s}$ be as in Theorem $\ref{thm.shishikura-lavaurs}$ such that $\lim\limits_{n\to\infty}g_{\lambda_n,s}^{N_n} = L_{\tau,s}$. If $|\lambda_n|\leq 1$, then $\mathfrak{Im}\,\tau\geq 0$.
\end{lem}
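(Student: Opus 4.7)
The proof is essentially a direct calculation, since the only hypotheses that enter are $|\lambda_n|\leq 1$ and the asymptotic $1/\alpha_n = N_n - \tau + o(1)$.

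The plan is to translate $|\lambda_n|\leq 1$ into a sign condition on $\mathfrak{Im}\,\alpha_n$ and then read off the sign of $\mathfrak{Im}\,\tau$ from the leading asymptotic behavior of $\alpha_n$. Since $\lambda_n = e^{2\pi i \alpha_n}$, we have $|\lambda_n| = e^{-2\pi\,\mathfrak{Im}\,\alpha_n}$, so the assumption $|\lambda_n|\leq 1$ is equivalent to $\mathfrak{Im}\,\alpha_n \geq 0$ for every $n$.

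Next I would invert the asymptotic relation. Write $N_n - \tau + o(1) = A_n + iB_n$ with
\[
A_n = N_n - \mathfrak{Re}\,\tau + o(1) \longrightarrow +\infty, \qquad B_n = -\mathfrak{Im}\,\tau + o(1).
\]
Then $\alpha_n = 1/(A_n + iB_n)$ gives
\[
\mathfrak{Im}\,\alpha_n \;=\; \frac{-B_n}{A_n^2 + B_n^2} \;=\; \frac{\mathfrak{Im}\,\tau + o(1)}{|N_n - \tau|^2\bigl(1+o(1)\bigr)}.
\]
The denominator is positive, so the inequality $\mathfrak{Im}\,\alpha_n \geq 0$ forces $\mathfrak{Im}\,\tau + o(1) \geq 0$. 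Letting $n\to\infty$ gives $\mathfrak{Im}\,\tau \geq 0$, as desired.

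There is no real obstacle here — the only thing to be mildly careful about is ensuring that the $o(1)$ error from the hypothesis does not swamp $\mathfrak{Im}\,\tau$, but this is automatic because the error term in $\mathfrak{Im}\,\alpha_n$ is $o(1)/N_n^2$ whereas the main term is of order $\mathfrak{Im}\,\tau / N_n^2$, so after multiplying through by $|N_n - \tau|^2$ we genuinely get $\mathfrak{Im}\,\tau \geq 0$ in the limit.
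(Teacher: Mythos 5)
Your proof is correct and follows essentially the same route as the paper's: both arguments reduce to the observation that $|\lambda_n|\leq 1$ is equivalent to $\mathfrak{Im}\,\alpha_n\geq 0$, which by the sign-reversal of the imaginary part under inversion forces $\mathfrak{Im}(1/\alpha_n)\leq 0$, and then read off $\mathfrak{Im}\,\tau\geq 0$ from the asymptotic $1/\alpha_n = N_n-\tau+o(1)$. You simply write out the inversion explicitly where the paper states it as a one-line remark.
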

\begin{proof}
   This follows by a direct computation: $|\lambda_n|\leq 1$ implies that $\mathfrak{Im}\,\alpha_n\geq0$, so $\mathfrak{Im}\,\frac{1}{\alpha_n}\leq0$.
\end{proof}

\subsection{Escaping region $E_s$}
For $s\in\C^*$, we denote by $K_s$ the filled-in Julia set of $g_s$.
\begin{defn}\label{def.escape-region}
   Under Assumption \ref{assum.s1}, the {\it escaping region} and the {\it $N$-th escaping region} for $L_{s,\tau}$ are defined by
$$E_s := \{z\in B^*_s;\,\exists N\geq 1,L_s^N(z)\in \mathbb{C}\setminus{K_s}\}.$$
$$E_s^N := \{z\in B^*_s;\,L_s^N(z)\in \mathbb{C}\setminus {K_s}\}.$$
\end{defn}

\begin{rem}\label{rem.Estau}
    It is immediate from the definition that $E_s^N$ is open and $L_{s,\tau}^{-1}(E_s^N) = E_s^{N+1}$ for all $N\geq 1$. 
\end{rem}

\begin{lem}\label{lem.gs-proper}
    Let $C$ be a component of $E_s^N$. Then $g_s|_{C}$ is a proper map. In particular, $g_s(C)$ is also a component of $E_s^N$.
\end{lem}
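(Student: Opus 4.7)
The plan rests on one simple dynamical identity and then a properness argument. Because $\phi_s\circ g_s=T_1\circ\phi_s$ on $B^*_s$ and the extended inverse Fatou coordinate satisfies $g_s\circ\psi_s^\bullet=\psi_s^\bullet\circ T_1$, we get the commutation
\[
L_s\circ g_s = g_s\circ L_s \quad\text{on } B^*_s.
\]
Combined with the total invariance $g_s^{-1}(K_s)=K_s$ and the fact that $g_s(B^*_s)\subseteq B^*_s$, an immediate induction on $N$ gives $g_s(E_s^N)\subseteq E_s^N$. Hence if $C$ is a component of $E_s^N$, then $g_s(C)$ is a connected subset of $E_s^N$ contained in a unique component $C'$. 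The lemma reduces to showing that $g_s|_C:C\to C'$ is proper, since a proper holomorphic map between connected open subsets of $\mathbb{C}$ is automatically surjective, so $g_s(C)=C'$.

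For properness, fix a compact $K\subset C'$. Since $g_s$ is a cubic polynomial, $g_s^{-1}(K)$ is compact in $\mathbb{C}$, so $A:=g_s^{-1}(K)\cap C$ is bounded; the heart of the argument is to verify that $A$ is closed. Take $z_n\in A$ with $z_n\to z_\infty$. Then $g_s(z_\infty)\in K\subset B^*_s$, and $z_\infty\in\overline{B^*_s}$. If $z_\infty\in\partial B^*_s\subset J_{g_s}$, then $g_s(z_\infty)\in J_{g_s}$, contradicting $g_s(z_\infty)\in B^*_s$; so $z_\infty\in B^*_s$. Next I claim inductively that $L_s^i(z_\infty)\in B^*_s$ for every $i\leq N-1$ and $L_s^N(z_\infty)\in\mathbb{C}\setminus K_s$. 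Once $L_s^{i-1}(z_\infty)\in B^*_s$, the map $L_s^i$ is continuous at $z_\infty$, so $L_s^i(z_\infty)=\lim_n L_s^i(z_n)\in\overline{B^*_s}$. If $L_s^i(z_\infty)$ lay on $\partial B^*_s$, it would be in $J_{g_s}$, yet the commutation gives $g_s(L_s^i(z_\infty))=L_s^i(g_s(z_\infty))\in B^*_s$ (because $g_s(z_\infty)\in C'\subset E_s^N$ and $i<N$), a contradiction; so $L_s^i(z_\infty)\in B^*_s$. For $i=N$, total invariance of $K_s$ turns $g_s(L_s^N(z_\infty))=L_s^N(g_s(z_\infty))\in\mathbb{C}\setminus K_s$ into $L_s^N(z_\infty)\in\mathbb{C}\setminus K_s$. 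Thus $z_\infty\in E_s^N$; since $(z_n)\subset C$ converges to $z_\infty$ and $E_s^N$ is open, $z_\infty$ lies in the same component $C$. So $A$ is closed and hence compact, giving properness.

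The expected main obstacle is exactly the inductive step that keeps the orbit of $z_\infty$ inside $B^*_s$: continuity alone only yields $L_s^i(z_\infty)\in\overline{B^*_s}$, and one must rule out accumulation on $\partial B^*_s$. The clean way is to exploit the commutation together with the fact that $\partial B^*_s\subset J_{g_s}$ is totally invariant under $g_s$, which forces any putative boundary value to be mapped out of $B^*_s$ under $g_s$, contradicting membership of $g_s(z_\infty)$ in $K\subset C'\subset B^*_s$. Everything else--forward invariance, boundedness, passing from properness to surjectivity--is standard.
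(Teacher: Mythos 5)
Your proof is correct and takes essentially the same approach as the paper: show that $g_s^{-1}(K)\cap C$ is compact by verifying that any accumulation point $z_\infty$ lies back in $E_s^N$ (hence in the same component $C$), using the commutation $g_s\circ L_s=L_s\circ g_s$ together with total invariance of $K_s$ and $J_{g_s}$. The paper's published argument is only a few lines and asserts the contradiction rather than justifying it; you have supplied the missing detail, in particular the inductive check that the Lavaurs orbit $L_s^i(z_\infty)$ remains in $B^*_s$ for $i\le N-1$, which is exactly the point the paper leaves implicit.
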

\begin{proof}
    Suppose the contrary that $g_s|_{C}$ is not proper. Then there exists a sequence of points $z_n\in C$ converging to $z_0\in\partial C$ such that $g_s(z_0)\in g_s(C)$. On the other hand, $g_s(C)\subset E_s^N$ since $g_s$ and $L_s$ commute. This contradicts with $z_0\in\partial C$.
\end{proof}

\begin{lem}\label{lem.atmost-one-escape}
   Let $\lambda_n,N_n$ and $\tau$ be the same as in Theorem \ref{thm.shishikura-lavaurs}. If we assume additionally that $|\lambda_n|\leq 1$, then $E_s$ contains at most one of $s$ and $1/s$.
\end{lem}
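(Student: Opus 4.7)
The plan is a proof by contradiction. Assume both critical points $s$ and $1/s$ lie in $E_s$, and choose minimal integers $M, N \geq 1$ with $L_s^M(s), L_s^N(1/s) \in \C \setminus K_s$. The overall strategy is to transport these escape statements back to the perturbed polynomials $g_{\lambda_n,s}$, conclude that both free critical points of $g_{\lambda_n,s}$ must escape to $\infty$ for large $n$, and then derive a contradiction with the hypothesis $|\lambda_n|\leq 1$ by appealing to the classical rigidity for cubic polynomials having two escaping critical points.

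The first step iterates the Shishikura convergence of Theorem \ref{thm.shishikura-lavaurs}. Since $g_{\lambda_n,s}^{N_n} \to L_s$ uniformly on compact subsets of $B^*_s$, and since by minimality $L_s^j(s) \in B^*_s$ for $j = 0, \dots, M-1$, a routine induction yields $g_{\lambda_n,s}^{k N_n}(s) \to L_s^k(s)$ for $k = 1, \dots, M$, and likewise $g_{\lambda_n,s}^{k N_n}(1/s) \to L_s^k(1/s)$ for $k = 1, \dots, N$. The second step converts $L_s^M(s) \notin K_s$ into escape under the perturbed dynamics: since $|g_s^j(L_s^M(s))| \to \infty$, I would fix a uniform escape radius $R_0$ for the family $\{g_{\lambda_n,s}\}$ near $\lambda = 1$ (immediate from bounded coefficients), choose $j_0$ with $|g_s^{j_0}(L_s^M(s))| > R_0+1$, and combine the uniform convergence $g_{\lambda_n,s}^{j_0} \to g_s^{j_0}$ on compact subsets of $\C$ with Step 1 to obtain $|g_{\lambda_n,s}^{MN_n+j_0}(s)| > R_0$ for all large $n$. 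Hence $s \notin K(g_{\lambda_n,s})$ for large $n$, and symmetrically $1/s \notin K(g_{\lambda_n,s})$.

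The final step is the dynamical contradiction. For a cubic polynomial both of whose finite critical points escape to $\infty$, every finite periodic point must be repelling: by Fatou's classical theorem in the attracting, parabolic, Siegel and Herman cases, and by Mañé's theorem (a Cremer cycle is accumulated by a recurrent critical orbit) in the Cremer case. On the other hand, $0$ is a fixed point of $g_{\lambda_n,s}$ with multiplier $\lambda_n$, and $|\lambda_n|\leq 1$ forces $0$ to be non-repelling, yielding the contradiction. The delicate point I anticipate is precisely the Cremer case in this last step: for $\lambda_n = e^{2\pi i \alpha_n}$ with $\alpha_n$ irrational, Fatou's theorem alone does not preclude a Cremer fixed point at $0$, so one really does need to invoke Mañé's theorem to rule out bounded non-repelling orbits once both critical orbits have been captured by the super-attracting basin at $\infty$.
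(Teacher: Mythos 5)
Your proposal is correct and follows the same basic strategy as the paper's proof: assume both critical points escape under the Lavaurs map, transport this to escape of both critical orbits for $g_{\lambda_n,s}$ with $n$ large via the Shishikura convergence, and then contradict $|\lambda_n|\leq 1$ because the multiplier of the fixed point $0$ would have to be repelling.

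In fact you supply two pieces of reasoning that the paper glosses over. First, the paper writes that $g_{\lambda_n,s}^{m_1N_n}(s)\in\mathbb{C}\setminus K_s$ "i.e. both $s,1/s$ are attracted by $\infty$", but $\mathbb{C}\setminus K_s$ is the escape region of $g_s$, not of $g_{\lambda_n,s}$; your intermediate step of iterating $g_s$ until the orbit exceeds a uniform escape radius $R_0$ for the whole family, and then invoking compact-open convergence of $g_{\lambda_n,s}^{j_0}$ to $g_s^{j_0}$, is exactly what is needed to make that implication rigorous. Second, the paper ends with "this contradicts the assumption $|\lambda_n|\leq 1$" without explanation; your case analysis (Fatou's theorem for attracting/parabolic/Siegel cases, Ma\~n\'e's theorem for the Cremer case) is a valid way to justify it. A slightly more streamlined alternative for that last step: when both finite critical orbits of a cubic polynomial tend to $\infty$, the postcritical set is disjoint from the (compact) Julia set, so $g_{\lambda_n,s}$ is hyperbolic on $J$, the basin of $\infty$ is the entire Fatou set, and every finite periodic point is repelling --- which directly contradicts the non-repelling fixed point $0$ of multiplier $\lambda_n$. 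Either route closes the argument; your proof is sound.
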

\begin{proof}
Suppose the contrary that $E_s$ contains both $s,1/s$. Then there exists integers $m_1,m_2$ such that $L_{s,\tau}^{m_1}(s)\in\mathbb{C}\setminus{K_s}$ and $L_{s,\tau}^{m_2}(1/s)\in\mathbb{C}\setminus{K_s}$. By Theorem \ref{thm.shishikura-lavaurs}, both $g_{\lambda_n,s}^{m_1N_n}(s)$ and $g_{\lambda_n,s}^{m_2N_n}(1/s)$ are also contained in $C\setminus K_s$, i.e., both $s,1/s$ are attracted by $\infty$. This contradicts the assumption $|\lambda_n|\leq 1$.
\end{proof}

Fix any $s\in\overline{\mathcal{H}}\setminus\{i,-i\}$. By Lemma \ref{lem.atmost-one-escape}, at least one of $s$ and $1/s$ does not escape to $\C\setminus K_s$ under $L_s$. To fix the idea, let us make the following assumption (compare with Assumption \ref{assum.s1}):
\begin{assum}[Further assumption on $s$]\label{assum.s2}
  $s\in\overline{\mathcal{H}}\setminus\{i,-i\}$, $1/s\in B^*_s\setminus E_s$.
\end{assum}

Let us recall that $\mathfrak{l}^u_s := \mathfrak{l}^u_{s,\tau}$  and $\mathfrak{l}^l_s :=\mathfrak{l}^l_{s,\tau}$ in Definition \ref{defn.invariant-curves} (we omit the subscript $\tau$) bound a $T_1$-invariant open strip that coincides with $\psi_{s}^{-1}(\mathbb{C}\setminus \overline{B^*_s})$. One can verify easily the following:
\begin{lem}\label{lem.psi}
   Under Assumption $\ref{assum.s2}$, the following diagram commutes
      \begin{equation}\label{diag.psi}
    \begin{tikzcd}
    &&\mathbb{H}\arrow[d, "e^z"]\\
  \psi^{-1}_{s}(\mathbb{C}\setminus K_{s})\arrow[r, "\psi_{s}"]\arrow[rru, bend left = 10, "\Psi_{s}" swap]  & \mathbb{C}\setminus K_{s} \arrow[r] \arrow[r,"\phi^\infty_{s}"] & \C\setminus\overline{\mathbb{D}} 
   \end{tikzcd}\quad,
\end{equation}
where
\begin{itemize}
    \item $\psi_s:\psi^{-1}_s(\mathbb{C}\setminus K_s)\longrightarrow\mathbb{C}\setminus K_s$ is the inverse of the extended repelling Fatou coordinate and is a universal covering map;
    \item $\phi^\infty_s$ is the Böttcher coordinate of $g_s$ at $\infty$, normalized by $\phi_s^\infty(z) = z+o(1)$;
    \item $\Psi_s$ is the lift such that $\Psi_s(\mathfrak{R}_s(0)) =\mathbb{R}^+$, where $\mathfrak{R}_s(0)$ is the unique connected component of $\psi_s^{-1}(R^\infty_s(0))$ that is $T_1$-invariant ($R^\infty_s(0)$ is the external ray of angle 0 for $g_s$).
\end{itemize}  
\end{lem}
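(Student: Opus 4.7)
The plan is to establish Lemma \ref{lem.psi} in two stages: first, identify $\psi_s\colon U \to \mathbb{C}\setminus K_s$ (with $U := \psi_s^{-1}(\mathbb{C}\setminus K_s)$) as a universal covering; then, construct $\Psi_s$ as a lift through $e^z$. For the covering property, I would first analyze the singular set $S(\psi_s)$. Lemma \ref{lem.asymptotic.psi} identifies the unique asymptotic value of $\psi_s$ as $0 \in K_s$. The critical values of $\psi_s$ arise from the extension formula $\psi_s(w+1) = g_s(\psi_s(w))$: each new critical point appears when a $T_1$-translate of the domain is sent through a critical point of $g_s$, so the critical values lie in the forward $g_s$-orbits of the two critical points $s, 1/s$. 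Under Assumption \ref{assum.s2}, $s \in \overline{\mathcal{H}}$, so both critical points belong to $K_s$ and their orbits are trapped in $K_s$. Therefore $S(\psi_s) \subset K_s$, and Proposition \ref{Propositioncoveringmap} implies that $\psi_s|_U$ is an unramified covering onto $\mathbb{C}\setminus K_s$.

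To upgrade this to a universal covering, I must verify that $U$ is connected and simply connected. Since $\mathbb{C}\setminus K_s \subset \mathbb{C}\setminus\overline{B^*_s}$, we have $U \subset \psi_s^{-1}(\mathbb{C}\setminus\overline{B^*_s})$, which is the open $T_1$-invariant strip bounded by the two Jordan arcs $\mathfrak{l}^u_s, \mathfrak{l}^l_s$ of Definition \ref{defn.invariant-curves}; this strip is simply connected. By Theorem \ref{thm.pascale}, $K_s = \overline{B^*_s} \cup \bigsqcup_n \mathrm{Limb}_n$ with pairwise disjoint limbs, each attached to a single point of $\partial B^*_s$. Consequently, $U$ is obtained from the strip by deleting $\psi_s^{-1}\bigl(\bigsqcup_n \mathrm{Limb}_n\bigr)$, a disjoint union of countably many closed tree-like sets, each hanging off a single point of $\mathfrak{l}^u_s \cup \mathfrak{l}^l_s$. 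Removing such appendages from a simply-connected strip preserves both connectedness and simple connectedness, so $U$ is simply connected and $\psi_s|_U$ is the universal covering.

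Finally, since $U$ is simply connected, the holomorphic map $\phi^\infty_s \circ \psi_s\colon U \to \mathbb{C}\setminus\overline{\mathbb{D}}$ lifts through the universal covering $e^z\colon \mathbb{H} \to \mathbb{C}\setminus\overline{\mathbb{D}}$ to a holomorphic function $\Psi_s\colon U \to \mathbb{H}$ with $e^{\Psi_s} = \phi^\infty_s\circ\psi_s$, unique up to an additive constant in $2\pi i\mathbb{Z}$. To fix the branch, I use the invariant curve $\mathfrak{R}_s(0)$: by construction $\psi_s$ sends $\mathfrak{R}_s(0)$ into $R^\infty_s(0)$, and $\phi^\infty_s$ sends $R^\infty_s(0)$ into $(1,+\infty) \subset \mathbb{R}^+$; choosing the branch of the logarithm that is real-valued on $\mathfrak{R}_s(0)$ and extending by continuity produces a $\Psi_s$ with $\Psi_s(\mathfrak{R}_s(0)) = \mathbb{R}^+$. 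The commutativity of the diagram is then immediate from the construction. The main subtle point of the whole argument is verifying the simple connectedness of $U$, which crucially uses the Jordan-curve structure of $\partial B^*_s$ and the disjointness of the limbs supplied by Theorem \ref{thm.pascale}.
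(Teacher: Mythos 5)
The overall structure of your argument is sound and you correctly identify the two ingredients the paper has already supplied: Lemma \ref{lem.asymptotic.psi} places the asymptotic value $0$ of $\psi_s$ in $K_s$, and the critical values lie in the postcritical set of $g_s$, which is in $K_s$ under Assumption \ref{assum.s2}, so Proposition \ref{Propositioncoveringmap} yields the unramified covering $\psi_s\colon U\to\mathbb{C}\setminus K_s$. The lifting step at the end is also correct: once $U$ is known to be simply connected, $\phi_s^\infty\circ\psi_s$ lifts through $e^z$ and the normalization on $\mathfrak{R}_s(0)$ pins down the lift. The paper itself offers no proof, so there is no text to compare against; what follows is an assessment of the one place where your proof is not yet airtight.

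The gap is the simple connectedness of $U$. First, a small inaccuracy: Theorem \ref{thm.pascale} decomposes the Julia set $J_s$ into $\overline{B^*_s}$ and limbs, but $U=\psi_s^{-1}(\mathbb{C}\setminus K_s)$, so what you must delete from the strip is $\psi_s^{-1}(K_s\setminus\overline{B^*_s})$, i.e.\ preimages of the \emph{filled} limbs, which are full continua and not tree-like. More seriously, the topological assertion ``removing appendages that each touch the boundary at a single point preserves simple connectedness'' is false in that generality (a closed annulus together with a radial segment reaching the boundary is such an appendage of $\mathbb{D}$, and removing it disconnects and multiply connects the disk); one needs the appendages to be full and must also control the structure of $\psi_s^{-1}$ of each filled limb inside the strip, including how the branches of $\psi_s$ interact with critical values sitting inside the limbs when $s\in\partial\mathcal{H}$. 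None of this is addressed. A much shorter, self-contained route to $\pi_1(U)=0$ bypasses Theorem \ref{thm.pascale} entirely: set $F:=\phi^\infty_s\circ\psi_s\colon U\to\mathbb{C}\setminus\overline{\mathbb{D}}$. Since $\psi_s\circ T_1=g_s\circ\psi_s$ and the B\"ottcher coordinate conjugates $g_s$ to $z\mapsto z^3$, one has $F(w+1)=F(w)^3$ on $U$, and $T_{\pm1}$ preserve $U$ because $K_s$ is fully $g_s$-invariant. For any loop $\delta$ in $U$ and any $k\ge0$, the winding number $m$ of $F\circ\delta$ around $0$ then satisfies $m=3^k\cdot(\text{winding number of }F\circ T_{-k}\circ\delta)$, forcing $3^k\mid m$ for all $k$, hence $m=0$. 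Thus $F_*\pi_1(U)=0$, the lift $\Psi_s$ through $e^z$ exists, and since $F=e^{\Psi_s}$ with $\Psi_s$ holomorphic, the covering $\psi_s|_U$ must be universal. I would either supply a rigorous version of your appendage argument (verifying fullness of the filled limbs and the structure of their $\psi_s$-preimages) or replace it with this functional-equation argument.
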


\begin{defn}[Croissant and bi-croissant]\label{defn.croissant}
    A simply connected domain $D\subset\C$ is a croissant (resp. bi-croissant), if $\overline{D}$ is homeomorphic to $\overline{\D}\setminus B$ (resp. $\overline{\D}\setminus(B_1\cup B_2)$), where $B\subset\D$ (resp. $B_1,B_2\subset\D$, $\overline{B_1}\cap \overline{B_2}=\emptyset$) is any open round disk such that $\partial{B}\cap\partial\D$ (resp. $\partial B_i\cap\partial\D$, $i=1,2$) is a single point.
\end{defn}

\begin{lem}[See Figure \ref{fig.escaperegion1}]\label{lem.classification.C1}
   Under Assumption $\ref{assum.s2}$, there is a unique $g_s$-invariant connected component $C^1_{s,(0)}$ of $E^1_s$. Moreover for $s\in\mathcal{H}$, we have the following classification of $\partial{C^1_{s,(0)}}$:
    \begin{enumerate}[label=\upshape(\Roman*)]
        \item\label{lem.classification.C1:first} a croissant: $\partial C^1_{s,(0)}$ is the union of two Jordan curves $\overline{\mathfrak{L}^u_s},\overline{\mathfrak{L}^l_s}$ such that $\overline{\mathfrak{L}^u_s}$ encloses $\overline{\mathfrak{L}^l_s}$ and $\overline{\mathfrak{L}^u_s}\cap\overline{\mathfrak{L}^l_s} = \{0\}$. The restrictions 
        $$g_s:C^1_{s,(0)}\longrightarrow C^1_{s,(0)},\quad \phi_s:C^1_{s,(0)}\longrightarrow \psi_s^{-1}(\mathbb{C}\setminus K_s)$$
        are conformal, therefore $L_s:C^1_{s,(0)}\longrightarrow\mathbb{C}\setminus K_s$ is the universal covering. 
        \item\label{lem.classification.C1:second} a Jordan disk: $\partial C^1_{s,(0)}$ is the union of two Jordan arcs $\overline{\mathfrak{L}^u_s},\overline{\mathfrak{L}^l_s}$, both of which link $0$ to the unique repelling fixed point $\alpha_s$ of $g_s$. One has the same properties as case \ref{lem.classification.C1:first} for $g_s,\phi_s,L_s$. 
        \item\label{lem.classification.C1:third} $\partial C^1_{s,(0)}$ is the closure of the union of countably many Jordan arcs and Jordan curves. Every Jordan arc is an iterated pre-image of $\overline{\mathfrak{L}^u_s}$; every Jordan curve is an iterated pre-image of $\overline{\mathfrak{L}^l_s}$. Moreover $s\in C^1_{s,(0)}$, the restriction $g_s:C^1_{s,(0)}\longrightarrow C^1_{s,(0)}$ is a degree 2 ramified covering; $L_s:C^1_{s,(0)}\longrightarrow \mathbb{C}\setminus K_s$ is an infinitely ramified covering.
    \end{enumerate}
If $s\in\partial\mathcal{H}\setminus\{i,-i\}$, then $C^1_{s,(0)}$ is always of case \ref{lem.classification.C1:first} by modifying "Jordan curves" into "compact sets".
\end{lem}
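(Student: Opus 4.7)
Plan. I proceed by pulling back from the repelling Fatou coordinate plane. Set $\tilde{E}^1 := \psi_{s,\tau}^{-1}(\C\setminus K_s)$; since $\psi_{s,\tau}$ maps $\C\setminus\mathfrak{M}$ into $\overline{B^*_s}\subset K_s$ (where $\mathfrak{M}$ denotes the middle strip between $\mathfrak{l}^u_s$ and $\mathfrak{l}^l_s$ from Definition \ref{defn.invariant-curves}), we have $\tilde{E}^1\subset\mathfrak{M}$. The $0$-angle external ray $R^\infty_s(0)$ is $g_s$-invariant, so Lemma \ref{lem.psi} produces a $T_1$-invariant lift $\mathfrak{R}_s(0)\subset\tilde{E}^1$. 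Let $\tilde{C}^1_{s,(0)}$ be the component of $\tilde{E}^1$ containing $\mathfrak{R}_s(0)$; it is $T_1$-invariant. Let $C^1_{s,(0)}$ be the component of $\phi_s^{-1}(\tilde{C}^1_{s,(0)})$ containing the lift of $\mathfrak{R}_s(0)$ obtained from the inverse of $\phi_s$ on the attracting petal; this is $g_s$-invariant by the semiconjugation $\phi_s\circ g_s=T_1\circ\phi_s$. Uniqueness of the $g_s$-invariant component of $E^1_s$ follows from a sheet-permutation argument: $g_s$ acts on the sheets of $\phi_s^{-1}(\tilde{C}^1_{s,(0)})$ as $T_1$ does modulo the deck-group of Lemma \ref{lem.psi}, and only the sheet through the marked petal lift is preserved.

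When $s\notin C^1_{s,(0)}$ (Cases \ref{lem.classification.C1:first} and \ref{lem.classification.C1:second}), since $\phi_s(1/s)=0$ and Assumption \ref{assum.s2} forces $L_s(1/s)=\psi_{s,\tau}(0)\in K_s$, we have $0\notin\tilde{C}^1_{s,(0)}$ and hence $1/s\notin C^1_{s,(0)}$. With neither critical point of $g_s$ in $C^1_{s,(0)}$, the map $\phi_s|_{C^1_{s,(0)}}:C^1_{s,(0)}\to\tilde{C}^1_{s,(0)}$ is a conformal isomorphism, $g_s|_{C^1_{s,(0)}}$ is a conformal automorphism, and $L_s=\psi_{s,\tau}\circ\phi_s$ restricted to $C^1_{s,(0)}$ becomes a universal cover of $\C\setminus K_s$ by Lemma \ref{lem.psi}. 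The Case \ref{lem.classification.C1:first}/\ref{lem.classification.C1:second} dichotomy then reduces to the topology of $\partial\tilde{C}^1_{s,(0)}$: in Case \ref{lem.classification.C1:first} the boundary covers full $T_1$-periods of $\mathfrak{l}^u_s$ and $\mathfrak{l}^l_s$, pulling back through the conformal $\phi_s^{-1}$ to two Jordan curves meeting only at $0$; in Case \ref{lem.classification.C1:second} the boundary arcs terminate at the discrete $T_1$-invariant subset $\psi_{s,\tau}^{-1}(\alpha_s)\subset\mathfrak{l}^u_s\cup\mathfrak{l}^l_s$ (discrete because $\alpha_s$ is a $g_s$-fixed point in $\partial B^*_s$), pulling back to two Jordan arcs from $0$ to $\alpha_s$.

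The main obstacle I foresee is characterizing exactly which case occurs, i.e.\ deciding whether $\alpha_s\in\partial C^1_{s,(0)}$, equivalently whether $\psi_{s,\tau}^{-1}(\alpha_s)$ meets the bounding portion of $\mathfrak{l}^u_s\cup\mathfrak{l}^l_s$. I expect this to require a careful analysis of the horn map $H_{s,\tau}$ (Definition \ref{def.hornmap}) and the positions of its critical and asymptotic values relative to $\tilde{W}^u_{s,\tau}$. For Case \ref{lem.classification.C1:third} ($s\in C^1_{s,(0)}$), $\phi_s$ is ramified at $s$ and at all iterated $g_s$-preimages of $s$ within $C^1_{s,(0)}$, so $\phi_s|_{C^1_{s,(0)}}$ is an infinitely ramified covering of $\tilde{C}^1_{s,(0)}$, $g_s|_{C^1_{s,(0)}}$ is a degree-$2$ ramified cover, and $\partial C^1_{s,(0)}$ becomes the closure of iterated $g_s$-preimages of the Case \ref{lem.classification.C1:first} curves $\overline{\mathfrak{L}^u_s},\overline{\mathfrak{L}^l_s}$. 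Finally, the case $s\in\partial\mathcal{H}\setminus\{i,-i\}$ is handled by approximation: density of Misiurewicz parabolic parameters in $\partial\mathcal{H}$ (Theorem \ref{thm.runze}\ref{thm.rurnze:first}) together with upper-semicontinuity of filled-in Julia sets yield a compact-set analogue of Case \ref{lem.classification.C1:first}.
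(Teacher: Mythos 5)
Your proposal takes a genuinely different route from the paper's proof. The paper fixes a large attracting petal $P_s$, defines $\mathfrak{L}^{u}_s,\mathfrak{L}^{l}_s$ as the $(\phi_s|_{P_s})^{-1}$-images of the invariant curves $\mathfrak{l}^u_s,\mathfrak{l}^l_s$ restricted to $\mathbb{H}_v$, and then inductively pulls these two curves back by $g_s$, running a careful case analysis (subcases (1.1)--(3.2)) on whether the pullbacks land at $0$ or at $\alpha_s$ and whether they encounter the critical points $s$ or $1/s$. That exhaustive accounting is what establishes the completeness of the trichotomy, the exact structure of $\partial C^1_{s,(0)}$, and, crucially, the enclosure property in Case~(I) — which the paper extracts from Lemma~\ref{lem.at-least-one} to rule out the alternative nesting. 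You instead work in the repelling-Fatou plane, defining $\tilde{E}^1=\psi_{s,\tau}^{-1}(\mathbb{C}\setminus K_s)$ and pulling it back through $\phi_s$.

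There are several real gaps. First, your ``sheet-permutation argument'' for uniqueness of the $g_s$-invariant component is not an argument: the paper's version is concrete (any invariant component must contain $(\phi_s|_{P_s})^{-1}(\psi_s^{-1}(\mathbb{C}\setminus K_s)\cap\mathbb{H}_v)$ because orbits in $B^*_s$ eventually enter $P_s$), and that is what needs to be said. Second, the step ``with neither critical point of $g_s$ in $C^1_{s,(0)}$, $\phi_s|_{C^1_{s,(0)}}\colon C^1_{s,(0)}\to\tilde{C}^1_{s,(0)}$ is a conformal isomorphism'' is a non sequitur: absence of critical points gives only a local isomorphism, not global injectivity, not properness, and not surjectivity onto the target component; you would still need an argument (e.g.\ the covering degree of $g_s|_{C^1_{s,(0)}}$ together with the boundary structure, which is exactly what the paper's pullback construction tracks). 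Third, your Case~(I)/Case~(II) dichotomy is stated rather than derived: deciding whether the boundary closes up through $0$ as two nested Jordan curves, or terminates at $\alpha_s$ as two Jordan arcs, is precisely the content of the paper's case analysis — and in particular the fact that in Case~(I) $\overline{\mathfrak{L}^u_s}$ must \emph{enclose} $\overline{\mathfrak{L}^l_s}$ (rather than sit side by side) is a nontrivial consequence of Lemma~\ref{lem.at-least-one} which you do not use. You explicitly flag that you ``expect'' to need a horn-map analysis to decide which case occurs; the lemma does not in fact ask you to decide which case occurs, but it does require showing that these three cases are exhaustive, and your outline never establishes that. Finally, the $s\in\partial\mathcal{H}$ case is not reducible to an approximation by Misiurewicz parabolic parameters plus upper-semicontinuity of $K_s$: upper-semicontinuity only gives $\limsup K_{s_n}\subset K_{s_0}$ and cannot by itself transfer the Case~(I) structure in the limit; the paper instead re-runs the same pullback construction, observing that the curves become compact sets.
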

\begin{proof}
 Let us fix $v>0$ large enough ($v$ may depend on $s$) such that $\phi_s$ sends an attracting petal $P_s$ of $g_s$ injectively to $\mathbb{H}_v$ and that both $\mathbb{H}_v\cap\mathfrak{l}^u_s$, $\mathbb{H}_v\cap\mathfrak{l}^l_s$ are connected, forward invariant under $T_1$. Notice that $C^1_{s,(0)}$ is unique since any $g_s$-invariant component of $E_s^1$ contains $$(\phi_s|_{P_s})^{-1}(\psi^{-1}_s(\mathbb{C}\setminus K_s)\cap\mathbb{H}_v).$$ 
 
 Denote by $\mathfrak{L}^u_s$ (resp. $\mathfrak{L}^l_s$) the image of $\mathbb{H}_v\cap\mathfrak{l}^u_s$, $\mathbb{H}_v\cap\mathfrak{l}^l_s$ under $(\phi_s|_{P_s})^{-1}$. Since $\mathfrak{L}^u_s,\mathfrak{L}^l_s$ are forward invariant under $g_s$, we can pull them back inductively by $g_s$. During the pullback process, there are three possibilities:
\begin{enumerate}
    \item Neither of the pullbacks of $\mathfrak{L}^u_s,\mathfrak{L}^l_s$ hits a critical point of $g_s$. Then the pull back of $\mathfrak{L}^u_s,\mathfrak{L}^l_s$ are uniquely defined. We abuse the notation and still denote the pulled back set by $\mathfrak{L}^u_s,\mathfrak{L}^l_s$. Since $\mathfrak{L}^u_s,\mathfrak{L}^l_s$ are $g_s$-invariant, they must land at a fixed point of $g_s$, i.e. $z=0$ or the unique repelling fixed point $\alpha_s$ of $g_s$. Thus we have the following subcases: 
    \begin{enumerate}
        \item[(1.1)] Both $\mathfrak{L}^u_s,\mathfrak{L}^l_s$ land at $0$. 
        \item[(1.2)] Both $\mathfrak{L}^u_s,\mathfrak{L}^l_s$ land at $\alpha_s$.
        \item[(1.3)] One of $\mathfrak{L}^u_s,\mathfrak{L}^l_s$ lands at $\alpha_s$ and the other lands at $0$.
    \end{enumerate}
 In subcase (1.1), we claim that $\mathfrak{L}^u_s$ encloses $\mathfrak{L}^l_s$: indeed, otherwise $\mathfrak{L}^u_s$ will enclose a Jordan disk $\mathfrak{D}$ such that $\phi_s|_{\mathfrak{D}}$ is injective and $\overline{W^u_s}\subset \phi_s({\mathfrak{D}})$ (recall $W^u_s$ in Definition \ref{defn.invariant-curves}). In particular, $\overline{W^u_s}$ contains neither $\phi_s(s)$ nor $\phi_s(1/s)$. This contradicts Lemma \ref{lem.at-least-one}. Hence (1.1) leads to \ref{lem.classification.C1:first}. Clearly (1.2) leads to \ref{lem.classification.C1:second} by taking $C^1_{s,(0)}$ to be the Jordan domain bounded by $\overline{\mathfrak{L}^u_s\cup\mathfrak{L}^l_s}$. Finally for (1.3), $\mathfrak{L}^u_s$ cannot land at $0$ by the same argument as (1.1). Hence $\mathfrak{L}^u_s$ lands at $\alpha_s$ and $\mathfrak{L}^u_s$ lands at $0$, which leads to \ref{lem.classification.C1:third}.  

 \vspace{0.2cm}
    \item At least one of the pullbacks of $\mathfrak{L}^u_s,\mathfrak{L}^l_s$ hits exactly one of the critical points $s,1/s$. To fix the idea, let's say that one of them hits $s$ and it will be similar for $1/s$. We have the following two subcases:
\begin{enumerate}
    \item[(2.1)] $\mathfrak{L}^u_s$ hits $s$. Then there are exactly two $g_s$-invariant pullbacks of $\mathfrak{L}^u_s$ and a unique $g_s$-invariant pullback of $\mathfrak{L}^l_s$. Since the two pullbacks of $\mathfrak{L}^u_s$ are invariant by $g_s$, they land at $0$ or $\alpha_s$. Notice that they cannot both land at $\alpha_s$: otherwise there will be two segments of these two pullbacks starting from $s$ to $\alpha_s$, enclose a Jordan domain $V$. However by $g_s$-invariance of the pullbacks, $ g_s(\partial V)\not\subset\overline{V}$. This implies that $g_s(V)$ is unbounded, which is a contradiction since $g_s$ is a polynomial. Denote by $\mathfrak{L}^{u,+}_s$ the pullback landing at $\alpha_s$ and $\mathfrak{L}^{u,-}_s$ the one landing at $0$. Denote by $\mathfrak{L}^{u,+}_s$ the pullback that is above the other one $\mathfrak{L}^{u,-}_s$. Therefore, we have either:
     \begin{enumerate}
        \item[-(2.1a)] $\mathfrak{L}^{u,+}_s$ lands at $0$, $\mathfrak{L}^{u,-}_s$ lands at $\alpha_s$. Then $\mathfrak{L}^l_s$ is beneath $\mathfrak{L}^{u,-}_s$. If the pullback of $\mathfrak{L}^{l}_s$ does not hit $1/s$, then $\mathfrak{L}^{l}_s$ either lands at $0$, which leads to \ref{lem.classification.C1:third}; or it lands at $\alpha_s$, which leads to \ref{lem.classification.C1:second}. If the pullback of $\mathfrak{L}^{l}_s$ hits $1/s$, then similarly there will be exactly two pullbacks $\mathfrak{L}^{l,+}_s, \mathfrak{L}^{l,-}_s$ so that $\mathfrak{L}^{l,+}_s$ is above $\mathfrak{L}^{l,-}_s$. Notice that they cannot both land at $0$, otherwise $\mathfrak{L}^{l,+}_s$ will need to cross $\mathfrak{L}^{u,-}_s$. They cannot both land at $\alpha_s$ by the same argument as (2.1). Hence $\mathfrak{L}^{l,+}_s$ lands at $\alpha_s$ and $\mathfrak{L}^{l,-}_s$ lands at $0$. Abusing the notation by setting $\mathfrak{L}^{l}_s = \mathfrak{L}^{l,+}_s$, $\mathfrak{L}^{u}_s = \mathfrak{L}^{u,-}_s$, we get \ref{lem.classification.C1:second}.

        \item[-(2.1b)] $\mathfrak{L}^{u,+}_s$ lands at $0$, $\mathfrak{L}^{u,-}_s$ lands at $0$. Then $\mathfrak{L}^{u,-}_s$ must enclose $\mathfrak{L}^{l}_s$, which leads to \ref{lem.classification.C1:first}.
        \item[-(2.1c)] $\mathfrak{L}^{u,+}_s$ lands at $\alpha_s$, $\mathfrak{L}^{u,-}_s$ lands at $0$. Then again $\mathfrak{L}^{u,-}_s$ must enclose $\mathfrak{L}^{l}_s$, which leads to \ref{lem.classification.C1:first}.
    \end{enumerate}
\end{enumerate}

\vspace{0.1cm}

 \item [(2.2)] $\mathfrak{L}^l_s$ hits $s$. Similarly as (2.1), let $\mathfrak{L}^{l,+}_s, \mathfrak{L}^{l,-}_s$ be the two $g_s$-invariant pullbacks of $\mathfrak{L}^l_s$. We have the following subcases:
   \begin{enumerate}
        \item[-(2.2a)] $\mathfrak{L}^{l,+}_s$ lands at $0$, $\mathfrak{L}^{l,-}_s$ lands at $\alpha_s$. Then $\mathfrak{L}^{l,+}_s$ must enclose $\mathfrak{L}^{u}_s$. But this is impossible by a similar argument as (1.1).

        \item[-(2.2b)] $\mathfrak{L}^{l,+}_s$ lands at $0$, $\mathfrak{L}^{l,-}_s$ lands at $0$. Then this is also impossible as (2.2a).
        \item[-(2.2c)] $\mathfrak{L}^{l,+}_s$ lands at $\alpha_s$, $\mathfrak{L}^{u,-}_s$ lands at $0$. Then $\mathfrak{L}^{u}_s$ must be above $\mathfrak{L}^{l,+}_s$. If the pullback of $\mathfrak{L}^{u}_s$ does not hit $1/s$, then $\mathfrak{L}^{l}_s$ either lands at $0$, which is impossible by a similar argument as (1.1); or it lands at $\alpha_s$, which leads to \ref{lem.classification.C1:second}. If the pullback of $\mathfrak{L}^{l}_s$ hits $1/s$, then under a similar argument as (2.1a), it will lead to \ref{lem.classification.C1:second}.
\end{enumerate}

 \vspace{0.2cm}
3. One of the pullbacks of $\mathfrak{L}^u_s,\mathfrak{L}^l_s$ hits both of the critical points $s,1/s$. 
\begin{enumerate}
    \item[(3.1)] $\mathfrak{L}^u_s$ hits both $s,1/s$. Then it has three $g_s$-invariant pullbacks $\mathfrak{L}^{u,+}_s,\mathfrak{L}^{u,0}_s,\mathfrak{L}^{u,-}_s$ such that $\mathfrak{L}^{u,+}_s$ is on the top and $\mathfrak{L}^{u,0}_s$ is in the middle. Under a similar argument as (1.1), one can show that $\mathfrak{L}^{u,0}_s$ lands at $\alpha_s$ and the other two land at $0$. Hence $\mathfrak{L}^{l}_s$ must be enclosed by $\mathfrak{L}^{u,-}_s$, which will lead to \ref{lem.classification.C1:first}.
     \item[(3.2)] $\mathfrak{L}^u_s$ hits both $s,1/s$. Similarly as (3.2), we get that $\mathfrak{L}^{u}_s$ must be enclosed by $\mathfrak{L}^{u,+}_s$. This is impossible by a similar argument as (1.1).
\end{enumerate}

\end{enumerate}

\end{proof}

\begin{figure}[ht]
\centering 
\includegraphics[width=\textwidth]{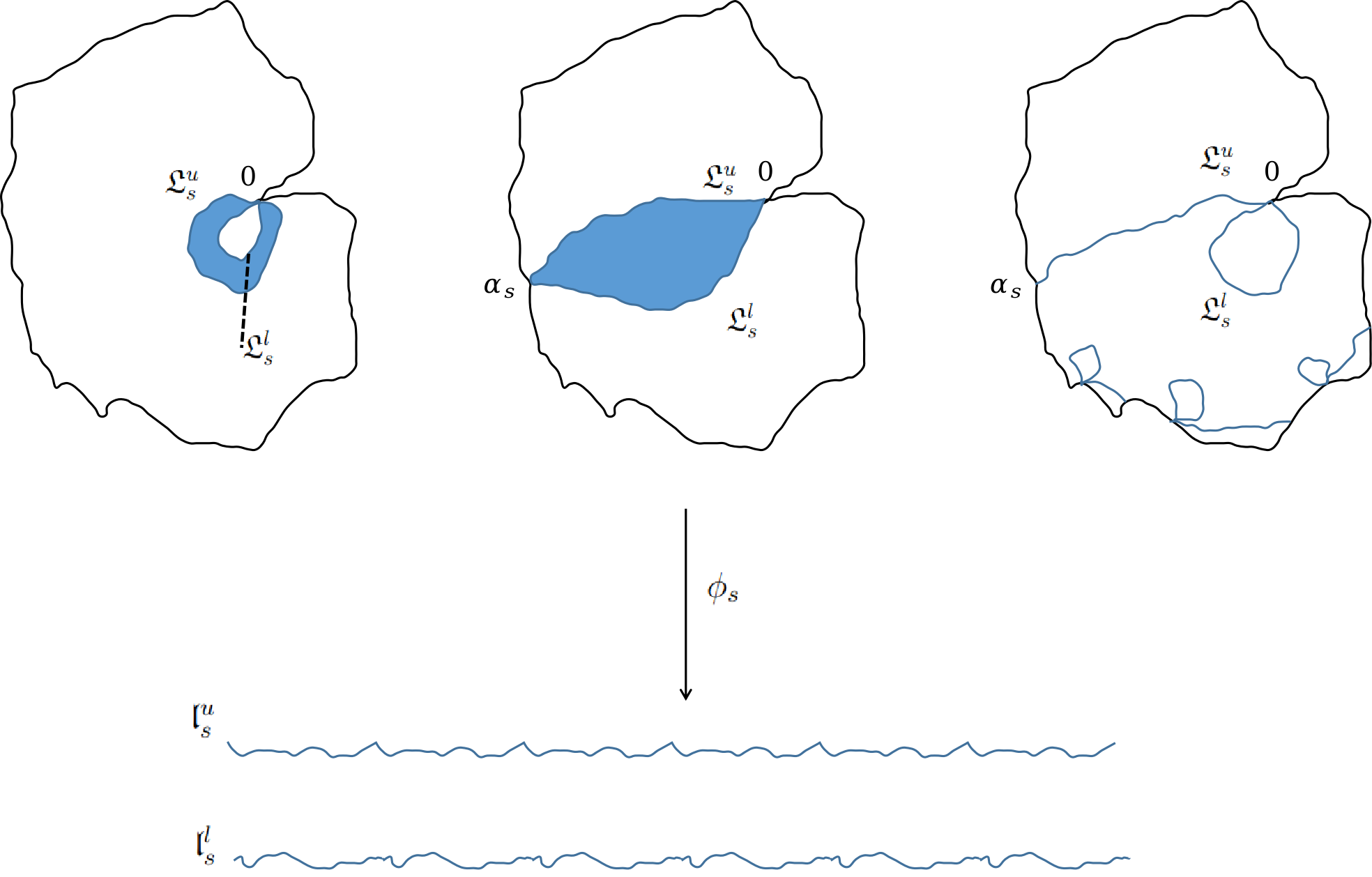} 
\caption{Schematic picture of $C^1_{s,(0)}$ in case \ref{lem.classification.C1:first} \ref{lem.classification.C1:second} and \ref{lem.classification.C1:third} in Lemma \ref{lem.classification.C1} from left to right. The outer dark curve is the Julia set of $g_s$.} 
\label{fig.escaperegion1} 
\end{figure}

\begin{lem}\label{lem.properness.lavaurs}
  Under Assumption $\ref{assum.s2}$, let $N\geq 1$ be such that $E^n_s$ contains no critical point of $g_s$ for all $n< N$. Then for each component $C^N\subset E^N_s$, there exists a smallest integer $k\geq 0$ such that $g^k_s\circ L^{N-1}_s$ sends $C^N$ onto $C^1_{s,(0)}$. Moreover, if $C^1_{s,(0)}$ satisfies case \ref{lem.classification.C1:first} or \ref{lem.classification.C1:second} in Lemma \ref{lem.classification.C1}, then $g^k_s\circ L^{N-1}_s:C^N\longrightarrow C^1_{s,(0)}$ is either an isomorphism or a degree two ramified covering.
\end{lem}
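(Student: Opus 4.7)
The plan exploits the pull-back identity $L_s^{-1}(E^{n-1}_s)=E^n_s$ (Remark \ref{rem.Estau}), the commutation $g_s\circ L_s = L_s\circ g_s$ (immediate from $\phi_s(g_s(z))=\phi_s(z)+1$ and $\psi_s(w+1)=g_s(\psi_s(w))$), and Riemann--Hurwitz.

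First, since $L_s(C^N)$ is connected and contained in $E^{N-1}_s$, it lies in a unique component $C^{N-1}$. Iterating yields a chain $C^{N-1}\supset L_s(C^N),\ldots,C^1\supset L_s^{N-1}(C^N)$. By Lemma \ref{lem.gs-proper} the forward $g_s$-orbit of $C^1$ is a sequence of components of $E^1_s$; since every orbit in $B^*_s$ converges to the parabolic fixed point $0$ and $C^1_{s,(0)}$ contains the attracting petal used in the construction of Lemma \ref{lem.classification.C1}, some $g_s^k(C^1)$ must meet, and hence coincide with, $C^1_{s,(0)}$. Take $k$ minimal.

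Next I would promote the inclusions to equalities by showing that $L_s:C^n\longrightarrow C^{n-1}$ is proper for $2\leq n\leq N$. Suppose $z_m\in C^n$ with $L_s(z_m)\to w\in C^{n-1}$ and $z_m\to z^*$. A boundary point of a component of the open set $E^n_s$ lies outside $E^n_s$, so if $z^*\in B^*_s\setminus\{0\}$ then continuity of $L_s$ gives $L_s(z^*)=w\in E^{n-1}_s$, hence $z^*\in L_s^{-1}(E^{n-1}_s)=E^n_s$, which places $z^*$ back in $C^n$. If $z^*=0$ or $z^*\in\partial B^*_s$, then $\phi_s(z_m)\to\infty$; Lemma \ref{lem.asymptotic.psi} (which identifies $0$ as the unique asymptotic value of $\psi_s$) then forces $L_s(z_m)$ to cluster only at $0$ or diverge, contradicting $L_s(z_m)\to w\in C^{n-1}\subset B^*_s\setminus\{0\}$. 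Thus $L_s:C^n\longrightarrow C^{n-1}$ is proper, hence surjective, yielding $L_s^{N-1}(C^N)=C^1$ and $(g_s^k\circ L_s^{N-1})(C^N)=C^1_{s,(0)}$.

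For the degree bound, assume $C^1_{s,(0)}$ is in case \ref{lem.classification.C1:first} or \ref{lem.classification.C1:second}, so it is simply connected. Treating $N\geq 2$ (the case $N=1$ is analogous with $L_s^{N-1}=\mathrm{id}$), the hypothesis together with the $g_s$-invariance of components of $E^1_s$ gives $s,1/s\notin E^1_s$, so $g_s^k:C^1\longrightarrow C^1_{s,(0)}$ is unramified and hence a conformal isomorphism; propagating simple connectedness along the chain by the same argument, $C^N$ is simply connected. The no-critical-point assumption below level $N$ localizes all ramification of $F:=g_s^k\circ L_s^{N-1}$ to critical points of $L_s$ inside $C^N$, which by Assumption \ref{assum.s2} ($1/s\notin E_s$) must be iterated $g_s$-preimages of $s$ and therefore exist only if $s\in E^N_s$. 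If $s\notin E^N_s$, $F$ is a proper unramified map between simply connected domains and is an isomorphism. If $s\in E^N_s$, each such critical point is simple (since $g_s$ is simply ramified at $s$), so Riemann--Hurwitz $\deg F = 1+\sum(e_i-1)$ yields $\deg F = 2$ as soon as one shows there is exactly one such point in $C^N$.

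The main obstacle is this last uniqueness. The plan is to use the commutation in the form $F=L_s^{N-1}\circ g_s^k$ together with monodromy over the simply connected target $C^1_{s,(0)}$: a second critical point would force either a loss of simple connectedness of $C^N$ or a periodic return under $g_s$ of the component of $E^N_s$ containing $s$; both possibilities are incompatible with the structural constraints placed on $C^1_{s,(0)}$ in cases \ref{lem.classification.C1:first} and \ref{lem.classification.C1:second} of Lemma \ref{lem.classification.C1}.
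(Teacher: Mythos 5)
Your outline parallels the paper's (existence of $k$, surjectivity via a covering/properness argument, and a degree bound via counting ramification), but several steps are either under-justified or genuinely missing.

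First, the properness argument at $\partial B^*_s$ is not rigorous. You assert that if $z_m\to z^*\in\partial B^*_s$ then $\phi_s(z_m)\to\infty$, and that this forces $L_s(z_m)$ to cluster at $0$ or diverge. Neither implication is justified: $\phi_s$ is a branched covering onto $\mathbb C$ whose fibers accumulate on $\partial B^*_s$, so one can perfectly well have $z_m\to z^*\in\partial B^*_s$ with $\phi_s(z_m)$ bounded (even constant along a fiber), and then $L_s(z_m)$ stays bounded away from $0$. The paper avoids this entirely by deducing from Lemma \ref{lem.asymptotic.psi} and Proposition \ref{Propositioncoveringmap} that $L_s:B^*_s\setminus L_s^{-1}(0)\to\mathbb C\setminus\{0\}$ is a branched covering; its restriction to any component of a preimage of an open set is then automatically proper, and combining with Lemma \ref{lem.gs-proper} gives $(g_s^k\circ L_s^{N-1})(C^N)=C^1_{s,(0)}$ directly.

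Second, your classification of ramification points of $F$ omits the \emph{moving} critical points of $L_s$ (Proposition-Definition \ref{def.twotypes.criticalpoint}): saying the ramification ``must be iterated $g_s$-preimages of $s$'' covers only the absolute ones. Moving critical points can be ruled out, but it requires an argument (roughly: a moving critical point $c$ in $E^N_s$ would give $L_s(c)=g_s^n(s)$ or $g_s^n(1/s)$ in $E^{N-1}_s$, hence $s$ or $1/s$ in $E^{N-1}_s\subset E_s$, contradicting the hypothesis on $N$ or Assumption \ref{assum.s2}), and this is precisely the second half of the paper's claim (i).

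Third and most importantly, the core of the degree estimate --- that $C^N$ contains at most one ramification point of $F$ --- is exactly what you leave as a ``main obstacle'' with only a hand-waving allusion to monodromy and ``periodic return.'' The paper proves this with real content: in case \ref{lem.classification.C1:first} it first establishes by induction a claim that for each $1\leq n\leq N$ there is a unique $g_s$-invariant component $C^n_{s,(0)}$ of $E^n_s$, which is a croissant nested strictly inside $\mathfrak L^{l,n-1}_s$ and hence never contains $s$; in case \ref{lem.classification.C1:second} it shows no $g_s$-invariant component of $E^n_s$ exists for $n\geq 2$. These facts are what make the paper's claims (i) and (ii) work --- they show the orbit $C^N\to g_s(C^N)\to\cdots\to g_s^{k-1}(C^N)$ meets $s$ at most once, and that two distinct absolute critical points in $g_s^k(C^N)$ would force a $g_s$-invariant component of $E^N_s$ containing $s$, contradicting the claim. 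Without something like this structural lemma, the Riemann--Hurwitz count cannot be closed. Relatedly, your propagation of simple connectedness of $C^n$ along the chain presumes you already know the number of critical points at each level, which is what you are trying to prove; the paper's induction on the structure of $C^n_{s,(0)}$ breaks this circularity.
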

\begin{proof}
    If $N =1$, then $C^1$ is necessarily a $g_s$-pre-image of $C^1_{s,(0)}$ by the relation $\phi_s\circ g_s = \phi_s +1$. The rest of the statement follows from Lemma \ref{lem.classification.C1} \ref{lem.classification.C1:first} and \ref{lem.classification.C1:second}.

    Suppose $N\geq 2$. By definition of $E^N_s$ and the case $N=1$ above, there exists a smallest $k\geq 0$ such that $g_s^k\circ L_s^{N-1}(C^N)\subset C^1_{s,(0)}$. By Lemma \ref{lem.asymptotic.psi}, $L_s:B^*_s\setminus L_s^{-1}(0)\longrightarrow\C\setminus\{0\}$ is a branched covering. Thus $L_s^{N-1}|_{C^N}$ is a branched covering. By Lemma \ref{lem.gs-proper}, $g_s^k\circ L_s^{N-1}:C^N \longrightarrow C^1_{s,(0)}$ is a branched covering. In particular, $g_s^k\circ L_s^{N-1}(C^N) = C^1_{s,(0)}$. 
     Consider two cases respectively:

    \begin{itemize}
        \item[\textbf{(a).}] {\it $C^1_{s,(0)}$ satisfies case \ref{lem.classification.C1:first} in Lemma \ref{lem.classification.C1}.} We prove by induction on $1\leq n \leq N$ the following claim: 
        
 \begin{claim*}
     There exists a unique connected component $C^n_{s,(0)}$ of $E^N_s$ that is $g_s$-invariant; moreover, $C^n_{s,(0)}$ is a croissant attached at $z=0$, $\partial C^n_{s,(0)}$ consists of two curves $\mathfrak{L}^{u,n}_s,\mathfrak{L}^{l,n}_s$ such that $\mathfrak{L}^{u,n}_s$ encloses $\mathfrak{L}^{l,n}_s$ and $\mathfrak{L}^{l,n-1}_s$ encloses $\mathfrak{L}^{u,n}_s$. In particular, $C^n_{s,(0)}$ does not contain $s$.
 \end{claim*}

\begin{proof}[Proof of the claim]
    Assume that the claim is proved for $n<N$. Let us set $\mathfrak{l}^{u,n}_s = \phi_s(\mathfrak{L}^{u,n}_s)$ and $\mathfrak{l}^{l,n}_s = \phi_s(\mathfrak{L}^{l,n}_s)$.  Recall that $\mathfrak{l}^{u,1}_s = \mathfrak{l}^{u}_s$ and $\mathfrak{l}^{l,1}_s = \mathfrak{l}^{l}_s$ are the two translation $+1$ invariant curves that are sent to $J_s$ by $\psi_s$. Without loss of generality, we may suppose that $\mathfrak{l}^{u,2}_s$ is beneath $\mathfrak{l}^{l,1}_s$ (the case where $\mathfrak{l}^{l,2}_s$ is above $\mathfrak{l}^{u,1}_s$ will be  similar). By induction, $\mathfrak{L}^{l,n-1}_s$ encloses $\mathfrak{L}^{u,n}_s$. Therefore $\mathfrak{l}^{u,n}_s$ is beneath $\mathfrak{l}^{l,n-1}_s$. Since $C^n_{s,(0)}$ is a $g_s$-invariant croissant by induction, both $\mathfrak{L}^{u,n}_s,\mathfrak{L}^{l,n}_s$ enter a repelling petal of $g_s$. Hence $\psi_s^{-1}(\mathfrak{L}^{u,n}_s)$ (resp. $\psi_s^{-1}(\mathfrak{L}^{l,n}_s)$) has a $T_1$-invariant component $\mathfrak{l}^{u,n+1}_s$, (resp. $\mathfrak{l}^{l,n+1}_s$). Let $\mathfrak{B}$ be the strip bounded by $\mathfrak{l}^{u,n+1}_s, \mathfrak{l}^{l,n+1}_s$. Then $\mathfrak{B}$ is beneath $\mathfrak{l}^{l,n}_s$ since $\mathfrak{L}^{l,n-1}_s$ encloses $\mathfrak{L}^{u,n}_s$ and $\psi_s(\mathfrak{l}^{l,n}_s) = \psi(\mathfrak{L}^{l,n-1}_s)$.

Let $\mathfrak{D}^{l,n}_s$ be the Jordan disk enclosed by $\mathfrak{L}^{l,n}_s$. Let $\mathfrak{H}^{l,n}_s$ be the lower-half plane bounded by $\mathfrak{l}^{l,n}_s$. By Lemma \ref{lem.classification.C1} \ref{lem.classification.C1:first}, $\phi_s:\mathfrak{D}^{l,n}_s\longrightarrow \mathfrak{H}^{l,n}_s$ is bijective. Hence $C^{n+1}_{s,(0)} := (\phi_s|_{\mathfrak{D}^{l,n}_s})^{-1}(\mathfrak{B})$ is a croissant which is enclosed by $\mathfrak{L}^{l,n}_s$. This proves the existence of $C^{n+1}_{s,(0)}$ and its desired properties.

Suppose that there is another $g_s$-invariant $\tilde{C}^{n+1}_{s,(0)}$. Since $L_s,g_s$ commute, $L_s(\tilde{C}^{n+1}_{s,(0)})$ is also $g_s$-invariant. By the uniqueness of $C^n_{s,(0)}$, $L_s(\tilde{C}^{n+1}_{s,(0)}) = C^n_{s,(0)}$. Since $\psi_s$ is injective on some left-half plane with large negative real part, there exists a unique component of $\psi_s^{-1}(C^n_{s,(0)})$ that is $T_1$-invariant (which has to be $\mathfrak{B}$). By the relation $\phi_s\circ g_s = \phi_s +1$, there is a unique component of $\phi_s^{-1}(\mathfrak{B})$ which is $g_s$-invariant.
\end{proof}
        \item[ \textbf{(b).}] {\it $C^1_{s,(0)}$ satisfies \ref{lem.classification.C1:second} in Lemma \ref{lem.classification.C1}.} We prove that any component $\mathfrak{B}'$ of $\psi_s^{-1}(C^1_{s,(0)})$ is {\it not} $T_1$-invariant, which implies that $E^n_s$ contains no $g_s$-invariant component for $n\geq 2$. Suppose the contrary, then $\mathfrak{B}'$ will intersect some left-half plane on which $\psi_s$ is injective. Therefore $C^1_{s,(0)}$ intersects some repelling petal of $g_s$, in which the dynamics is attracting by $z=0$ under the action of $g_s^{-1}$. But points in $C^1_{s,(0)}$ are attracted by $\alpha_s$ under the action of $g_s^{-1}$, a contradiction.

    \end{itemize}

Thus in both Cases \textbf{(a)} and \textbf{(b)}, the orbit $C^N\to g_s(C^N)\to...\to g_s^{k-1}(C^N)$ contains the critical point $s$ at most one time (otherwise some $g_s^l(C^N)$ containing $s$ will be $g_s$-invariant). We make the following two claims: 
\begin{itemize}
    \item[(\romannumeral1)]If some $g_s^l(C^N)$ contains $s$, then $g_s^k(C^N)$ contains no critical point of $L_s$. To see this, notice that it cannot contain an absolute critical point, otherwise $g_s^l(C^N)$ $g_s$-invariant; it cannot contain a moving critical point, otherwise $E^{N-1}_s$ will contain a critical point of $g_s$. Hence $L_s^N$ sends $g_s^k(C^N)$ bijectively to $C^{1}_{s,(0)}$. 
    \item[(\romannumeral2)]If the orbit $C^N\to g_s(C^N)\to...\to g_s^{k-1}(C^N)$ does not contain $s$, then $g_s^k(C^N)$ contains at most one critical point of $L_s$. To see this:
\begin{itemize}
    \item Clearly $g_s^k(C^N)$ contains no moving critical point, otherwise $E^{N-1}_s$ will contain $s$.
    \item If $g_s^k(C^N)$ contains two absolute critical points $c_1,c_2$ such that $g_s^{k_i}(c_i) = s$, $i=1,2$, then $k_1=k_2$, otherwise there will exist a $g_s$-invariant component of $E^N_s$ containing $s$. This implies that there exists a smallest $k'\leq k_1$ such that $g_s^{k'}(c_1) = g_s^{k'}(c_2)$. Hence $g_s:g_s^{k+k'-1}(C^N)\longrightarrow g_s^{k+k'}(C^N)$ has degree at least two and $g_s^{k+k'-1}(C^N)$ contains $s$. Thus $k'=k_1=k_2$, otherwise $g_s^{k+k'-1}(C^N)$ will be $g_s$-invariant. Thus $g_s^k(C^N)$ contains $s$, and it is $g_s$-invariant. This is a contradiction. 
\end{itemize}
Immediately from the above two claims, we get that $L_s^N:g_s^k(C^N)\longrightarrow C^{1}_{s,(0)}$ is at most a degree two covering, which completes the proof. 
\end{itemize}
\end{proof}

\subsection{Parametrization of the escaping region $E_s$}
Suppose that $s$ satisfies Assumption \ref{assum.s1} and  assume furthermore that $1/s\not\in E_s$. Following \cite{Ka}, for an integer $N\geq 0$ and a real number $\omega\geq 0$, we say that $L_s$ is $(N,\omega)$-{\it nonescaping} if either
\begin{itemize}
    \item $G_s(L_s^N(s)) = \omega$
    \item $L_s^{N'}(s) \in K_s\setminus B^*_s$ for some $1\leq N'< N$ and $\omega=0$.
\end{itemize}
where $K_s$ is the filled-in Julia set of $g_s$ and $G_s$ is the green function of $g_s$. Notice that a $(N,\omega)$-nonescaping Lavaurs map $L_s$ is automatically $(N-1,0)$-nonescaping if $N\geq 1$; $L_s$ is $(0,\omega)$-nonescaping with $\omega> 0$ if and only if $s\in\mathbb{C}\setminus K_s$.

Let $\phi^\infty_s$ be the Böttcher coordinate of $g_s$ at $\infty$, normalized by $\phi_s^\infty(z) = z+o(1)$. It is a classical result that if $s\in K_s$, then $\phi^\infty_s$ can be injectively extended to the whole attracting basin of infinity $B_s(\infty)$ with $\phi_s^\infty(B_s(\infty)) = \mathbb{C}\setminus\overline{\mathbb{D}}$; if $s\not\in K_s$, then $\phi^\infty_s$ can be injectively extended to a neighborhood $U_s$ of $\infty$ such that $s\in\partial U_s$ and $\phi_s^\infty(U_s) = \mathbb{C}\setminus\overline{\mathbb{D}}_\rho$, where $\mathbb{D}_\rho$ is the open disk centered at $0$ with radius $\rho = e^{G_s(s)}> 1$. Let $r_s = G_s(s)$, set $\varphi_s: \mathbb{H}_{r_s}\longrightarrow \mathbb C$ to be $\varphi_s(w) := (\phi^\infty_s)^{-1}(e^w)$. For a $(N,\omega)$-nonescaping $L_s$ and $r\geq \omega$, define the {\it $(N,r)$-escaping region} by $E^{N}_s(r) := L^{-N}_s(\varphi_s(\mathbb{H}_r))$. 

By Lemma \ref{lem.classification.C1}, $\psi_s:\psi_s^{-1}(\mathbb{C}\setminus K_s)\longrightarrow \mathbb{C}\setminus K_s$ is the universal covering map. Therefore there exists a unique component of $\psi_s^{-1}(R^\infty_s(0))$, denoted by $\mathfrak{R}_s(0)$, that is $T_1$-invariant, where $R^\infty_s(0)$ is the external ray of angle 0 for $g_s$. We take a large $v>0$ and a petal $P_s$ such that $\phi_s|_{P_s}:P_s\longrightarrow \mathbb{H}_v$ is injective. Suppose $L_s$ is $(1,\omega)$-nonescaping. Define $C^1_{s,(0)}(\omega)$ to be the connected component of $\phi_s^{-1}(E^1_s(\omega))$ containing $(\phi_s|_{P_s})^{-1}(\mathfrak{R}_s(0)\cap\mathbb{H}_v)$. 

\begin{lem}\label{lem.botthcer-lavaurs.coordinate}
    Let $s\in\mathcal{H}$ satisfy Assumption \ref{assum.s2}. Suppose $L_s$ is $(1,\omega)$-nonescaping. Then there exists a unique holomorphic isomorphism $\Phi_{s,(0)}: C^1_{s,(0)}(\omega)\longrightarrow \mathbb{H}_\omega$ such that $\varphi_s\circ{\Phi_{s,(0)}} = L_s$, $\mu\circ\Phi_{s,(0)} = \Phi_{s,(0)}\circ g_s$, where $\mu$ is the multiplication by 3.
\end{lem}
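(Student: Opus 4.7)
The plan is to set $\Phi_{s,(0)}(z) := \Psi_s(\phi_s(z))$, with $\Psi_s$ the lift supplied by Lemma~\ref{lem.psi}. Both functional equations then follow formally from identities already in place. For $\varphi_s \circ \Phi_{s,(0)} = L_s$, one invokes $\varphi_s \circ \Psi_s = \psi_s$ (commutativity of the diagram in Lemma~\ref{lem.psi}) together with $L_s = \psi_s \circ \phi_s$. For $\mu \circ \Phi_{s,(0)} = \Phi_{s,(0)} \circ g_s$, the Abel equation $\phi_s \circ g_s = \phi_s + 1$ reduces matters to the identity $\Psi_s(w+1) = 3 \Psi_s(w)$, which in turn follows by lifting the Böttcher functional equation $\phi^\infty_s \circ g_s = (\phi^\infty_s)^3$ through $\exp$ using $\exp \circ \Psi_s = \phi^\infty_s \circ \psi_s$ and $\psi_s \circ T_1 = g_s \circ \psi_s$.

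The bulk of the work is to show that $\Phi_{s,(0)}$ is a biholomorphism onto $\mathbb{H}_\omega$, and I would proceed by a covering-space argument. In cases~\ref{lem.classification.C1:first} and~\ref{lem.classification.C1:second} of Lemma~\ref{lem.classification.C1}, the restriction $L_s: C^1_{s,(0)} \to \mathbb{C} \setminus K_s$ is the universal covering. Since $\varphi_s(\mathbb{H}_\omega) = \{z : G_s(z) > \omega\}$ is an annular subregion of $\mathbb{C} \setminus K_s$ whose inclusion is a homotopy equivalence, the preimage $L_s^{-1}(\varphi_s(\mathbb{H}_\omega)) \cap C^1_{s,(0)}$ is connected and is again the universal cover of $\varphi_s(\mathbb{H}_\omega)$; by construction this connected set equals $C^1_{s,(0)}(\omega)$. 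The target $\mathbb{H}_\omega$ is simultaneously a universal cover of $\varphi_s(\mathbb{H}_\omega)$ via $\varphi_s = (\phi^\infty_s)^{-1} \circ \exp$. Uniqueness of universal covers then produces a biholomorphism $C^1_{s,(0)}(\omega) \to \mathbb{H}_\omega$ intertwining the two coverings, and the basepoint normalisation $\Psi_s(\mathfrak{R}_s(0)) = \mathbb{R}^+$ from Lemma~\ref{lem.psi} matches the petal-ray basepoint used to single out $C^1_{s,(0)}(\omega)$, so this biholomorphism is exactly $\Psi_s \circ \phi_s$.

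Case~\ref{lem.classification.C1:third}, in which $s \in C^1_{s,(0)}$ and $L_s|_{C^1_{s,(0)}}$ is infinitely ramified, requires a preliminary observation: the $(1,\omega)$-nonescaping hypothesis gives $G_s(L_s(s)) = \omega$, so the critical value $L_s(s)$ sits on $\partial \varphi_s(\mathbb{H}_\omega)$ rather than inside it, forcing $s \notin C^1_{s,(0)}(\omega)$ and $L_s|_{C^1_{s,(0)}(\omega)}$ to be unramified; the covering-space argument above then applies on the sheet containing the petal-ray basepoint. Uniqueness of $\Phi_{s,(0)}$ follows because any competitor $\tilde{\Phi}$ with $\varphi_s \circ \tilde{\Phi} = L_s$ differs from $\Phi_{s,(0)}$ by an element of $2\pi i \mathbb{Z}$ (the deck group of $\exp$), and the conjugacy with $\mu$ forces that constant to vanish. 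The anticipated main obstacle is precisely this bookkeeping in case~\ref{lem.classification.C1:third}: one must verify that $C^1_{s,(0)}(\omega)$ is a simply connected subdomain disjoint from the ramification locus of $L_s$, and that the specified component is exactly the sheet on which $\Psi_s \circ \phi_s$ is globally injective.
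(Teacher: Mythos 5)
Your construction ($\Phi_{s,(0)} = \Psi_s \circ \phi_s$, i.e.\ the $\exp$-lift of $\phi^\infty_s \circ L_s$) and verification of the two functional equations are exactly the paper's approach; the paper's whole proof is the single sentence that one lifts the universal covering $\phi^\infty_s \circ L_s : C^1_{s,(0)}(\omega) \to \mathbb{C}\setminus\overline{\mathbb{D}_{e^{\omega}}}$ through $z\mapsto e^z$, with the required properties ``easy to check.'' The covering-theory detail you add, and in particular your observation that in case~\ref{lem.classification.C1:third} the $(1,\omega)$-nonescaping hypothesis expels $s$ (and all $g_s$-preimages of $s$, whose $L_s$-images have potential $\omega/3^m\le\omega$) from $C^1_{s,(0)}(\omega)$ so that the restriction is unramified, spell out precisely what the paper's terse citation of Lemma~\ref{lem.classification.C1} leaves implicit.
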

\begin{proof}
   By Lemma \ref{lem.classification.C1}, we can lift the universal covering map $\phi_s^\infty \circ L_s:C^1_{s,0}(\omega)\longrightarrow \mathbb{C}\setminus\overline{\mathbb{D}_{e^{\omega}}}$ by $z\mapsto e^z$ to a map $\Phi_{s,(0)}: C^1_{s,(0)}(\omega)\longrightarrow \mathbb{H}_\omega$ such that $$\Phi_{s,(0)}((\phi_s|_{P_s})^{-1}(\mathfrak{R}_s(0)\cap\mathbb{H}_v))\subset \mathbb{R}^+.$$ 
   It is easy to check that the required properties for $\Phi_{s,(0)}$ are satisfied. 
\end{proof}

\begin{defn}\label{def.botthcer-lavaurs}
    Let $L_s$ be $(N,\omega)$-nonescaping with $N\geq 1$, and $C^N_s(\omega)$ a connected component of $E^N_s(\omega)$. Define the {\it Böttcher-Lavaurs coordinate} on $C^N_s(\omega)$ by $$\Phi_{s,C^N_s(\omega)}:C^N_s(\omega)\longrightarrow \mathbb{H}_\omega,\quad \Phi_{s,C^N_s(\omega)} := \mu^{-m}\circ\Phi_{s,(0)}\circ g^m_s\circ L^{N-1}_s,$$ 
where $m\geq 0$ is any integer such that $g^m_s\circ L^{N-1}_s(C^N_s(\omega))\subset C^1_{s,(0)}(\omega)$. 
\end{defn}

The following proposition is an immediate consequence of Lemma \ref{lem.properness.lavaurs} and \ref{lem.botthcer-lavaurs.coordinate}.
\begin{prop}\label{Propositionbotthcer-lavaurs.coordinate}
    $\Phi_{s,C^N_s(\omega)}$ is an isomorphism and does not depend on the choice of $m$. Moreover, if $C^1_{s,(0)}$ satisfies case \ref{lem.classification.C1:first} or \ref{lem.classification.C1:second} in Lemma \ref{lem.classification.C1}, and if $C^N_s$ does not intersect the inverse orbit of $s$ (under $g_s$), then $\Phi_{s,C^N_s(\omega)}$ extends to an isomorphism $\Phi_{s,C^N_s}: C^{N}_s\longrightarrow\mathbb{H}$ and we have the following commutative diagram:
    \begin{equation}\label{diag.commutative.criticalvalue}
    \begin{tikzcd}
    {C}^N_{s}\arrow[d, "g^m_{s}\circ L^{N-1}_s" swap]\arrow[rrr, bend left=18, "\mu^m\circ\Phi_{s,C^N_s}"]&&&\mathbb{H}\arrow[d, "e^z"]\\
    C^1_{s,(0)}  \arrow[r, "\phi_{s}"] \arrow[rrru, bend left = 12, "\Phi_{s,(0)}"]& \psi^{-1}_{s}(\mathbb{C}\setminus K_{s})\arrow[r, "\psi_{s}"]\arrow[rru, bend left = 10, "\Psi_{s}" swap]  & \mathbb{C}\setminus K_{s} \arrow[r] \arrow[r,"\phi^\infty_{s}"] & \mathbb{C}\setminus\overline{\mathbb{D}} 
   \end{tikzcd}
\end{equation}    
If $C^1_{s,(0)}$ satisfies case \ref{lem.classification.C1:third} in Lemma \ref{lem.classification.C1}, we have the following commutative diagram:
\begin{equation}\label{diag.commutative.criticalvalue_case(3)}
  \begin{tikzcd}
    {C}^1_{s}(\omega)\arrow[d, "g^m_{s}" swap]\arrow[rrr, bend left=18, "\mu^m\circ\Phi_{s,C^1_s(\omega)}"]&&&\mathbb{H}_{3^m\omega}\arrow[d, "e^z"]\\
   C^1_{s,(0)}(3^m\omega_{s})  \arrow[r, "\phi_{s}"] \arrow[rrru, bend left = 12, "\Phi_{s,(0)}"]& \psi^{-1}_{s}(\varphi_s(\mathbb{H}_{3^m\omega}))\arrow[r, "\psi_{s}"]\arrow[rru, bend left = 10, "\Psi_{s}" swap]  & \varphi_s(\mathbb{H}_{3^m\omega}) \arrow[r] \arrow[r,"\phi^\infty_{s}"] & \mathbb{C}\setminus\overline{\mathbb{D}_{e^{3^m\omega}}}
   \end{tikzcd}
\end{equation}   
\end{prop}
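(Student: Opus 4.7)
The plan is to derive the four assertions---well-definedness (independence of $m$), the isomorphism onto $\mathbb{H}_\omega$, the extension under the additional hypotheses, and commutativity of the diagrams---directly from Lemma \ref{lem.properness.lavaurs} and Lemma \ref{lem.botthcer-lavaurs.coordinate}, along the lines hinted by the author's phrase "immediate consequence."

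First I would verify independence of $m$. The conjugacy $\mu\circ\Phi_{s,(0)} = \Phi_{s,(0)}\circ g_s$ from Lemma \ref{lem.botthcer-lavaurs.coordinate}, iterated $j$ times, yields $\mu^j\circ\Phi_{s,(0)} = \Phi_{s,(0)}\circ g_s^j$. So whenever both $m$ and $m+j$ send $C^N_s(\omega)$ into $C^1_{s,(0)}(\omega)$, the expressions $\mu^{-(m+j)}\circ\Phi_{s,(0)}\circ g_s^{m+j}\circ L^{N-1}_s$ and $\mu^{-m}\circ\Phi_{s,(0)}\circ g_s^m\circ L^{N-1}_s$ coincide after telescoping $\mu^{-j}\circ\mu^j$.

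The crux of the isomorphism claim is to check that the critical values of the branched covering $g_s^m\circ L^{N-1}_s:C^N_s\to C^1_{s,(0)}$, identified by Lemma \ref{lem.properness.lavaurs} as lying in $\{s,1/s\}$, avoid $C^1_{s,(0)}(\omega)$. First, $1/s\notin E_s$ by Assumption \ref{assum.s2}, hence $1/s\notin C^1_{s,(0)}$. Next, the $(N,\omega)$-nonescaping hypothesis forces $L_s(s)\in K_s$ whenever $N\geq 2$ (so $s\notin C^1_{s,(0)}$ at all) and forces $G_s(L_s(s))=\omega$ when $N=1$ (so $s$ sits on $\partial C^1_{s,(0)}(\omega)$ but not inside). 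Therefore the restriction of $g_s^m\circ L^{N-1}_s$ to the preimage of $C^1_{s,(0)}(\omega)$ is an unramified covering of degree at most $2$ onto $C^1_{s,(0)}(\omega)$. Since $C^1_{s,(0)}(\omega)\cong\mathbb{H}_{3^m\omega}$ via $\Phi_{s,(0)}$ is simply connected, such a covering is trivial: each connected component $C^N_s(\omega)$ of the preimage maps isomorphically. Post-composing with the biholomorphisms $\Phi_{s,(0)}$ and $\mu^{-m}$ produces the required isomorphism onto $\mathbb{H}_\omega$. For the extension in cases \ref{lem.classification.C1:first}/\ref{lem.classification.C1:second} with no inverse $g_s$-orbit of $s$ meeting $C^N_s$, the extra hypothesis rules out the degree-$2$ scenario of Lemma \ref{lem.properness.lavaurs}, so $g_s^m\circ L^{N-1}_s:C^N_s\to C^1_{s,(0)}$ is itself a biholomorphism; since in cases \ref{lem.classification.C1:first}/\ref{lem.classification.C1:second} the map $g_s$ acts conformally on $C^1_{s,(0)}$, one can extend $\Phi_{s,(0)}$ by a nested union over $\omega\to 0^+$ to a global isomorphism $C^1_{s,(0)}\to\mathbb{H}$, and composing with $\mu^{-m}$ gives $\Phi_{s,C^N_s}:C^N_s\to\mathbb{H}$. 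The commutativity of diagrams (\ref{diag.commutative.criticalvalue}) and (\ref{diag.commutative.criticalvalue_case(3)}) then reduces to chaining the identity $\Phi_{s,(0)}=\Psi_s\circ\phi_s$ (which follows from $\varphi_s\circ\Phi_{s,(0)}=L_s$, the definition of $\Psi_s$ in Lemma \ref{lem.psi}, and the Böttcher relation $\phi_s^\infty\circ\psi_s=\exp$) with the defining equation $\mu^m\circ\Phi_{s,C^N_s}=\Phi_{s,(0)}\circ g_s^m\circ L^{N-1}_s$.

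The step I expect to demand the most care is the degree argument in the isomorphism claim: without the verification that the critical values of the potentially degree-$2$ map strictly avoid $C^1_{s,(0)}(\omega)$, the covering could be genuinely branched and the isomorphism conclusion would fail. This is precisely where the $(N,\omega)$-nonescaping hypothesis enters essentially, via the dichotomy $N\geq 2$ (yielding $L_s(s)\in K_s$) versus $N=1$ (yielding the boundary equality $G_s(L_s(s))=\omega$).
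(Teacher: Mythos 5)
The paper itself gives no proof beyond the phrase ``immediate consequence of Lemma \ref{lem.properness.lavaurs} and \ref{lem.botthcer-lavaurs.coordinate},'' so there is no detailed argument to compare with; your skeleton (independence of $m$ via the conjugacy $\mu\circ\Phi_{s,(0)}=\Phi_{s,(0)}\circ g_s$, degree bound from Lemma \ref{lem.properness.lavaurs}, simple connectivity of the target forcing degree one, and a diagram chase) is the right skeleton and uses the same two lemmas the author had in mind.

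However, the central step has a genuine gap. You assert that ``the critical values of the branched covering $g_s^m\circ L^{N-1}_s:C^N_s\to C^1_{s,(0)}$ \ldots\ lie in $\{s,1/s\}$'' and attribute this to Lemma \ref{lem.properness.lavaurs}. That lemma only bounds the degree by $2$; it says nothing about the critical values, and they are in fact not $s$ or $1/s$. If a branch point $z_*\in C^N_s$ arises because $g_s^l(z_*)=s$ for some $l<m$, the critical value is $g_s^{m-l}\circ L_s^{N-1}(s)$; if it arises because $g_s^m(z_*)$ is an absolute critical point $c$ of $L_s^{N-1}$ with $g_s^j(c)=s$, the critical value is $L_s^{N-1}(c)$. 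Your subsequent observations --- $1/s\notin E_s$, $L_s(s)\in K_s$ when $N\geq 2$, $G_s(L_s(s))=\omega$ when $N=1$ --- establish only that $s$ and $1/s$ themselves avoid $C^1_{s,(0)}(\omega)$, which does not by itself show that the branch point $z_*$ lies outside $C^N_s(\omega)$. The argument that actually closes the gap is a computation of the Green's function at the branch point: whichever case applies, the orbit relations together with $(N,\omega)$-nonescaping give $G_s(L_s^N(z_*))=3^{-l}G_s(L_s^N(s))=3^{-l}\omega$ (or $=0$ in the sub-cases where $L_s(s)\in K_s$), hence $G_s(L_s^N(z_*))\leq\omega$, so $z_*\notin C^N_s(\omega)$ and the restriction to $C^N_s(\omega)$ is unramified. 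You should replace the incorrect identification of the critical values with this estimate.

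Two smaller inaccuracies are worth fixing as well. The relation you invoke for the diagram chase, ``$\phi_s^\infty\circ\psi_s=\exp$,'' is not what Lemma \ref{lem.psi} says; Diagram \eqref{diag.psi} gives $\phi_s^\infty\circ\psi_s=\exp\circ\Psi_s$, and the identity you want is $\Phi_{s,(0)}=\Psi_s\circ\phi_s$ (obtained by comparing the two lifts of $\phi_s^\infty\circ L_s$ and matching normalizations). Also, $\Phi_{s,(0)}$ maps $C^1_{s,(0)}(\omega)$ onto $\mathbb{H}_\omega$, not $\mathbb{H}_{3^m\omega}$; the set $C^1_{s,(0)}(3^m\omega)$ is the image of $C^N_s(\omega)$ under $g_s^m\circ L_s^{N-1}$, and keeping these levels straight is needed to see that the domain and range of $\Phi_{s,C^N_s(\omega)}$ indeed match up as $\mathbb{H}_\omega$.
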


\section{Proof of Theorem \ref{thm.implosion-main1}}\label{sec.main1}
The escaping locus of the family $L_s$ is defined to be 
\[\boldsymbol{\mathrm{Esc}} := \{s\in\mathcal{H};\,\exists N\geq 1 \text{ s.t. }L^N_s(s) \text{ or }L^N_s(1/s) \in \mathbb{C}\setminus K_s\},\]
Clearly $\boldsymbol{\mathrm{Esc}}$ is open. Proposition \ref{Propositionexistence} guarantees that it is not empty.
By Lemma \ref{lem.atmost-one-escape},
$\boldsymbol{\mathrm{Esc}}$ is naturally decomposed into
\begin{equation}
    \begin{aligned}
        &\boldsymbol{\mathrm{Esc}}_+:=\{s\in\mathcal{H};\,\exists N\geq 1 \text{ s.t. }L^N_s(s)\in \mathbb{C}\setminus K_s\},\\
        &\boldsymbol{\mathrm{Esc}}_-:= \{s\in\mathcal{H};\,\exists N\geq 1 \text{ s.t. }L^N_s(1/s) \in \mathbb{C}\setminus K_s\}.
    \end{aligned}
\end{equation}

Furthermore, each of them is decomposed into $\boldsymbol{\mathrm{Esc}}^N_+$ and $\boldsymbol{\mathrm{Esc}}^N_-$, where $N\geq 1$ represents the first escaping moment. 

By the symmetry $s\mapsto1/s$, it suffices to study one of them, say $\boldsymbol{\mathrm{Esc}}_+$. Let $\mathcal{E}^N$ be a connected component of $\boldsymbol{\mathrm{Esc}}^N_+$. Clearly any $s\in\mathcal{E}^N$ is $(N,\omega)$-nonescaping with some $\omega> 0$.
Denote by ${Cv}^N_{s}(\omega)$ the connected component of $E^N_s(\omega)$ containing the critical value $v_s := g_s(s)$. Now we are ready to prove the following proposition which implies directly Theorem \ref{thm.implosion-main1}.

\begin{prop}\label{Propositionparametristation.escape_locus}
    Let $\Phi_{s,{Cv}^N_s(\omega)}:{Cv}^N_{s}(\omega)\longrightarrow \mathbb{H}_{\omega_s}$ be the Böttcher-Lavaurs coordinate. The mapping $\Phi_{\mathcal{E}^N}:\mathcal{E}^N\longrightarrow \mathbb{H}$ defined by $s\mapsto\Phi_{s,{Cv}^N_s(\omega)}(v_s)$ is a conformal isomorphism. 
\end{prop}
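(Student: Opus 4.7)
The plan is to prove that $\Phi_{\mathcal{E}^N}$ is a biholomorphism by showing it is holomorphic and admits a holomorphic inverse constructed via a Branner-Hubbard style motion, as outlined in the introduction. Holomorphy of $\Phi_{\mathcal{E}^N}$ follows from the holomorphic dependence on $s\in\mathcal{H}$ of the attracting Fatou coordinate $\phi_s$ and the extended inverse Fatou coordinate $\psi_s$ (both non-degenerate on $\mathcal{H}$, since $\mathcal{H}$ avoids $\{\pm i\}$), together with the holomorphic dependence of the Böttcher coordinate $\phi^\infty_s$. Each map appearing in diagram (\ref{diag.commutative.criticalvalue}) is therefore holomorphic in $s$, so by Definition \ref{def.botthcer-lavaurs} the Böttcher-Lavaurs coordinate $\Phi_{s, Cv^N_s(\omega)}$ depends holomorphically on $s$, and since $v_s = g_s(s)$ also varies holomorphically, the map $s \mapsto \Phi_{s, Cv^N_s(\omega)}(v_s)$ is holomorphic on $\mathcal{E}^N$ with values in $\mathbb{H}$.

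To produce a holomorphic inverse, I would first fix a normalized base point. Starting from any $s_0 \in \mathcal{E}^N$, a quasiconformal surgery that modifies $\phi^\infty_{s_0}$ near the image of the escaping critical value, pulls the deformation back through diagram (\ref{diag.commutative.criticalvalue}) to an almost complex structure invariant under $g_{s_0}$, and integrates via the measurable Riemann mapping theorem, produces a parameter $s_* \in \mathcal{E}^N$ with $\Phi_{\mathcal{E}^N}(s_*) = 1$. Then, for each $w \in \mathbb{H}$, I would choose a Möbius automorphism $A_w \in \mathrm{Aut}(\mathbb{H})$ with $A_w(1)=w$ depending holomorphically on $w$ and repeat the surgery with $A_w$ as the modification of the Böttcher coordinate. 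The outcome is a holomorphic family $\{g_{s(w)}\}_{w\in\mathbb{H}}$ of cubic polynomials in $\mathcal{E}^N$ with $s(1) = s_*$ and $\Phi_{\mathcal{E}^N}(s(w)) = w$ by construction. The map $\Psi: w \mapsto s(w)$ is thus a holomorphic section of $\Phi_{\mathcal{E}^N}$.

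From here, $\Psi$ is injective and $\Psi(\mathbb{H})$ is open in $\mathcal{E}^N$ because $\Phi_{\mathcal{E}^N}$ is a local biholomorphism on it. To show $\Psi(\mathbb{H}) = \mathcal{E}^N$, given any accumulation point $s' \in \overline{\Psi(\mathbb{H})} \cap \mathcal{E}^N$, repeating the surgery with $s'$ as base point exhibits $s'$ as lying in the range of a Branner-Hubbard section, hence $s' \in \Psi(\mathbb{H})$. Therefore $\Psi(\mathbb{H})$ is open and closed in the connected set $\mathcal{E}^N$, hence equals it. It follows that $\Psi$ is a biholomorphism and $\Phi_{\mathcal{E}^N} = \Psi^{-1}$.

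The main technical obstacle is setting up the Branner-Hubbard motion so that it preserves the parabolic structure. The Beltrami coefficient has to be trivial on $K_{s_*}$, and in particular on the parabolic basin $B^*_{s_*}$ where the Fatou coordinates live, so that after integration the new polynomial still has a parabolic fixed point at $0$ with the same normalized Fatou coordinates and, crucially, the same Lavaurs phase $\tau$. Moreover, the pull-back of the Beltrami coefficient through the iterated maps of diagram (\ref{diag.commutative.criticalvalue}) must have bounded support disjoint from the critical orbit of $g_{s_*}$; this requires working with the connected component $Cv^N_{s_*}$ and verifying that the relevant inverse branches of $g_{s_*}^m \circ L_{s_*}^{N-1}$ are univalent on the support of the deformation. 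Once this compatibility is established, the rest of the argument is a standard application of the measurable Riemann mapping theorem with parameters.
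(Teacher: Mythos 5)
Your overall framework — holomorphy of $\Phi_{\mathcal{E}^N}$ via the dynamical diagram, then a Branner–Hubbard-style surgery to build a holomorphic inverse section, then openness and closedness of the image — matches the paper's strategy. But two steps in the surgery are misconceived in a way that would make the whole construction collapse.

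First, the modification of the Böttcher coordinate cannot be a Möbius automorphism $A_w\in\mathrm{Aut}(\mathbb{H})$. A Möbius map is conformal, so pulling back the standard structure by $A_w\circ(\text{Böttcher--Lavaurs coordinate})$ produces a Beltrami coefficient identically zero: the surgery is vacuous and $s(w)\equiv s_*$. (There is also no holomorphic family $w\mapsto A_w$ in the real Lie group $\mathrm{PSL}_2(\mathbb{R})$.) The paper instead uses the genuinely quasiconformal map $l_u(z)=z|z|^{u-1}$ on $\mathbb{C}\setminus\overline{\mathbb{D}}$, chosen because it satisfies $l_u(z^n)=(l_u(z))^n$ — so its Beltrami differential is invariant under $z\mapsto z^3$ and pulls back through $\phi^\infty_{s_0}$ to a $g_{s_0}$-invariant coefficient — and because its logarithmic lift $\tilde{l}_u(w)=w+\mathfrak{Re}(w)\cdot(u-1)$ is an $\mathbb{R}$-linear (not Möbius) self-map of $\mathbb{H}$ with $\tilde{l}_u(1)=u$, which is exactly what delivers surjectivity of the section. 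Replacing $l_u$ by a conformal $A_w$ destroys both of these essential compatibilities.

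Second, your statement that ``the Beltrami coefficient has to be trivial on $K_{s_*}$, and in particular on the parabolic basin $B^*_{s_*}$'' is the opposite of what the construction requires. The Beltrami coefficient is deliberately spread into $B^*_{s_*}$ — specifically onto the escaping region $E_{s_*}\subset B^*_{s_*}\subset K_{s_*}$ — by pulling it back infinitely many times by $L_{s_*}$ and $g_{s_*}$ (which commute). If the coefficient were trivial on $K_{s_*}$, the integrating quasiconformal map would be conformal on $K_{s_*}$; since the support on $\mathbb{C}\setminus K_{s_*}$ alone produces a trivial deformation for a connected filled Julia set, you would again get $s(w)\equiv s_*$. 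The preservation of the parabolic point, the Fatou coordinates, and the Lavaurs phase $\tau$ comes not from triviality on the basin but from the fact that the pullback chain is built out of $\phi_{s_*}$, $\psi_{s_*}$, $L_{s_*}$, $g_{s_*}$, so the deformation is equivariant with respect to the Fatou coordinate structure (the paper verifies this by showing $\chi_{s_0,u}\circ\phi_{s_0}\circ h_{s_0,u}^{-1}$ and $h_{s_0,u}\circ\psi_{s_0}\circ\chi_{s_0,u}^{-1}$ are again Fatou coordinates). Your open–closed argument to conclude surjectivity is sound and is implicit in the paper's $\mathscr{A}\circ\Phi=\mathrm{Id}$; the gaps are in the surgery itself.
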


\begin{proof}
    We will construct directly the inverse map of $\Phi := \Phi_{\mathcal{E}^N}$. 
    We start by constructing a dynamical holomorphic motion using Branner-Hubbard motion (cf. \cite{PT}) as follows: for any $u\in\mathbb{H}$, consider the mapping $l_u(z) = z\cdot|z|^{u-1}$. Clearly $(l_u(z))^n = l_u(z^n)$ for any $n\geq1$. Fix $s_0\in\mathcal{E}^N$, pull-back the 0-Beltrami coefficient on $\mathbb{C}\setminus\overline{\mathbb{D}}$ by $l_u\circ\phi^\infty_{s_0}$ to get a Beltrami coefficent $\mu_{s_0,u}$, and spread $\mu_{s_0,u}$ into $B^*_{s_0}$ by pulling back infinitely many times by $L_{s_0}$ and $g_{s_0}$. This operation is valid since $L_{s_0}$ and $g_{s_0}$ commute. Finally set elsewhere to be 0. Therefore we get a $L_{s_0}$ and $g_{s_0}$-invariant Beltrami coefficent $\mu_{s_0,u}$. Integrate it with some proper normalization we get a quasiconformal map $h_{s_0,u}$ conjugating $g_{s_0}$ to some $g_{s(v)}$. Moreover, it turns out that $h_{s_0,u}$ also conjugates $L_{s_0}$ to $L_{s(u)}$: recall that $L_{s_0} = \psi_{s_0}\circ\phi_{s_0}$, pull back $\mu_{s_0,u}$ by $\psi_{s_0}$ and integrate it to get $\chi_{s_0,u}$ fixing $0,1,\infty$. It is then direct that $h_{s_0,u}\circ\psi_{s_0}\circ\chi^{-1}_{s_0,u}$ is holomorphic and semi-conjugates $g_{s(u)}$ to translation $+1$. So is $\chi_{s_0,u}\circ\phi_{s_0}\circ h^{-1}_{s_0,u}$. Thus by uniqueness of Fatou coordinate, they are $\psi_{s(u)}$, $\phi_{s(u)}$ respectively. Hence $L_{s_(u)}\circ h_{s_0,u} = h_{s_0,u}\circ L_{s_0}$. The relation between the mappings are summarized in the commutative diagram below:

\begin{equation}\label{diag.commu.branner-hubbard}
    \begin{tikzcd}
       B^*_{s(u)} \arrow[d,"g_{s(u)}"]\arrow[rrr,bend left = 40, "\phi_{s(u)}"]&B^*_{s_0} \arrow[l,"h_{s_0,u}" swap] \arrow[d,"g_{s_0}"]\arrow[r,"\phi_{s_0}"] &\mathbb{C}\arrow[r,"\chi_{s_0,u}"]\arrow[d,"+1"]& \mathbb{C} \arrow[d,"+1"]\arrow[rrr,bend left = 40, "\psi_{s(u)}"]& \mathbb{C}\arrow[l,"\chi_{s_0,u}" swap]\arrow[d,"+1"]\arrow[r,"\psi_{s_0}"]&\mathbb{C}\arrow[d,"g_{s_0}"]\arrow[r,"h_{s_0,u}"]&\mathbb{C}\arrow[d,"g_{s(u)}"] \\
       B^*_{s(u)}\arrow[rrr,bend right = 40, "\phi_{s(u)}" ]&B^*_{s_0} \arrow[l,"h_{s_0,u}" swap]\arrow[r,"\phi_{s_0}"] &\mathbb{C}\arrow[r,"\chi_{s_0,u}"] &\mathbb{C}\arrow[rrr,bend right = 40, "\psi_{s(u)}"]&\mathbb{C}\arrow[l,"\chi_{s_0,u}" swap]\arrow[r,"\psi_{s_0}"]  &\mathbb{C}\arrow[r,"h_{s_0,u}"]&\mathbb{C}
    \end{tikzcd}
\end{equation}
 For simplicity of notations, we denote $\Phi_{s,{Cv}^N_s(\omega)}$ by $\Phi_s$. Recall that 
 \begin{equation}\label{eq.Phi_s}
     \Phi_s = \mu^{-m}\circ\Phi_{s,(0)}\circ g^m_s\circ L^{N-1}_s
 \end{equation} 
 for some $m\geq 0$ and $\mu$ is the multiplication by 3. By Proposition \ref{Propositionbotthcer-lavaurs.coordinate}, we have the commutative diagram
    \begin{equation}\label{diag.commu.lavaur-bottcher}
    \begin{tikzcd}
    {Cv}^N_{s_0}(\omega_{s_0})\arrow[d, "g^m_{s_0}\circ L^{N-1}_{s_0}" swap]\arrow[rr, bend left=10, "\mu^m\circ\Phi_{s_0}"]&&\mathbb{H}_{3^m\omega_{s_0}}\arrow[d, "e^z"]\arrow[ddd, bend left = 50, color =red, "\tilde{l}_u"]\\
 C^1_{s_0,(0)}(3^m\omega_{s_0})  \arrow[r, "L_{s_0}"] \ar[red]{d}{h_{s_0,u}} \arrow[rru, bend left = 12, color =red, "\Phi_{s_0,(0)}" swap]& \varphi_{s_0}(\mathbb{H}_{3^m\omega_{s_0}}) \arrow[d, "h_{s_0,u}"] \arrow[r] \arrow[r,"\phi^\infty_{s_0}"] & \mathbb{C}\setminus\overline{\mathbb{D}_{e^{3^m\omega_{s_0}}}} \arrow[d, "l_u"] \\
 C^1_{s(u),(0)}(3^m\omega_{s(u)}) \arrow[r, "L_{s(u)}"] \arrow[rrd, bend right = 12, color =red, "\Phi_{s(u),(0)}"] & \varphi_{s(u)}(\mathbb{H}_{3^m\omega_{s(u)}}) \arrow[r,"\phi^\infty_{s(u)}"]  & \mathbb{C}\setminus\overline{\mathbb{D}_{e^{3^m\omega_{s(u)}}}}\\
 {Cv}^N_{s(u)}(\omega_{s(u)})\arrow[u, "g^m_{s(u)}\circ L^{N-1}_{s(u)}"]\arrow[rr, bend right=10, "\mu^m\circ\Phi_{s(u)}" swap]&&\mathbb{H}_{3^m\omega_{s(u)}}\arrow[u, "e^z" swap]
\end{tikzcd}
\end{equation}
from which (the red loop) we obtain
\begin{equation}\label{eq.tilde_l}
    \Phi_{s(u),(0)} = \tilde{l}_{u}\circ\Phi_{s_0,(0)}\circ h^{-1}_{s_0,u},\text{ where } \tilde{l}_{u}(w) = w+\mathfrak{Re}\,w\cdot(u-1).
\end{equation}
Thus 
\begin{equation}
\begin{aligned}
    \Phi_{s(u)}(v_{s(u)})& =  \mu^{-m}\circ\Phi_{s(u),(0)}\circ g^m_{s(u)}\circ L^{N-1}_{s(u)}(v_{s(u)})\\
    &=\mu^{-m}\circ\tilde{l}_{u}\circ\Phi_{s_0,(0)}\circ h^{-1}_{s_0,u}\circ g^m_{s(u)}\circ L^{N-1}_{s(u)}(v_{s(u)})\\
    &=\mu^{-m}\circ\tilde{l}_{u}\circ\Phi_{s_0,(0)}\circ g^m_{s_0}\circ L^{N-1}_{s_0}(v_{s_0}),
\end{aligned}
\end{equation}
where the first equality uses (\ref{eq.Phi_s}); the second equality uses (\ref{eq.tilde_l}); the third equality uses (\ref{diag.commu.branner-hubbard}) and the fact that $h_{s_0,u}^{-1}(v_{s(u)}) = v_{s_0}$.
A straight forward computation shows that
\[\tilde{l}_u = \begin{bmatrix}
    &\mathfrak{Re}\,u & 0\,\,\,\,\\
    &\mathfrak{Im}\,u & 1\,\,\,\,
\end{bmatrix}\]
is a linear automorphism of $\mathbb{H}$ such that $\tilde{l}_u(1) = u$. Hence there exists $u_0$ such that $\Phi_{s(u_0)}(v_{s(u_0)}) = 1$. Let us set $s_* := s(u_0)$. Construct similarly the holomorphic motion $h_{s_*,u}$ with base point $s_*$. Conjugating $g_{s_*}$ by $h_{s_*,u}$, one gets $g_{s_*(u)}$. Thus we obtain an analytic map\footnote{
\cite[Thm. 2.7]{PT} guarantees that $\mathscr{A}$ is analytic.} $\mathscr{A}:\mathbb{H}\longrightarrow \mathcal{E}^N$ defined by $u\mapsto s_*(u)$. It follows by construction that $\Phi\circ \mathscr{A} = \mathrm{Id}_{\mathbb{H}}$ and $\mathscr{A}\circ\Phi  = \mathrm{Id}_{\mathcal{E}^N}$.
\end{proof}

\section{Proof of Theorem \ref{thm.implosion-main2}}\label{sec.main2}

For $s\in \boldsymbol{\mathrm{Esc}}^1_+$, let us recall Definition \ref{def.escape-region} that the {\it $1$-escaping region} for $L_s$ is defined to be 
$$E^{1}_s:= \{z\in B^*_s;\,L_s(z)\in \mathbb{C}\setminus K_s\}.$$ 
Let ${Cv}^1_s,{Cp}^1_s,{Cc}^1_s$ be the connected component of $E^1_s$ containing the critical value $v_s = g_s(s)$, the critical point $s$ and 
its co-critical point respectively. 
\begin{defn}\label{def.disjoint-adjacent}
    We say that a connected component $\mathcal{E}^1$ of $\boldsymbol{\mathrm{Esc}}^1_+$ is \textit{adjacent} if $Cv^1_s = Cp^1_s$, otherwise it is called \textit{disjoint}. See Figure \ref{fig.disjoint-adjacent}.
\end{defn}

\begin{figure}[ht]
\centering 
\includegraphics[width=0.7\textwidth]{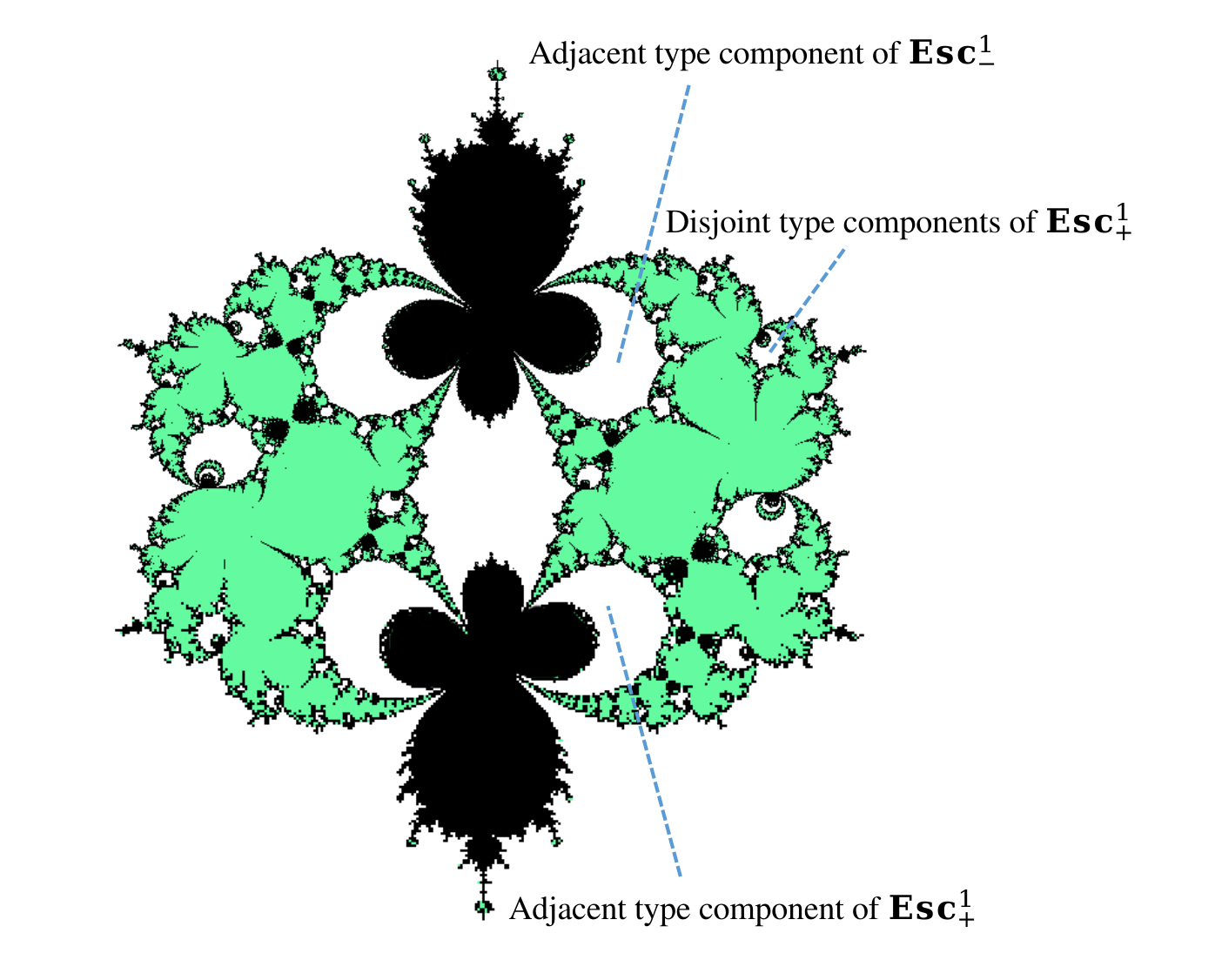} 
\caption{} 
\label{fig.disjoint-adjacent} 
\end{figure}

In this section, we aim to prove the following theorem, which will be crucial to prove Theorem \ref{thm.implosion-main2}:
\begin{thm}\label{thm.implosion.family-gs}
      Let $\mathcal{E}^1\subset\boldsymbol{\mathrm{Esc}}^1_+$ be a connected component and $s_n\in\mathcal{E}^1$ be a sequence such that $s_n$ converges to $\partial\mathcal{E}^1$. Then 
      \begin{itemize}
          \item  $s_n$ converges to $\partial\mathcal{H}$ if and only if $\Phi_{\mathcal{E}^1}(s_n)$ tends to $0$ or $\infty$. 
          \item ${\partial\mathcal{E}^1}\cap\partial\mathcal{H} = \{s_0\}$ is a singleton: if $\mathcal{E}^1$ is disjoint type, then $s_0$ is Misiurewicz parabolic, that is, $g_{s_0}^N(s_0) = 0$ for some $N\geq 1$; if $\mathcal{E}^1$ is adjacent type, then $s_0=-i$.
      \end{itemize}
\end{thm}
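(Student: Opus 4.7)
The plan is to split $\boldsymbol{\mathrm{Esc}}^1_+$ according to Definition \ref{def.disjoint-adjacent} and, following the outline in the introduction, treat adjacent and disjoint components separately. Throughout I abbreviate $\Phi := \Phi_{\mathcal{E}^1}$.

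\textbf{First bullet.} Since $\Phi: \mathcal{E}^1 \to \mathbb{H}$ is a conformal isomorphism (Proposition \ref{Propositionparametristation.escape_locus}), the statement reduces to identifying, among the prime ends of $\mathbb{H}$, which correspond to $\partial \mathcal{H}$-accumulation of $s_n$ versus internal-$\mathcal{H}$ accumulation. The easy half is: if $\Phi(s_n)$ stays in a compact subset $K \subset \mathbb{H}$, then $s_n \in \Phi^{-1}(K)$, a compact subset of $\mathcal{E}^1 \subset \mathcal{H}$, so $s_n$ cannot accumulate on $\partial \mathcal{H}$. To upgrade this to exclude accumulation of $\Phi(s_n)$ on $i\mathbb{R}^*$ when $s_n \to \partial \mathcal{H}$, I would reuse the Branner--Hubbard motion construction from the proof of Proposition \ref{Propositionparametristation.escape_locus}: based at a fixed $s_* \in \mathcal{E}^1$, the motion extends continuously to compact neighborhoods of any putative limit $u_0 \in i\mathbb{R}^*$ of $\Phi(s_n)$ in $\mathbb{H}$, with image sitting compactly inside $\mathcal{H}$; this contradicts $s_n \to \partial \mathcal{H}$.

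\textbf{Second bullet, disjoint case.} The key step is $\pm i \notin \partial \mathcal{E}^1_{\mathrm{dis}}$. Assume by contradiction $s_n \in \mathcal{E}^1_{\mathrm{dis}}$ with $s_n \to -i$ (the case $+i$ is symmetric). Via Lemma \ref{lem.relation-gs-fa} the sequence lands, after conjugation, in the sector $\overline{\Omega_{\eta, r}} \setminus \{0\}$ of Lemma \ref{lem.perturb.Fatoucoordi}, where $f_a$ has gate structure $(*, 2)$ and the preferred Fatou coordinates (Definition \ref{def.prefered.nor}) depend continuously up to the degenerate limit. The disjointness hypothesis places $v_{s_n}$ and the critical point $s_n$ in distinct components of $E^1_{s_n}$; however, the gate-structure geometry forces both critical points to be funneled through the single sepal $P_a$ adjacent to the degenerate fixed point, yielding the required contradiction. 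Consequently every $\partial \mathcal{H}$-accumulation point of $\mathcal{E}^1_{\mathrm{dis}}$ lies in $\partial \mathcal{H} \setminus \{\pm i\}$; on this regime the classical (non-degenerate) Fatou coordinates are well-behaved, and density of Misiurewicz parabolic parameters on $\partial \mathcal{H}$ (Theorem \ref{thm.runze}\ref{thm.rurnze:first}) together with a transversality argument at such a parameter identifies the accumulation point as Misiurewicz parabolic. The singleton property follows from rigidity of the first-escape combinatorics attached to such a parameter.

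\textbf{Second bullet, adjacent case.} Existence of $-i \in \partial \mathcal{E}^1_{\mathrm{adj}}$ is supplied by Proposition \ref{Propositionexistence}: transferred via Lemma \ref{lem.relation-gs-fa}, the curve $\mathcal{G}_\tau$ consists of adjacent-type first-escape parameters accumulating at the degenerate parabolic. For uniqueness, a Misiurewicz parabolic $s_0 \in \partial \mathcal{H} \setminus \{\pm i\}$ drives one of $s_n, 1/s_n$ onto a pre-orbit of the fixed point $0$, and continuity then forces the escaping orbit of $v_{s_n}$ to part from that of $s_n$, incompatible with the adjacency condition; hence $-i$ is the only candidate. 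The main obstacle throughout, as signaled by the author, is the disjoint case at $\pm i$: the standard Fatou-coordinate machinery for the non-degenerate fixed point degenerates there, so the argument must be routed entirely through the gate-structure framework of Section \ref{sec.parabolic-degenerate}, Oudkerk's preferred-normalization theory of Section \ref{sec.oudkerk}, and the orbit correspondence Lemma \ref{lem.orbit.correspond}.
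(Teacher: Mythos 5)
Your high-level decomposition (split into disjoint and adjacent components; use the gate-structure machinery of Section~\ref{sec.parabolic-degenerate} near $\pm i$; reduce via the parametrization $\Phi_{\mathcal{E}^1}$) does track the paper's outline, but each of the three technical reductions you make has a genuine gap.

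\textbf{First bullet.} Your ``easy half'' is fine, but your plan to exclude accumulation of $\Phi(s_n)$ on $i\R^*$ via Branner--Hubbard motion does not work. The motion $u\mapsto s_*(u)$ is built by integrating the pullback of the Beltrami coefficient of $l_u(z)=z|z|^{u-1}$, whose dilatation blows up as $u\to i\R$; there is no a~priori continuous extension, and the assertion ``with image sitting compactly inside $\mathcal{H}$'' \emph{is} the claim, not a tool. You also do not address the converse direction ($\Phi(s_n)\to 0$ or $\infty$ implies $s_n\to\partial\mathcal{H}$) at all. The paper's argument (Lemma~\ref{lem.accumulation.inside}) is of a different nature: it moves a puzzle piece $Q_s$ holomorphically near $s_0\in\partial\mathcal{H}\setminus\{i,-i\}$ and derives a boundedness contradiction with Lemma~\ref{lem.unbounded.curveR}, and conversely uses the holomorphic motion of $J_s$ together with properties of $\Psi_s$. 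Crucially that lemma only applies once one already knows $\pm i$ are excluded from the accumulation set, so the first bullet cannot be disposed of before the work you defer to the second bullet.

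\textbf{Disjoint case.} Your proposed contradiction --- ``the gate-structure geometry forces both critical points to be funneled through the single sepal $P_a$'' --- is not a contradiction: when there is only one gate, both critical orbits necessarily pass through it. The actual obstruction in Lemma~\ref{lem.nondouble.parabolic} is more delicate. One first shows $Cp^1_s$ is a bi-croissant whose two landing angles straddle $\frac12$ (Lemma~\ref{lem.cross.gate}), so $Cc^1_s$ must cross the gate; then one follows a figure-eight curve $\Gamma$ in $Cp^1_s$, and because the target half-line $e_0=\Phi_{s_0,(0)}^{-1}(e)$ is $g_{s_0}$-invariant, a segment $\Gamma([t_1,t_2])$ and its image under $g_{s_0}^k$ both lie in $P_{s_0}$, producing two preimages of the same point inside $P_{s_0}$ and contradicting injectivity of $g_{s_0}^k$ there. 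The contradiction you identify does not carry this content.

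\textbf{Adjacent case.} You rely on Proposition~\ref{Propositionexistence} for existence of $-i$ on the boundary, but you do not address why $i\notin\partial\mathcal{E}^1_{\mathrm{adj}}$; that is Lemma~\ref{lem.i.notonboundary}, which needs its own involved argument (homotopy lifting, the upper main chessboard, the Julia control of Proposition~\ref{Propositioncontrol.juliaset}). For uniqueness, your reasoning that ``continuity forces the escaping orbit of $v_{s_n}$ to part from that of $s_n$'' is too loose to be checkable: $s_n$ is not Misiurewicz, so there is no parting of orbits to appeal to. The paper's argument is structurally different: Corollary~\ref{cor.internal-curve-mustland} forces $\Phi^{-1}_{\mathcal{E}^1}((0,\infty))$ to land at a single point, the holomorphically moving Jordan curve $R_{s,C^1}(0)$ (from Lemma~\ref{lem.existence.disjoint}, extended in Remark~\ref{rem.motion}) rules out a Misiurewicz landing for adjacent type, and a Carath\'eodory argument then pins the landing point to $-i$ and shows $\partial\mathcal{E}^1\cap\partial\mathcal{H}$ is a singleton. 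You should also note that one still needs Lemma~\ref{lem.disjoint.accumulation} in the disjoint case to show the limits coming from $\Phi\to 0$ and $\Phi\to\infty$ agree; your transversality/rigidity remark does not substitute for it.
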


\subsection{Control of the Julia set for near degenerate parabolic mappings}\label{subsec.julia-control}
 Consider the parabolic degenerate polynomial $g_{i} = g_{-i}$. It has two immediate basins at $0$. Denote by $B^*_i$ the one containing $i$. Let $\phi_{1,+,i}:B^*_i\longrightarrow \C$ be the attracting Fatou coordinate and $\phi_{1,-,i}:P_{1,-,i}:\longrightarrow\C$ be the repelling Fatou coordinate of repelling axis $\mathbb{R}^+$, where $P_{1,-,i}$ is a small repelling petal. In this subsection, we always make the following assumption if there is no extra explanation:
 \begin{assum}\label{assum.g_s-closeto-i}
     $s\in\mathcal{H}$ is close to $i$ or $-i$, such that $f_{a_s}$ satisfies the conditions in Lemma \ref{lem.perturb.Fatoucoordi} or \ref{lem.perturb.Fatoucoordi'}, where $f_{a_s}(z) = z+a_sz^2+z^3$ is affinely conjugate to $g_s$. 
 \end{assum}
In particular, $g_s$ has gate structure $(*,2)$ or $(2,*)$. In the following discussion, we focus on $(*,2)$ and it will be similar for $(2,*)$. 

Recall that by Lemma \ref{lem.perturb.Fatoucoordi}, $g_s$ has the corresponding attracting/repelling Fatou coordinates $\phi_{1,\pm,s}$  and attracting/repelling petals $P_{1,\pm,s}$. We take the Fatou coordinates with the preferred normalization (Definition \ref{def.prefered.nor}).
\paragraph{Upper main chessboard.} Let $\mathbb{U}$ be the upper-half plane bounded by the horizontal line passing through $\phi_{1,+,i}(i)$.
    Let $CB$ be the {\it upper main chessboard} in $B^*_i$: that is, the unique $g_{i}$-invariant connected component of $\phi_{1,+,i}^{-1}(\mathbb{U})$. Notice that $\partial\mathbb{U}$ is a Jordan curve, and $\mathbb{U}$ enters $P_{1,-,i}$. Thus by Lemma \ref{lem.perturb.Fatoucoordi}, there is a $g_{s}$-invariant Jordan disk $CB_{s}\subset B^*_{s}$, such that $s,0\in\partial CB_s$, $\phi_{1,+,s_n}(CB_s)$ is the upper-half plane bounded by the horizontal line $l_{s}$ passing through $\phi_{1,+,s}(s)$. We call $CB_s$ the {\it upper main chessboard} of $g_s$. We parametrize naturally $l_{s}$ by $l_s(t) = t+i\mathfrak{Im}\,l_s$. Finally notice that $g_{s}:CB_s\longrightarrow CB_s$ is injective.

\paragraph{Puzzle piece bounding the Julia set.}
Let $s$ either satisfies Assumption \ref{assum.g_s-closeto-i} and has gate structure $(*,2)$, or $s= i$ (it will be the same for $s=-i$).
From the discussion in \cite[\S 3]{Ro}, for each fixed $k_0\geq 2$, one can construct $k_0$-periodic internal ray $R_{k_0}(g_s)$ with internal angle $\theta_{k_0}=\frac{1}{2^{k_0}-1}$. It starts from $\partial P_{1,+,s}$ and lands at a repelling $k_0$-periodic point $x_{k_0}(g_s)\in\partial B^*_s$ that admits a $\theta_{k_0}$-rotation under the action of $g_{s}$. We may take $k_0$ large enough (not depending on $s$) such that $x_{k_0}(g_s),g_i(x_{k_0}(g_s))\in P_{1,-,i}\cap P_{1,-,s}$. Let $R^\infty_{t_0}(g_s)$ be the external ray landing at $x_{k_0}(g_s)$. Then $R^\infty_{3t_0}(g_s)$ lands at $g_{s}(x_{k_0}(g_s))$. Let $E(g_s)\subset P_{1,-,i}\cap P_{1,-,s}$ be a segment of equipotential connecting $R^\infty_{t_0}(g_s)$ and $R^\infty_{3t_0}(g_s)$; let $\gamma_s\subset \partial P_{1,+,s}$ be an arc connecting the starting point of $R_{k_0}(g_s)$ and that of $g_{s}(R_{k_0}(g_s))$ so that $\phi_{1,+,s}(\gamma_s)$ does not depend on $s$. Let $Q_s$ be the bounded connected component enclosed by (see Figure \ref{fig.julia_control}) 
$${R_{k_0}(g_s)\cup g_s(R_{k_0}(g_s))\cup\gamma_s}\cup R^\infty_{t_0}(g_s)\cup E(g_s).$$

\begin{figure}[ht]
\centering 
\includegraphics[width=0.9\textwidth]{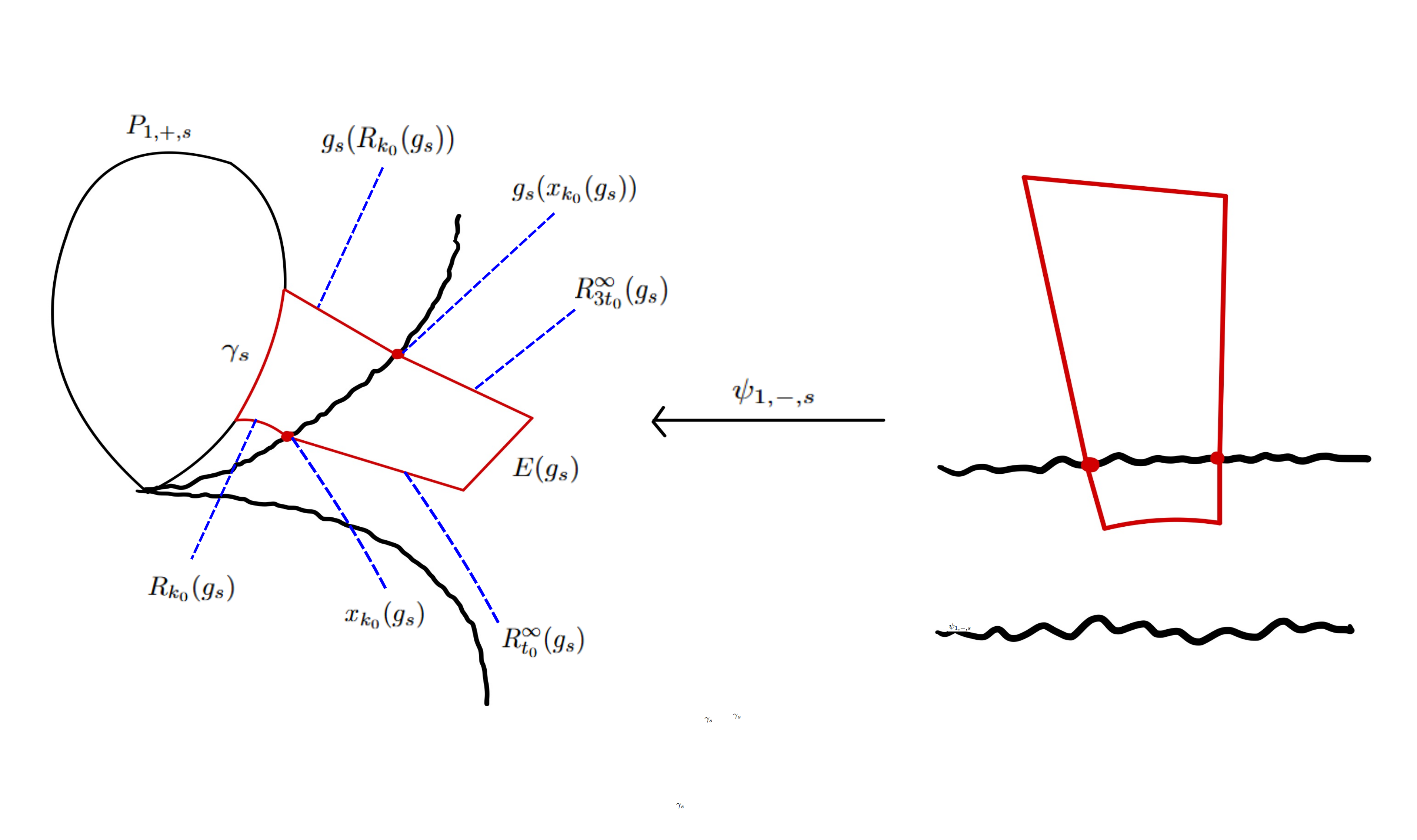} 
\caption{The puzzle piece $Q_s$ is enclosed by the red curve on the left-hand side; the red curve on the right-hand side is the pre-image under $\psi_{1,-,s}$.} 
\label{fig.julia_control} 
\end{figure}

From the continuity dependence of perturbed Fatou coordinate (Lemma \ref{lem.perturb.Fatoucoordi}), the continuity dependence of repelling Koenigs coordinate and Böttcher coordinate, $\partial Q_s$ moves continuously under a mapping
\begin{equation}\label{eq.continuous-mouvement}
    h_s:\partial Q_i\longrightarrow \partial Q_s
\end{equation}
In particular $\partial Q_s$ converges uniformly to $\partial Q_i$ as $s$ tends to $i$. Let $\psi_{1,-,s}:\C\longrightarrow\C$ be the extended inverse Fatou coordinate of $\phi_{1,-,s}$. Notice that $\overline{Q_i}$ does not contain any critical value of $\psi_{1,-,i}$. Hence we can lift $Q_i$ by $\psi_{1,-,i}$ bijectively to some topological disk $\tilde{Q}_i$. Then there exists $N$ such that $\psi_{1,-,i}(\tilde{Q}_i-N)\subset P_{1,-,i}$. Set $Q_i^N = \psi_{1,-,i}(\tilde{Q}_i-N)$. Notice that $g_i^N$ sends $\overline{Q_i^N}$ bijectively to $\overline{Q_i}$. Thus for $s$ close enough to $i$, we can lift $h_s$ by $g_i^N$ and $g_s^N$ to $\overline{Q_i^N}$. Set $Q^N_s = h_s({Q_i^N})$. Thus by continuity dependence of perturbed Fatou coordinate (Lemma \ref{lem.perturb.Fatoucoordi}) $\phi_{1,-,s}(Q^N_s)$ converges uniformly to $\tilde{Q}_i-N$ as $s$ tends to $i$. By construction, $\partial[\phi_{1,-,s}(Q^N_s)]$ intersects $\psi_{1,-,s}^{-1}(J_s)$ at exactly two points $w$ and $w+1$. This leads to the following control on the size of Julia set for $g_s$:
\begin{prop}\label{Propositioncontrol.juliaset}
    Suppose that $s$ satisfies Assumption \ref{assum.g_s-closeto-i}. Let $y$ be any point in $\psi_{1,-,s}^{-1}(J_s)$ (recall that $\psi_{1,-,s}^{-1}(J_s)$ is the union of two $T_1$-invariant curves). Let $\mathfrak{l}$ be the segment of $\psi_{1,-,s}^{-1}(J_s)$ between $y$ and $y+1$. Then there exists $M>0$ not depending on $y$ and $s$, such that
$$\sup_{z_1,z_2\in\mathfrak{l}}|z_1-z_2|\leq M.$$
\end{prop}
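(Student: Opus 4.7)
The plan is to bound the diameter of one specific fundamental segment on each of the two $T_1$-invariant components of $\psi_{1,-,s}^{-1}(J_s)$ uniformly in $s$, and then propagate the bound to an arbitrary $y$ by using $T_1$-invariance of the curves.

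First I would combine the ingredients already assembled before the statement. By Lemma \ref{lem.perturb.Fatoucoordi}, the continuous motion (\ref{eq.continuous-mouvement}) of the puzzle boundary, and the continuity of the Böttcher coordinate as well as of the repelling Koenigs coordinate at $x_{k_0}(g_s)$, the bounded domain $\phi_{1,-,s}(Q^N_s) \subset \C$ converges in the Hausdorff topology to the fixed domain $\tilde{Q}_i - N$ as $s \to i$. Hence there exist a neighborhood $\mathcal{V}$ of $\{i,-i\}$ in parameter space and a constant $M_0 > 0$ such that $\mathrm{diam}\,\phi_{1,-,s}(\overline{Q^N_s}) \leq M_0$ for every $s \in \mathcal{V}$ satisfying Assumption \ref{assum.g_s-closeto-i}.

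Next I would locate the fundamental segment sitting inside this puzzle image. By construction, $\partial[\phi_{1,-,s}(Q^N_s)]$ meets $\psi_{1,-,s}^{-1}(J_s)$ at precisely two points $w_s$ and $w_s + 1$, both lying on a single component — call it $C_+$ — of the two $T_1$-invariant curves making up $\psi_{1,-,s}^{-1}(J_s)$. Indeed, these two crossings are the $\phi_{1,-,s}$-images of the two points of $\partial B^*_s \cap \overline{Q^N_s}$ (the $g_s^N$-preimages of $x_{k_0}(g_s)$ and $g_s(x_{k_0}(g_s))$ inside $P_{1,-,s}$), and they differ by $+1$ because $\phi_{1,-,s}$ conjugates $g_s$ to $T_1$. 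The arc of $C_+$ between $w_s$ and $w_s + 1$ coincides with $\phi_{1,-,s}(\partial B^*_s \cap \overline{Q^N_s})$, lies in $\overline{\phi_{1,-,s}(Q^N_s)}$, and therefore has diameter at most $M_0$. For an arbitrary $y \in C_+$, write $y = w_s + k + t$ with $k \in \Z$ and $t \in [0,1)$; the arc from $y$ to $y + 1$ is contained in the union of two consecutive integer translates of the reference arc, so by $T_1$-invariance its diameter is at most $M_0 + 1$.

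For the other component $C_-$ I would run the analogous construction using a $k_0$-periodic cycle of internal/external rays accessible from the opposite side of $\partial B^*_s$ near $0$: there are countably many periodic rays on either side, and Lemma \ref{lem.perturb.Fatoucoordi} is symmetric in this respect, so the same continuity argument produces an analogous constant $M_0'$ uniform on $\mathcal{V}$. Setting $M := \max(M_0, M_0') + 1$ then completes the proof. The main obstacle I anticipate is arranging the symmetric puzzle-piece construction for $C_-$ with a common neighborhood $\mathcal{V}$ working for both constructions; once that uniformity is in place, the passage from a single fundamental segment to all of them is merely bookkeeping via $T_1$-invariance.
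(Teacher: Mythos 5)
Your treatment of the first component (the one met by $\phi_{1,-,s}(\overline{Q^N_s})$) and the propagation to an arbitrary $y$ via $T_1$-invariance match the paper. The divergence is in how you handle the other $T_1$-invariant curve, and this is where a genuine gap appears.

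The paper does \emph{not} build a second puzzle piece. For $y$ on the lower curve it first normalizes $y$ (via a $T_1$-translate, which does not change $\operatorname{diam}\mathfrak{l}$) so that $\psi_{1,-,s}(y)\in P_{1,-,s}$, then observes that the same dynamical point is the $\psi_{1,-,s}$-image of some point $y'$ of the \emph{upper} curve with uniformly bounded real part; since $\psi_{1,-,s}$ semi-conjugates $T_1$ to $g_s$, the fundamental segment $\mathfrak{l}'$ from $y'$ to $y'+1$ on the upper curve has $\psi_{1,-,s}(\mathfrak{l})\subset\psi_{1,-,s}(\mathfrak{l}')$, and one pulls back the bound already established on the upper side. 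In other words, the lower curve is controlled \emph{through} the upper one, using only the extension relation of $\psi_{1,-,s}$ and the continuity of $\phi_{1,-,s}$ on $P_{1,-,s}$.

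Your alternative — a mirror puzzle piece $\hat Q_s$ built from a periodic internal/external ray pair landing on the other arc of $\partial B^*_s$ — has a specific obstruction that the word ``symmetric'' hides. As $s\to i$, $B^*_s$ pinches at $0$ and degenerates into the two disjoint basins $B^*_i$ and $B^*_{-i}$. The arc of $\partial B^*_s$ on the side of $\mathbb R^+$ that bounds the ``upper'' strip in $\phi_{1,-,s}$-coordinates limits onto $\partial B^*_i$, while the arc bounding the ``lower'' strip limits onto $\partial B^*_{-i}$. The internal ray $R_{k_0}(g_s)$ used in the paper starts from $\partial P_{1,+,s}$ and lands on the first arc; this ray converges to an internal ray of $B^*_i$, which is why the motion $h_s$ of $\partial Q_s$ is continuous up to $s=i$. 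An internal ray from $\partial P_{1,+,s}$ landing on the second arc must pass through the gate petal $P_a$, which collapses as $s\to i$ (recall $\overline{P_a}\to\overline{P_{2,+,f_0}\cup P_{2,-,f_0}}$ in Lemma~\ref{lem.perturb.Fatoucoordi}); such a ray does not converge to any single ray of $g_i$ starting from $\partial P_{1,+,i}$, so the corresponding motion $\hat h_s$ of $\partial\hat Q_s$ is \emph{not} continuous at $s=i$ with the same normalization. There is no $z\mapsto -z$ symmetry of $g_s$ for $s\neq i$ to fall back on, and Lemma~\ref{lem.perturb.Fatoucoordi} gives only one attracting petal $P_{1,+,f_a}$ at $0$ plus the degenerating gate, so the two sides of the repelling petal are genuinely asymmetric in the perturbed picture. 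You could try to anchor the second puzzle piece in $\phi_{2,+,s}$ on $P_a$ instead, but that is a separate construction not supplied by \cite{Ro} or by your appeal to ``symmetry,'' and you do not provide it. You correctly flag the uniformity of the second construction as the main obstacle, but you treat it as bookkeeping, whereas it is the crux — precisely the difficulty the paper sidesteps by routing the lower curve through the upper one via $\psi_{1,-,s}$.
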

\begin{proof}
    If $y$ belongs to the upper component of $\psi_{1,-,s}^{-1}(J_s)$, then the proposition follows from the boundedness of $\phi_{1,-,s}(Q_s)$. Suppose that $y$ belongs to the lower component. We may assume that $y$ has large negative real part so that $\psi_{1,-,s}(y)\in P_{1,-,s}$. Then there exists a $y'$ belonging to the upper component with smallest real part such that $\psi_{1,-,s}(y') = y$. It is easy to see that $y'$ is uniformly bounded for $s$ close to $i$. Let $\mathfrak{l}'$ be the segment of $\psi_{1,-,s}^{-1}(J_s)$ between $y'$ and $y'+1$. Then $\psi_{1,-,s}(\mathfrak{l})\subset\psi_{1,-,s}(\mathfrak{l'})$. Hence from the continuity dependence of $\psi_{1,-,s}$, we get the uniform bound for $\mathfrak{l}$.
\end{proof}

\begin{rem}\label{rem.puzzle.piecebounding}
    Suppose that $s$ does \textbf{not} satisfy Assumption \ref{assum.g_s-closeto-i} and is close to some $s_0\in\partial\mathcal{H}\setminus\{i,-i\}$. Then the same construction of the puzzle piece $Q_s$ also works for $s$ in a small neighborhood of $s_0$. Moreover, the continuous motion (\ref{eq.continuous-mouvement}) can be improved to holomorphic motion. 
\end{rem}

\subsection{A technical lemma}

\begin{lem}\label{lem.i.notonboundary}
    Let $\mathcal{E}^1\subset \boldsymbol{\mathrm{Esc}}^1_+$ be an adjacent component. Then $i\not\in\partial\mathcal{E}^1$ (see Figure \ref{fig.disjoint-adjacent}).
\end{lem}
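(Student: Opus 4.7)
Assume for contradiction that $i\in\partial\mathcal{E}^1$ for some adjacent $\mathcal{E}^1\subset\boldsymbol{\mathrm{Esc}}^1_+$, and fix $s_n\in\mathcal{E}^1$ with $s_n\to i$. After extracting a subsequence, Assumption \ref{assum.g_s-closeto-i} applies and we may suppose every $g_{s_n}$ has gate structure $(*,2)$ (the case $(2,*)$ follows by the symmetry $s\leftrightarrow 1/s$). By Lemma \ref{lem.perturb.Fatoucoordi}, the critical point $s_n$ lies in $P_{1,+,s_n}$ while the normalization point $1/s_n$ lies in $P_{s_n}$. Adjacency yields $s_n\in Cp^1_{s_n}=Cv^1_{s_n}$, so $\phi_{s_n}(s_n)+\tau$ belongs to the strip $\psi_{s_n}^{-1}(\mathbb{C}\setminus K_{s_n})$ bounded by the $T_1$-invariant curves $\mathfrak{l}^u_{s_n},\mathfrak{l}^l_{s_n}$. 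The plan is to contradict this by comparing two conflicting estimates on $\mathrm{Im}\,\phi_{s_n}(s_n)$.

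\emph{Upper bound on $|\mathrm{Im}\,\phi_{s_n}(s_n)|$.} By Proposition \ref{Propositioncontrol.juliaset}, every $T_1$-period segment of $\mathfrak{l}^u_{s_n}$ and $\mathfrak{l}^l_{s_n}$ has diameter at most a uniform $M$; combined with the $T_1$-invariance of both curves, this confines the strip to a horizontal band of height $\leq 2M$. The normalization $\tilde H_{s_n}(w)=w+o(1)$ at $+i\infty$ chosen in Section \ref{subsec.basic-setting}, together with the convergence of the preferred Oudkerk coordinates given by Lemma \ref{lem.perturb.Fatoucoordi}, pins the vertical position of this band uniformly as $s_n\to i$, so there exists $M'$ independent of $n$ with
\[
|\mathrm{Im}\,\phi_{s_n}(s_n)|\leq M'+|\mathrm{Im}\,\tau|.
\]

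\emph{Lower bound: $|\mathrm{Im}\,\phi_{s_n}(s_n)|\to\infty$.} In the preferred Oudkerk normalization, $\phi_{s_n}$ coincides on $P_{s_n}$ with $\phi_{2,+,s_n}-\phi_{2,+,s_n}(1/s_n)$, a bounded family converging to $\phi_{2,+,g_i}-\phi_{2,+,g_i}(-i)$; on $P_{1,+,s_n}$ we write $\phi_{s_n}=\phi_{1,+,s_n}+D_n$ for a constant $D_n$ recording the analytic continuation across the degenerating gate. Using the gate-$2$ identity $\phi_{2,-,s_n}=\phi_{2,+,s_n}+\tilde\tau_2(g_{s_n})$ with
\[
\tilde\tau_2(g_{s_n})=\tau(a_{s_n})=-2\pi i/\log(1+a_{s_n}^2),
\]
and Lemma \ref{lem.orbit.correspond} applied to an inverse orbit $g_{s_n}^{-N_n}$ carrying a bounded piece of $P_{1,+,s_n}$ through $P_{2,-,s_n}$ into $P_{s_n}$, one obtains
\[
D_n=-\tilde\tau_2(g_{s_n})+O(1)\qquad (n\to\infty).
\]
Since $|\mathrm{Im}\,\tilde\tau_2(g_{s_n})|\to\infty$ as $a_{s_n}\to 0$ and $\phi_{1,+,s_n}(s_n)\to\phi_{1,+,g_i}(i)$ is bounded, $|\mathrm{Im}\,\phi_{s_n}(s_n)|\to\infty$, contradicting the upper bound.

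The main obstacle is making precise the identity $D_n=-\tilde\tau_2(g_{s_n})+O(1)$: one must identify the integer $N_n\to\infty$ of inverse iterates needed to cross the degenerating gate and verify that $N_n$ absorbs exactly the (bounded) real part of $\tilde\tau_2(g_{s_n})$, leaving the divergent imaginary part untouched in $D_n$. Theorems \ref{thm.combinatorics.traject}, \ref{Thm.hornmap} and \ref{thm.oudkerk}, combined with the continuity statements in Lemma \ref{lem.perturb.Fatoucoordi}, supply the bookkeeping needed.
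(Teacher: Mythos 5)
Your approach is genuinely different from the paper's. The paper uses a topological/homotopy argument: it builds the upper main chessboard $CB_{s_n}$, shows that the $T_1$-period segments of $\mathfrak{l}^u_{s_n}$ are homotopic rel endpoints to arcs in the chessboard avoiding the marked critical values, lifts these homotopies inductively by $g_{s_n}$, and concludes that $\bigcup_{k\leq 1}\mathfrak{L}_k$ converges to $0$, contradicting that $\mathfrak{L}^u_{s_n}$ lands at the repelling fixed point $\alpha_{s_n}$. You instead try to derive a quantitative contradiction for $\mathrm{Im}\,\phi_{s_n}(s_n)$ between an upper bound from Proposition \ref{Propositioncontrol.juliaset} and a claimed divergence coming from the gate phase $\tilde\tau_2(g_{s_n})$.

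There are two genuine gaps in your argument. First, the central estimate $D_n = -\tilde\tau_2(g_{s_n}) + O(1)$ is not established — you flag it as "the main obstacle" — and it is not merely bookkeeping: the constant relating $\phi_{s_n}$ on $P_{1,+,s_n}$ to the normalization $\phi_{s_n}(1/s_n)=0$ at $1/s_n\in P_{s_n}$ comes from the global analytic continuation through the basin, and an integer $N_n\to\infty$ from the orbit correspondence necessarily absorbs part of $\tilde\tau_2$; showing that the \emph{imaginary} part survives in $D_n$ after this absorption requires exactly the analysis you are deferring. Second, and more seriously, even if that identity were proved, the asserted divergence $|\mathrm{Im}\,\tilde\tau_2(g_{s_n})|\to\infty$ is false in general: since $\tau(a)=-2\pi i/\log(1+a^2)\approx -2\pi i\,a^{-2}$, one has $\mathrm{Im}\,\tau(a)\approx -2\pi\cos(2\arg a)/|a|^2$, which stays \emph{bounded} (indeed tends to $0$) along any sequence with $\arg a_{s_n}\to\pi/4$, a direction permitted in $\Omega_{\eta,r}$. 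Along such a sequence only $\mathrm{Re}\,\tilde\tau_2$ diverges, and that gets absorbed by the integer $N_n$, so your two bounds do not contradict each other. The paper's homotopy argument is insensitive to the approach angle of $a_{s_n}$ and does not have this defect.
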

\begin{proof}
    Assume by contradiction that $i\in\partial\mathcal{E}^1$. Then we can take a sequence $\mathcal{E}^1\ni s_n\to i$. By Lemma \ref{lem.perturb.Fatoucoordi}, \ref{lem.perturb.Fatoucoordi'} and \ref{lem.parabolic.attracting}, $g_{s_n}$ has either gate structure $(*,2)$ or $(2,*)$. Let us recall that for $s\in\mathcal{E}^1$, $\partial C^1_{s,(0)}$ contains two curves $\overline{\mathfrak{L}_s^u},\overline{\mathfrak{L}_s^l}$ invariant by $g_s$ (see Lemma \ref{lem.classification.C1} \ref{lem.classification.C1:third} and the third picture in Figure \ref{fig.escaperegion1}), where $\overline{\mathfrak{L}_s^u}$ links $0$ to the repelling fixed point $\alpha_s$ and $\overline{\mathfrak{L}_s^l}$ is a Jordan curve containing $0$. Moreover, $\mathfrak{L}_s^u$ is above $\mathfrak{L}_s^l$, $s$ is contained in the connected component $\Delta_s$ of $B_s^*\setminus\mathfrak{L}_s^u$ that contains $\mathfrak{L}_s^l$. Thus $g_{s_n}$ cannot have gate structure $(2,*)$, otherwise $\mathfrak{L}_{s_n}^l$ will be above $\mathfrak{L}_{s_n}^u$.
    
    The case less obvious to rule out is when $g_{s_n}$ has gate structure $(*,2)$. See Figure \ref{fig.case-to-ruleout}. Let us recall the perturbed Fatou coordinates $\phi_{1,\pm,s_n}$ and perturbed attracting/repelling petals $P_{1,\pm,s_n}$ for $g_{s_n}$ given by Lemma \ref{lem.perturb.Fatoucoordi}; also recall the upper main chessboard $CB_{s_n}$ constructed in \S \ref{subsec.julia-control}: it is a $g_{s_n}$-invariant Jordan disk $CB_{s_n}\in B^*_{s_n}$, such that $s,0\in\partial CB_{s_n}$ and $\phi_{1,+,s_n}(CB_{s_n})$ is the upper-half plane bounded by the horizontal line $l_{s_n}$ passing through $\phi_{1,+,s_n}(s_n)$. We naturally parametrize $l_{s_n}$ by $l_{s_n}(t) = t +i\mathfrak{Im}\,{l_{s_n}}$.
    

    Set $\mathfrak{l}^u_{s_n} = \phi_{1,+,s_n}(\mathfrak{L}^u_{s_n})$. This is a non self-intersecting curve which is $T_1$-invariant. Let $\mathfrak{l}^u_{s_n}(t)$ with $t\in\mathbb{R}$ be any parametrization of $\mathfrak{l}^u_{s_n}$ such that $\mathfrak{l}^u_{s_n}(t+1) = \mathfrak{l}^u_{s_n}(t)+1$. We set $w = \phi_{1,+,s_n}(s_n)$. Notice that $\mathfrak{l}^u_{s_n}$ cannot be strictly above $l_{s_n}$, otherwise $\mathfrak{L}^u_{s_n}$ will be a Jordan curve. Since $s_n$ is the escaping critical point, $w$ is beneath $\mathfrak{l}^u_{s_n}$. Thus $\mathfrak{l}^u_{s_n}$ intersects $l_{s_n}$ and $\mathfrak{l}^u_{s_n}\cap l_{s_n}$  is $T_1$-invariant. For any $k\in\mathbb{Z}$, let $x_k$ be the left-hand side (i.e. $\mathfrak{Re}\,x_k<\mathfrak{Re}\,w+k$) closest point to $w+k$ in $l_{s_n}\cap \mathfrak{l}^u_{s_n}$. Let $\mathfrak{l}_k$ be the segment of $\mathfrak{l}^u_{s_n}$ between $x_k$ and $x_k+1$. Notice that we have $x_k+1=x_{k+1}$ (see Figure \ref{fig.+1invariant-curves}). Therefore for any curve $\gamma_k$ starting from $x_k$ ending at $x_{k+1}$ and above $l_{s_n}$, we have ($\sim$ represents homotopy):
    \begin{equation}\label{eq.homotop}
         \mathfrak{l}_k\sim\gamma_k\,\,\, \mathrm{rel}\,\,\{x_k,x_{k+1}\}\text{ on }\C\setminus\{w+1-m;\,m\in\mathbb{N}\},\,\,\text{for all }k\in\mathbb{Z}.
    \end{equation}
   Let $w' =  \phi_{1,+,s_n}(1/s_n)$. By Theorem \ref{thm.oudkerk} \ref{thm.oudkerk:sixth}, $\mathfrak{Re}\,w'\to-\infty$ as $n$ goes to infinity. By Proposition \ref{Propositioncontrol.juliaset}, we can take $n$ large enough, such that $$\mathfrak{Re}\,w'<\inf_{z\in\mathfrak{l}_k}\mathfrak{Re}\,z,\,\,\text{for all } k\geq 1.$$
    Thus we can choose $\gamma_k$ in (\ref{eq.homotop}) close to $l_{s_n}$, so that 
     \begin{equation}\label{eq.homotop'}
         \mathfrak{l}_k \sim\gamma_k\,\,\, \mathrm{rel}\,\,\{x_k,x_{k+1}\}\text{ on }\{w'+1-m;\,m\in\mathbb{N}\},\,\,\text{for all } k\geq 1. 
    \end{equation}
   For $k\in\mathbb{Z}$, let $\eta_k = (\phi_{1,+,s_n}|_{\overline{CB_{s_n}}})^{-1}(\gamma_k)$ set $z_k = (\phi_{1,+,s_n}|_{\overline{CB_{s_n}}})^{-1}(x_k)$. We associate any parametrization $\eta_k(t)$ so that $g_{s_n}(\eta_k(1)) = \eta_k(0)$. Then $\eta_k(1) = z_k$, $\eta_1(0) = z_{k+1}$. 
    Let $\mathfrak{L}_1(t)$ be the lift of $\mathfrak{l}_1$ by $\phi_{1,+,s_n}$ (notice that $\phi_{1,+,s_n}$ is globaly defined on the immediate parabolic basin $B^*_{s_n}$) such that $\mathfrak{l}_1(0) = z_1$. Then by (\ref{eq.homotop}), (\ref{eq.homotop'}) and Homotopy lifting theorem, 
    \begin{equation}\label{eq.L1homotop}
\mathfrak{L}_1\sim\eta_1\,\,\mathrm{rel}\,\, \{z_1,z_2\}\text{ on }
    B^*_{s_n}\setminus\{g_{s_n}(s_n),g_{s_n}(1/s_n)\}.
    \end{equation}
For $k\leq 0$, we define by induction $\mathfrak{L}_k$ so that $\mathfrak{L}_k$ is the lift of $\mathfrak{L}_{k+1}$ by $g_{s_n}$ with $\mathfrak{L}_{k+1}(0) = \mathfrak{L}_k(1)$. We prove by induction that 
 \begin{equation}\label{eq.Lkhomotop}
\mathfrak{L}_k\sim\eta_k\,\,\mathrm{rel}\,\, \{z_k,z_{k+1}\}\text{ on }
    B^*_{s_n}\setminus\{g_{s_n}(s_n),g_{s_n}(1/s_n)\}.
    \end{equation}
The case $k=1$ is just (\ref{eq.L1homotop}). Assume that the case $k$ is proved and consider the case $k-1$. Notice that by induction, $\mathfrak{L}_{k-1}$ and $\eta_{k-1}$ have the same end points $z_{k-1},z_k$. By (\ref{eq.homotop}), $\mathfrak{L}_{k-1}\cup\eta_{k-1}$ does not wind $g_{s_n}(s_n)$. It suffices to prove that it can neither wind $g_{s_n}(1/s_n)$. Recall that $l_{s_n}$ is the horizontal line passing through $w=\phi_{1,s_n,+}(s_n)$. Denote by $\xi := (\phi_{1,+,s_n}|_{{\overline{CB}_n}})^{-1}((0,+\infty)+i\mathfrak{Im}\,{l_{s_n}})$. Notice that $\phi_{1,+,s_n}^{-1}((0,+\infty)+i\mathfrak{Im}\,{l_{s_n}})$ has a connected component $\xi'$ that connects $\xi$ to a pre-image of $0$. Thus $\overline{\xi\cup\xi'}$ cuts $B^*_{s_n}$ into two connected components. Suppose by contradiction that $\mathfrak{L}_{k-1}\cup\eta_{k-1}$ winds $g_{s_n}(1/s_n)$. Then $\mathfrak{L}_{k-1}$ intersects with $\overline{\xi\cup\xi'}$, say at $z_*$, since it needs to pass through the $(*,2)$ gate. On the other hand, there exists $z_*'\in\mathfrak{L}_{k-1}$, such that $\mathfrak{Re}\,\phi_{1,+,s_n}(z_*')<\mathfrak{Re}\,\phi_{1,+,s_n}(g_{s_n}(1/s_n))$. By Theorem \ref{thm.oudkerk} \ref{thm.oudkerk:sixth}, $\mathfrak{Re}\,\phi_{1,+,s_n}(g_{s_n}(1/s_n))$ is very negative for $n$ large. But $\mathfrak{Re}\,\phi_{1,+,s_n}(z_*)$ is positive, which contradicts to the bound in Proposition \ref{Propositioncontrol.juliaset}. Hence (\ref{eq.Lkhomotop}) is proved. Thus we get that 
$\bigcup_{k\leq 1}\mathfrak{L}_{k}$ converges to $0$. This contradicts the fact that the one of the two end points of $\mathfrak{L}_{s_n}^u$ is the repelling fixed point of $g_{s_n}$. 
\end{proof}
\begin{figure}[ht]
\centering 
\includegraphics[width=0.8\textwidth]{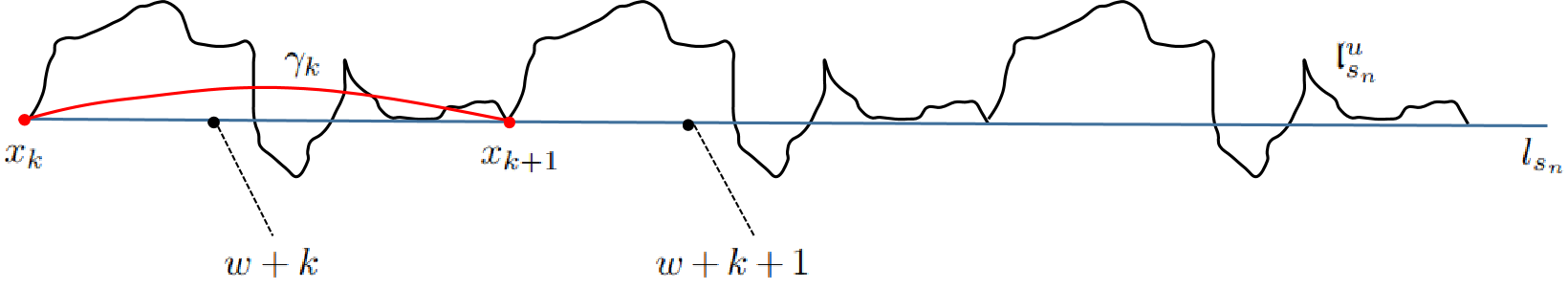} 
\caption{} 
\label{fig.+1invariant-curves} 
\end{figure}
\begin{figure}[ht]
\centering 
\includegraphics[width=0.4\textwidth]{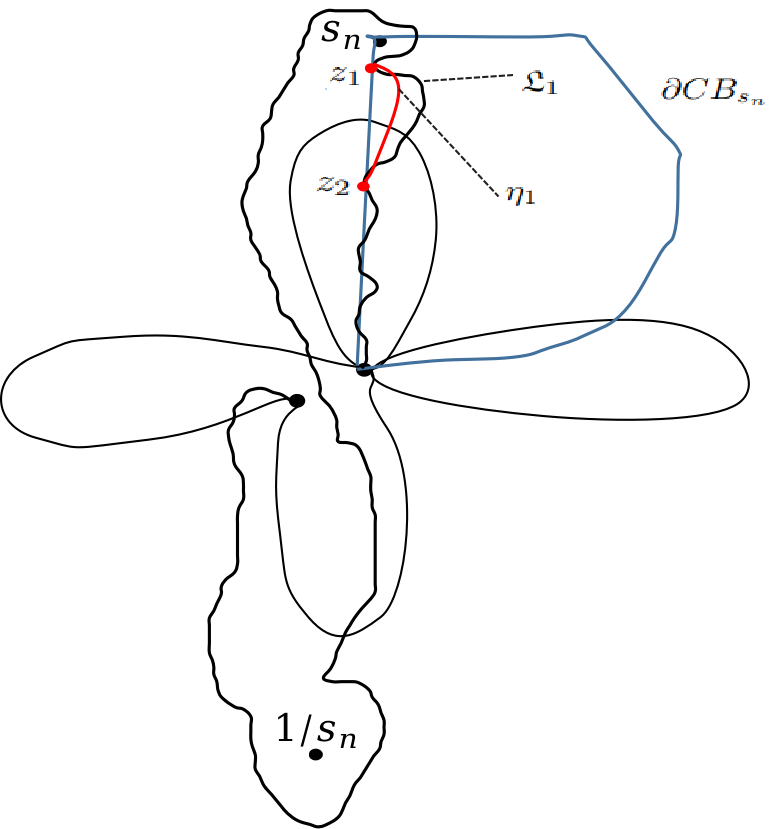} 
\caption{Illustration of the proof of Lemma \ref{lem.i.notonboundary}.} 
\label{fig.case-to-ruleout} 
\end{figure}

\subsection{Invesgating the accumulation set via parametrization}
The escaping region for the Lavaurs map $L_{s_0}$ with $s_0\in\partial\mathcal{H}\setminus\{i,-i\}$ has similar properties as presented in (\ref{diag.commutative.criticalvalue}):
\begin{equation}\label{diag.commutative.boundary}
    \begin{tikzcd}
    {C}^N_{s_0}\arrow[d, "g^m_{s_0}\circ L^{N-1}_{s_0}" swap]\arrow[rrr, bend left=18, "\mu^m\circ\Phi_{s_0}"]&&&\mathbb{H}\arrow[d, "e^z"]\\
 C^1_{s_0,(0)}  \arrow[r, "\phi_{s_0}"] \arrow[rrru, bend left = 12, "\Phi_{s_0,(0)}"]& \psi^{-1}_{s_0}(\mathbb{C}\setminus K_{s_0})\arrow[r, "\psi_{s_0}"]\arrow[rru, bend left = 10, "\Psi_{s_0}" swap]  & \mathbb{C}\setminus K_{s_0} \arrow[r] \arrow[r,"\phi^\infty_{s_0}"] & \mathbb{D} 
\end{tikzcd}
\end{equation}
Notice that $\Psi_{s_0}$ is a conformal isomorphism, and the Julia set $J_{s_0}$ of $g_{s_0}$ is locally connected \cite{Ro}, thus $\Psi^{-1}_{s_0}$ extends continuously to $\overline{\mathbb{H}}$.

\begin{lem}\label{lem.accumulation.inside}
    Let $\mathcal{E}^1$ be a component of $\boldsymbol{\mathrm{Esc}}^1_+$. Suppose $\{s_n\}\subset\mathcal{E}^1$ is a sequence such that $$\Phi_{\mathcal{E}^1}(s_n)\to iy_0\in\partial{{\mathbb{H}}}\cup\{\infty\}.$$ 
    Moreover assume that \{i,-i\} does not intersect the accumulation set of $s_n$. Then the accumulation set of $\{s_n\}$ is contained in $\mathcal{H}$ if and only if $y_0\not\in\{0,\infty\}$.
\end{lem}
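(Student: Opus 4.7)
My starting observation is the identity
\[
\Phi_{\mathcal{E}^1}(s)\;=\;\log\phi^\infty_s\bigl(L_s(v_s)\bigr)
\]
(in a coherent branch of the logarithm), which follows from Definition~\ref{def.botthcer-lavaurs} together with $\phi^\infty_s\circ g_s=(\phi^\infty_s)^3$ and $L_s\circ g_s = g_s\circ L_s$. Consequently $\mathfrak{Re}\,\Phi_{\mathcal{E}^1}(s_n)\to 0$ is equivalent to $L_{s_n}(v_{s_n})$ approaching the Julia set $J_{s_n}$, while $\mathfrak{Re}\,\Phi_{\mathcal{E}^1}(s_n)\to+\infty$ is equivalent to $L_{s_n}(v_{s_n})\to\infty$. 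I would then treat the two directions of the equivalence separately.

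\textbf{From accumulation on $\partial\mathcal{H}\setminus\{\pm i\}$ to $y_0\in\{0,\infty\}$.} Assume some accumulation point $s_0$ of $\{s_n\}$ lies in $\partial\mathcal{H}\setminus\{\pm i\}$ and extract $s_n\to s_0$. By Theorem~\ref{thm.runze}~\ref{thm.rurnze:first} Misiurewicz parabolic parameters are dense in $\partial\mathcal{H}$, and the puzzle piece construction of Remark~\ref{rem.puzzle.piecebounding} provides a holomorphic motion of $J_s$ in a neighborhood of $s_0$; combining these via a diagonal/approximation argument I would reduce to the case where $s_0$ itself is Misiurewicz parabolic, so $g_{s_0}^{k-1}(v_{s_0})=0$ for some $k\geq 1$. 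The functional equation $\phi_{s_n}(v_{s_n})=\phi_{s_n}(g_{s_n}^{k-1}(v_{s_n}))-(k-1)$, together with the blow-up $\mathfrak{Re}\,\phi_{s_n}(z)\to+\infty$ as $z\to 0$ along the attracting direction inside $B^*_{s_n}$, then forces $\mathfrak{Re}\,\phi_{s_n}(v_{s_n})\to+\infty$. Since $L_{s_n}(v_{s_n})=\psi_{s_n}(\phi_{s_n}(v_{s_n})+\tau)\in\mathbb{C}\setminus K_{s_n}$ and $\psi_s$ satisfies $\psi_s(w+n)=g_s^n(\psi_s(w))$, iterating $g_{s_n}$ forward yields $L_{s_n}(v_{s_n})\to\infty$. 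Therefore $|\Phi_{\mathcal{E}^1}(s_n)|\to\infty$, which gives $y_0=\infty$.

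\textbf{From $y_0\in\{0,\infty\}$ to accumulation on $\partial\mathcal{H}$.} For $y_0=\infty$: if $\{s_n\}$ remained in a compact $K\Subset\mathcal{H}$, then $s\mapsto\phi_s,\psi_s,\phi^\infty_s$ would be uniformly continuous on $K$ and $v_s$ bounded, so $L_{s_n}(v_{s_n})\in\mathbb{C}\setminus K_{s_n}$ would stay in a compact set bounded uniformly away from $\infty$ and $J_{s_n}$, contradicting $|\Phi_{\mathcal{E}^1}(s_n)|\to\infty$; hence $\{s_n\}$ leaves every compact and accumulates on $\partial\mathcal{H}$. For $y_0=0$: if some subsequence $s_n\to s_0\in\mathcal{H}$, continuity combined with the local connectivity of $J_{s_0}$ coming from Theorem~\ref{thm.pascale} forces $\phi^\infty_{s_0}(L_{s_0}(v_{s_0}))=1$, so $L_{s_0}(v_{s_0})$ is precisely the landing point of the $0$-external ray of $g_{s_0}$; through the conformal identification of Theorem~\ref{thm.runze}~\ref{thm.rurnze:third}, this would place $\Phi(s_0)$ on the boundary of $B(\mathfrak{p}_1)\setminus\overline{\Omega}$, forcing $s_0\in\partial\mathcal{H}$—a contradiction.

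\textbf{Main obstacle.} The main difficulty lies in the reduction, for a general $s_0\in\partial\mathcal{H}\setminus\{\pm i\}$, to a Misiurewicz parabolic limit: controlling $\phi_{s_n}(v_{s_n})$ uniformly requires combining the holomorphic motion of $J_s$ provided by the puzzle piece of \S~\ref{subsec.julia-control} with the Fatou coordinate blow-up along the attracting axis of the parabolic fixed point. The rigidity step in case $y_0=0$ is also delicate, as it requires interpreting the landing condition $L_{s_0}(v_{s_0})=\alpha_{s_0}$ as a boundary condition on $B(\mathfrak{p}_1)$ via Theorem~\ref{thm.runze}~\ref{thm.rurnze:third}.
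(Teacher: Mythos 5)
Your proposal has genuine gaps in both directions, and the route you sketch is quite different from — and weaker than — what the paper actually does.

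\textbf{The ``if'' direction.} The central device in the paper is Lemma~\ref{lem.unbounded.curveR} together with the puzzle-piece construction of \S\ref{subsec.julia-control}: assuming $s_n\to s_0\in\partial\mathcal{H}\setminus\{\pm i\}$ with $y_0\notin\{0,\infty\}$, one shows $\phi_{s_n}(v_{s_n})$ is \emph{bounded} (by tracking $\Psi^{-1}_{s_n}(3^{-N}\Phi_{s_n}(v_{s_n}))$ inside a holomorphically moving puzzle piece in the repelling cylinder) and then contradicts Lemma~\ref{lem.unbounded.curveR}. Your reduction ``via a diagonal/approximation argument'' to the case where $s_0$ itself is Misiurewicz parabolic is a real gap: density of Misiurewicz parabolic parameters gives no uniform control, and the paper never performs such a reduction. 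Worse, your conclusion $y_0=\infty$ is too strong. You argue $\mathfrak{Re}\,\phi_{s_n}(v_{s_n})\to+\infty$, but the blow-up of the Fatou coordinate near a prelanding on $0$ can go to $-\infty$ as well: Lemma~\ref{lem.disjoint.accumulation} shows precisely that the \emph{same} Misiurewicz parabolic parameter $s_0$ is approached both by a curve with $\Phi_{\mathcal{E}^1}\to 0$ and by one with $\Phi_{\mathcal{E}^1}\to\infty$. So the correct target is $y_0\in\{0,\infty\}$, and the dichotomy you set up (``$\mathfrak{Re}\,\Phi\to 0\Leftrightarrow L(v)\to J$'', ``$\mathfrak{Re}\,\Phi\to\infty\Leftrightarrow L(v)\to\infty$'') does not capture the $\{0,\infty\}$ vs.\ interior split: for any finite $y_0\neq 0$ one also has $\mathfrak{Re}\,\Phi\to 0$.

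\textbf{The ``only if'' direction.} Your argument for $y_0=\infty$ controls only the modulus $\mathfrak{Re}\,\Phi_{\mathcal{E}^1}(s_n)=G_{s_n}(L_{s_n}(v_{s_n}))$, i.e.\ it bounds $|e^{\Phi}|$; this does not rule out $\mathfrak{Im}\,\Phi_{\mathcal{E}^1}(s_n)\to\infty$ with bounded real part, which is precisely a way to have $\Phi\to\infty$. The paper avoids this by working with the lift $\Psi_{s}$ (Diagram~(\ref{diag.psi})) and the $T_1$-invariant strip $\psi_s^{-1}(\mathbb{C}\setminus K_s)$: since $\phi_{s_n}(v_{s_n})$ is bounded when $\{s_n\}$ is compactly contained in $\mathcal{H}$, and $\Psi(s,z)$ is jointly continuous on a compact region of the strip bundle, the image $\Psi_{s_n}(\phi_{s_n}(v_{s_n}))$ stays in a compact subset of $\overline{\mathbb{H}}\setminus\{0,\infty\}$. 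Your $y_0=0$ argument is also off: from $\phi^\infty_{s_0}(L_{s_0}(v_{s_0}))=1$ you cannot pass to a statement about $\Phi(s_0)$ in the sense of Theorem~\ref{thm.runze}~\ref{thm.rurnze:third}; that $\Phi$ is the conformal parametrization of $\mathcal{D}_+$, an unrelated map, and the claimed ``placing $\Phi(s_0)$ on the boundary of $B(\mathfrak{p}_1)\setminus\overline{\Omega}$'' does not follow.

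In short, the two essential ingredients of the actual proof — Lemma~\ref{lem.unbounded.curveR} with the puzzle-piece control of the repelling coordinate, and the strip/lift argument for joint continuity of $\Psi_s$ — are both missing from your proposal, and the substitutes you offer (reduction to Misiurewicz parabolic, Green's function estimates) do not close the argument.
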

\begin{proof}
{\it The “if” part}: suppose by contradiction that $\{s_n\}$ converges to $s_0\in\partial\mathcal{H}\setminus\{i,-i\}$. Up to taking a subsequence of $\Phi_{\mathcal{E}^1}(s_n)$, we may assume that there is a curve $l\subset\mathbb{H}$ containing all $\Phi_{\mathcal{E}^1}(s_n)$ and converging to $iy_0$. Clearly $s_0$ is contained in the accumulation set of $\mathcal{R} := \Phi_{\mathcal{E}^1}^{-1}(l)$. Fix $N\geq0$ such that $\Psi^{-1}_{s_0}(iy_0/3^N)$ belongs to some left half-plane $\mathbb{H}_{-v}$ on which $\psi_{s_0}$ is injective. Denote by $z_0 = \Psi^{-1}_{s_0}(iy_0/3^N)$. By Remark \ref{rem.puzzle.piecebounding}, there exists a puzzle piece $Q_{s_0}\subset\psi_{s_0}(\mathbb{H}_{-v})$ containing $\psi(z_0)$, whose boundary consists of \begin{itemize}
        \item  two external rays in $\mathbb{C}\setminus K_0$, each of which colands with an internal ray in $B^*_{s_0}$, 
        \item a segment of equipotential in $\mathbb{C}\setminus K_0$ and a segment of equipotential $B^*_{s_0}$ (defined by the vertical line in the Fatou coordinate).
    \end{itemize}
If $s$ is close enough to $s_0$, we can follow holomorphically $\partial Q_{s_0}$, that is, there exists a dynamical holomorphic motion $h_s:\partial Q_{s_0}(z_0)\longrightarrow \partial Q_{s}$, where $Q_s$ is the puzzle piece for $g_s$. See \cite[\S 5.2-5.4]{Z} for details of the construction and the holomorphic motion of internal rays in the parabolic basin.

By Proposition \ref{Propositionbotthcer-lavaurs.coordinate}, the condition that $\Phi_{\mathcal{E}^1}(s)(=\Phi_{s}(v_s))$ converges to $iy_0$ as $s$ converges to $s_0$ along $\mathcal{R}$ tells us that 
    \begin{equation}\label{eq.belonging}
        \Psi^{-1}_{s}(3^{-N}\Phi_{s}(v_s))\in(\psi_s|_{\mathbb{H}_{-v}})^{-1}(Q_s),\,\,s\in\mathcal{R}\text{ close enough to }s_0. 
    \end{equation}
Here we modify $v>0$ to be larger if necessary so that for $s$ close to $s_0$, $\psi_s$ is injective on $\mathbb{H}_{-v}$. Since $\partial Q_s$ is close to $\partial Q_{s_0}$, $\psi_s|_{\mathbb{H}_{-v}}$ is close to $\psi_{s_0}|_{\mathbb{H}_{-v}}$ (because of the holomorphic dependence of Fatou coordinate), (\ref{eq.belonging}) implies that $\phi_s(v_s)-N-k$ is bounded in $s$. However this contradicts Lemma \ref{lem.unbounded.curveR}.

\vspace{0.1in}
{\it The “only if” part}: since the accumulation set of $s_n$ is contained in $\mathcal{H}$, $\phi_{s_n}(v_{s_n})$ is bounded. By Lemma \ref{lem.psi} (\ref{diag.psi}) and Proposition \ref{Propositionbotthcer-lavaurs.coordinate}, we have $\Phi_{s_n,(0)}(v_{s_n}) = \Psi_{s_n}(\phi_{s_n}(v_{s_n}))$. Notice that $\Psi_{s_n}:\C\setminus K_s\longrightarrow \mathbb{H}$ is an isomorphism and $\Psi_{s_n}^{-1}$ extends homeomorphically to $\partial\overline{\mathbb{H}}\setminus\{0\}$ such that $\Psi(w)\to\infty$ if $\mathfrak{Re}\,w\to+\infty$ and $\Psi(w)\to0$ if $\mathfrak{Re}\,w\to-\infty$. Also notice that since $s$ is contained in $\mathcal{H}$, $K_s$ moves holomorphically, and so does $\overline{\psi_s^{-1}(\C\setminus K_s)}$. Thus for any fixed compact set $K\subset\C$ and any small neighborhood $\mathcal{V}$, $\Psi(s,z) = \Psi_s(z)$ extends to a continuous function on the following compact set:
$$\{(s,z);\,s\in\overline{\mathcal{V}},\,z\in K\cap \overline{\psi_s^{-1}(\C\setminus K_s)}\}.$$
Thus $\Psi_{s_n}(\phi_{s_n}(v_{s_n}))$ does not accumulate to $0,\infty$. 
\end{proof}

\subsection{Disjoint type components}

\begin{lem}\label{lem.disjoint.type1}
      Let $\mathcal{E}^1$ be a disjoint component of $\boldsymbol{\mathrm{Esc}}^1_+$. Then for any $s\in\mathcal{E}^1$, $C^1_{s,(0)}$ is type \ref{lem.classification.C1:first} in Lemma \ref{lem.classification.C1}.
\end{lem}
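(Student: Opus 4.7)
The plan is to argue by contradiction and rule out types \ref{lem.classification.C1:second} and \ref{lem.classification.C1:third} of Lemma \ref{lem.classification.C1} separately, using the defining property that $\mathcal{E}^1$ is disjoint, namely $Cp^1_s\neq Cv^1_s$ for every $s\in\mathcal{E}^1$.

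Type \ref{lem.classification.C1:third} is the easy case. Lemma \ref{lem.classification.C1}\ref{lem.classification.C1:third} explicitly states that $s\in C^1_{s,(0)}$ and that $g_s:C^1_{s,(0)}\to C^1_{s,(0)}$ is a degree-$2$ ramified self-cover. By the $g_s$-invariance of $C^1_{s,(0)}$, the critical value $v_s=g_s(s)$ also lies in $C^1_{s,(0)}$, so $Cp^1_s=Cv^1_s=C^1_{s,(0)}$, contradicting the assumption that $\mathcal{E}^1$ is disjoint.

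For type \ref{lem.classification.C1:second}, the plan is to revisit each of the three subcases (1.2), (2.1a), (2.2c) of the proof of Lemma \ref{lem.classification.C1} that produces this type, and to verify in each one that $Cp^1_s=Cv^1_s$. In (2.1a) and (2.2c) both $s$ and $v_s$ lie on $\partial C^1_{s,(0)}$ (the pullback of an invariant arc hits the critical point); using the local $2$-to-$1$ branching of $g_s$ at $s$, the component of $E^1_s$ attached to $\partial C^1_{s,(0)}$ at $s$ is mapped onto the component attached at $v_s$, and the Jordan-curve structure of $\partial C^1_{s,(0)}$ (two arcs landing at $\alpha_s$) forces them to coincide. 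In case (1.2) the boundary arcs $\overline{\mathfrak{L}^u_s},\overline{\mathfrak{L}^l_s}$ avoid the whole critical orbit, so $s$ lies in the interior of one of the two flanking regions of $B^*_s\setminus\overline{C^1_{s,(0)}}$; combining the degree-$3$ decomposition of $g_s:B^*_s\to B^*_s$, Assumption \ref{assum.s2} (so that $v_{1/s}\notin E_s$ and in particular $v_{1/s}\notin C^1_{s,(0)}$), and Lemma \ref{lem.at-least-one} on the position of $\phi_s(s)$ relative to $\tilde W^u_s$, I would rule out the configuration where $v_s$ lies in the interior of $C^1_{s,(0)}$ while $s$ lies in a flanking region; what remains is that $s$ and $v_s$ belong to the same component of $E^1_s$, again contradicting disjointness.

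The main obstacle is case (1.2): naive degree counting alone admits an \emph{a priori} disjoint configuration with $v_s$ in the interior of $C^1_{s,(0)}$ and $s$ in a flanking region, since then $g_s^{-1}(C^1_{s,(0)})=C^1_{s,(0)}\sqcup Cp^1_s$ with $g_s:Cp^1_s\to C^1_{s,(0)}$ of degree~$2$ ramified at $s$. Excluding this configuration requires the finer dynamical input from the invariant strip $\tilde W^u_s$ (via the position constraint on $\phi_s(s)$ provided by Lemma \ref{lem.at-least-one}) together with the fact, from Assumption \ref{assum.s2}, that the co-critical value $v_{1/s}$ does not escape, and this is the step where the proof will demand the most care.
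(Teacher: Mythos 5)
Your disposal of type \ref{lem.classification.C1:third} is correct and is exactly what the paper means by ``Clearly $C^1_{s,(0)}$ cannot be type \ref{lem.classification.C1:third}.'' For type \ref{lem.classification.C1:second}, however, your proposal diverges from the paper and has a genuine gap. The paper does not revisit the subcases of the proof of Lemma \ref{lem.classification.C1} at all; it assumes disjointness together with type \ref{lem.classification.C1:second}, reads off the combinatorics of the external angles of the two rays landing on $\partial Cv^1_s$ under angle-tripling, and constructs from them a degree-two polynomial-like restriction $g_s:\tilde U\to\tilde V$ whose filled Julia set would have two distinct fixed external accesses, namely via $R^\infty_s(0)$ and $R^\infty_s(1/2)$, which is impossible for (anything hybrid equivalent to) a quadratic polynomial. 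This is a clean, self-contained contradiction that never needs to identify which subcase of Lemma \ref{lem.classification.C1} produced case \ref{lem.classification.C1:second}.

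The gap in your proposal is case (1.2), and you acknowledge it yourself: the sketch that invokes Lemma \ref{lem.at-least-one} together with the non-escaping of $1/s$ is not carried out, and it is not clear that it closes. In particular, Lemma \ref{lem.at-least-one} only tells you that $\phi_s(s)$ or $\phi_s(1/s)$ lies in $\overline{\tilde W^u_{s,\tau}}$, which is already used to rule out a subcase inside Lemma \ref{lem.classification.C1} and does not by itself control the position of the critical value relative to $C^1_{s,(0)}$ in the flanking-region configuration you describe. A second, more minor, issue: in subcases (2.1a) and (2.2c) you claim both $s$ and $v_s$ lie on $\partial C^1_{s,(0)}$, but if the pull-back of an invariant boundary arc $\mathfrak{L}^u_s$ or $\mathfrak{L}^l_s$ hits $s$, then $L_s(s)\in J_s$ (since these arcs map into $\partial B^*_s$ under $L_s$), so $s\notin E^1_s$, which directly contradicts $s\in\boldsymbol{\mathrm{Esc}}^1_+$ rather than yielding $Cp^1_s=Cv^1_s$; when instead it is $1/s$ that is hit, $s$ need not be on $\partial C^1_{s,(0)}$ at all. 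So the subcase analysis as you present it needs repair even before getting to (1.2). Given all this, I would recommend abandoning the subcase-by-subcase route and adopting the external-ray and renormalization argument that the paper uses.
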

\begin{proof}
    Clearly $C^1_{s,(0)}$ cannot be type \ref{lem.classification.C1:third}. Suppose the contrary that $C^1_{s,(0)}$ is type \ref{lem.classification.C1:second}. Let $R^\infty_s(t_0),R^\infty_s(t_0')$ be the two external rays landing at $\partial Cv^1_s$. Without loss of generality, we may assume that $0<t_0<t_0'<\frac{1}{2}$. Therefore, the pre-images of $t_0,t_0'$ under the angle tripling map and $t_0,t_0'$ satisfy
    $$\frac{t_0}{3}<\frac{t_0'}{3}<t_0<t_0'<\frac{t_0+1}{3}<\frac{t_0'+1}{3}<\frac{1}{2}<\frac{t_0+2}{3}<\frac{t_0'+2}{3}.$$
    Hence $R^\infty_s(\frac{t_0}{3}),R^\infty_s(\frac{t_0'}{3}),R^\infty_s(\frac{t_0+1}{3}),R^\infty_s(\frac{t_0'+1}{3})$ are the four external rays landing at $\partial Cp^1_s$; $R^\infty_s(\frac{t_0+2}{3}),R^\infty_s(\frac{t_0'+2}{3})$ are the two external rays landing at $\partial Cc^1_s$. 
    
    Take any equipotential $Eq$ of $g_s$. Let $V$ be the connected component of $\C\setminus(\overline{R^\infty_s(t_0)\cup R^\infty_s(t_0')\cup Cv^1_s})$
    containing $g_s(1/s)$. Notice that $U$ must contain $C^1_{s,(0)}$, otherwise $Cv^1_s$ would have 3 pre-images under $g_s$ with total degree 4, but $g_s$ is a cubic polynomial. Let's take a slightly smaller $\tilde{V}\subset V$ such that $\tilde{V}$ is a Jordan domain and $C^1_{s,(0)}\subset\tilde{V}$. Let $\tilde{U}$ be the connected component of $g_s^{-1}(\tilde{V})$ containing $C^1_{s,(0)}$. Then $\tilde{U}\Subset\tilde{V}$ and $g_s:\tilde{U}\longrightarrow\tilde{V}$ is a degree two polynomial-like map in the sense of Douady-Hubbard \cite{DoHu2}, which is quasiconformally conjugate to a quadratic polynomial. However $g_s|_{\tilde{U}}$ has two fixed access to its filled in Julia set: $R^\infty_s(0)$ and $R^\infty_s(\frac{1}{2})$. This is impossible for a quadratic polynomial, a contradiction.
\end{proof}

\begin{lem}\label{lem.cross.gate}
    Let $\mathcal{E}^1$ be a disjoint component of $\boldsymbol{\mathrm{Esc}}^1_+$. For $s\in\mathcal{E}^1$, $Cp_s^1$ is a bi-croissant. Let $\{z_1,z_2\} = Cp_s^1\cap \partial B^*_s$ and $R^\infty_s(t_1),R^\infty_s(t_2)$ $(t_1<t_2)$ be the external rays landing at them respectively. Then $0<t_1<\frac{1}{2}<t_2<1$.
\end{lem}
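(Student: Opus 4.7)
The plan is to exploit the disjoint hypothesis to realize $Cp_s^1$ as a degree-$2$ branched cover of the invariant croissant $C^1_{s,(0)}$, read off its topology via a monodromy computation, and then translate the picture into angles using the tripling map.

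First, by Lemma~\ref{lem.disjoint.type1}, $C^1_{s,(0)}$ is of type~\ref{lem.classification.C1:first}, so $g_s|_{C^1_{s,(0)}}$ is conformal and $\partial C^1_{s,(0)}=\overline{\mathfrak{L}^u_s}\cup\overline{\mathfrak{L}^l_s}$ is a pair of Jordan curves meeting only at $0$. A degree count shows $g_s^{-1}(C^1_{s,(0)})$ consists of $C^1_{s,(0)}$ (degree $1$) together with exactly one further component $Cp_s^1$ of degree $2$ ramified only at $s$ (the alternative of two extra unramified components would place the critical point $s$ at a non-ramification point). In particular $g_s(Cp_s^1)=C^1_{s,(0)}$, i.e.\ $Cv_s^1=C^1_{s,(0)}$. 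I then compute the monodromy of the unramified cover $g_s\colon Cp_s^1\setminus\{s\}\to C^1_{s,(0)}\setminus\{v_s\}$: the outer curve $\overline{\mathfrak{L}^u_s}$ bounds a disk containing $v_s$ and generates $\pi_1(C^1_{s,(0)}\setminus\{v_s\})\cong\Z$, so it lifts to a single Jordan curve $\tilde{\mathfrak{L}}^u_s$ doubly covering it; the inner curve $\overline{\mathfrak{L}^l_s}$ does not enclose $v_s$, is null-homotopic, and lifts to two disjoint Jordan curves $\tilde{\mathfrak{L}}^{l,1}_s,\tilde{\mathfrak{L}}^{l,2}_s$. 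Combined with Riemann--Hurwitz giving $\chi(Cp_s^1)=1$, these three boundary pieces assemble $\overline{Cp_s^1}$ into a bi-croissant in the sense of Definition~\ref{defn.croissant}, with two pinch points.

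To identify the pinch points, write $g_s^{-1}(0)=\{0,p_1,p_2\}$. Using $g_s^{-1}(\partial B^*_s)=\partial B^*_s$ (total invariance of $B^*_s$) and $\partial C^1_{s,(0)}\cap\partial B^*_s=\{0\}$, I get $\overline{Cp_s^1}\cap\partial B^*_s\subset\{0,p_1,p_2\}$; the bi-croissant structure yields exactly two elements. To exclude $0$, I appeal to the parabolic tangency $g_s'(0)=1$: the local inverse branch of $g_s$ at $0$ is tangent to the identity, so the germs of $\overline{\mathfrak{L}^u_s}$ and $\overline{\mathfrak{L}^l_s}$ at $0$ are their own $g_s$-preimages and hence belong to $\overline{C^1_{s,(0)}}$, not to $\overline{Cp_s^1}$. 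This forces $\{z_1,z_2\}=\{p_1,p_2\}$.

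Finally, I translate into angles. Since $0$ has parabolic multiplier $1$, the only fixed external rays of $g_s$ are $R^\infty_s(0)$ and $R^\infty_s(1/2)$, both landing at $0$. Pulling back under $g_s$ produces six rays at angles in $\{0,\tfrac16,\tfrac13,\tfrac12,\tfrac23,\tfrac56\}$ landing on $\{0,p_1,p_2\}$, two per point. The claim $0<t_1<\tfrac12<t_2<1$ asserts exactly that the two rays landing at $p_1,p_2$ distinguished by the bi-croissant outer boundary $\tilde{\mathfrak{L}}^u_s$ are separated in $\R/\Z$ by the two fixed angles. I plan to deduce this from the fact that $\tilde{\mathfrak{L}}^u_s$ is a single Jordan curve doubly covering $\overline{\mathfrak{L}^u_s}$: traversing it from $p_1$ to $p_2$ realizes the non-trivial monodromy around $v_s$, which combinatorially forces $p_1,p_2$ to lie on opposite sides of the two fixed rays in the cyclic order of $\partial B^*_s$ under the semiconjugacy $g_s|_{\partial B^*_s}\sim z\mapsto z^3$. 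The main obstacle will be this last combinatorial matching — precisely identifying which angle among $\{\tfrac16,\tfrac13,\tfrac23,\tfrac56\}$ corresponds to each $t_i$, and verifying the separation — which I expect to handle by tracking the prime ends of $Cp_s^1$ at $p_1$ and $p_2$ relative to the two sides of each pinch.
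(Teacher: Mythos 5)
Your degree-count step asserting $Cv_s^1=C^1_{s,(0)}$ is a genuine gap, and it propagates through the rest of the argument. Riemann--Hurwitz only tells you that \emph{if} $s$ lies in $g_s^{-1}(C^1_{s,(0)})$ then its component is a degree-$2$ cover; it does not force $v_s\in C^1_{s,(0)}$. The alternative is not ``two extra unramified components with $s$ at a non-ramification point'' — it is simply that $s\notin g_s^{-1}(C^1_{s,(0)})$ at all, so $g_s^{-1}(C^1_{s,(0)})$ has three degree-$1$ pieces none of which meet $s$, and $Cv^1_s$ is some other component sitting further out. In general $Cv^1_s$ is a croissant attached at a (possibly deep) $g_s$-preimage of $0$, with a single external ray at some angle $t_0\notin\{0,\tfrac12\}$; the paper's proof works precisely with this arbitrary $t_0$. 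Once $Cv^1_s\neq C^1_{s,(0)}$, your identification $\{z_1,z_2\}=\{p_1,p_2\}\subset g_s^{-1}(0)$ and the angle list $\{\tfrac16,\tfrac13,\tfrac23,\tfrac56\}$ both break down; the candidate angles become $\{\tfrac{t_0}{3},\tfrac{t_0+1}{3},\tfrac{t_0+2}{3}\}$ and the whole content of the lemma is to decide which two of these bound $Cp^1_s$. You also explicitly leave the final combinatorial matching as an unfinished ``plan'', which is exactly the nontrivial part.

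The paper argues differently: assume by contradiction that the two angles $t_1<t_2$ at $\partial Cp^1_s$ lie on the same side of $\tfrac12$ (WLOG $t_1=t_0/3$, $t_2=(t_0+1)/3$ with $0<t_0<\tfrac12$). One then cuts the plane along an equipotential, $R^\infty_s(t_0)$, and $\overline{Cv^1_s}$, picks the component $V$ containing $g_s(1/s)$, shrinks slightly to a Jordan domain $\tilde V\supset C^1_{s,(0)}$, and checks that $g_s:\tilde U\to\tilde V$ (with $\tilde U$ the preimage component containing $C^1_{s,(0)}$) is a degree-$2$ polynomial-like map whose filled Julia set has two fixed accesses via $R^\infty_s(0)$ and $R^\infty_s(\tfrac12)$ — impossible for a quadratic-like map. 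Your monodromy framework is plausible for the special case $Cv^1_s=C^1_{s,(0)}$ (where $t_0=0$ and the claim reduces to $\{t_1,t_2\}=\{\tfrac13,\tfrac23\}$ once $0\notin\overline{Cp^1_s}$, which your parabolic-tangency remark does handle), but it needs to be redone relative to the attaching point of $Cv^1_s$ rather than relative to $0$, and the ``opposite sides of the fixed rays'' step still needs an actual argument — the paper's puzzle/polynomial-like construction is what supplies it.
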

\begin{proof}
The fact that $Cp_s^1$ is a bi-croissant follows immediately from Lemma \ref{lem.disjoint.type1}. Suppose the contrary that $t_1<t_2$ are not ordered as stated in the Lemma. Let $t_0$ be the angle of the external ray landing at $\partial Cv_s^1$. We may assume $0<t_0<\frac{1}{2}$. Then 
$$\frac{t_0}{3} = t_1<t_0<t_2 = \frac{t_0+1}{3},\quad \frac{1}{2}<t_3 := \frac{t_0+2}{3}.$$ 

Take any equipotential $Eq$ of $g_s$. Let $V$ be the connected component of 
$$\mathbb{C}\setminus(Eq(r_0)\cup R^\infty_s(t_0)\cup\overline{Cv_s^1})$$ containing $g_s(1/s)$. Clearly $U$ contains the two fixed points of $g_s$.
Now one can procced the same argument as for Lemma \ref{lem.disjoint.type1} to conclude.
\end{proof}

\begin{lem}\label{lem.nondouble.parabolic}
   Let $\mathcal{E}^1$ be a disjoint component of $\boldsymbol{\mathrm{Esc}}^1_+$. Then $\partial\mathcal{E}^1$ contains neither the two degenerate parabolic parameters $s=i,-i$.  
\end{lem}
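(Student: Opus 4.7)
The plan is to argue by contradiction, closely paralleling Lemma \ref{lem.i.notonboundary} but with a different combinatorial obstruction. Suppose there exists a sequence $s_n \in \mathcal{E}^1$ converging to $i$; the case $s_n \to -i$ will be handled identically, invoking Lemma \ref{lem.perturb.Fatoucoordi'} in place of Lemma \ref{lem.perturb.Fatoucoordi} to set up the $(2,*)$ gate structure. By extracting a subsequence and applying Lemmas \ref{lem.perturb.Fatoucoordi} and \ref{lem.parabolic.attracting}, I can assume that every $g_{s_n}$ has gate structure $(*,2)$, with preferred perturbed Fatou coordinates $\phi_{1,\pm,s_n}$ on $P_{1,\pm,s_n}$ and $\phi_{2,\pm,s_n}$ on $P_{a_{s_n}}$ converging to the corresponding coordinates for $g_i$.

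First I would assemble the geometric data. By Lemma \ref{lem.disjoint.type1}, $C^1_{s_n,(0)}$ is of type \ref{lem.classification.C1:first} in Lemma \ref{lem.classification.C1}: its boundary is the union of two Jordan curves $\overline{\mathfrak{L}^u_{s_n}}, \overline{\mathfrak{L}^l_{s_n}}$ based at $0$, with $\mathfrak{L}^u_{s_n}$ enclosing $\mathfrak{L}^l_{s_n}$. Their lifts $\mathfrak{l}^u_{s_n} := \phi_{1,+,s_n}(\mathfrak{L}^u_{s_n})$ and $\mathfrak{l}^l_{s_n} := \phi_{1,+,s_n}(\mathfrak{L}^l_{s_n})$ are $T_1$-invariant Jordan curves in $\mathbb{C}$. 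By Lemma \ref{lem.cross.gate}, the component $Cp^1_{s_n}$ is a bi-croissant attached to $\partial B^*_{s_n}$ at two points whose external-ray angles satisfy $0<t_1<1/2<t_2<1$, and $g_{s_n}$ restricts to a degree two ramified covering $Cp^1_{s_n}\to Cv^1_{s_n}$ with critical point $s_n$. In particular, each boundary arc of $Cp^1_{s_n}$ inside $B^*_{s_n}$ passes through $s_n$ and is a pullback under $g_{s_n}$ of one of $\mathfrak{L}^u_{s_n}, \mathfrak{L}^l_{s_n}$ or their iterated pre-images.

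Next I would locate the critical points in the Fatou coordinate. The key rigidity input is Theorem \ref{thm.oudkerk} \ref{thm.oudkerk:sixth}, which forces $\mathfrak{Re}\,\phi_{1,+,s_n}(1/s_n)\to -\infty$ as $n\to\infty$, whereas $\phi_{1,+,s_n}(s_n)$ stays bounded (it lies on the boundary of the upper main chessboard $CB_{s_n}$ of \S \ref{subsec.julia-control}). Simultaneously, Proposition \ref{Propositioncontrol.juliaset} gives a uniform bound $M$ on the horizontal extent of any single period of $\psi_{1,-,s_n}^{-1}(J_{s_n})$. I would then mimic the homotopy/pullback argument of Lemma \ref{lem.i.notonboundary}: because the two attachment points of $Cp^1_{s_n}$ have external angles straddling $1/2$, at least one iterated pullback of $\mathfrak{L}^u_{s_n}$ (or $\mathfrak{L}^l_{s_n}$) used to build $\partial Cp^1_{s_n}$ must "cross the gate," so in the Fatou coordinate one can produce a chain of pullback arcs $(\mathfrak{L}_k)_{k\leq 1}$, $\mathfrak{L}_{k+1}(0)=g_{s_n}^{-1}(\mathfrak{L}_k(1))$, whose endpoints converge to $0$ by $g_{s_n}^{-1}$-contraction into the repelling petal, yet whose concatenation must wind around $g_{s_n}(1/s_n)$, whose $\phi_{1,+,s_n}$-image has arbitrarily large negative real part. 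This is incompatible with the uniform bound $M$ on $\psi_{1,-,s_n}^{-1}(J_{s_n})$ and with the bounded position of $\phi_{1,+,s_n}(s_n)$, yielding the contradiction.

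The hard part will be the precise topological bookkeeping in the last step. In Lemma \ref{lem.i.notonboundary} the obstruction was supplied by the fact that, in case \ref{lem.classification.C1:third}, one end of $\mathfrak{L}^u_{s_n}$ had to equal the repelling fixed point $\alpha_{s_n}$, which gave a concrete topological invariant to contradict. Here both $\mathfrak{L}^u_{s_n}$ and $\mathfrak{L}^l_{s_n}$ are loops based at $0$, so the obstruction must instead be extracted from the straddling angles $t_1<1/2<t_2$: these force the two boundary arcs of $Cp^1_{s_n}$ to cross the line $l_{s_n}$ bounding $CB_{s_n}$ on opposite sides, which in turn forces the pullback chain to enclose $\phi_{1,+,s_n}(1/s_n)$. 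Making this winding argument rigorous, uniformly in $n$, using the continuity of preferred Fatou coordinates (Lemma \ref{lem.perturb.Fatoucoordi}) and the uniform Julia bound (Proposition \ref{Propositioncontrol.juliaset}), is the technical heart of the proof.
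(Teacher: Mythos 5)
Your setup and early steps match the paper: assume $s_n\to i$ (or $-i$), use Lemmas \ref{lem.perturb.Fatoucoordi}, \ref{lem.perturb.Fatoucoordi'}, \ref{lem.parabolic.attracting} to put $g_{s_n}$ in gate structure $(*,2)$ or $(2,*)$, use Lemma \ref{lem.disjoint.type1} to see $C^1_{s_n,(0)}$ is a croissant, and use the angle-straddling of Lemma \ref{lem.cross.gate} to force $Cp^1_{s_n}$ to cross the gate $P_{s_n}$. That part is right.

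Where the proposal has a genuine gap is the claimed obstruction. You propose to replicate the homotopy/pullback chain of Lemma \ref{lem.i.notonboundary} and to argue that gate-crossing \emph{forces} the chain to wind around $\phi_{1,+,s_n}(1/s_n)$, then to contradict this winding via Proposition \ref{Propositioncontrol.juliaset}. But in Lemma \ref{lem.i.notonboundary} the homotopy argument goes the other way: it shows the chain \emph{cannot} wind around either critical value, then extracts the contradiction from the fact that $\overline{\mathfrak{L}^u_s}$ is pinned to the repelling fixed point $\alpha_s$, not from any winding. In the disjoint case both $\overline{\mathfrak{L}^u_s},\overline{\mathfrak{L}^l_s}$ return to $0$, so that endpoint obstruction disappears, and it is not explained why gate-crossing of $Cp^1_{s_n}$ should compel any arc to enclose $\phi_{1,+,s_n}(1/s_n)$. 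A curve can perfectly well pass through the gate without encircling a point of very negative real part. You flag this as the "technical heart" yourself; that heart is exactly what is missing.

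The paper's proof uses the gate-crossing differently and avoids homotopy, Proposition \ref{Propositioncontrol.juliaset}, and Theorem \ref{thm.oudkerk}~\ref{thm.oudkerk:sixth} entirely. Because $\mathcal{E}^1$ is disjoint, $\Phi_{s_0,(0)}$ is a bijection $C^1_{s_0,(0)}\to\mathbb{H}$; take the radial half-line $e$ through $\Phi_{s_0,(0)}(g^k_{s_0}(s_0))$ and set $e_0=\Phi^{-1}_{s_0,(0)}(e)$, which is $g_{s_0}$-invariant. Pulling $e_0$ back by $g^k_{s_0}$ into $Cp^1_{s_0}$ gives a figure-eight with joint at $s_0$, and a curve $\Gamma$ through $z_1,s_0,z_2$ mapped bijectively to $e_0$. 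By the gate-crossing, $\Gamma$ traverses $P_{s_0}$ between the sections $l_{2,+,s_0}$ and $g^k_{s_0}(l_{2,+,s_0})$; taking $s_0$ close enough to $i$ so that $g_{s_0}^k(\Gamma([t_1,t_2]))$ and $g_{s_0}^{2k}(\Gamma([t_1,t_2]))$ also stay in $P_{s_0}$, the $g_{s_0}$-invariance of $e_0$ yields $g^{2k}_{s_0}(\Gamma(t_1))=g^{k}_{s_0}(\Gamma(t_2))$, contradicting injectivity of $g^k_{s_0}$ on the gate. You would do better to abandon the winding argument and instead exploit the radial invariant curve $e_0$ and the injectivity of the first-return dynamics through the gate.
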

\begin{proof}
    Suppose by contradiction that $i\in\partial\mathcal{E}^1$ (it will be the same for $-i$). By Lemma \ref{lem.perturb.Fatoucoordi}, \ref{lem.perturb.Fatoucoordi'} and \ref{lem.parabolic.attracting}, there exists $s_0\in\mathcal{E}^1$ arbitrarily close to $i$ such that $g_{s_0}$ has gate structure $(*,2)$ or $(2,*)$. We give the proof for $(*,2)$ and the other is similar. Let $P^\pm_{s_0,1},P_{s_0}$ be the petals corresponding to those in Lemma \ref{lem.perturb.Fatoucoordi}. Notice that $$\mathbb{C}\setminus(R^\infty_{s_0}(0)\cup R^\infty_{s_0}(\frac{1}{2})\cup\overline{P_{s_0}})$$
    has two connected components, therefore by Lemma \ref{lem.cross.gate}, $Cc^1_{s_0}$ passes through the gate $P_{s_0}$. Recall the Böttcher-Lavaurs coordinate $\Phi_{s,(0)}$ in Lemma \ref{lem.botthcer-lavaurs.coordinate}. Since $\mathcal{E}^1$ is disjoint, $\Phi_{s_0,(0)}$ extends bijectively to $C^1_{s_0,(0)}\longrightarrow\mathbb{H}$, where $C^1_{s_0,(0)}$ is the unique connected component of $E^1_{s_0}$ such that $g_{s_0}(C^1_{s_0,(0)})=C^1_{s_0,(0)}$. Suppose $k$ is the smallest integer such that $g^k_{s_0}(Cp^1_{s_0}) = C^1_{s_0,(0)}$. Let ${e}$ be the half-line passing through the origin and $\Phi_{s_0,(0)}(g^k_{s_0}(s_0))$. Set $e_0 := \Phi^{-1}_{s_0,(0)}(e)$ and thus $e_0 = e_0(t)$ is naturally parametrized by $e$. The pre-image of $e_0$ by $g^k_{s_0}$ in $Cp^1_{s_0}$ is a figure “eight” with joint $s_0$. Consider the curve $\Gamma$ contained in the figure eight passing through $z_1,s_0,z_2$ such that $g^k_{s_0}$ sends bijectively $\Gamma$ to $e_0$, with $\Phi_{s_0,(0)}(g^k_{s_0}(z))\to\infty$ as $\Gamma\ni z\to z_1$. Thus we can parametrize $\Gamma = \Gamma(t)$ by $e$. Consider the last moment $t_1$ that $\Gamma$ intersects $l_{2,+,s_0}$ and the first moment $t_2$ after $t_1$ such that $\Gamma$ intersects $g^k_{s_0}(l_{2,+,s_0})$ (thus $t_2>t_1$). See Figure \ref{fig.curve-pass-gate}. Thus $\Gamma(t_1,t_2)\subset  P_{s_0}$. Since $k$ does not depend on $s\in\mathcal{E}^1$, we may take $s_0$ close enough to $i$ such that $g_{s_0}^k(\Gamma([t_1,t_2]))\subset P_{s_0}$, $g_{s_0}^{2k}(\Gamma([t_1,t_2]))\subset P_{s_0}$. By construction, $g_{s_0}^k(\Gamma([t_1,t_2]))\subset e_0=\Phi^{-1}_{s_0,(0)}(e)$ and $e_0$ is ${g_{s_0}}$-invariant. Therefore $g^{2k}_{s_0}(\Gamma(t_1)) = g^{k}_{s_0}(\Gamma(t_2))$. Thus $g^k_{s_0}(\Gamma(t_2))$ has two pre-images under $g^{k}_{s_0}$ in $P_{s_0}$, while $g^k_{s_0}$ is injective on $P_{s_0}$. This leads to a contradiction.
\end{proof}
\begin{figure}[ht]
\centering 
\includegraphics[width=0.6\textwidth]{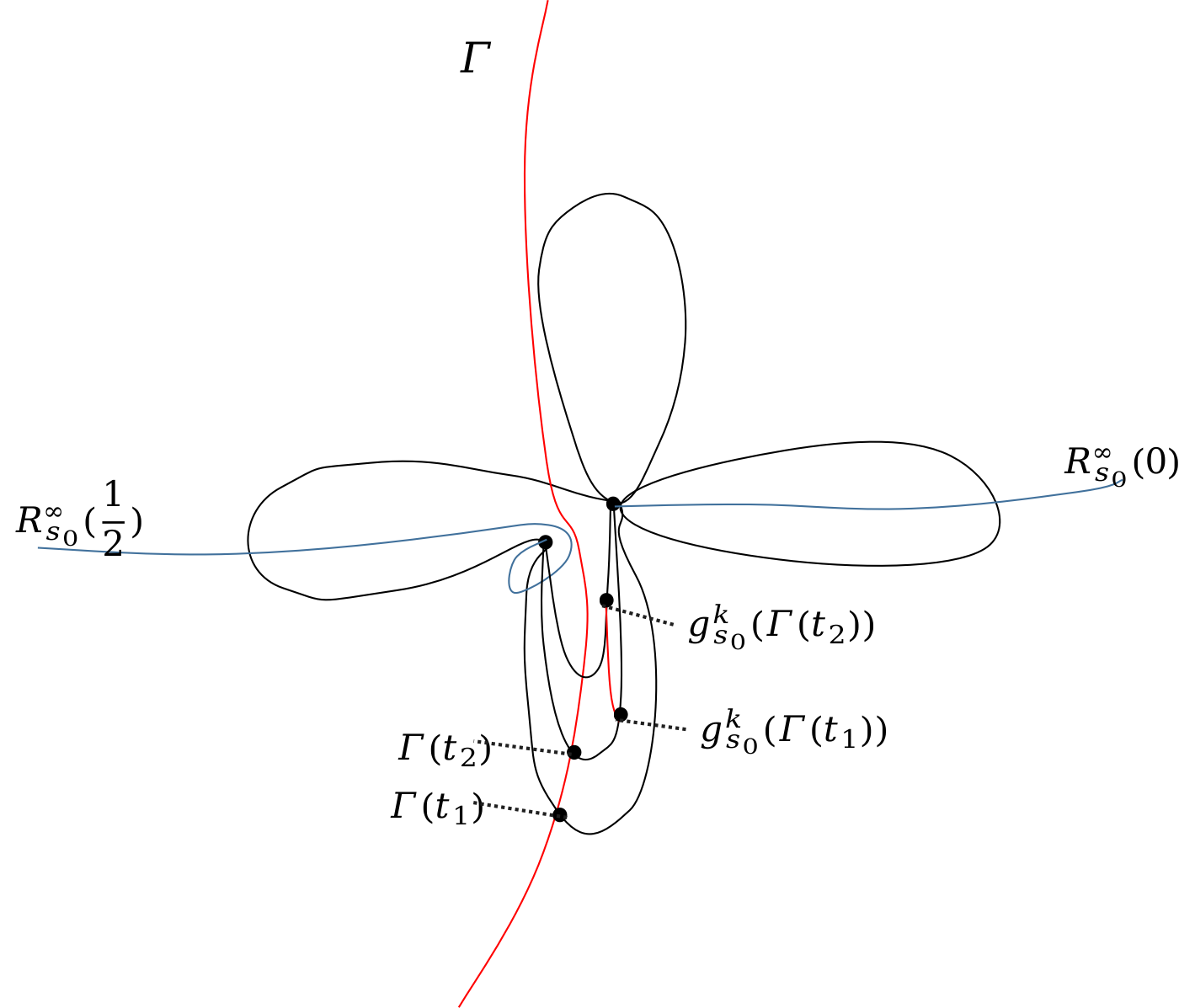} 
\caption{Illustration of the proof for Lemma \ref{lem.nondouble.parabolic}.} 
\label{fig.curve-pass-gate} 
\end{figure}

\begin{lem}\label{lem.existence.disjoint}
    Let $s_0\in\partial\mathcal{H}$ be a Misiurewicz parabolic parameter such that $g_{s_0}^n(s_0) = 0$ for some $n\geq 1$. Then there exists a unique component $\mathcal{E}^1\subset \boldsymbol{\mathrm{Esc}}^1_+$ of disjoint type such that $s_0\in\partial\mathcal{E}^1$. Moreover $s_0$ is accessible by a curve in $\mathcal{E}^1$.
\end{lem}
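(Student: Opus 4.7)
The strategy is to carry out a parameter-space version of the orbit-correspondence technique used in Lemma \ref{lem.orbit.correspond} and Proposition \ref{Propositionexistence}, but pivoted at the Misiurewicz parabolic parameter $s_0$ rather than at the degenerate parabolic one. Since $s_0\in\partial\mathcal{H}\setminus\{i,-i\}$, the parabolic germ of $g_{s_0}$ at $0$ has simple multiplicity, so both Fatou coordinates $\phi_{s_0}$, $\psi_{s_0}^{\bullet}$ and their petals persist holomorphically in $s$ on a full neighborhood of $s_0$ in $\overline{\mathcal{H}}$. The first input is the analogue of Lemma \ref{lem.escaping.strip} at $s_0$: there exists a $T_1$-invariant open strip $B\subset\mathbb{C}$ such that $\psi_s^{\bullet}\circ T_\tau(B)\subset\mathbb{C}\setminus K_s$ for every $s$ in a neighborhood of $s_0$; this follows by lifting the $0$-angle external ray of $g_{s_0}$ to the repelling cylinder and using the holomorphic motion of $\partial B_s^*$ supplied by Remark \ref{rem.puzzle.piecebounding}. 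The second input is the holomorphic map $h(s):=g_s^n(s)$ defined on a disk around $s_0$ with $h(s_0)=0$; assuming the non-degeneracy $h'(s_0)\neq 0$ (see the obstacle below), $w=h(s)$ serves as a local chart at $s_0$.

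For $s\in\mathcal{H}$ close to $s_0$, the identity $\phi_s(v_s)=\phi_s(h(s))-(n-1)$ combined with Lemma \ref{lem.perturb.Fatoucoordi} yields $\phi_s(v_s)=\phi_{s_0}(h(s))-(n-1)+o(1)$ as $s\to s_0$. Since $\phi_{s_0}$ maps an attracting petal $P$ at $0$ biholomorphically onto a right half-plane, the pre-image $\phi_{s_0}^{-1}(B-\tau+(n-1))\cap P$ is a non-empty open subset of $P$ accumulating at $0$; pulling back through $h$ produces an open set $\mathcal{V}\subset\mathcal{H}$ accumulating at $s_0$ along which $\phi_s(v_s)+\tau\in B$, and therefore $L_s(v_s)\in\mathbb{C}\setminus K_s$. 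Hence $\mathcal{V}\subset\boldsymbol{\mathrm{Esc}}_+^1$; taking $\mathcal{E}^1$ to be the component of $\boldsymbol{\mathrm{Esc}}_+^1$ containing $\mathcal{V}$, and $\gamma$ to be the $h$-pullback of any horizontal half-line of sufficiently large real part inside $B-\tau+(n-1)$, yields simultaneously the existence of an attached component at $s_0$ and an access to $s_0$ along $\gamma\subset\mathcal{E}^1$.

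That $\mathcal{E}^1$ is of disjoint type follows from Lemma \ref{lem.classification.C1} at $s_0$: under Assumption \ref{assum.s2}, $C^1_{s_0,(0)}$ is a generalized croissant of type \ref{lem.classification.C1:first}, and by continuity the same holds for $s\in\mathcal{E}^1$ near $s_0$; in particular $g_s|_{C^1_{s,(0)}}$ is conformal so $s\notin C^1_{s,(0)}$, while the main-petal branch argument ensures $v_s\in C^1_{s,(0)}$, forcing $Cp_s^1\neq Cv_s^1$. For uniqueness, let $\mathcal{E}^{1\prime}\subset\boldsymbol{\mathrm{Esc}}_+^1$ be another component with $s_0\in\partial\mathcal{E}^{1\prime}$. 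By Lemma \ref{lem.accumulation.inside} any sequence $s_m'\in\mathcal{E}^{1\prime}$ converging to $s_0$ satisfies $\Phi_{\mathcal{E}^{1\prime}}(s_m')\to 0$ or $\infty$; through diagram (\ref{diag.commutative.criticalvalue}) this forces $\phi_{s_m'}(v_{s_m'})+\tau$ to remain in $B$ and to exit every compact of $\mathbb{C}$, so by local bijectivity of $h$ the germ of $\mathcal{E}^{1\prime}$ at $s_0$ coincides with that of $\mathcal{V}$, whence $\mathcal{E}^{1\prime}=\mathcal{E}^1$ by connectedness.

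The principal obstacle is the transversality $h'(s_0)\neq 0$: a priori $g_s^n(s)=0$ could have a multiple root at $s_0$, which would produce several components attached at $s_0$ and break uniqueness. Ruling this out is a parameter-transversality statement at Misiurewicz parabolic parameters in the family $g_s$. I would derive it from the conformal parametrization $\Phi:\mathcal{D}_+\to B(\mathfrak{p}_1)\setminus\overline{\Omega}$ of Theorem \ref{thm.runze} \ref{thm.rurnze:third}: Misiurewicz parabolic parameters correspond via $\Phi$ to iterated $\mathfrak{p}_1$-pre-images of the critical point $-\tfrac{1}{2}$ in the parabolic basin, which are simple classical pre-images; transporting this simplicity back through $\Phi$ gives the required simple root for $h$.
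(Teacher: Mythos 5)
Your proposal runs broadly parallel to the paper's proof in spirit (a shooting argument based at $s_0$, combined with a strip/sepal in the repelling cylinder that maps outside $K_s$), but there are genuine gaps at the places where the paper does the actual work.

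The most serious gap is exactly the transversality $h'(s_0)\neq 0$ that you flag, and your proposed repair does not close it. First, the identification is off: under $\Phi$ of Theorem~\ref{thm.runze}~\ref{thm.rurnze:third}, a parameter $s_0$ with $g_{s_0}^n(s_0)=0$ should be matched with a boundary point of $B(\mathfrak{p}_1)$ that is an iterated $\mathfrak{p}_1$-\emph{pre-image of the parabolic fixed point} $0$, not of the critical point $-\tfrac12$ (the defining relation $\phi(\Phi(s))=\phi_s(s)$ sends the escaping critical point to a point of the basin, and the Misiurewicz condition places its limit on $J_1$, at a pre-image of $0$). Second, even after this correction, $\Phi$ is a conformal isomorphism of \emph{open} sets $\mathcal{D}_+\to B(\mathfrak{p}_1)\setminus\overline{\Omega}$; the claim that simplicity of pre-images of $0$ under $\mathfrak{p}_1$ transports back through $\Phi$ to give a simple root of $h$ at $s_0\in\partial\mathcal{H}$ requires boundary regularity (existence and nondegeneracy of the boundary extension of $\Phi$) that is not furnished by Theorem~\ref{thm.runze}. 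The paper circumvents this entirely: it constructs a holomorphic motion $h_s$ of a $g_s$-invariant curve $R_{s,C^1}(0)$ attached at a $k$-th pre-image of $0$, applies Rouché's theorem to $h_s(z)=v_s$, and derives both the landing and uniqueness from the fact (from \cite{Z}) that a single parameter external ray lands at $s_0$. This replaces the problematic derivative condition by a combinatorial rigidity already available.

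Two further problems. (i) The step $\phi_s(v_s)=\phi_{s_0}(h(s))-(n-1)+o(1)$ is not justified: $h(s)\to 0$ as $s\to s_0$, and the compact-open convergence of Fatou coordinates (and Lemma~\ref{lem.perturb.Fatoucoordi}, which anyway treats the degenerate case $s_0=\pm i$, not a simple parabolic) gives no uniform control on $\phi_s$ near $0$, precisely where $h(s)$ lives. To make this rigorous one has to compare $h(s)$ directly against a $g_s$-invariant dynamical object whose germ at $0$ moves holomorphically, which is what the curve $R_{s,C^1}(0)$ accomplishes in the paper. (ii) Your disjoint-type argument asserts $v_s\in C^1_{s,(0)}$ for $s$ near $s_0$, which is generally false (for $n\geq 2$, $v_{s_0}$ lands at a proper pre-image of $0$, so $Cv^1_s$ is a pre-image component of $C^1_{s,(0)}$, not $C^1_{s,(0)}$ itself); the correct dichotomy is that adjacency forces $s\in C^1_{s,(0)}$ (via $g_s$-invariance of the common component), and the paper rules this out at $s_0$ because $s_0\notin R_{s_0,C^1}(0)$ while $s\in R_{s,C^1}(0)$ would hold by the motion, giving a contradiction. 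Finally, the uniqueness argument via ``local bijectivity of $h$'' is incomplete for the same reason as the transversality: it presupposes what it must prove.
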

\begin{proof}
    By Diagram (\ref{diag.commutative.boundary}), there exists a component $C_{s_0}\subset E^1_{s_0}$ and $k\geq 0$ such that $v_{s_0}\in \partial C_{s_0}\cap \partial B_{s_0}$ and $g_{s_0}^k(C_{s_0}) = C^1_{s_0,(0)}$. For $s$ near $s_0$, denote by $\mathfrak{R}_{s}(0)$ the unique component of $\psi_{s}^{-1}(R^\infty_{s}(0))$ that is $T_1$-invariant ($R^\infty_{s}(0)$ is the external ray of angle 0 for $g_{s}$). By continuity dependence on Fatou coordinate, $\psi_s|_{\mathbb{H}_{-v}}$ is injective for some $v>0$ large enough. Therefore $\mathfrak{R}_{s}(0)\cap \mathbb{H}_{-2v}$ moves holomorphically by $\psi^{-1}_s\circ\zeta_s\circ\psi_{s_0}$, where $\zeta_s$ is the holomorphic motion of $R^\infty_{s_0}(0)$ induced by $(\phi^\infty_{s})^{-1}\circ\phi^\infty_{s_0}$. Since $\mathfrak{R}_{s_0}$ is $T_1$-invariant, we can extend the motion to $\mathfrak{R}_{s_0}$. Again by continuity dependence on Fatou coordinate, $\phi_s$ is injective on some attracting petal $P_s$ so that $\phi_s(P_s) = \mathbb{H}_{v}$. Therefore $(\phi_{s_0}|_{\mathbb{H}_{2v}})^{-1}(\mathfrak{R}_{s_0})$ moves holomorphically by $(\phi_{s}|_{\mathbb{H}_{2v}})^{-1}\circ\phi_{s_0}$. By Lemma \ref{lem.classification.C1}, there exists $N>0$ such that the pull back of $(\phi_{s_0}|_{\mathbb{H}_{2v}})^{-1}(\mathfrak{R}_{s_0})$ by $g^N_{s_0}$ enters the repelling petal of $g_{s_0}$. By continuity dependence of Fatou coordinate, the pull back of $(\phi_{s}|_{\mathbb{H}_{2v}})^{-1}(\mathfrak{R}_{s})$ by $g^N_s$ is also contained in the repelling Fatou petal of $g_s$ if $s$ is close to $s_0$. Therefore the holomorphic motion $(\phi_{s}|_{\mathbb{H}_{2v}})^{-1}\circ\phi_{s_0}$ can be pulled back infintely by $g_s$, so that we get a motion $h_s$ of a $g_{s}$-invariant Jordan curve, denoted by $R_{s,C^1}(0)$, so that $L_s(R_{s,C^1}(0)) = R^\infty_s(0)$, $R_{s,C^1}(0)\cap\partial K_{s} = \{0\}$, $h_s(R_{s_0,C^1}(0)) = R_{s,C^1}(0)$.

    Notice that the orbit $v_{s_0},g_{s_0}(v_{s_0}),...,g^k_{s_0}(v_{s_0}) = 0$ moves homolorphically for $s$ near $s_0$. Now pull back $h_s$ by $g_s$ along this orbit, we get a holomorphic motion of $R_{s,C}(0)$ which is a Jordan curve attached at a $k$-th pre-image of $0$. Apply Rouché's Theorem to the equation of $s$: \begin{equation}\label{eq.Rouche}
        h_s(z) =v_s.
    \end{equation} 
    Then one can find a curve $\Gamma\subset\mathcal{H}$ converging to $s_0$ so that for $s\in\Gamma$, $s$ is a solution to (\ref{eq.Rouche}) and $\Phi_{s,C^1_{s,(0)}}(g_s^k(v_s))\to 0$ as $s\to s_0$ along $\Gamma$. This implies that $\Gamma$ is contained in a component $\mathcal{E}^1$ of $\boldsymbol{\mathrm{Esc}}^1_+$. Moreover, we claim that such curve is unique: notice that the motion $h_s$ can be extended to the external ray $R_{s_0}^{\infty}(t_0)$ that lands at $v_{s_0}$; while there is a unique parameter external ray $\mathcal{R}^{\infty}$ landing at $s_0$, hence $s=s_0$ is the unique solution to (\ref{eq.Rouche}) when $z = v_{s_0}$.

    It remains to prove that $\mathcal{E}^1$ is of disjoint type. Suppose the contrary. Then $\mathcal{E}^1$ is of adjacent type.  
    Let $s\in\Gamma$ and suppose that $g_s$ is $(1,\omega)$-non escaping. Recall Proposition \ref{Propositionbotthcer-lavaurs.coordinate} and Diagram (\ref{diag.commutative.criticalvalue_case(3)}), we get that $v_s\in\partial C^1_{s,(0)}(3\omega)$. Notice that $C^1_{s,(0)}(3\omega)$ is a topological disk attached at $0$ and is forward invariant by $g_s$. On the other hand, notice that for $s\in\Gamma$, $$\phi_s(v_s) = \phi_s(g_s^k(v_s))-k\in\mathfrak{R}_s(0).$$
    Thus from the construction of $R_{s,C^1}(0)$, we get $v_s\in R_{s,C^1}(0)$, $s\in R_{s,C^1}(0)$. But $s_0\not\in R_{s_0,C^1}(0)$, and $R_{s,C^1}(0)$ moves holomorphically. This leads to a contradiction.
    \end{proof}

\begin{rem}\label{rem.motion}
   It is easy to see that the construction of the holomorphic motion of $R_{s_0,C^1}(0)$ in Lemma \ref{lem.existence.disjoint} works for any $s_0\in\partial\mathcal{H}\setminus\{i,-i\}$.
\end{rem}

\begin{cor}\label{cor.internal-curve-mustland}
   Let $\mathcal{E}^1\subset\boldsymbol{\mathrm{Esc}}^1_+$ be a connected component. Let $l\subset\mathbb{H}$ be any curve parametrized by $l(t)$, $t\in[0,+\infty)$ such that $l(t)$ converges to $y_0\in\partial\mathbb{H}\cup\{\infty\}$ as $t$ tends to infinity. Then $\mathcal{R}(t) := \Phi_{\mathcal{E}^1}^{-1}(l(t))$ also converges to a singleton as $t\to+\infty$.
\end{cor}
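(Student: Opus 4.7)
The plan is to show that the accumulation set $A \subset \overline{\mathcal{H}}$ of $\mathcal{R}(t)$ as $t \to +\infty$ is a singleton. Since $\Phi_{\mathcal{E}^1}$ is a homeomorphism onto $\mathbb{H}$ and $l(t)$ leaves every compact subset of $\mathbb{H}$, the curve $\mathcal{R}$ eventually leaves every compact subset of $\mathcal{E}^1$. A standard nested-compact argument then shows that $A$ is nonempty, compact, connected, and contained in $\partial\mathcal{E}^1$. Once $A$ is established as a singleton, $\mathcal{R}(t)$ converges to it.

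I would handle two cases according to where $y_0$ lies. Suppose first that $y_0 \in \{0, \infty\}$. Pick any $s_* \in A$ and a subsequence $\mathcal{R}(t_n) \to s_*$. Lemmas \ref{lem.i.notonboundary} and \ref{lem.nondouble.parabolic} rule out $s_* \in \{i, -i\}$ for adjacent and disjoint components respectively. If $s_* \in \mathcal{H}$, applying the ``only if'' direction of Lemma \ref{lem.accumulation.inside} to this subsequence gives $y_0 \notin \{0, \infty\}$, a contradiction. Hence $A \subset \partial\mathcal{H}$, and the second bullet of Theorem \ref{thm.implosion.family-gs} says that $\partial\mathcal{E}^1\cap\partial\mathcal{H}$ is a singleton, so $A$ is this singleton.

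Suppose now $y_0 \in \partial\mathbb{H}\setminus\{0\}$. For any subsequence $\mathcal{R}(t_n) \to s_* \in A$, the first bullet of Theorem \ref{thm.implosion.family-gs} prevents $s_* \in \partial\mathcal{H}$, since otherwise $l(t_n) = \Phi_{\mathcal{E}^1}(\mathcal{R}(t_n))$ would tend to $0$ or $\infty$; together with the two lemmas above, this forces $A \subset \mathcal{H} \cap \partial\mathcal{E}^1$. Proposition \ref{Propositionbotthcer-lavaurs.coordinate} yields the identity
\[ \exp\bigl(3^m \Phi_{\mathcal{E}^1}(s)\bigr) = \bigl(\phi_s^\infty(L_s(v_s))\bigr)^{3^m} \]
for an integer $m$ depending only on $\mathcal{E}^1$. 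Passing to the limit along $\mathcal{R}(t_n) \to s_*$ and invoking local connectedness of $J_{s_*}$ (Theorem \ref{thm.pascale}), I would identify $L_{s_*}(v_{s_*})$ as the landing point on $J_{s_*}$ of a well-defined external ray $R^\infty_{s_*}(\theta_0)$. To obtain uniqueness of $s_*$, I would fix $s_0 \in A$ and, on a small neighborhood of $s_0$ in $\mathcal{H}$, use the holomorphic motion $h_s : J_{s_0} \to J_s$ of the Julia set coming from the Böttcher coordinate. Belonging to $A$ near $s_0$ is then locally captured by the holomorphic equation $L_s(v_s) = h_s(p_0)$, where $p_0 \in J_{s_0}$ is the landing point of $R^\infty_{s_0}(\theta_0)$. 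This equation has a discrete zero set, and the connectedness of $A$ forces $A = \{s_0\}$.

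The main obstacle is the second case: promoting the topological dichotomy in Theorem \ref{thm.implosion.family-gs} to a genuine local uniqueness statement inside $\mathcal{H}$. This rests on holomorphic motion of $J_s$ for $s \in \mathcal{H}$ together with local connectedness of $J_s$ from \cite{Ro}, which together render $\partial\mathcal{E}^1 \cap \mathcal{H}$ locally as the zero locus of a nonconstant holomorphic function of $s$. Some care is also needed when selecting the correct branch of the angle $\theta_0$ from the displayed identity, but this is uniquely pinned down once the integer $m$ attached to $\mathcal{E}^1$ is fixed.
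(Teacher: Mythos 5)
There is a genuine gap: your argument is circular. In both cases you invoke Theorem \ref{thm.implosion.family-gs}, but its proof depends on Lemmas \ref{lem.disjoint.accumulation} and \ref{lem.adjacent.accumulation}, and both of those lemmas cite Corollary \ref{cor.internal-curve-mustland} in an essential way (the corollary is stated and proved before them precisely so they can use it). Concretely, the step you really cannot dispense with --- concluding that $A$ is a singleton from $A \subset \partial\mathcal{H}$ when $y_0 \in \{0,\infty\}$ --- is exactly the content of the second bullet of Theorem \ref{thm.implosion.family-gs}, whose proof runs through Lemma \ref{lem.disjoint.accumulation}, which opens by invoking ``Lemma \ref{lem.accumulation.inside} and Corollary \ref{cor.internal-curve-mustland}.'' The mild use of the first bullet in your second case is also circular as stated, though that one could be repaired by citing Lemma \ref{lem.accumulation.inside} directly.

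The paper argues quite differently and avoids the loop. It supposes $\mathrm{accum}(\mathcal{R})$ is a nontrivial continuum and first rules out $\mathrm{accum}(\mathcal{R}) \subset \partial\mathcal{H}$: Misiurewicz parabolic parameters are dense in $\partial\mathcal{H}$ (Theorem \ref{thm.runze}), and by Lemma \ref{lem.existence.disjoint} each of them carries a disjoint-type component of $\boldsymbol{\mathrm{Esc}}^1_+$ attached to it and accessible by a curve from inside $\mathcal{H}$; such access curves prevent a nondegenerate arc of $\partial\mathcal{H}$ from lying entirely in the accumulation set of a curve confined to a single component $\mathcal{E}^1$. Having produced $s_0 \in \mathrm{accum}(\mathcal{R}) \cap \mathcal{H}$, the conclusion is quick: if $y_0 = \infty$ then $\phi_{s_n}(v_{s_n}) \to \infty$, contradicting $s_0 \in \mathcal{H}$; if $y_0 \neq \infty$, then (using $J$-stability and holomorphic motion of $J_s$ near $s_0$) the holomorphic map $s \mapsto L_s(v_s) - (\phi^\infty_s)^{-1}(e^{iy_0})$ has a non-isolated zero at $s_0$, hence vanishes identically, yet is nonzero along $\mathcal{R}$. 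Your case-two analytic-continuation idea is essentially this last step, but you are missing the density-of-Misiurewicz-parabolics argument that is the real engine of the proof and handles the $y_0 \in \{0,\infty\}$ case without ever needing Theorem \ref{thm.implosion.family-gs}.
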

\begin{proof}
     Suppose the contrary. Then the accumulation set $\mathrm{accum}(\mathcal{R})$ of $\mathcal{R}$ is a non trivial continuum. By Lemma \ref{lem.existence.disjoint}, for every Misiurewicz parameter, there is a component of $\mathrm{Esc}^1$ of disjoint type attached to it. Thus $\mathrm{accum}(\mathcal{R})$ is not contained in $\partial\mathcal{H}$, since Misiurewicz parabolic parameter is dense in $\partial\mathcal{H}$ by Theorem \ref{thm.runze}. Thus $\mathrm{accum}(\mathcal{R})\not\subset\partial\mathcal{H}$. Take any $s_0\in\mathrm{accum}(\mathcal{R})\cap\mathcal{H}$ and let $s_n\in\mathcal{R}$ be a sequence converging to $s_0$.
     \begin{itemize}
         \item   If $y_0\not = \infty$, then by Diagram (\ref{diag.commutative.criticalvalue}) and (\ref{diag.commutative.criticalvalue_case(3)}), $\phi_{s_n}^\infty(L_{s_n}(v_{s_n}))$ converges to $e^{iy_0}$. Since $s_0\in\mathcal{H}$, $g_{s_0}$ is $J$-stable in the family $g_s$. Thus $J_s$ moves holomorphically for $s$ near $s_0$. In particular, $(\phi^\infty_{s})^{-1}(e^{iy_0})$ moves holomorphically. By taking limit in $n$, we get $L_{s_0}(v_{s_0}) = (\phi^\infty_{s_0})^{-1}(e^{iy_0})$. Hence the holomophic map $s\mapsto L_s(v_s)-(\phi^\infty_{s})^{-1}(e^{iy_0})$ is constantly zero by analytic continuation. On the other hand, it does not vanish for $s\in\mathcal{R}$. This leads to a contradiction.
     \item If $y_0 = \infty$, then by Diagram (\ref{diag.commutative.criticalvalue}) and (\ref{diag.commutative.criticalvalue_case(3)}), $\phi_{s_n}(v_{s_n})$ tends to infinity. This contradicts $s_0\in\mathcal{H}$.
     \end{itemize}
\end{proof}

\begin{lem}\label{lem.disjoint.accumulation}
    Let $\mathcal{E}^1$ be a disjoint component of $\boldsymbol{\mathrm{Esc}}^1_+$. Suppose $\{s_n\},\{s'_n\}\subset\mathcal{E}^1$ are two sequences such that $\Phi_{\mathcal{E}^1}(s_n)\to 0$ and $\Phi_{\mathcal{E}^1}(s'_n)\to\infty$. Then $s_n$ and $s'_n$ converge to the same Misiurewicz parabolic parameter on $\partial\mathcal{H}$.
\end{lem}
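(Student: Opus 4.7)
Throughout, fix the integer $m\geq 0$ appearing in the Böttcher–Lavaurs coordinate of $\mathcal{E}^1$ (locally constant and hence constant on the connected set $\mathcal{E}^1$). Combining Proposition~\ref{Propositionbotthcer-lavaurs.coordinate} with Lemma~\ref{lem.botthcer-lavaurs.coordinate} and $\phi_s\circ g_s=\phi_s+1$ gives the identity
\[
3^m\,\Phi_{\mathcal{E}^1}(s)=\Psi_s\bigl(\phi_s(v_s)+m\bigr),\qquad s\in\mathcal{E}^1.
\]
Lemma~\ref{lem.disjoint.type1} places $\mathcal{E}^1$ in case~\ref{lem.classification.C1:first} of Lemma~\ref{lem.classification.C1}, so $\Psi_s:B\to\mathbb{H}$ is a conformal isomorphism sending the $T_1$-invariant curve $\mathfrak{R}_s(0)$ to $\mathbb{R}^+$, the left end of the strip $B$ to $0\in\overline{\mathbb{H}}$, and the right end to $\infty\in\overline{\mathbb{H}}$. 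This is the dictionary between boundary behavior of $\Phi_{\mathcal{E}^1}$ and dynamics of $g_s^m(v_s)$.

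\textbf{Step 1 (existence and uniqueness of $p_0, p_\infty$).} By Proposition~\ref{Propositionparametristation.escape_locus}, $\Phi_{\mathcal{E}^1}:\mathcal{E}^1\to\mathbb{H}$ is a conformal isomorphism. Corollary~\ref{cor.internal-curve-mustland} asserts that any curve $l\subset\mathbb{H}$ converging to $y_0\in\{0,\infty\}$ has image $\Phi_{\mathcal{E}^1}^{-1}(l)$ converging to a singleton in $\overline{\mathcal{E}^1}$. Given two such curves $l_1,l_2\to y_0$, interlacing them into a single curve still approaching $y_0$ and applying the corollary to the interlaced curve forces the two singleton-limits to agree. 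This defines unambiguously
\[
p_0:=\lim_{l\to 0}\Phi_{\mathcal{E}^1}^{-1}(l),\qquad p_\infty:=\lim_{l\to\infty}\Phi_{\mathcal{E}^1}^{-1}(l),
\]
and in particular $s_n\to p_0$, $s'_n\to p_\infty$. Lemma~\ref{lem.nondouble.parabolic} gives $\{i,-i\}\cap\partial\mathcal{E}^1=\emptyset$, so the hypothesis of Lemma~\ref{lem.accumulation.inside} holds; that lemma, applied with $y_0\in\{0,\infty\}$, places $p_0,p_\infty\in\partial\mathcal{H}\setminus\{i,-i\}$.

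\textbf{Step 2 (both limits are Misiurewicz parabolic).} Feed $\Phi_{\mathcal{E}^1}(s_n)\to 0$ into the boxed identity. Because $\Psi_s$ sends the left end of $B$ to $0\in\overline{\mathbb{H}}$ along $\mathfrak{R}_s(0)\mapsto\mathbb{R}^+$, we must have $\phi_{s_n}(v_{s_n})+m\to-\infty$ along $\mathfrak{R}_{s_n}(0)\subset B$; equivalently, $g^m_{s_n}(v_{s_n})\to 0$ through the repelling end of the croissant $C^1_{s_n,(0)}$. Exactly analogously, $\Phi_{\mathcal{E}^1}(s'_n)\to\infty$ forces $g^m_{s'_n}(v_{s'_n})\to 0$ through the attracting end. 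Continuity in $s$ of $v_s=g_s(s)$ and of $g^m_s$ (polynomial dependence) yields
\[
g^{m+1}_{p_0}(p_0)=0=g^{m+1}_{p_\infty}(p_\infty),
\]
so $p_0$ and $p_\infty$ are both Misiurewicz parabolic, with the same iteration count $m+1$.

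\textbf{Step 3 (coincidence $p_0=p_\infty$).} By the uniqueness of Step~1, it suffices to exhibit a single sequence in $\mathcal{E}^1$ converging to $p_0$ along which $\Phi_{\mathcal{E}^1}\to\infty$. The plan is to adapt the proof of Lemma~\ref{lem.existence.disjoint} by swapping its ``repelling-side'' input for an ``attracting-side'' one: instead of starting from a fundamental segment of $R^\infty_{p_0}(0)$ deep in the repelling petal at $0$, start from a fundamental segment $\tilde{X}_0\subset\phi_{p_0}^{-1}\bigl(\mathfrak{R}_{p_0}(0)\cap\{\mathrm{Re}\,w\geq M\}\bigr)$ deep on the attracting end of $C^1_{p_0,(0)}$ (with $M$ large), transport it holomorphically to $X_s$ via the holomorphic perturbation of the attracting Fatou coordinate $\phi_s$ (valid on $\overline{\mathcal{H}}\setminus\{i,-i\}$), pull it back by $g^m_s$, and apply Rouché's theorem to the equation $g^m_s(v_s)=x_s$ in the same way as (\ref{eq.Rouche}). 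This produces a curve $\Gamma'$ with $s\to p_0$ along which $\phi_s(v_s)\to+\infty$ on $\mathfrak{R}_s(0)$, equivalently $\Phi_{\mathcal{E}^1}(s)\to\infty$; the uniqueness from Step~1 then gives $p_\infty=p_0$. The main obstacle is verifying $\Gamma'\subset\mathcal{E}^1$ rather than some other component of $\boldsymbol{\mathrm{Esc}}^1_+$. This reduces by continuity to showing that the orbit $v_s,g_s(v_s),\dots,g^{m-1}_s(v_s)$ does not enter $C^1_{s,(0)}$ before step $m$ (which holds near $p_0$ because it does so at $s=p_0$ by minimality of $m$ for the base component), combined with the uniqueness clause of Lemma~\ref{lem.existence.disjoint} that singles out $\mathcal{E}^1$ as the only disjoint component attached at the Misiurewicz parabolic parameter $p_0$.
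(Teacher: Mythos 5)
Your Steps~1 and~2 are essentially sound (though the uniformity in Step~2 when passing $\Psi_{s_n}\bigl(\phi_{s_n}(v_{s_n})+m\bigr)\to 0$ to $\phi_{s_n}(v_{s_n})+m\to-\infty$ deserves a sentence, since $\Psi_{s_n}$ depends on $s_n$; this is fine because $s_n$ has already been localized near $p_0\in\partial\mathcal{H}\setminus\{i,-i\}$ where Fatou coordinates converge). The interlacing argument in Step~1 is a legitimate alternative to the paper's Carath\'eodory argument for well-definedness of $p_0$ and $p_\infty$.

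Step~3, however, has a genuine gap. To conclude $p_0=p_\infty$ you construct (via a Rouch\'e equation on the attracting side) a curve $\Gamma'$ with $\Phi_{\mathcal{E}^1}(\Gamma')\to\infty$ and $\Gamma'\to p_0$, and then invoke ``the uniqueness clause of Lemma~\ref{lem.existence.disjoint}'' to place $\Gamma'$ in $\mathcal{E}^1$. But the uniqueness established in the proof of Lemma~\ref{lem.existence.disjoint} concerns the Rouch\'e-constructed curve $\Gamma$ with $\Phi_{\mathcal{E}^1}(\Gamma)\to 0$ — it is proved via the unique solution of~(\ref{eq.Rouche}) at $z=v_{s_0}$ together with the uniqueness of the parameter ray landing at $s_0$. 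Nothing there forces a component reached by a curve of the other type ($\Phi\to\infty$) to coincide with $\mathcal{E}^1$; indeed the statement that both ends of the same component accumulate at the same Misiurewicz point is exactly Lemma~\ref{lem.disjoint.accumulation}, which is what one is trying to prove. So the argument is circular (or at best, it has silently moved the difficulty to an unproved extension of Lemma~\ref{lem.existence.disjoint}). Furthermore, verifying that your $\Gamma'$ even lies in \emph{some} disjoint component of $\boldsymbol{\mathrm{Esc}}^1_+$ is not reduced merely to ``the orbit not entering $C^1_{s,(0)}$ before step $m$''; one must also control how $Cv^1_s$ and $Cp^1_s$ behave as $s$ traces $\Gamma'$.

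The paper's actual proof avoids this obstacle by a combinatorial rigidity argument rather than a second Rouch\'e construction: for $s\in\mathcal{E}^1$ with $\Phi_{\mathcal{E}^1}(s)\in(0,\infty)$, the critical value $v_s$ lies on the dynamically defined Jordan curve $R_{s,C}(0)$, which is attached at a specific iterated preimage of $0$, and the \emph{external angle} of that attaching point is constant along the connected set $\mathcal{E}^1$. Letting $\Phi_{\mathcal{E}^1}(s)\to 0$ and $\to\infty$ along $(0,\infty)$, the limits $v_{s_0}$, $v_{s_0'}$ are both preimages of $0$ with the same external angle, and since a Misiurewicz parabolic parameter is uniquely determined by this angle (via the parameter ray landing there), $s_0=s_0'$. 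The general (non-real-axis) case is then reduced to the real-axis case by a Carath\'eodory extension argument rather than your interlacing. Your plan would need to either reproduce this combinatorial invariance or find some other way to identify the two limits; as written, the identification is assumed rather than proved.
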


\begin{proof}
    By Lemma \ref{lem.accumulation.inside}, $s_n,s_n'$ do not accumulate to point in $\mathcal{H}$. 
    It remains to show that $s_n,s'_n$ converge to the same Misiurewicz parabolic parameter of $\partial\mathcal{H}$. First we prove the special case when $\Phi_{\mathcal{E}^1}(s_n)\to 0, \Phi_{\mathcal{E}^1}(s_n)(s'_n)\to\infty$ along the real axis. By Lemma \ref{lem.nondouble.parabolic}, the accumulation point $s_0,s'_0\not\in\{i,-i\}$. Recall the holomorphic motion of $R_{s_0,C^1}(0),R_{s'_0,C^1}(0)$ constructed in Lemma \ref{lem.existence.disjoint} (Remark \ref{rem.motion}). By hypothesis $v_{s}\in R_{s,C^1}(0)$ for $s = s_n$ or $s'_n$. By taking limit we get that $v_{s_0},v_{s'_0}$ is a pre-image of $0$ by $g_{s_0},g_{s'_0}$ respectively, i.e. the accumulation is Misiurewicz parabolic. It is clear that $s_0 = s'_0$, since for $s\in\mathcal{E}^1$, the angle of the external ray attached at $R_{s,C}(0)$ remains constant (recall that $R_{s,C}(0)$ is the pull back of $R_{s,C^1}(0)$) along the critical value orbit).

    For general case of convergence of $s_n,s_n'$, we only prove for $s_n$ and it will be the same for $s_n'$. By passing to a subsequence, we can take a curve $l\subset\mathbb{H}$ such that $l\cap(0,+\infty) = \emptyset$, $l$ contains all $\Phi_{\mathcal{E}^1}(s_n)$ and $l$ converges to $0$. Set $\mathcal{R} = \Phi_{\mathcal{E}^1}^{-1}(l)$. By Lemma \ref{lem.accumulation.inside} and Corollary \ref{cor.internal-curve-mustland}, $\mathcal{R}$ converges to some $\tilde{s}_0\in\partial\mathcal{H}$. From be above paragraph, $\Phi_{\mathcal{E}^1}^{-1}((0,+\infty))$ converges to $s_0$. Take an arc $\gamma\subset\mathbb{H}$ linking $l$ and $(0,\infty)$ so that the three of them bound a Jordan domain ${U}$. Then for any curve $\tilde{l}\subset U$ converging to $0$, $\tilde{\mathcal{R}} := \Phi^{-1}_{\mathcal{E}^1}(\tilde{l})$, converges to a single point on $\partial\mathcal{H}$ by Lemma \ref{lem.accumulation.inside} and Corollary \ref{cor.internal-curve-mustland}. Thus $\Phi^{-1}_{\mathcal{E}^1}(U)$ is also a Jordan domain whose boundary consists of $\Phi^{-1}_{\mathcal{E}^1}(\overline{U}\setminus\{0\})$ and a closed arc between $s_0,\tilde{s}_0$ in $\partial\mathcal{H}$ (recall that $\partial\mathcal{H}$ is a Jordan curve by Theorem \ref{thm.runze}). But this is impossible, since this contradicts Carathéodory's extension theorem. Thus $s_0$ has to be $\tilde{s}_0'$ and we have finished the proof.
\end{proof}

\subsection{Adjacent components and conclusion}
\begin{lem}\label{lem.adjacent.accumulation}
    Let $\mathcal{E}^1\subset \boldsymbol{\mathrm{Esc}}^1_+$ be an adjacent component. Let $\{s_n\}\subset\mathcal{E}^1$ be a sequence such that $\Phi_{\mathcal{E}^1}(s_n)\to iy_0\in\partial{{\mathbb{H}}}$. Then $s_n\to-i$ if and only if $y_0\in\{0,\infty\}$.
\end{lem}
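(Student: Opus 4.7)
The plan is to parallel the proof of Lemma \ref{lem.disjoint.accumulation}, with the main adaptation being that $\mathcal{E}^1$ now accumulates at the degenerate parabolic $s=-i$ rather than at a Misiurewicz parameter. After extracting a subsequence, we may assume the $\Phi_{\mathcal{E}^1}(s_n)$ lie on a curve $l \subset \mathbb{H}$ converging to $iy_0$; by Corollary \ref{cor.internal-curve-mustland}, $s_n$ then converges to a unique limit $s_* \in \overline{\mathcal{H}} \cup \{i, -i\}$ in $\partial \mathcal{E}^1$. The core of the argument is to rule out each alternative for $s_*$ in the $(\Leftarrow)$ direction, and then to deduce $(\Rightarrow)$ by a boundary topology argument.

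For the $(\Leftarrow)$ direction (assuming $y_0 \in \{0, \infty\}$): Lemma \ref{lem.accumulation.inside} rules out $s_* \in \mathcal{H}$, since it would force $y_0 \notin \{0, \infty\}$, and Lemma \ref{lem.i.notonboundary} rules out $s_* = i$ because $\mathcal{E}^1$ is adjacent. The essential case to handle is $s_* \in \partial\mathcal{H} \setminus \{i, -i\}$, for which Lemma \ref{lem.accumulation.inside} is unavailable because its puzzle-piece argument requires $y_0 \notin \{0, \infty\}$. To exclude this case, I plan to adapt the gate-structure analysis of Lemma \ref{lem.i.notonboundary}: for $s \in \mathcal{E}^1$, Lemma \ref{lem.classification.C1} identifies $C^1_{s,(0)}$ as being of type (III), with boundary $\overline{\mathfrak{L}^u_s} \cup \overline{\mathfrak{L}^l_s}$ attached at $0$ and containing the critical point $s$, together with the repelling fixed point $\alpha_s$ on $\partial C^1_{s,(0)}$. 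Using the holomorphic motion of $R_{s, C^1}(0)$ from Remark \ref{rem.motion}, which is available for any $s_* \in \partial\mathcal{H} \setminus \{i, -i\}$, combined with the uniqueness of the disjoint component accessing $s_*$ from Lemma \ref{lem.existence.disjoint}, I will derive a topological contradiction: the type-(III) configuration cannot persist as $s \to s_*$ along $\mathcal{E}^1$, since the holomorphic motion forces $C^1_{s, (0)}$ to degenerate into a type-(I) croissant accessed only by the disjoint component.

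For the $(\Rightarrow)$ direction (assuming $s_n \to -i$): suppose for contradiction that $y_0 \in \mathbb{R} \setminus \{0\}$. The $(\Leftarrow)$ direction just established implies that the prime-end extension $\hat{\Phi}:\partial\mathbb{H}\cup\{\infty\}\to\overline{\mathcal{E}^1}\cap(\partial\mathcal{H}\cup\{-i\})$ already satisfies $\hat{\Phi}(0)=\hat{\Phi}(\infty)=-i$. If in addition $\hat{\Phi}(iy_0)=-i$, I will run a Carath\'eodory-style topological argument analogous to the end of Lemma \ref{lem.disjoint.accumulation}: choose a Jordan arc in $\mathbb{H}$ joining a point near $0$, through $iy_0$, to a point near $\infty$, and together with $\mathbb{R}_{>0}$ bound a Jordan subdomain $U \subset \mathbb{H}$. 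By Corollary \ref{cor.internal-curve-mustland}, the image $\Phi_{\mathcal{E}^1}^{-1}(\overline{U})$ in $\overline{\mathcal{E}^1}$ would have three distinct boundary arcs accumulating at the single point $-i$, producing a boundary topology incompatible with the conformal isomorphism $\Phi_{\mathcal{E}^1}:\mathcal{E}^1\to\mathbb{H}$, yielding the contradiction.

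The principal obstacle is the type-(III) persistence argument in the $(\Leftarrow)$ direction. Unlike in Lemma \ref{lem.accumulation.inside}, where the puzzle-piece construction provides explicit bounds on $\phi_s(v_s)$ that conflict with Lemma \ref{lem.unbounded.curveR}, here no such quantitative bound is available. The analysis instead requires tracking how $\overline{\mathfrak{L}^u_s}$, $\overline{\mathfrak{L}^l_s}$ and the repelling fixed point $\alpha_s$ co-vary as $s$ inside an adjacent component approaches a non-degenerate parabolic boundary point, and carefully invoking Lemma \ref{lem.existence.disjoint} to identify the unique accessing component at the limit as being of disjoint type, ruling out a continuous adjacent-type access.
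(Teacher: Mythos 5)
Your overall skeleton (reduce to a curve converging to $iy_0$, rule out $s_*\in\mathcal{H}$ by Lemma~\ref{lem.accumulation.inside}, rule out $s_*=i$ by Lemma~\ref{lem.i.notonboundary}, then treat $s_*\in\partial\mathcal{H}\setminus\{i,-i\}$) and your $(\Rightarrow)$ Carath\'eodory argument both match the paper. The $(\Leftarrow)$ direction, however, has a real gap at the point you flag as the "principal obstacle."

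The gap: the mechanism that rules out $\partial\mathcal{H}\setminus\{i,-i\}$ via the motion of $R_{s,C^1}(0)$ (Remark~\ref{rem.motion}) only bites along the specific curve $\Phi_{\mathcal{E}^1}^{-1}((0,+\infty))$. The contradiction the paper extracts is that for $s\in\Phi_{\mathcal{E}^1}^{-1}((0,+\infty))$, the real-axis condition $\Phi_{s,(0)}(v_s)\in\mathbb{R}^+$ forces the critical value $v_s$ onto the invariant curve $(\Phi_{s,(0)})^{-1}(\mathbb{R}^+)$, so the pullback of the tail $(\Phi_{s,C^1_{s,(0)}})^{-1}((\omega,+\infty))$ under $g_s$ eventually passes through the critical point $s$; this is incompatible with the Jordan-curve motion $h_s$ near $s_*$, which requires $s\notin R_{s,C^1}(0)$. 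For a general sequence $s_n$ with $\Phi_{\mathcal{E}^1}(s_n)\to 0$ but not lying on $\mathbb{R}^+$, the critical value is \emph{not} on $R_{s_n,C^1}(0)$ and no such contradiction arises directly. So the argument gives only that $\Phi_{\mathcal{E}^1}^{-1}((0,+\infty))\to -i$; you still need an additional Carath\'eodory-type argument (as at the end of the paper's "if" part, parallel to Lemma~\ref{lem.disjoint.accumulation}) to transfer this to the general $s_n$. Your proposal confines Carath\'eodory to the $(\Rightarrow)$ direction and omits this step.

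Two of the tools you invoke also don't apply where you use them: gate structures (Lemmas~\ref{lem.perturb.Fatoucoordi},~\ref{lem.perturb.Fatoucoordi'}) are defined only near the degenerate parabolic parameters $\pm i$, not near $s_*\in\partial\mathcal{H}\setminus\{i,-i\}$, so "adapting the gate-structure analysis of Lemma~\ref{lem.i.notonboundary}" to that case is not available; and Lemma~\ref{lem.existence.disjoint} gives uniqueness of a \emph{disjoint-type} access only at a \emph{Misiurewicz parabolic} parameter, whereas the accumulation point $s_*$ of an arbitrary adjacent-type sequence need not be Misiurewicz parabolic. The statement "the holomorphic motion forces $C^1_{s,(0)}$ to degenerate into a type-(I) croissant" is also not what the motion does: it moves a fixed $g_{s_*}$-invariant Jordan curve inside $C^1_{s,(0)}$, and says nothing per se about the classification type of $C^1_{s,(0)}$; the actual contradiction is purely about the critical point landing on that moved Jordan curve.
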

\begin{proof}
    {\it The “if” part}. To fix the idea, let us assume that $y_0=0$; the case $y_0=\infty$ will be similar. Suppose the contrary, then there is a subsequence $s_{n_i}$ such that $s_{n_i}$ does not converges to $-i$. Up to taking a subsequence of $s_{n_i}$, we may assume that $s_{n_i}$ converges to $s_0$ and that there is a curve $\mathcal{R}\subset\mathcal{E}^1$ such that $\mathcal{R}$ passes through all $s_{n_i}$, $\Phi_{\mathcal{E}^1}(\mathcal{R})$ converges to $0$ and $\mathcal{R}\cap\Phi^{-1}_{\mathcal{E}^1}((0,+\infty)) = \emptyset$. By Lemma \ref{lem.i.notonboundary} and \ref{lem.accumulation.inside}, $s_0\in\partial\mathcal{H}\setminus\{i,-i\}$; by Corollary \ref{cor.internal-curve-mustland}, $\mathcal{R}$ converges to $s_0$.

    Again by Corollary \ref{cor.internal-curve-mustland}, $(\Phi_{\mathcal{E}^1})^{-1}(t)$ converges to a singleton as $t\to 0^+$. We claim that this singleton is either $i$ or $-i$. Suppose the contrary, then by Lemma \ref{lem.accumulation.inside}, the accumulation set contains some $s'_0\in\partial\mathcal{H}\setminus\{i,-i\}$. Under the same construction and notations as in the proof of Lemma \ref{lem.existence.disjoint}, there is a holomorphic motion of a $g_{s'_0}$-invariant Jordan curve $R_{s'_0,C^1}(0)$ for $s$ close to $s'_0$. Moreover, in the dynamical plane of $g_{s}$, $s\not\in R_{s,C^1}(0)$. However, for $s\in\Phi^{-1}_{\mathcal{E}^1}((0,+\infty))$, when one pulls back $(\Phi_{s,C^1_{s,(0)}})^{-1}(\omega,+\infty)$ by $g_s$ with some large $\omega$, the pulled back curve will eventually hit $s$, since $\mathcal{E}^1$ is of adjacent type. This contradicts the construction of $R_{s,C^1}(0)$.

   Thus by Lemma \ref{lem.i.notonboundary}, $\lim\limits_{t\to 0^+}(\Phi_{\mathcal{E}^1})^{-1}(t) = -i$. Link $\Phi_{\mathcal{E}^1}(\mathcal{R})$ and $(0,+\infty)$ by an arc $\gamma$ so that the three of them bounds a Jordan domain $U$. Let $\tilde{{l}}\subset U$ be any curve converging to $0$. Then by Corollary \ref{cor.internal-curve-mustland} and Lemma \ref{lem.accumulation.inside}, $\tilde{\mathcal{R}} := \Phi^{-1}_{\mathcal{E}^1}(\tilde{l})$, converges to a single point on $\partial\mathcal{H}$. Thus $\Phi^{-1}_{\mathcal{E}^1}(U)$ is also a Jordan domain whose boundary consists of $\Phi^{-1}_{\mathcal{E}^1}(\overline{U}\setminus\{0\})$ and a closed arc between $-i,s_0$ in $\partial\mathcal{H}$ (recall that $\partial\mathcal{H}$ is a Jordan curve by Theorem \ref{thm.runze}). But this is impossible, since this contradicts Carathéodory's extension theorem. Thus $s_0$ has to be $-i$ and we have finished the proof of the “if” part.

    \vspace{0.1in}
    {\it The “only if” part}. Suppose $s_n\to-i$. Take a curve $l\subset\mathbb{H}$ such that $l\cap(0,+\infty) = \emptyset$, $l$ contains all $\Phi_{\mathcal{E}^1}(s_n)$ and $l$ converges to $y_0$. Set $\mathcal{R} = \Phi_{\mathcal{E}^1}^{-1}(l)$. By Corollary \ref{cor.internal-curve-mustland}, $\mathcal{R}$ converges to $-i$. While by the “if” part, $\Phi_{\mathcal{E}^1}^{-1}((0,+\infty))$ converges to $-i$. Take an arc $\gamma\subset\mathcal{E}^1$ linking $\mathcal{R}$ and $\Phi_{\mathcal{E}^1}^{-1}((0,\infty))$ so that three of them bounds a Jordan domain $\mathcal{U}$. Then $\Phi_{\mathcal{E}^1}(\mathcal{U})$ is the Jordan domain bounded by $l$, $(0,+\infty)$ and $\Phi_{\mathcal{E}^1}(\gamma)$. But since $y_0\not\in\{0,\infty\}$, this contradicts Carathéodory's extension theorem.

\end{proof}

\begin{proof}[Proof of Theorem \ref{thm.implosion.family-gs}]
Let $s_n\subset\mathcal{E}^1$ be a sequence converging to $\partial\mathcal{E}^1$.
\begin{itemize}
    \item Suppose that $\mathcal{E}^1$ is adjacent type. Then by Lemma \ref{lem.i.notonboundary}, $s_n$ does not converge to $i$. Thus by Lemma \ref{lem.accumulation.inside} and \ref{lem.adjacent.accumulation}, $s_n$ converges to $\partial\mathcal{H}$ if and only if $\Phi_{\mathcal{E}^1}(s_n)$ converges to 0 or $\infty$. In particular $\partial\mathcal{E}\cap\partial\mathcal{H} = \{-i\}$.
    \item  Suppose that $\mathcal{E}^1$ is disjoint type. Then by Lemma \ref{lem.nondouble.parabolic}, $s_n$ does not converge to $\{i,-i\}$. Thus by Lemma \ref{lem.accumulation.inside}, $s_n$ converges to $\partial\mathcal{H}$ if and only if $\Phi_{\mathcal{E}^1}(s_n)$ converges to 0 or $\infty$. By Lemma \ref{lem.disjoint.accumulation}, $\partial\mathcal{E}\cap\partial\mathcal{H}$ is a singleton which is a Misiurewicz parabolic parameter.
\end{itemize}
\end{proof}

\begin{proof}[Proof of Theorem \ref{thm.implosion-main2}] Let's verify the items one by one: 
\begin{itemize}
    \item Notice that 
    \ref{thm.implosion-main2.first} and \ref{thm.implosion-main2.third} follow immediately from Theorem \ref{thm.implosion.family-gs}.

    \item  Next we prove \ref{thm.implosion-main2.second}. If $a_0\in\partial\mathcal{K}_1$ is Misiurewicz parabolic, then it follows from Lemma \ref{lem.existence.disjoint}. Therefore suppose that $a_0=0$. The existence of an adjacent component $\mathcal{E}$ attached to $a=0$ is given by Proposition \ref{Propositionexistence}. To fix the idea, let's assume that $\mathcal{E}\subset\mathbb{H}$ (recall that by Theorem \ref{thm.runze}, $\mathcal{K}_1\setminus\{0\}$ has two components, symmetric with respect to the $y$-axis). Assume by contradiction that there is another $\mathcal{E}'\subset\mathbb{H}\cap\mathbf{Esc}^1$ attached to $a=0$. By \ref{thm.implosion-main2.first} and \ref{thm.implosion-main2.third}, $\mathcal{E}'$ must be adjacent type and $\overline{\Phi_{\mathcal{E}'}^{-1}((0,+\infty))}$ is a Jordan curve only intersecting $\partial\mathcal{K}_1$ at $a=0$. Notice that for $\epsilon>0$ small enough, $f_{a}$ has gate structure $(*,2)$ for $a\in\Phi_{\mathcal{E}'}^{-1}((0,\epsilon))$, hence 
 $\Phi_{\mathcal{E}'}^{-1}((0,\epsilon))\subset\Omega_{\eta,r}$ (recall lemma \ref{lem.perturb.Fatoucoordi}). However, parameters in $\Phi_{\mathcal{E}'}^{-1}((0,\epsilon))$ will satisfy Equation (\ref{eq.orbit-correspondence}) in Lemma \ref{lem.orbit.correspond} if we keep the same $x(a,t),y(a,t)$ constructed in Proposition \ref{Propositionexistence} (recall (\ref{eq.shooting.externalray})). This contradicts the uniqueness of the solution given by Lemma \ref{lem.orbit.correspond}. We complete the proof of \ref{thm.implosion-main2.second}.
 \end{itemize}
From now on we assume that the phase $\tau$ satisfies either $\Im\,\tau>0$ or $\tau\in\mathbb{Q}$.
 
 \begin{itemize}
 \item We proceed the proof of \ref{thm.implosion-main2.fourth}. Let us assume that both $\mathcal{E},\mathcal{E}'\subset\mathbb{H}$. By \ref{thm.implosion-main2.first} and \ref{thm.implosion-main2.second}, at least one of $\mathcal{E}$ and $\mathcal{E}'$ (let's say $\mathcal{E}$) is disjoint type. Suppose the contrary that $a_0\in\partial\mathcal{E}\cap\partial\mathcal{E}'$. Let $E^1_{a_0}$ be the first level escaping region of $f_{a_0}$ (Definition \ref{def.escape-region}). Since $z=0$ is either an attracting or parabolic fixed point for the projective horn map $H_{a,\tau}$ (Definition \ref{def.hornmap}) and it attracts at least one critical orbit (see the proof of Lemma \ref{lem.at-least-one}). Hence for any component $C\subset E^1_{a_0}$, $\overline{C}$ contains at most one critical point of $f_{a_0}$. Let $C^1_{a_0,(0)}\subset E^1_{a_0}$ be the component attached to $z=0$. Since $\partial B^*_{a_0}$ ($B^*_{a_0}$ is the immediate parabolic basin of $f_{a_0}$) moves holomorphically for $a$ near $a_0$, $\partial C^1_{a_0,(0)}$ intersects at least one critical orbit of $f_{a_0}$. Hence this critical orbit is unique and let's say that it is associated to the critical point $c_{a_0}$ with critical value $v_{a_0}$. Moreover, since $\mathcal{E}$ is disjoint type, $C^1_{a_0,(0)}$ is a croissant by Lemma \ref{lem.classification.C1}. Let $Cv_{a_0}\subset E^1_{a_0}$ be the component such that $v_{a_0}\in\partial Cv_{a_0}$ and $Cv_{a_0}$ is sent to $C^1_{a_0,(0)}$ by the smallest iteration of $f_{a_0}$. Notice that $Cv_{a_0}$ is also a croissant and $c_{a_0}\in\partial Cv_{a_0}$. Let $Cv_{a_0}'\subset E^{1}_{a_0}$ be the other component attached to $c_{a_0}$. Since $Cv_{a_0},Cv_{a_0}'$ are both sent to $C^1_{a_0,(0)}$ but under different iterations of $f_{a_0}$, $c_{a_0}$ must belong to $\partial C^1_{a_0,(0)}$ and hence $C^1_{a_0,(0)} = Cv_{a_0}$. Now $f_{a_0}^{-1}(C^1_{a_0,(0)})$ has three components: $C^1_{a_0,(0)},Cv_{a_0}'$ and $Cc_{a_0}$, where $\overline{Cc_{a_0}}$ contains the co-critical point and is disjoint from $\overline{C^1_{a_0,(0)}\cup Cv_{a_0}'}$. On the other hand,  $C^1_{a,(0)}$ is a croissant and $Cv_{a}$ is a bi-croissant for $a\in\mathcal{E}$. Moreover, the Hausdorff limit of $\overline{C^1_{a,(0)}\cup Cv_a}$ as $a\in\mathcal{E}$ tends to $a_0$ is $\overline{C^1_{a_0,(0)}\cup Cv'_{a_0}\cup Cc_{a_0}}$, which is a contradiction. 
\item Finally we prove \ref{thm.implosion-main2.fifth}. First suppose that $\mathcal{E}$ is disjoint type. From the proof of \ref{thm.implosion-main2.fourth}, we see that for $a\in\partial\mathcal{E}$, $\overline{Cv_{a_0}}$ does not contain any critical point of $f_{a_0}$. Hence $\overline{Cv_{a_0}}$ moves holomorphically. The result then follows from a standard transversality argument (\cite[Lem. 3.2]{Sh2}).

Now suppose that $\mathcal{E}$ is adjacent type. Then $v_{a_0}\in\mathfrak{L}$ where $\mathfrak{L}$ is either $\mathfrak{L}^u_{a_0}$ or $\mathfrak{L}^l_{a_0}$ (recall Lemma \ref{lem.classification.C1}). Let $\mathfrak{L'}\subset\mathfrak{L}$ be the subcurve starting from $z=0$ and ending at $v_{a_0}$. Then $\mathfrak{L'}$ moves holomorphically for $a$ near $a_0$. The result follows again from the transversality argument.
\end{itemize}

\end{proof}


\begin{thebibliography}{99999}
\addcontentsline{toc}{section}{\textbf{Bibliography}}

\bibitem[BEE]{BEE} X. Buff, J. Ecalle, A. Epstein, {\it Limits of degenerate parabolic quadratic rational maps.} Geometric and Functional Analysis, 23, 02 2013.




\bibitem[CEP]{CEP} A. Chéritat, A. Epstein, C. Petersen, {\it Perspectives on Parabolic Points in Holomorphic Dynamics.} Survey, Banff International Research Station, (2015).



\bibitem[Do]{Do} A. Douady, {\it Does a Julia set depend continuously
on the Polynomial?}. Proceedings of Symposia in Applied Mathematics.
Volume 49, (1994).

\bibitem[DoHu1]{DoHu1} A. Douady, J. Hubbard, {\it Etude dynamique des polynômes complexes.} Publ. Math. d’Orsay, 1984.

\bibitem[DoHu2]{DoHu2} A. Douady, J. Hubbard, {\it On the dynamics of polynomial-like mappings}. Ann. Sci. l'\'Ecole Norm. Sup., 18 (1985), 287-343.


\bibitem[Er]{Er} A. Eremenko, {\it Singularities of inverse functions}.	arXiv:2110.06134.


\bibitem[Ka]{Ka} A. Kapiamba, {\it An optimal Yoccoz inequality for near-parabolic quadratic polynomials}. http://arxiv.org/abs/2103.03211, (2021).



\bibitem[La]{La} P. Lavaurs, {\it Systemes dynamiques holomorphes: explosion de points périodiques paraboliques.} PhD
thesis, Thèse de doctrat de l’Université de Paris-Sud, Orsay, France, 1989.



\bibitem[Mi]{Mi} J. Milnor. Cubic polynomial maps with periodic critical orbit, part i. Complex Dynamics: Families and
Friends, pages 333–412, 01 2009.








\bibitem[Ou]{Ou} R. Oudkerk, \emph{The parabolic implosion for $f_0(z) = z+z^{\nu+1}+\mathcal{O}(z^{\nu+2})$}, PhD thesis,
University of Warwick, England, (1999).









\bibitem[PT]{PT} C. Petersen, L. Tan \emph{Branner-Hubbard Motions and attracting dynamics}, Dynamics on the
Riemann sphere, 45–70, Eur. Math. Soc., Zurich, (2006).



\bibitem[Ro]{Ro} P. Roesch, \emph{Cubic polynomials with a parabolic point}, Ergodic Theory and Dynamical Systems, 30(6),
1843-1867. doi:10.1017/S0143385709000820, 2010.

\bibitem[Sh1]{Sh1} M. Shishikura, \emph{The Hausdorff dimension of the boundary of the Mandelbrot set and Julia sets}, Ann. Math. 147, 25?267 (1998).

\bibitem[Sh2]{Sh2} M. Shishikura, \emph{Bifurcation of parabolic fixed points}, The Mandelbrot set, Theme and variations, London Math. Soc. 274, Cambridge: Cambridge Univ. Press, (2000).


\bibitem[Ta]{Ta} L. Tan, \emph{Local properties of the Mandelbrot set at parabolic points}, The Mandelbrot set, Theme and variations, London Math. Soc. 274, Cambridge: Cambridge Univ. Press, (2000).





\bibitem[Z]{Z} R. Zhang, \emph{On Dynamical Parameter Space of Cubic Polynomials with a Parabolic Fixed Point}, Journal of the London Mathematical Society, Volume 110 (2024) no. 6 | DOI:10.1112/jlms.70038.		
\end{thebibliography}
\end{document}